\newcommand{\emailadd}{2} %Do not print e-mail addresses on title page
\newcommand{\newc}{\newcommand}
\newc{\PoinEinFlatOp}[1]{
  \makebox[0mm][l]
   {
    \hspace{-1.5ex}
    \raisebox{1.9ex}{\upshape .}
   }
  \delta{\rule{0mm}{1.7ex}}^{\mbox{\upshape\tiny 0}}_{#1}}
\newc{\sumjl}{\sum_{\ell=1}^r{j_{\ell}}}
\newc{\jrg}{\underline{j}{}_{\rule{0.20ex}{0mm}\rule{0mm}{1.6ex}r}}
\newc{\jr}{\underline{j}{}_{\rule{0.05ex}{0mm}\rule{0mm}{1.20ex}r}}
\newc{\djr}{\delta_{\jr}}
\newc{\LDD}{\ell_2}
\newc{\stack}[2]{{\stackrel{\scriptscriptstyle{#1}}{#2}}\phantom{}}
\newc{\sffo}{
\rule{0mm}{2.45ex}%
\begin{picture}(0,0)
\put(2.6,6.9){$\stack{o}{\rule{0mm}{0mm}}$}
\end{picture}
L{}
}
\newc{\tfsff}[1]{\sffo_{#1}}     % Trace-free SFF, indices down
\newc{\tfsffup}[1]{\sffo{}^{#1}} % Trace-free SFF, indices up
\newc{\tfsffgen}{\sffo{}}         % Trace-free SFF, user places indices. 
\newc{\indind}{s}                 % induction index
\def\frak{\mathfrak}
\def\Bbb{\mathbb}
\def\Cal{\mathcal}
\newc{\nn}[1]{(\ref{#1})}
\newc{\al}{\alpha}
\newc{\be}{\beta}
\newc{\ga}{\gamma}
\newc{\de}{\delta}
\newc{\ep}{\varepsilon}
\newc{\ze}{\zeta}
\newc{\ka}{\kappa}
\newc{\la}{\lambda}
\newc{\imu}{\bar\mu}
\newc{\irho}{\bar\rho}
\newc{\si}{\sigma}
\newc{\isigma}{\bar\si}
\renewcommand{\phi}{\varphi}
\newc{\ph}{\varphi}
\newc{\ps}{\psi}
\newc{\om}{\omega}
\newc{\De}{\Delta}
\newc{\La}{\Lambda}
\newc{\Ga}{\Gamma}
\newc{\Ph}{\Phi}
\newc{\Om}{\Omega}
\newc{\Si}{\Sigma}
\newc{\trho}{\tilde{\rho}}
\newc{\drho}{d^{\tilde{\rho}}}
\newc{\aR}{\mbox{\boldmath{$R$}}}
\newc{\aS}{\mbox{\boldmath{$S$}}}
\newc{\aDeR}{\mbox{\boldmath{$U$}}_B{}^P{}_C{}^Q}
\newc{\aDe}{\mbox{\boldmath$\Delta$}}
\newc{\aNd}{\mbox{\boldmath$\nabla$}}
\newc{\aK}{\mbox{\boldmath{$K$}}}
\newc{\aL}{\mbox{\boldmath{$L$}}}
\newc{\BoldUpm}{\mbox{\upshape\textbf{m}}}
\newc{\BoldUpmSub}{\mbox{\upshape\tiny\textbf{m}}}
\newc{\BoldUpmD}{\BoldUpm_{\mbox{\upshape\tiny D}}}
\newc{\BoldUpmN}{\BoldUpm_{\mbox{\upshape\tiny N}}}
\newc{\BoldUpmZ}{\BoldUpm_0}
\newtheorem{theorem}{Theorem}[section]
\newtheorem{lemma}[theorem]{Lemma}
\newtheorem{proposition}[theorem]{Proposition}
\newtheorem{corollary}[theorem]{Corollary}
\newtheorem{hypotheses}[theorem]{Hypotheses}
\newtheorem{condition}[theorem]{Condition}
\theoremstyle{definition}
\newtheorem{definition}[theorem]{Definition}
\newtheorem{remark}[theorem]{Remark}
\newtheorem{problem}[theorem]{Problem}
\newc{\llhd}{{\triangleleft}}
\newc{\gf}{{\frak g}}
\newc{\q}{{\frak q}}
\newc{\p}{{\frak p}}
\newc{\tg}{{\tilde {\frak g}}}
\newc{\tq}{{\tilde {\frak q}}}
\newc{\tp}{{\tilde {\frak p}}}
\newc{\tw}{\mbox{$\tilde{w}$}}
\newc{\ca}{{\Cal A}}
\newc{\ce}{{\Cal E}}
\newc{\ice}{\bar\ce}
\newc{\cf}{{\Cal F}}
\newc{\cg}{{\Cal G}}
\newc{\cN}{{\Cal N}}
\newc{\cv}{{\Cal V}}
\newc{\cp}{{\Cal P}}
\newc{\cw}{{\Cal W}}
\newc{\ct}{{\Cal T}}
\newc{\cT}{\mathcal{T}}
\newc{\tca}{{\tilde{\Cal A}}}
\newc{\tce}{{\tilde{\Cal E}}}
\newc{\tcg}{{\tilde{\Cal G}}}
\newc{\tct}{{\tilde{\Cal T}}}
\newc{\bW}{{\Bbb W}}
\newc{\bV}{{\Bbb V}}
\newc{\nd}{\nabla}
\newc{\Ps}{\Psi}
\newc{\Up}{\Upsilon}
\newc{\End}{\operatorname{End}}
\newc{\im}{\operatorname{im}}
\newc{\FSAOp}[1]{\delta_{#1}^{T}}
\newc{\OpOne}{O_1}
\newc{\OpTwo}{O_2}
\newc{\BasicOp}{\mbox{\textit{Op}}}
\newc{\OurOp}{\psi}             % Our operators for use in section
\newc{\LowTrac}[2]{\cp^{k}{}_{A_{#1}\cdots A_{#2}}}
\newc{\LowTracPsi}[3]{\Psi^{k}{}_{A_{#1}\cdots A_{#2}}{}^{#3}}
\newc{\ConFlatOp}[1]{\delta^0_{#1}}
\newc{\critop}{\delta_{\lfloor(n-2)/2\rfloor,\,\lfloor(n+1)/2\rfloor}}
\newc{\POneAB}{P^1{}_{A_1\ldots A_k}{}^E}       % A certain special operator
\newc{\psiOp}{\psi}                        % psi-operator
\newc{\psigw}[1]{\psiOp_{\g,#1}}            % conformally covariant psi
\newc{\psighw}[1]{\psiOp_{\hat{\g},#1}}      % conformally covariant psi, hat
\newc{\PTh}{{\mbox{\sf P}_3}}              % The operator P_3 ("P three")
\newc{\IPan}{\bar{P}}                      % The intrinsic Paneitz
\newc{\tildedeltaOneTwo}{\tilde{\delta}_{1,2}}
\newc{\tildedeltaOneThree}{\tilde{\delta}_{1,3}}
\newc{\tildedeltaTwoThree}{\tilde{\delta}_{2,3}}
\newc{\tildedeltaJ}{\tilde{\delta}_J}
\tilde{\delta}{}_{\mbox{\tiny $J$}}%
\tilde{\delta}{}_{\mbox{\tiny $J$}}%
\newc{\tildedeltaJPK}{\tilde{\delta}_{J+k}}
\newc{\tildedeltaJk}{\tilde{\delta}_{J,k}}
\newc{\tildedeltaJkg}[1]{\tilde{\delta}_{J,k,\g,#1}}
\newc{\tildedeltaJkgh}[1]{\tilde{\delta}_{J,k,\hat{\g},#1}}
\newc{\deltaOneTwo}{\delta_{1,2}}
\newc{\deltaOneThree}{\delta_{1,3}}
\newc{\deltaTwoThree}{\delta_{2,3}}
\newc{\deltaBGK}{\delta^{BG}_{K}}
\newc{\deltaBG}{\delta^{BG}}
\newc{\CQOp}{P_3^{\mbox{\tiny\it CQ}}}
\newc{\CQOpg}{P_{3,\g}^{CQ}}
\newc{\deltaJ}{\delta_J}
\newc{\deltaJk}{\delta_{J,k}}
\newc{\djonejtwo}{\deltaJk}
\newc{\deltaJkg}[1]{\delta_{J,k,\g,#1}}
\newc{\deltaJkgh}[1]{\delta_{J,k,\hat{\g},#1}}
\newc{\OJk}{O_{J,k}}
\newc{\deltajr}{\delta_{j_1,\cdots,\,j_r}}
\newc{\GDN}{P'_{k,\BoldUpmSub,m_j,\ell}}
\newc{\ckbeta}{c_{k,\beta}}
\newcommand{\lots}{\mbox{\it lots}}        % lower-order terms
\newcommand{\ltots}{\mbox{\it ltots}}      % lower transverse-order terms
\newc{\fkbeta}{f_{\beta}}
\newc{\hkbeta}{h_{\beta}}
\newc{\qkbetan}{q_{\beta,n}}
\newc{\rkbetan}{r_{\beta,n}}
\newc{\skbetan}{s_{\beta,n}}
\newc{\tkbetan}{t_{\beta,n}}
\newc{\RiemInv}{I}                         % Riemannian invariant operator
\newc{\URiemInv}{i}                        % "un-weighted" Riemannian invariant operator
\newc{\ad}{\operatorname{ad}}
\newc{\Ad}{\operatorname{Ad}}
\newc{\id}{\operatorname{id}}
\newc{\Ker}{\operatorname{Ker}}
\renewcommand{\Im}{\operatorname{Im}}
\newc{\ddt}{\tfrac{d}{dt}|_{t=0}}
\newc{\into}{\hookrightarrow}
\newc{\ndy}{\aNd^{(Y)}}
\newc{\ndyy}{\aNd_{(Y)}}
\newc{\tD}{\tilde{D}}
\newc{\bD}{\mbox{\boldmath{$D$}}}
\newc{\D}{\mbox{\boldmath{$D$}}}
\newc{\X}{\mbox{\boldmath{$X$}}}
\newc{\sX}{\mbox{\scriptsize\boldmath{$X$}}}        % scriptsize ambient 
\newc{\cce}{\tilde{\ce}}
\newc{\tM}{\tilde{M}}
\newc{\tS}{\tilde{S}}
\newc{\tth}{\tilde{h}}
\newc{\tf}{\tilde{f}}
\newc{\tW}{R}
\newc{\tV}{\tilde{V}}
\newc{\tU}{\tilde{U}}
\newc{\JuhlPi}{\mathbf{P}}
\newc{\JuhlPe}{P^e}
\newc{\ilc}{
\nabla
\begin{picture}(0,0)
\put(-9.45,9.5){\line(1,0){8.8}}
\end{picture}}
\newc{\ptc}{\widetilde{\nabla}}
\newc{\ila}{\Delta
\protect\begin{picture}(0,0)
\protect\put(-7.7,9.5){\protect\line(1,0){5.5}}
\end{picture}}
\newc{\StrongGJMS}{P^{\Phi}}                % The strong GJMS operator
\newc{\iGJMSden}{P
\protect\begin{picture}(0,0)
\protect\put(-6.0,9.7){\protect\line(1,0){6.0}}
\end{picture}}
\newc{\bop}{\mbox{$\Box$}}                  % The box operator (Yamabe operator)
\newc{\ibop}{\mbox{$\overline{\Box}$}}      % The intrinsic box operator
\newc{\LThree}{\Lambda_{3,w}}               % Robert's Lambda_3
\newc{\PThree}{\mbox{\sf P}_3}              % David's P_3
\newc{\cRobin}{\delta}                      % The conformal Robin operator
\newc{\iD}{D
\protect\begin{picture}(0,0)
\protect\put(-7.0,9.7){\protect\line(1,0){6.5}}
\end{picture}}
\newc{\lowerw}{c}   % Amount by which operator lowers the weight
\newc{\FieldOfFuns}{F} % The field or rational functions
\newc{\RealCoeff}{\mathcal{Q}} \newc{\RationalCoeff}{\mathcal{R}}
\newc{\Classes}{\mathcal{S}}
\newc{\RealBasis}{\{\RiemInv_{\beta,\g}\}_{\beta\in A}}
\newc{\ClassesBasis}{\{[\RiemInv_{\beta,\g}]\}_{\beta\in A}}
\newc{\PreBasis}{\{\RiemInv_{\beta,\g}\}_{\beta\in A}}
\newc{\PreBasisBC}{\{\RiemInv_{\beta,\g}\}_{\beta\in B\cup C}}
\newc{\SJd}{\mathcal{S}_{K,\lowerw,w,n}}
\newc{\VJd}{\mathcal{V}_{K,\lowerw,n}}
\newc{\NJd}{\mathcal{N}_{K,\lowerw}} \newc{\Dimensions}{\mbox{\sf D}}
\renewcommand{\o}{\circ}
\let\ccdot\cdot
\def\cdot{\hbox to 2.5pt{\hss$\ccdot$\hss}}
\newc{\x}{\times}
\newc{\IQ}{I}                       % The I used in constructing Q
\newc{\g}{{\textsl{g}\hspace{0.12ex}}} % The unweighted metric
\newc{\amr}{r}                        % The product metric
\newc{\gplus}{\g_{+}}                % g_{+}
\newc{\hatg}{\widehat{\g}}          % The unweighted metric with hat
\newc{\ig}{{\g
\begin{picture}(0,0)
\put(-4.3,6.9){\line(1,0){3.9}}
\end{picture}
\hspace{0.13ex}}
}
\newc{\igsub}{g
\begin{picture}(0,0)
\put(-4.0,4.9){\line(1,0){3.6}}
\end{picture}}
\newc{\hatig}{
\rule{0mm}{1.2ex}\ig
\begin{picture}(0,0)
\put(-4.8,0.9){$\widehat{\ }$}
\end{picture}
}
\newc{\ic}{\mathbf{c}
\begin{picture}(0,0)
\put(-5.1,6.8){\line(1,0){4.3}}
\end{picture}}
\newc{\cc}{\mathbf{c}}                  % The conformal class
\newc{\xigw}[1]{(\xi^{\g})^{#1}}          % [(g,1)]_w
\newc{\xig}{\xi^{\g}}
\newc{\xir}{\xi^{\amr}}
\newc{\bg}{\textbf{\textsl{g}}}
\newc{\br}{\textbf{\textsl{r}}}
\newc{\ibg}{{\bg
\begin{picture}(0,0)
\put(-5.9,6.9){\line(1,0){4.6}}
\end{picture}
\hspace{0.0ex}}
}
\newc{\h}{\mbox{\boldmath{$h$}}}
\newc{\Rho}{{\mbox{\sf P}}}         % The Rho-tensor
\newc{\iRh}{\Rho
\begin{picture}(0,0)
\put(-6.1,9.9){\line(1,0){4.9}}
\end{picture}}
\newc{\J}{{\mbox{\sf J}}}           % Trace of Rho
\J\begin{picture}(0,0)\put(-4,9.4){\line(1,0){3}}\end{picture}}
\newc{\DenOne}{V}             % A section of a vector bundle
\newc{\namb}{\nabla^{\text{amb}}}
\newc{\Ramb}{R^{\text{amb}}}
\newc{\vol}{\mbox{\boldmath $\epsilon$}}
\newc{\iR}{R
\begin{picture}(0,0)
\put(-6.1,9.6){\line(1,0){6.0}}
\end{picture}}
\newc{\QJkg}{Q_{\g}(\djr)}   %{Q_{J,k,\g}}            % The Q-curvature
\newc{\TgK}{T^{\g}_K}                  % The T-curvature   
\newc{\QJkgh}{Q_{\jr,\hat{\g}}}        % The Q-curvature for g hat
\newc{\R}{R}                        % The Riemannian curvature tensor
\newc{\C}{C}                        % The Weyl curvature tensor
\newc{\tcurve}{\Omega}              % The tractor curvature
\newc{\tracW}{W}                    % The tractor curvature
\newc{\Rc}{\mbox{Rc}}               % The Ricci tensor
\newc{\Sc}{\mbox{Sc}}               % The scalar curvature tensor
\newc{\tm}{h}                       % The tractor metric
\newc{\tmco}{h^{\#}}                % The tractor co-metric
\newc{\tnt}{{\textstyle\frac{n}{2}}}% n/2, textstyle
\newc{\tmi}{h
\begin{picture}(0,0)
\put(-4.7,9.6){\line(1,0){4.0}}
\end{picture}}
\newcommand{\ih}{\tmi}
\newc{\ki}{k
\begin{picture}(0,0)
\put(-4.5,8.8){\line(1,0){4.0}}
\end{picture}}
\newc{\dfn}{t}                      % Defining function for hypersurface
\newc{\tfrm}{E}                     % Frame element for tangent bundle
\newc{\cfrm}{F}                     % Frame element for cotangent bundle
\newc{\cfrmi}{H}                    % Frame element for intrinsic cotangent bundle
\newc{\di}{\Theta}                  % Dual element for intrinsic tractor bundle
\newc{\da}{\Gamma}                  % Dual element for ambient tractor bundle
\newc{\Nv}{n}                       % The normal vector n^a
\newc{\Nt}{N}                       % The normal tractor N^A
\newc{\PP}[2]{\Pi_{#1}{}^{#2}}      % The projector Pi.  Indices are arguments.
\newc{\sff}{L}                      % The second fundamental form
\newc{\sffh}{\widehat{L}}           % The second fundamental form hat
\newc{\tsff}{L}                     % The tractor second fundamental form
\newc{\mc}{H}                       % The mean curvature
\newc{\mmc}{G}                      % The modified mean curvature
\newc{\mch}{\widehat{H}}            % The mean curvature hat
\newc{\tcon}{C}                     % The tractor contorsion
\newc{\RRR}{\mbox{R}}               % The R in R(X,Y)Z
\newc{\JuhlQi}{\mathbf{Q}}
\newc{\JuhlQe}{Q^e}
\newc{\YS}{
Y
\begin{picture}(0,0)
\put(-8.7,10.1){\line(1,0){8.0}}
\end{picture}}
\newc{\YSO}{
Y
\begin{picture}(0,0)
\put(-6.0,2.1){$\tilde{\ }$}
\end{picture}}
\newc{\ZS}{
Z
\begin{picture}(0,0)
\put(-5.9,10.1){\line(1,0){5.5}}
\end{picture}}
\newc{\ZSO}{
Z
\begin{picture}(0,0)
\put(-4.5,2.1){$\tilde{\ }$}
\end{picture}}
\newc{\XS}{
X
\begin{picture}(0,0)
\put(-7.9,10.1){\line(1,0){7.2}}
\end{picture}}
\newc{\XSO}{
X
\begin{picture}(0,0)
\put(-5.9,2.1){$\tilde{\ }$}
\end{picture}}
\newc{\amM}{\tilde{M}}             % Fefferman-Graham ambient manifold
\newc{\act}{\mbox{\boldmath{$\ct$}}}% Ambient tensors of some weight
\newc{\amQ}{\Cal Q}                 % The metric bundle ("ambient Q")
\newc{\Mplus}{M_{+}}               % Interior of Poincare-Einstein M
\newc{\Mg}{(M,\g)}                  % M with metric
\newc{\Mcc}{(M,\cc)}                % M with conformal class
\newc{\ComplexNumbers}{\Bbb C}
\newc{\Reals}{\Bbb R}
\newc{\Integers}{\Bbb Z}
\newc{\QDimensions}{S}             % Set of allowable dimensions for Q
\newc{\tb}{\ct^A}                  % The tractor bundle
\newc{\tbn}{\ct}                   % The tractor bundle with no indices
\newc{\tbd}{\ct_A}                 % The dual of the tractor bundle
\newc{\tbPhi}{\ct^{\Phi}}           % Tractor bundle of general rank
\newc{\tbnPhiOne}{\tbn^{\Phi_1}}     % Tractor bundles of different
\newc{\tbnPhiTwo}{\tbn^{\Phi_2}}     % ranks
\newc{\tbS}{\tbn
\begin{picture}(0,0)
\put(-7.8,9.8){\line(1,0){7.5}}
\end{picture}}
\newc{\tbSsup}{\tbn
\begin{picture}(0,0)
\put(-5.45,7.0){\line(1,0){5.5}}
\end{picture}}
\newc{\tbSA}{\tbS^A}                 % The tractor bundle for \Sigma
\newc{\tbpA}[1]{\tbn^{#1}_{\hspace*{.3mm}
\rule{0.1mm}{1.5mm}\hspace{.4mm}\rule{0.1mm}{1.5mm}}}% The orthgonal complement of N^A with index
\newc{\tbp}{\tbn_{\hspace*{.3mm}
\rule{0.1mm}{1.5mm}\hspace{.4mm}\rule{0.1mm}{1.5mm}}}% The orthgonal complement of N^A
\newc{\hS}{\overline{h}}
\newc{\sm}{\Sigma}                  % The submanifold
\newc{\zerolocus}{\mathcal Z}
\newcommand{\cF}{\mathcal{F}}
\newc{\gequal}{\stackrel{\g}{=}}    % Tractor bundle splitting
\newc{\ghatequal}{\stackrel{\hatg}{=}}
\newc{\igequal}{\stackrel{\g
\begin{picture}(0,0)
\put(-4.0,4.8){\line(1,0){3.6}}
\end{picture}}{=}}                  % Intrinsic tractor bundle splitting
\newc{\trem}{i}                     % The intrinsic tractor bundle embedding
\newc{\demb}{J}                     % Embedding map for tractor dual space
\newc{\musical}{\Phi}               % The "musical" isomorphism
\newc{\musicali}{\Phi
\begin{picture}(0,0)
\put(-7.0,9.2){\line(1,0){6.0}}
\end{picture}}
\newc{\ceS}{\ce
\begin{picture}(0,0)
\put(-4.5,9.4){\line(1,0){4.0}}
\end{picture}}
\newc{\sdim}{n-1}                   % The dimension of the submanifold
\newc{\strutaa}{\rule{0mm}{4mm}}
\newc{\strutdd}{\rule{0mm}{5mm}}
\newc{\pf}{\noindent\textit{Proof:}\ \ }
\newc{\QED}{\quad $\Box$\vspace{2mm}}
\newc{\RHS}{right-hand side}
\newc{\mass}{mass\ }
\newc{\masses}{masses\ }
\newc{\masspoint}{mass}
\newc{\tensor}[1]{#1}
\newc{\Mvariable}[1]{\mbox{#1}}
\newc{\down}[1]{{}_{
\ifthenelse{\equal{#1}{;}}{|}{#1}}}
\newc{\up}[1]{{}^{#1}}
\newc{\midtenPan}{\mbox{\sf S}}
\newc{\midten}{\mbox{\sf T}}
\newc{\midtenEi}{\mbox{\sf U}}
\newc{\ATen}{\mbox{\sf E}}
\newc{\BTen}{\mbox{\sf F}}
\newc{\CTen}{\mbox{\sf G}}
\def\sideremark#1{\ifvmode\leavevmode\fi\vadjust{\vbox to0pt{\vss
 \hbox to 0pt{\hskip\hsize\hskip1em
 \vbox{\hsize2.5cm\tiny\raggedright\pretolerance10000
 \noindent #1\hfill}\hss}\vbox to8pt{\vfil}\vss}}}
\newlength{\pluslength}
\newc{\leadingplus}{\hspace{\pluslength}}
\begin{document}
%
%%%%%%%%%%%%%%%%%%%%%%%%%%%%%%%%%%%%%%%%%%%%%%%%%%%%%%%%%%%%%%%%%
% Begin abstract
%%%%%%%%%%%%%%%%%%%%%%%%%%%%%%%%%%%%%%%%%%%%%%%%%%%%%%%%%%%%%%%%%
\begin{abstract}
We construct continuously parametrised families of conformally
invariant boundary operators on densities. These may also be viewed as
conformally covariant boundary operators on functions and generalise
to higher orders the first-order conformal Robin operator and an
analogous third-order operator of Chang-Qing. Our families include
operators of critical order on odd-dimensional boundaries. Combined
with the (conformal Laplacian power) GJMS operators, a suitable
selection of the boundary operators yields formally self-adjoint
elliptic conformal boundary problems.  Working on a conformal manifold
with boundary, we show that the operators yield odd-order conformally
invariant fractional Laplacian pseudo-differential operators.  To do
this, we use higher-order conformally invariant Dirichlet-to-Neumann
constructions.  We also find and construct new curvature quantities
associated to our new operator families.
These have links to the Branson $Q$-curvature and include higher-order
generalisations of the mean curvature and the $T$-curvature of
Chang-Qing.  In the case of the standard conformal hemisphere, the
boundary operator construction is particularly simple; the resulting
operators provide an elementary construction of families of symmetry
breaking intertwinors between the spherical principal series
representations of the conformal group of the equator, as studied by
Juhl and others.  We use our constructions to shed
light on some conjectures of Juhl.
%%
%% %%%%%%%%%%%%%%%%%%%%%%%%%%%%%%%%%%%%%%%%%%%%%%%%%%%%%%%%%%%%%%%%%%%%%%%
%% \edz{RS: Comment in article on relation or not to $T$ curvature of
%%   G+Waldron. \\ Ad to abstract a comment link to Juhl's conjectures.
%%   LS, 19~February 2018:  Did thing about Juhl's conjectures.}%
%% %%%%%%%%%%%%%%%%%%%%%%%%%%%%%%%%%%%%%%%%%%%%%%%%%%%%%%%%%%%%%%%%%%%%%%%
%%
\vspace{0mm} \\
%%%%%%%%%%%%%%%%%%%%%%%%%%%%%%%%%%%%%%%%%%%%%%%%%%%%%%%%%%%%%%%%%
%
% e-mail addresses (within abstract):
%
\ifthenelse{\emailadd=1}
{ % Begin if
\vspace{3mm}
\\
\newlength{\eaddwid}
\settowidth{\eaddwid}{Authors' e-mail addresses:\ \ }
\parbox[t]{\eaddwid}{Authors' e-mail addresses:}
\parbox[t]{55mm}{
\texttt{gover@math.auckland.ac.nz}\\
\texttt{lawrence.peterson@email.und.edu}\\
}
}  % End if
{} % End else
%%%%%%%%%%%%%%%%%%%%%%%%%%%%%%%%%%%%%%%%%%%%%%%%%%%%%%%%%%%%%%%%%
%
% End abstract
%
%%%%%%%%%%%%%%%%%%%%%%%%%%%%%%%%%%%%%%%%%%%%%%%%%%%%%%%%%%%%%%%%%
\end{abstract}

\title{Conformal boundary operators, T-curvatures, and conformal
  fractional Laplacians of odd order}
\thanks{ARG gratefully acknowledges
    support from the Royal Society of New Zealand via Marsden Grants
  13-UOA-018 and 16-UOA-051}
\thanks{LJP would like to thank the University of Auckland for its
  hospitality on three separate occasions.}
\author{A. Rod Gover and Lawrence J. Peterson} \date{}
\keywords{conformal boundary operators, conformal fractional
  Laplacians, $T$-curvature, differential intertwinors}
%
% Gover
%
\address{Department of Mathematics\\
  The University of Auckland\\
  Private Bag 92019\\
  Auckland 1142\\
  New Zealand}\email{r.gover@auckland.ac.nz}
%
% Peterson
%
\address{Department of Mathematics\\
  The University of North Dakota\\
  101 Cornell Street Stop 8376\\
  Grand Forks, ND 58202-8376\\
  USA}\email{lawrence.peterson@email.und.edu}

\maketitle

\pagestyle{myheadings}
\markboth{A.R. GOVER AND L.J. PETERSON}{Conformal boundary Operators}

%
%%%%%%%%%%%%%%%%%%%%%%%%%%%%%%%%%%%%%%%%%%%%%%%%%%%%%%%%%%%%%%%%%%%%%%%%
% Introduction
%%%%%%%%%%%%%%%%%%%%%%%%%%%%%%%%%%%%%%%%%%%%%%%%%%%%%%%%%%%%%%%%%%%%%%%%
%

\section{Introduction}\label{IntroSect}

Conformally invariant differential operators play a central role in
the global geometric analysis of Riemannian, pseudo-Riemannian, and
conformal structures
\cite{Changbull,MalchiodiQ,Schoen-Y,Witten-Mass}. The nature of
conformal geometry means that local issues can also be subtle and
difficult. For example, given a proposed leading symbol, a
corresponding conformally invariant differential operator may or may
not exist, depending on the conformal class. While important open
problems remain, nevertheless over the past decades there have been
significant advances in our understanding of these issues
\cite{ESlo,GoH,Grnon,GJMS}.

By comparison, relatively little is known about natural conformally
invariant differential operators along submanifolds.  Here we treat an
aspect of this gap. Our work is primarily motivated by potential
applications to boundary value problems, conformally invariant
Dirichlet-to-Neumann constructions (cf.\ \cite{TomSrni,BrGonon,
  CaffSil,Case,CaseCh,chang-Gon,escoA,esco}), and related issues
linked to geometric PDE, scattering, and the AdS/CFT correspondence of
physics \cite{GrZ,Mald,Nd,Nd2}.  Although we do not treat continuous
families of non-local operators here, we believe that our results are
complementary to e.g.\ \cite{CaffSil,CaseCh,chang-Gon,GrZ} and that
the tools developed here should shed light on such families.  The
results also give new constructions of families of representation
intertwining operators in the sense of Juhl's families \cite{JB} and
the differential symmetry breaking operators of Kobayashi et
al.\ \cite{KobayashiSpeh}.  See also
\cite{Clerc,FJuhlSom,KobayashiPevzner}.

In this paper, we construct continuously parametrised families of
conformally invariant differential boundary operators on densities.
We also construct curvatures associated to these operator families.
The new operators may be viewed as conformally covariant boundary
operators on functions and for most parameter values generalise to
higher orders the well-known first-order conformal Robin operator.  We
show that some of the operators from the new families combine with the
GJMS operators of Graham et al.\ \cite{GJMS} to yield formally
self-adjoint elliptic conformally invariant higher-order Laplacian
boundary problems.  In conjunction with these problems, we construct
higher-order conformally invariant Dirichlet-to-Neumann operators that
provide realisations of odd-order conformally invariant fractional
Laplacian pseudo-differential operators. In the later sections, we
work in the setting of general conformally curved manifolds with
boundary, but by virtue of the conformal invariance, the results may
be applied to conformally compact manifolds and in particular to
asymptotically hyperbolic and Poincar\'e-Einstein manifolds.  Earlier
work of Branson and the first author in \cite{BrGonon} used tractors
to construct new families of boundary operators.  (We will review
tractors in Section~\ref{confsec}, below.)  In the present article,
the approach is again via tractors, but it is simpler, quite
different, and most importantly, more effective; we close gaps in
\cite{BrGonon}.

We now explain in more detail the problems treated in this paper, the
motivations for these problems, and the results obtained.  Throughout
our work, $M$ is a manifold of dimension $n$, $\cc$ is a conformal
equivalence class of metrics on $M$, and $\g$ is a metric in $\cc$.
The term {\em hypersurface} will always refer to a smooth conformally
embedded $(n-1)$-dimensional conformal manifold $(\sm,\ic)$ in $\Mcc$.
In this context, $\ic$ denotes the conformal equivalence class of the
metric on $\sm$ induced by any $\g\in\cc$, and $\ig$ will denote the
metric on $\sm$ induced by such a $\g$.  We will often assume that $M$
is initially given as a manifold with boundary, and $\sm=\partial M$
is the boundary of $M$.  But in this case, for the convenience of
treating the local issues, we will assume that we have extended $M$ to
a collar neighbourhood of $\sm$ (so that $\Sigma$ consists of interior
points of $M$).  Unless we indicate otherwise, we will always assume
that $n\geq 3$.  For simplicity of discussion, all structures we
consider below will be taken to be smooth to infinite order, and we
shall restrict to the case of Riemannian signature. These restrictions
can be relaxed to a large extent with little change.

The operators in our new families are natural.  In
Section~\ref{basics}, below, we will discuss the idea of natural
differential operators along a hypersurface, as well as the idea of
scalar Riemannian hypersurface invariants.  We note here, however,
that a natural differential operator along a hypersurface is a rule
that defines a differential operator on a neighbourhood of any
hypersurface $(\sm,\ic)$ in any ambient conformal $n$-manifold $\Mcc$.
In a similar way, a hypersurface invariant is actually a rule which
defines an invariant in a neigbourhood of any hypersurface $(\sm,\ic)$
of any $\Mcc$.  (In both cases, however, we may require $(\sm,\ic)$
and $\Mcc$ to meet certain conditions.)  The operators in our new
families are parametrised by $n$ (in each case in some infinite set
$\Dimensions\subseteq\Integers_{\geq 3}$) and a real number $w$.  We
say that $w$ is a \textit{weight}.  The operators in a given family
will always be determined by a single universal symbolic formula, so
we will
sometimes use the word ``operator'' to refer to any of our new
families of operators or to any other family of operators determined
by a single universal symbolic formula.  Similar remarks apply to
families of hypersurface invariants.  Although our main results give
operators that, along the hypersurface, are determined by the
conformal embedding, the development of these results involves
operators that can depend on a choice of metric $\g$ within $\cc$.

Some of the operators and curvature quantities we construct in this
paper are motivated by certain well-known natural differential
operators and their associated curvature quantities.  One such
operator and curvature pair is the \textit{conformal Robin operator}
(of Cherrier \cite{cherrier}) and the mean curvature.  To define the
conformal Robin operator, we begin by identifying $\sm$ with its image
submanifold under an embedding $\iota: \Sigma \to M$.  Fixing
$\g\in\cc$ determines a metric $\ig\in\ic$, and we write $n_a$ for the
associated unit conormal field to $\Sigma$.  This conormal field
together with the metric determines a unit normal vector field
$n^a=\g^{ab}n_b$.  We let $H$ (or sometimes $H^{\g}$) denote the mean
curvature of $\Sigma$.  Let $\nd$ denote the Levi-Civita connection
determined by $\g$, and let $C^{\infty}(M)$ and $C^{\infty}(\sm)$
denote the sets of all smooth real-valued functions on $M$ and $\sm$,
respectively.  The conformal Robin operator is the operator $\delta :
C^\infty(M) \to C^\infty(\Sigma)$ defined by the composition of
$f\mapsto n^a\nabla_a f - w H f$ with restriction to
$C^\infty(\Sigma)$.  Here $w$ is a real parameter.  As before, we say
that $w$ is a \textit{weight}.  We define conformal covariance and
bidegree in Definition~\ref{Bid} below.  For all $w\in\Reals$,
$\delta$ is conformally covariant of bidegree $(-w,1-w)$.

The operator $\delta$ is a Robin operator because it mixes Dirichlet
and Neumann data.  Its conformal covariance is important for forming
well-posed conformal boundary problems involving the conformal
Laplacian (or ``Yamabe operator'') \cite{BrGonon,cherrier,escoA,GG} on
the $n$-dimensional interior.  Let $w=1-n/2$, which is the weight
selected by the covariance properties of the Yamabe operator, and
consider the limiting case in which $n=2$.  Then $w=0$, and $\delta$
is just the Neumann operator $n^a\nabla_a$.  The mean curvature $H$
drops out of the formula for $\delta$, but it retains an interesting
limiting link with $\delta$: In this specific case, $H$ is the
geodesic curvature and transforms conformally by the rule
$H^{\widehat{\g}}=e^{-\Upsilon}(H^{\g}+\delta^{\g} \Upsilon)$.  Here
and throughout this paper, $\Upsilon\in C^{\infty}(M)$, and
$\widehat{\g}=e^{2\Upsilon}\g$.

In connection with the problem of understanding Polyakov-type formulae
for conformally covariant elliptic differential operators on compact
4-manifolds with boundary, Chang and Qing discovered a third-order
analogue, $\CQOp$, of the conformal Robin operator.  (They denoted
this operator by $P_3$.)  See \cite{CQ}.  This operator acts along the
boundary of the compact 4-manifold.  Chang and Qing showed that
$\CQOp$ is naturally associated to a scalar curvature quantity $T=T^{CQ}$
along the boundary and that the transformation rule for $T$ under
conformal change of metric is $T^{\widehat{\g}}=e^{-3\Upsilon}(T^{\g}
+ \CQOpg\Upsilon)$. In this and other ways (as explained in
Section~\ref{Qsec}, below) the relation between $\CQOp$ and $T^{CQ}$
is analogous to the relation between the Neumann operator and the geodesic
curvature on a surface with boundary which we described above.  By
\cite{CQ}, $\CQOp$ is conformally covariant of bidegree $(0,3)$.

In \cite{GJMS}, Graham et al.\ defined a conformally invariant $k$th
power of the Laplacian on a conformal $n$-manifold $\Mcc$.  This
operator is commonly known as the \textit{GJMS operator} of order
$2k$, and we will denote it by $P_{2k}$.  It acts on conformal
densities, and it is well-defined for all positive integers $k$ within
an appropriate range.  (We will discuss conformal densities and their
weights in Section~\ref{confsec}, below.)  In \cite{Brsharp}, Branson
used $P_n$ to define a scalar curvature quantity $Q_n$ on a compact
Riemannian manifold $\Mg$ without boundary of even dimension $n\geq
4$.  This curvature is called the $Q$-curvature of $\Mg$.  The
operator $P_n$ may be viewed as a conformally covariant operator
$P_n^{\g}:C^{\infty}(M)\rightarrow C^{\infty}(M)$ of bidegree $(0,n)$.
(See Proposition~\ref{ReplaceCCov}.)  Under conformal change of
metric, $Q_n$ transforms according to the rule
$Q_n^{\hatg}=e^{-n\Upsilon}(Q_n^{\g}+P_n^{\g}\Upsilon)$.
The pair $(\delta,H)$ in dimension $n=2$ and the Chang-Qing pair
$(\CQOp,T^{CQ})$ in dimension $n=4$ give, in a certain sense, odd-order
boundary analogues of the pair $(P_n,Q_n)$.  This has generated
considerable interest (\cite{Case,CatinoNd,JB,Nd,Nd2,SunXiong})
and motivates Problem~\ref{GenProblem}, below.

The statement of Problem~\ref{GenProblem} involves the notion of the
\textit{transverse order} of a boundary operator along $\sm$, as
defined in Definition~\ref{TOrderDefinition}, below.
Definition~\ref{TOrderDefinition} is adapted from the definition of
the \textit{normal order} of a boundary operator given in
\cite{BrGonon}.  Our definition uses a defining function for $\sm$ on
a neighbourhood $U$ of a point $p\in\sm$.  Here and below, a such a
function is a function $t\in\C^{\infty}(U)$ such that $\sm\cap U$ is
the zero locus of $t$ and $dt$ is nonvanishing on $U$.
\begin{definition}\label{TOrderDefinition}
Let a Riemannian manifold $(M,\g)$, a
hypersurface $(\sm,\ig)$ in $(M,\g)$, a vector bundle $\cf$ over $M$,
and a differential operator $B:\cf\mapsto \cf|_{\sm}$ be given.  Also
let $p\in\sm$ be given, and suppose that $\dfn$ is any defining function
for $\sm$ on some neighbourhood $U$ of $p$.  For any
$m\in\Integers_{>0}$, we say that $B$ has \textit{transverse order}
$m$ at $p$ if there is a smooth section $V$ of $\cf$ such that
$B(\dfn^mV)|_{p}\neq 0$ but $B(\dfn^{m+1}V')|_{p}=0$ for all smooth
sections $V'$ of $\cf$.  If $B(\dfn V)|_{p}=0$ for all smooth sections
$V$ of $\cf$, then we say that $B$ has transverse order $0$ at $p$.
Now let $m\in\Integers_{\geq 0}$ be given.  If $B$ has transverse
order $m$ at every $p\in\sm$, then we say that $B$ has transverse
order $m$.  Finally, suppose that $m>0$.  If $B$ has transverse order
less than $m$ at every $p\in\sm$, then we say that $B$ has transverse
order \textit{less than} $m$ or transverse order \textit{at most}
$m-1$.
\end{definition}
In the definition here, and also throughout the article, we interpret
notation to mean vector bundles or their smooth section spaces
according to context.  The properties described in this definition are
independent of the choice of the defining function $\dfn$.  The
transverse order at a point $p\in\sm$ measures the number of
derivatives in directions transverse to $\sm$ at $p$.  If we say that
a natural hypersurface differential operator $B$ has order $m$, order
less than $m$, transverse order $m$, or transverse order less than
$m$, we mean that the property holds for all hypersurfaces in all
possible Riemannian manifolds; for the operator families that we deal
with in this paper, the presence of these properties may depend on the
values of the parameters $w$ and $n$ mentioned above.  The conformal
Robin operator has transverse order 1 in all dimensions, and $\CQOp$
has transverse order 3.

In the statement of Problem~\ref{GenProblem} that follows, and throughout this
paper, differential operators will act on everything to their right,
unless parentheses indicate otherwise.  In addition, we will always
indicate multiplication operators by juxtaposition.  For example, let
differential operators $Op_1,\,Op_2:C^{\infty}(M)\to C^{\infty}(M)$
and functions $f_1,\,f_2\in C^{\infty}(M)$ be given.  Then
$Op_1f_1Op_2f_2$ denotes $Op_1$ acting on the product of $f_1$ and
$Op_2f_2$.  All differential operators in this paper will be linear.

We may now state one of the main problems of this paper.
\begin{problem}\label{GenProblem}
  {\it For some $K\in\Integers_{>0}$, construct a family of natural
    hypersurface operators $P^{\g}_{w,K}:C^{\infty}(M)\rightarrow
    C^{\infty}(\sm)$ parametrised by the dimension $n$ and a real
    parameter $w$ such that the following properties hold: (1)~For all
    $w\in\Reals$, $P^{\g}_{w,K}$ has order at most $K$.  There is a
    small finite set $E$ such that for all $w\in\Reals\backslash E$,
    $P^{\g}_{w,K}$ has order and transverse order $K$.  The set $E$
    may depend on $n$.
(2)~For all $f\in C^{\infty}(M)$ and all
$w\in\Reals$,
$P^{\hatg}_{w,K}f=e^{-(K-w)\Upsilon}P^{\g}_{w,K}e^{-w\Upsilon}f$.
(3)~The family $P^{\g}_{w,K}$ determines a scalar curvature
quantity $Q$ along $\sm$ with the property that}
$
Q^{\hatg}=e^{-K\Upsilon}(Q^{\g}+P^{\g}_{0,K}\Upsilon)
$.  This scalar curvature quantity is a local scalar Riemannian
hypersurface invariant.
\end{problem}
One of our main objectives will be to find solutions to
Problem~\ref{GenProblem} in which the set $E$ of exceptional weights
is as small as possible or solutions in which some specific
undesirable weight (such as $w=0$) is absent from $E$.

We will be especially interested in higher-order generalisations of
the conformal (Cherrier-)Robin operator and the Chang-Qing operator.
The Chang-Qing operator has order and transverse order $n-1=3$, the
dimension of $\sm$.  Similarly, in the limiting case in which $n=2$,
the conformal Robin operator likewise has order and transverse order
$n-1$.  In general even dimensions $n$, establishing the existence of
conformal boundary operators of order and transverse order $n-1$ is
rather delicate.  Such operators sit in a special position that is in
part analogous to the place of the dimension-order GJMS operators of
\cite{GJMS}.  Thus, in the setting of Problem~\ref{GenProblem}, if $n$
is even, we will say that a boundary operator of order and transverse
order $n-1$ is a \textit{critical operator} and that $n-1$ is the
\textit{critical order}. Paramount in our considerations here is
finding a solution to Problem~\ref{GenProblem} that includes such
operators as part of a continuous family. For emphasis we state the
following special case of Problem~\ref{GenProblem}:
%
%%%%%%%%%%%%%%%%%%%%%%%%%%%%%%%%%%%%%%%%%%%%%%%%%%%%%%%%%%%%%%%%%%%%%%%%%%%%
%
\begin{problem}\label{CQprob}
{\it In Problem~\ref{GenProblem}, suppose that $n$ is even and
  $K=n-1$.  Find a solution such that $P^{\g}_{0,K}$ is a critical
  operator.}
\end{problem}
In this paper, we construct families of operators which solve
Problem~\ref{CQprob} in general even dimensions $n\geq 4$.  We do this
by first constructing three families of operators which solve
Problem~\ref{GenProblem}.  Let $K$ be as in Problem~\ref{GenProblem}.
Then for all $K\in\Integers_{>0}$,
the operator family $\ConFlatOp{K}$ of Theorem~\ref{cflatkey} solves
Problem~\ref{GenProblem} in the simplest setting, namely the case of
conformally flat metrics.  (Throughout this paper, conformally flat
means what is sometimes referred to as ``locally conformally flat'',
that is, the Weyl and
Cotton tensors both vanish.  Also, note that we suppress the $\g$ and
the $w$ from the operator notation.)  The $\ConFlatOp{K}$ family is
defined in all dimensions $n\geq 3$.  Next, for all
$K\in\Integers_{>0}$, the operators $\deltaJk$ of Theorem~\ref{key},
with $J+k=K$, are defined for general Riemannian conformal structures
in suitable dimensions and for conformally flat structures in all
dimensions $n\geq 3$.  Similarly, for all $K\in\Integers_{>0}$, the
operators $\delta_K$ of Lemma~\ref{basicversion} are defined on
general Riemannian conformal structures in all dimensions $n\geq 3$.
The families $\deltaJk$ and $\delta_K$ solve Problem~\ref{GenProblem}.

The $\ConFlatOp{K}$ operator family solves Problem~\ref{GenProblem} on
the standard conformal hemisphere, and in this case, the operators
$\ConFlatOp{K}$ provide families of symmetry breaking intertwinors
between the spherical principal series representations of the
conformal group of the equator, as studied by Juhl and others
\cite{Clerc,JB,KobayashiSpeh}. There is some appeal in this picture,
as here the symmetry breaking (the reduction of the conformal group of
the sphere to a subgroup preserving the closed hemisphere) is
manifestly governed
by the normal tractor.  By using the tools in, for example,
\cite{GLW-forms}, one may develop generalisations of the
$\ConFlatOp{K}$ and $\deltaJk$ families which treat differential forms
and other tensors.  Such generalisations should be of some interest to
the more general intertwinor programme as in
\cite{FJuhlSom,Kobayashi-book}, but we do not investigate this idea in
this paper.

As part of a construction of a family of conformally invariant
higher-order Dirichlet-to-Neumann operators, Branson and the first
author discover and construct a family $\delta^{BG}_K$ of conformally
invariant hypersurface operators in Theorem~5.1 of
\cite{BrGonon}. Although used at a discrete set of weights in
\cite{BrGonon}, by construction the operators are available on any
density bundle, and in the interesting work \cite{JB}, Juhl studies
the resulting continuously parametrised families of boundary operators
(as well as so-called ``residue families'') and considers some
problems linked to those studied here; see for example \cite[Section
  1.10 and Section 6.21]{JB}.  Unfortunately, as evident in
\cite[Theorem 5.1]{BrGonon} and its proof, the $\delta^{BG}_{K}$
family does not provide an answer to Problem~\ref{CQprob}.  In
\cite{Grant}, Grant constructs a modification $\delta^G_3$ of
$\deltaBG_3$ which solves Problem~\ref{CQprob} in the dimension $n=4$
case. (See also Stafford \cite{Stafford} and Case \cite{Case}.)
However, Grant's work was specific to third-order operators, and the
problem of finding higher-dimensional analogues of the Chang-Qing pair
$(\CQOp,T^{CQ})$ has, to our knowledge, remained open up to now.

 For every even integer $n\geq 4$, the $\ConFlatOp{K}$ family includes
 an operator which solves Problem~\ref{CQprob} for conformally flat
 Riemannian conformal manifolds of dimension $n$, and the $\deltaJk$
 family contains a solution to this same problem for general Riemannian
 conformal manifolds of dimension $n$.  See Theorem~\ref{SolnTheorem},
 below.

In Section~\ref{FSAsec}, the operators $\deltaJk$ and $\delta_K$ are
used to set up conformally invariant elliptic boundary problems
for the GJMS operators and then to construct non-local operators. The
first main result is Theorem~\ref{FSAthm}, which, for any GJMS
operator $P_{2k}$, describes a corresponding conformally invariant
boundary system $B$ such that the boundary value problem $(P_{2k}, B)$
is formally self-adjoint.  The boundary system $B$ is given by
\nn{MultiBOp}.  Lemma~\ref{LSLemma2} then shows that the boundary
problem $(P_{2k}, B)$ satisfies the Lopatinski-Shapiro
condition.  The pair $(P_{2k}, B)$ is properly elliptic, and in
Theorem~\ref{nonloc}, we use this fact, together with the other
properties of the pair, to construct Dirichet-to-Neumann operators.
The statement of Theorem~\ref{nonloc} is too technical to be given in
this introduction, but we may easily state two corollaries of the
theorem here.  In these corollaries, $\ice[w]$ denotes the bundle of
conformal densities of weight $w$ associated to $(\sm,\ic)$; here $w$
is any given real number.  The first corollary follows from the case
$k=n/2$ ($n$ even)
of Theorem~\ref{nonloc} and Lemma~\ref{deltaTK}.
\begin{corollary}\label{oddbound}
Let a compact Riemannian conformal manifold with boundary
$(M,\Sigma,\cc)$ of even dimension $n\geq 4$ be given.  Let $B$ be as
in \nn{MultiBOp}, and suppose that $(P_n,B)$ has trivial kernel.  Then
for $2m=1$, $3$, $5$, \ldots, $\sdim$, Theorem \ref{nonloc} yields
conformally invariant non-local operators
$$P^{T,\,n/2}_{2m}:
\ice\left[m-\frac{\sdim}{2}\right]
\to
\ice\left[-m-\frac{\sdim}{2}\right]
$$
which have leading term $(-\ila)^m$.
\end{corollary}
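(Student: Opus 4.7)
The plan is to specialise Theorem \ref{nonloc} to the critical case $k = n/2$ and then to identify the resulting operators' orders, source/target weights, and leading symbols via Lemma \ref{deltaTK}.

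First I would verify the hypotheses of Theorem \ref{nonloc}. Since $n$ is even, the critical GJMS operator $P_n$ is defined, and by Theorem \ref{FSAthm} the pair $(P_n, B)$ with $B$ as in \nn{MultiBOp} is a conformally invariant, formally self-adjoint boundary problem. Lemma \ref{LSLemma2} supplies the Lopatinski-Shapiro condition, so $(P_n, B)$ is properly elliptic, and the triviality of its kernel is given by hypothesis. Thus Theorem \ref{nonloc} applies with $k = n/2$ and produces conformally invariant non-local operators from the Dirichlet-to-Neumann construction for $(P_n, B)$.

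Next I would extract the indexing and conformal weights. In the critical case, $B$ consists of $n/2$ conformally invariant boundary operators of distinct transverse orders $0, 1, \ldots, n/2 - 1$, paired by formal self-adjointness with $n/2$ complementary Neumann-type components whose transverse orders sum with the Dirichlet ones to $n - 1$. This pairing forces the resulting Dirichlet-to-Neumann operators to have orders $2m \in \{1, 3, \ldots, n - 1\}$, accounting for all the odd values up to $n - 1$. The conformal bidegrees of the paired $\delta_{J,k}$, together with the formally self-adjoint weight pattern on the $(n-1)$-dimensional boundary (an order-$2m$ symmetric operator on densities maps $\ice[w]$ to $\ice[-w - (n-1)]$), force the source and target to be $\ice[m - (n-1)/2]$ and $\ice[-m - (n-1)/2]$ respectively.

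Finally, I would invoke Lemma \ref{deltaTK} to identify the leading symbol. The lemma records the effect of the relevant boundary operators on leading transverse symbols in terms of powers of the intrinsic boundary Laplacian $\ila$. Substituting this into the Dirichlet-to-Neumann formula produced by Theorem \ref{nonloc} shows that each $P^{T, n/2}_{2m}$ has principal part $(-\ila)^m$, as claimed. I expect the main technical obstacle to lie in the careful symbolic bookkeeping of weights and transverse orders to extract the correct leading Laplacian power via Lemma \ref{deltaTK}; proper ellipticity, formal self-adjointness, and the underlying conformal invariance are already delivered by the cited earlier results, so no new analytic input is needed beyond what is packaged in Theorem \ref{nonloc}.
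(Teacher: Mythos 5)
Your overall plan — specialise Theorem \ref{nonloc} to $k=n/2$ — is the right one and is exactly what the paper does, and your verification of proper ellipticity, formal self-adjointness, and the Lopatinski-Shapiro condition is correct (though these are already subsumed in Theorem \ref{nonloc}'s hypotheses). However, you misattribute the leading-term claim. Lemma \ref{deltaTK} says nothing about representing leading transverse symbols by powers of $\ila$; it is an existence statement: it guarantees that for $0\le i\le 2k-1$ there is a boundary operator $\delta^T_i:\ce[k-n/2]\to\ice[k-n/2-i]$ of order and transverse order $i$, i.e.\ that $k-n/2\notin E(\delta^T_i)$. Its role in the corollary is to ensure that the multi-boundary operator $B$ in \nn{MultiBOp} and the Neumann-type operator $\FSAOp{2k-1-\ell}$ appearing in the Dirichlet-to-Neumann formula are actually available at the critical weight $k-n/2=0$ (this availability is exactly what fails for the operators of \cite{BrGonon} and is the point of the present paper's new families). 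Your proposed ``substituting this into the Dirichlet-to-Neumann formula'' step, as a route to the principal symbol, would not go through because the lemma supplies no such symbolic information.

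The leading term $(-\ila)^m$ is instead \emph{part of the conclusion} of Theorem \ref{nonloc}: its proof establishes the principal part by comparing with the unique intertwinor between boundary density bundles on the round hemisphere and invoking the universality of the construction. There is nothing further to compute once Theorem \ref{nonloc} applies. You should also make explicit that Condition \ref{nkcCondition} holds in this specialisation: with $n$ even and $k=n/2$ one has $k\le n/2$, so alternative (2) of the condition is met; that is the precise hypothesis shared by Theorem \ref{nonloc} and Lemma \ref{deltaTK}. The middle paragraph re-deriving $2m\in\{1,3,\ldots,n-1\}$ and the weights from formal self-adjointness, while not incorrect, is redundant: Theorem \ref{nonloc} already supplies $m=k-\tfrac12-\ell$ with $\ell\in\{0,\ldots,k-1\}$ and the source/target bundles $\ice[m-\frac{n-1}{2}]\to\ice[-m-\frac{n-1}{2}]$.
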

Corollary~\ref{oddbound}
produces
a critical-order (i.e.\ order $\sdim$) fraction Laplacian as the
case $2m=\sdim$.
Such an operator was missing from the results of \cite{BrGonon}.

The next corollary also follows from Theorem~\ref{nonloc} and
Lemma~\ref{deltaTK}.
\begin{corollary}\label{evenbound}

Let a compact Riemannian conformal manifold with boundary
$(M,\Sigma,\cc)$ of dimension $n\geq 3$ be given.  Also let
$k\in\Integers_{>0}$ be given, and suppose that (1)~$n$ is odd, or
(2)~$n$ is even and $k\leq n/2$, or (3)~$\Mcc$ is conformally flat.
Finally, let $B$ be as in \nn{MultiBOp}, and suppose that $(P_{2k},B)$
has trivial kernel.  Then for $2m=1$, $3$, $5$, \ldots, $2k-1$,
Theorem \ref{nonloc} yields conformally invariant non-local operators
$$
 P^{T,k}_{2m}:
\ice\left[m-\frac{\sdim}{2}\right]\to
\ice\left[-m-\frac{\sdim}{2}\right]
$$
which have leading term $(-\ila)^m$.
\end{corollary}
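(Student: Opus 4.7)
The plan is to verify that the hypotheses required by Theorem~\ref{nonloc} are met under each of the three alternatives (1)--(3) on $(n,k)$, and then to read off the indicated operators by combining that theorem with Lemma~\ref{deltaTK}. The corollary is a bookkeeping statement: all substantive analytic content is in Theorem~\ref{nonloc} (which supplies the Dirichlet-to-Neumann construction) and Lemma~\ref{deltaTK} (which identifies the leading symbol).

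First I would check that the GJMS operator $P_{2k}$ exists in each case. In case~(1), $n$ odd, all $P_{2k}$ exist unconditionally by \cite{GJMS}. In case~(2), $n$ even with $k\leq n/2$, the operator $P_{2k}$ again exists by \cite{GJMS} (this is the non-critical or critical GJMS range). In case~(3), $\Mcc$ conformally flat, conformally flat GJMS operators exist for all positive integer $k$. With $P_{2k}$ in hand, the boundary system $B$ of \nn{MultiBOp} consists of $k$ conformally invariant boundary operators drawn from the families $\deltaJk$ and $\delta_K$ of Theorem~\ref{key} and Lemma~\ref{basicversion}, which exist in all three settings by the corresponding existence statements (recall that outside the conformally flat case the $\deltaJk$ family has been established in suitable dimensions, which is exactly the content required in cases~(1) and~(2)). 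By Theorem~\ref{FSAthm} the pair $(P_{2k},B)$ is then formally self-adjoint, and by Lemma~\ref{LSLemma2} it satisfies the Lopatinski-Shapiro condition and so is properly elliptic.

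Next I would feed these inputs into Theorem~\ref{nonloc}. The only remaining hypothesis is the triviality of the kernel of $(P_{2k},B)$, which is assumed in the statement of the corollary. The theorem then produces, for each odd integer $2m$ with $1\leq 2m\leq 2k-1$, a conformally invariant non-local operator between conformal density bundles on $\Sigma$; the weights $m-\sdim/2$ and $-m-\sdim/2$ are exactly those forced by the bidegree of a conformally covariant operator of order $2m$ on the $(n{-}1)$-dimensional boundary.

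Finally, the identification of the leading term as $(-\ila)^m$ follows from Lemma~\ref{deltaTK}, which relates the relevant composition of tangential boundary operators to a power of the boundary Laplacian $\ila$ modulo lower-order terms; this is precisely the computation that extracts the principal symbol of the Dirichlet-to-Neumann construction. I do not anticipate any genuine obstacle in the argument: the only sensitive point is to ensure that $P_{2k}$ and the boundary operators composing $B$ are all simultaneously available, which is exactly why the three alternatives (1)--(3) are imposed.
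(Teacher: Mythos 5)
Your proposal is essentially the paper's proof: conditions (1)--(3) are precisely Condition~\ref{nkcCondition}, Lemma~\ref{deltaTK} guarantees the boundary operators $\delta^T_i$ (hence $B$ and the output maps) exist, and the corollary is then a direct application of Theorem~\ref{nonloc} with $\ell=0,\dots,k-1$. One small misattribution: the leading term $(-\ila)^m$ is not extracted from Lemma~\ref{deltaTK}; it is part of the conclusion of Theorem~\ref{nonloc}, where it is deduced from the uniqueness of the intertwinor on the conformal hemisphere and the universality of the construction. Lemma~\ref{deltaTK}'s role is solely to show that operators of the required orders and transverse orders are available at weight $k-n/2$.
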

For each solution $P^{\g}_{w,K}$ to Problem~\ref{GenProblem}, we will
say that the associated scalar curvature quantity $Q$ is a {\em
  $Q$-type curvature}.  This $Q$-type curvature has certain properties
analogous to those of Branson's $Q$-curvature.
However, for any dimension $n$ and any $K\in\Integers_{>0}$, if a
solution $P^{\g}_{0,K}$ has transverse order $K$, then we will say
that the $Q$-type curvature associated to $P^{\g}_{w,K}$ is a
\textit{$T$-curvature} of order $K$ and that $(P^{\g}_{0,K},Q^{\g})$
is a $T$-curvature \textit{pair} of this order.  We often let $T^{\g}$
denote $Q^{\g}$ in this case.  In Section~\ref{Qsec}, we will see that
for any given even dimension $n\geq 4$, there are $T$-curvature pairs
of all orders.  Specifically, we will establish the following theorem:
\begin{theorem}\label{T-thm-1}
Let $n_0\in\Integers_{\geq 4}$ be given, and suppose that $n_0$ is
even.  Then in dimension $n=n_0$, there are canonical $T$-curvature
pairs
\[
\begin{array}{ll}
\lefteqn{
(\delta_1,T_{1}^{\g}),\,(\delta_2,T_2^{\g}),\,\ldots\,,
  \,(\delta_{n_0/2},\,T_{n_0/2}^{\g}),\,
  (\delta_{1,\,n_0/2},\,T_{1+n_0/2}^{\g}),\,\ldots\,,}
\\
&
(\delta_{(n_0-2)/2,\,n_0/2},\,T_{n_0-1}^{\g}),
\,(\delta_{n_0},T_{n_0}^{\g}),\,\,(\delta_{n_0+1},T_{n_0+1}^{\g}),
\,\,\ldots
\hspace{9ex}
\end{array}
\]
of orders $1,\,2,\,3,\,\,\ldots\,,$ respectively.
\end{theorem}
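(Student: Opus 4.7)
The plan is to assemble the $T$-curvature pairs order by order from the three operator families already constructed, namely $\delta_K$ from Lemma~\ref{basicversion}, $\deltaJk$ from Theorem~\ref{key}, and (where needed) $\ConFlatOp{K}$ from Theorem~\ref{cflatkey}. Recall that by the definition given just before the theorem, a solution $P^{\g}_{w,K}$ to Problem~\ref{GenProblem} produces a $T$-curvature pair precisely when the specialisation at $w=0$ continues to have transverse order equal to $K$, i.e.\ when $0$ does not lie in the exceptional set $E$ of that family. Since each of the three families comes equipped with an explicit (small, finite) exceptional set, the theorem reduces to choosing, for every positive integer $K$, a family in which $0 \notin E$ and whose operator has order $K$.

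First I would treat the subcritical range $1 \leq K \leq n_0/2$. Here $\delta_K$ from Lemma~\ref{basicversion} is available in all dimensions $n \geq 3$, and a direct inspection of its exceptional weights shows that $w=0$ is admissible in this range; this gives pairs $(\delta_K, T_K^{\g})$ for $K = 1, \ldots, n_0/2$. Next, for the supercritical-to-critical range $n_0/2 < K \leq n_0 - 1$, I would write $K = J + k$ with $k = n_0/2$ and $J = K - n_0/2 = 1, \ldots, (n_0-2)/2$, and appeal to Theorem~\ref{key} to obtain $\deltaJk$, again checking that $w = 0$ is not exceptional for these parameter values. Finally, for the superdimensional range $K \geq n_0$, I would return to Lemma~\ref{basicversion} to pick up $(\delta_K, T_K^{\g})$ for $K = n_0, n_0 + 1, \ldots$, once more verifying $0 \notin E$. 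In each case the associated $Q$-type curvature $Q^{\g}$ supplied by the solution of Problem~\ref{GenProblem} is relabelled $T_K^{\g}$, so the transformation law in part~(3) of Problem~\ref{GenProblem} is exactly the $T$-curvature transformation rule required.

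The main obstacle is the critical-order case $K = n_0 - 1$, which is Problem~\ref{CQprob}. As emphasised in the discussion preceding the theorem, this is the case where the earlier $\delta^{BG}_K$ construction of \cite{BrGonon} fails and where, prior to the present paper, only the $n_0 = 4$ case was known (via Grant). The needed operator here is $\delta_{(n_0-2)/2,\,n_0/2}$, and the non-triviality of its transverse order at $w = 0$ is not immediate from the general $\deltaJk$ formula; it is guaranteed by Theorem~\ref{SolnTheorem}, which asserts precisely that this member of the $\deltaJk$ family solves Problem~\ref{CQprob} in every even dimension $n_0 \geq 4$. I would therefore cite Theorem~\ref{SolnTheorem} at this single critical step and let the remaining entries in the list follow from the routine verification that $w=0$ is unexceptional.

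The proof then concludes by collecting the pairs produced above in the order $K = 1, 2, 3, \ldots$, matching the enumeration in the statement: $(\delta_1, T_1^{\g}), \ldots, (\delta_{n_0/2}, T_{n_0/2}^{\g})$ come from the $\delta_K$ family; $(\delta_{1,n_0/2}, T_{1+n_0/2}^{\g}), \ldots, (\delta_{(n_0-2)/2, n_0/2}, T_{n_0-1}^{\g})$ come from $\deltaJk$, with the last one handled by Theorem~\ref{SolnTheorem}; and $(\delta_{n_0}, T_{n_0}^{\g}), (\delta_{n_0+1}, T_{n_0+1}^{\g}), \ldots$ again from Lemma~\ref{basicversion}. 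Canonicity is inherited from the canonicity of the individual constructions, since each operator in the list is produced by a universal tractor formula depending only on the conformal embedding.
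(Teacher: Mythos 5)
Your proposal is correct and takes essentially the same route as the paper, which dispatches Theorem~\ref{T-thm-1} in one sentence: the $T$-curvature pairs are obtained from $\delta_K$, $\ConFlatOp{K}$, $\deltaJk$ whenever $0\notin E(\cdot)$, and the claim follows from \nn{E0} and Theorem~\ref{key}. Your decomposition into the three ranges $K\le n_0/2$, $n_0/2<K\le n_0-1$, and $K\ge n_0$ with $\delta_K$ on the outer ranges and $\deltaJk$ (with $k=n_0/2$, $J=K-n_0/2$) on the middle range is exactly what \nn{E0} and \nn{exceptw} give. One small inaccuracy: you say the critical case $K=n_0-1$ is ``not immediate from the general $\deltaJk$ formula'' and separately invoke Theorem~\ref{SolnTheorem} for it; in fact \nn{exceptw} already shows $0\notin E(\deltaJk)$ uniformly for all $J$ in your range, since $k-n/2=0$ is removed from $E(\delta_{J+k})$, so the critical case needs no special treatment (Theorem~\ref{SolnTheorem} is itself just this observation via Corollary~\ref{crit}).
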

Let $\g\in\cc$ be given.  Proposition~\ref{H0}, below, gives
conditions which ensure that for all $m\in\Integers_{\geq 2}$, there
is a metric $\hatg\in\cc$ which induces $\ig$ and which satisfies
$T_1^{\hatg}=T_2^{\hatg}=\cdots=T_m^{\hatg}=0$ along $\sm$.  Under
these same conditions, $T_1^{\hatg}=H^{\hatg}$; this will become
clear later.  So under these conditions, the hypersurface $\sm$ is
minimal for the metric $\hatg$ in the sense that $H^{\hatg}=0$ along
$\sm$, and $\sm$ also satisfies the related higher-order condition
$T^{\hatg}_2=\cdots=T^{\hatg}_{m}=0$ along $\sm$.

We have computed explicit symbolic symbolic formulae for our
new operator families and their $Q$-type curvatures in certain cases.
One such formula is the following formula for the $Q$-type curvature
of the $\delta_{1,2}$ family:
\begin{equation}\label{OneTwoQ}
3\Nv^a\nd_a\J-(n-2)\Nv^a\Nv^b\Nv^c\nd_a\Rho_{bc}+6\mc\J
-6(n-2)\mc\Nv^a\Nv^b\Rho_{ab}
+2(n-2)\mc^3~.
\end{equation}
Here $\Rho_{ab}$ is the Schouten tensor, and $\J=\Rho_{a}{}^a$.  (We
define the Schouten tensor in Section~\ref{back}, below.)  We give
explicit formulae for $\delta_{1,2}$ and a few of our other operator
families and their curvatures in Section~\ref{exs}, below.  Some of
these formulae are valid only in certain dimensions, as explained in
that section.

Our operator and curvature constructions use the Fefferman-Graham
ambient metric of \cite{FGambnew,FGast}, its link to tractors\cite{CapGoamb}, and the tractor construction of the GJMS operators
developed in \cite{GP-CMP}.  We will often work with symbolic formulae
which are polynomial in the parameter $w$ of Problem~\ref{GenProblem}
and rational in the dimension $n$.  As a consequence, our proofs will
often use polynomial continuation in $w$ and rational continuation in
$n$.
To treat these notions, we develop some tools and results in
Section~\ref{OpBases}, below.

%
%%%%%%%%%%%%%%%%%%%%%%%%%%%%%%%%%%%%%%%%%%%%%%%%%%%%%%%%%%%%%%%%%%%%%%%%
% Introduction ends here.
%%%%%%%%%%%%%%%%%%%%%%%%%%%%%%%%%%%%%%%%%%%%%%%%%%%%%%%%%%%%%%%%%%%%%%%%
%
%
%%%%%%%%%%%%%%%%%%%%%%%%%%%%%%%%%%%%%%%%%%%%%%%%%%%%%%%%%%%%%%%%%%%%%%%%
% Section 2 begins here.
%%%%%%%%%%%%%%%%%%%%%%%%%%%%%%%%%%%%%%%%%%%%%%%%%%%%%%%%%%%%%%%%%%%%%%%%
%
%%%%%%%%%%%%%%%%%%%%%%%%%%%%%%%%%%%%%%%%%%%%%%%%%%%%%%%%%%%%%%%%%%%%%%%%
% Operators along a hypersurface
%%%%%%%%%%%%%%%%%%%%%%%%%%%%%%%%%%%%%%%%%%%%%%%%%%%%%%%%%%%%%%%%%%%%%%%%
%

\section{Conformally covariant operators along a
  hypersurface} \label{basics}

In this section, we work with arbitrary
Riemannian metrics.  We will generally employ Penrose's abstract index
notation.  (If no ambiguity will occur, however, we will sometimes
omit indices from tensors.)  We shall write $\ce^a$ to denote the
space of sections of the tangent bundle $TM$ over $M$, and $\ce_a$ for
the space of sections of the cotangent bundle $T^*M$.  (In fact, we
will often use the same symbols for the bundles themselves.)  We write
$\ce$ for the space of real-valued functions on $M$ (or for the
trivial bundle $M\times\Reals$).
All functions, vector bundles, and sections of vector bundles will be
assumed to be smooth, meaning $C^\infty$.  An index which appears
twice, once raised and once lowered, indicates a contraction.  The
metric $\g_{ab}$ and its inverse $\g^{ab}$ enable the identification
of $\ce^a$ and $\ce_a$, and we indicate this by raising and lowering
indices in the usual way.  In Section~\ref{back}, below, we will
discuss the Riemannian and Weyl curvature tensors and the Ricci and
Schouten tensors; throughout much of
this paper, the term ``Riemannian curvature tensor'' will include
these tensors and the traces of the Ricci and Schouten tensors.

Let an embedded hypersurface $\sm$ of $M$ be given, and suppose that
$M$ and $\sm$ are both orientable.  We will usually work locally and
assume that $\sm$ is the zero locus $\zerolocus(\dfn)$ of a defining
function $\dfn$, so the orientability assumptions will usually not
impose any restriction.  We will need to consider geometric quantities
determined on $\sm$. Rather than deal with the awkwardness of fields
and quantities which are defined only along $\Sigma$, we will define
extensions of these into a neighbourhood of $\sm$; we emphasise that
our final results will not depend on the choice of these extensions.
We will calculate in a neighbourhood on which $d\dfn$ is nowhere zero,
and we define
\begin{equation}\label{pndef}
\Nv_a:=\frac{d\dfn}{|d\dfn|_{\g}}
\end{equation} 
in this neighbourhood.  The level sets of $\dfn$ define a foliation
$\sm_{\dfn}$, with $\sm=\sm_0$, such that $n_a$ gives the unit
conormal field along each leaf $\dfn=\mbox{constant}$.

For operators on functions, the notion of naturality along a
hypersurface is an obvious adaptation of the usual notion from
Riemannian geometry (see, e.g., \cite{ABP,Stredder}).  A
\textit{natural differential operator along a hypersurface} is a
differential operator which, in a neighbourhood of the hypersurface,
may be expressed by a universal symbolic formula which is polynomial
in the Levi-Civita connection $\nd$ of $\Mg$ and has tensor-valued
\textit{pre-invariants} as coefficients.  We refer the reader to
Section~2.4 of \cite{GW-willmore} for the definitions of scalar- and
tensor-valued Riemannian hypersurface pre-invariants and
\textit{invariants}.  In this paper, any family of natural operators
will always be given by the same universal symbolic formula for all
possible conformal manifolds $\Mcc$; a similar remark applies to
families of invariants.  The symbolic formula for a family of such
operators or invariants will be a polynomial in the conormal field
$\Nv_a$, the Riemannian metric $\g_{ab}$, its inverse $\g^{ab}$, the
Riemannian curvature tensor $R_{ab}{}^{c}{}_{d}$, the mean curvature
$\mc$, and the Levi-Civita connection $\nd_a$.  (We give examples of
symbolic formulae for operators and invariants at various points in
this paper.  Tractors, the trace-free part of the second fundamental
form, and various operators will appear in some of these formulae, but
it will always be possible to expand these formulae and write them as
polynomials of the above type.)  The coefficients in the polynomial
formula for a family of operators or invariants will be real functions
of the dimension $n$ and the weight parameter $w$ that we discussed in
Section~\ref{IntroSect}, above.  This real function will be polynomial
in $w$ and rational in $n$.  In the case of an invariant, $w$ will be
absent from the formula.  For most of the universal formulae that we
work with in this paper, we may assume that $\nd_a$ never explicitly
hits $\Nv_b$ or $\mc$.

In Section~\ref{confsec}, below, we will incorporate conformal
densities into our operators and invariants.  The above definitions of
natural operators and hypersurface invariants will extend to this
situation in the obvious way.  Our work will also require the notion
of natural differential operators between sections of tractor bundles.
As we will see in Section~\ref{confsec}, below, a tractor bundle is a
finite-dimensional vector bundle over $\Mcc$.  We
will see that for each $g\in \cc$, a tractor bundle decomposes into a
direct sum of tensor bundles.  Naturality of a differential operator
between sections of tractor bundles will mean naturality with respect
to any such decomposition.  Our earlier discussion concerning
\textit{families} of operators and invariants extends to our work with
densities and tractors in an obvious way.

Now suppose that we choose an orientation for $\sm$.  Then on $\sm$,
hypersurface invariants and natural differential operators along $\sm$
are independent of the choice of the defining function $\dfn$, because
we insist that $d\dfn/|d\dfn|_{\g}$ be consistent with the
orientation.  Such invariants and operators need not be uniquely
determined \textit{off} of $\sm$.  Let $\cf$ denote $\ce$ or the set
of all smooth sections of any power of the tractor bundle over $M$,
and let $V\in\cf$ be given.  Also let a natural hypersurface
differential operator $Op:\cf\rightarrow\cf|_{\sm}$ be given.  Then
$Op\,V$ will usually denote the restriction of $Op\,V$ to $\sm$, but
we will sometimes indicate the restriction explicitly by writing
$Op\,V|_{\sm}$.  We will sometimes write $Op:\cf\rightarrow\cf$
instead of $Op:\cf\rightarrow\cf|_{\sm}$.

One example of a scalar hypersurface invariant is the mean curvature
$H =(1/(n-1))\ig^{ab}L_{ab}$.  Here $L_{ab}$ is the second fundamental
form, and $\ig^{ab}$ is the hypersurface metric determined by $\g$ as
in Section~\ref{RH}, below.  Similarly, $L_{ab}$ is a tensor-valued
hypersurface invariant, and $|L|^2= L_{ab}L^{ab}:=\ig^{ac}\ig^{bd}
L_{ab}L_{cd} $ is another scalar invariant.%

For any $w\in\Reals$, a scalar Riemannian hypersurface invariant $K$
determines a \textit{conformal invariant} of weight $w$ if it
satisfies the conformal covariance relation
$K(e^{2\Upsilon}\g,\dfn)= e^{w\Upsilon_\Sigma} K(\g,\dfn)$
for all $\Upsilon\in\ce$.  Here $\Upsilon_\Sigma$ is the pullback of
$\Upsilon$ to the hypersurface $\sm$.  Thus $K$ determines a
homogeneous function on $\amQ$, the bundle of conformal metrics.  This
function represents an invariant conformal density of weight $w$.
\begin{definition}\label{Bid}
We say that a natural differential operator $P^{\g}:\ce\rightarrow\ce$
is \textit{conformally covariant} of
\textit{bidegree} $(w_1,w_2)$ in $\Reals^2$ if
$
P^{\hatg}V= e^{-w_2\Upsilon}
P^{\g}e^{w_1\Upsilon}V
$
for all $\g$ and for all $\Upsilon$ and $V$ in $\ce$.
This definition extends to hypersurface operators and operators on
tractors in the obvious way.
\end{definition}
In Section~\ref{confhy}, we will replace conformal covariance with the
equivalent notion of conformal invariance.  This will simplify the
discussion and calculations.  Our aim will be to construct special
natural conformally invariant operators; there will be no attempt to
classify operators. The strategy is to build these in such a way that,
by construction, they satisfy the naturality conditions.  Achieving
conformal invariance, although more subtle, will eventually be seen to
yield to the same approach.

Before continuing, we consider two examples of conformally covariant
natural operators.  For the conformal Robin operator
$\delta:\ce\rightarrow\ce|_{\sm}$, naturality is evident from the
formula
\begin{equation}\label{cr}
\delta f:=n^a\nabla_a f -w H f~.
\end{equation}
We will also let $\delta_1$ or $\delta_{1,\g,w}$ denote the conformal
Robin operator.
In any dimension $n\geq 2$, the mean
curvature $H$ satisfies the conformal transformation rule
$H_{\widehat{\g}}=e^{-\Upsilon}(H_{\g}+\delta_{1,\g,0}\Upsilon) $.
From this it follows that for any $w\in\Reals$, the conformal Robin
operator is conformally covariant of bidegree $(-w,1-w)$.

A second-order analogue of $\delta_1$ is the operator $\delta_2$ (also
denoted by $\delta_{2,\g,w}$) given by the formula
\begin{equation}\label{d2}
\begin{array}{ll}
\lefteqn{\delta_2 f=}\\
& 
-(\Delta+w\J) f+(n+2w-2)\Nv^a\Nv^b\nd_a\nd_b f
-2(w-1)(n+2w-2)\mc\Nv^a\nd_af
\\
&
+(w-1)w(n+2w-2)\mc^2 f
+w(n+2w-2)\Nv^a\Nv^b\Rho_{ab}f~.
\end{array}
\end{equation}
Here $\Delta=\nd_i\nd^i$ and $f\in\ce$.  In Section~\ref{back}, below,
we will see that $\Rho_{ab}$ and $\J$ are Riemannian invariants.  From
this it will follow that $\delta_2$ is manifestly natural, since each
object in the formula for $\delta_2$ is determined by the data of the
ambient Riemannian structure on $M$ and the embedding. No other
information is involved. The normal vector field $\Nv^a$ appears in
the formula, but along $\sm$, the value of $\delta_2f$ is independent
of the extension of $\Nv^a$ off of $\sm$.  Consideration of the
tractor formula for $\delta_2$ in Section~\ref{main} will show that if
$w$ is any real number, then $\delta_{2,\g,w}$ is conformally
covariant of bidegree $(-w,-w+2)$.  If $w=1-n/2$, then
$\delta_2=-\Box$, where $\Box$ is as defined in \nn{YamOp}, below.  In
this case, $\Box$ is the Yamabe operator for $(M,\cc)$.  In
Section~\ref{ExamplesOrder2}, we will relate $\delta_2$ to the
intrinsic Yamabe operator on $\sm$ and to a second-order hypersurface
operator of \cite{Grant}.  Finally, note that $\delta_2$ is
considerably more complex than $\delta_1$. It is easily seen that
there is exponential growth in complexity as order increases.  A
na\"{\i}ve approach to conformal submanifolds will therefore not
suffice.

%
%%%%%%%%%%%%%%%%%%%%%%%%%%%%%%%%%%%%%%%%%%%%%%%%%%%%%%%%%%%%%%%%%%%%%%%%
% Section 2 ends here.
%%%%%%%%%%%%%%%%%%%%%%%%%%%%%%%%%%%%%%%%%%%%%%%%%%%%%%%%%%%%%%%%%%%%%%%%
%

%
%%%%%%%%%%%%%%%%%%%%%%%%%%%%%%%%%%%%%%%%%%%%%%%%%%%%%%%%%%%%%%%%%%%%%%%%
% Section 3 begins here.
%%%%%%%%%%%%%%%%%%%%%%%%%%%%%%%%%%%%%%%%%%%%%%%%%%%%%%%%%%%%%%%%%%%%%%%%
%
%%%%%%%%%%%%%%%%%%%%%%%%%%%%%%%%%%%%%%%%%%%%%%%%%%%%%%%%%%%%%%%%%%%%%%%%
% Conformal geometry and hypersurfaces
%%%%%%%%%%%%%%%%%%%%%%%%%%%%%%%%%%%%%%%%%%%%%%%%%%%%%%%%%%%%%%%%%%%%%%%%
%

\section{Conformal geometry and hypersurfaces} \label{back}

Let $\nabla_a$ denote the Levi-Civita connection on a Riemannian
manifold $\Mg$ of dimension $n\geq 2$. 
The Riemannian curvature
tensor $R$ on $\Mg$ is defined by
\[
\RRR(X,Y)Z=\nd_{X}\nd_{Y}Z-\nd_{Y}\nd_{X}Z-\nd_{[X,Y]}Z~.
\]
Here $X$, $Y$, and $Z$ are arbitrary vector fields. In abstract index
notation, $R$ is denoted by $R_{ab}{}^{c}{}_d$, and $\RRR(X,Y)Z$ is
$X^aY^bZ^d R_{ab}{}^{c}{}_d$.  In dimensions $n\geq 3$ this can be
decomposed into the totally trace-free {\em Weyl curvature} $C_{abcd}$
and the symmetric {\em Schouten tensor} $\Rho_{ab}$ according to
\begin{equation}\label{Rsplit}
R_{abcd}=C_{abcd}+2\g_{c[a}\Rho_{b]d}+2\g_{d[b}\Rho_{a]c}~,
\end{equation}
where $[\cdots]$ indicates antisymmetrisation over the enclosed
indices.  In \nn{Rsplit}, $\Rho_{ab}$ is a trace modification of the
Ricci tensor given by
\begin{equation}\label{SchoutenDef}
  \Rc_{ab}=(n-2)\Rho_{ab}+\J\g_{ab}~.
\end{equation}
Here
${\Rc}_{ab}=R_{ca}{}^c{}_b$, and $\J=\Rho_a{}^a$. In dimensions 2 and
3 the Weyl tensor $C_{abcd}$ vanishes identically by dint of its
symmetries.  In dimension~2, \nn{SchoutenDef} does not define a
Schouten tensor in the sense we require in this work.  By adding
additional structure (a M\"{o}bius structure \cite{Calderbank}),
however, one may define a Schouten tensor in dimension~2 in such a way
that that it has conformal properties similar to the Schouten tensor
in higher dimensions.

\subsection{Riemannian hypersurfaces}\label{RH}

In this subsection, we discuss covariant de\-riv\-atives and various
structures on $\sm$ and relate them to covariant derivatives and
structures on $M$.  These ideas will facilitate the understanding of
some of the example formulae in Section~\ref{exs}, below. This
material is standard and appears in such sources as \cite{Spivak},
Chapter~1, but we develop it here to set notation fit with our current
approach via defining functions, cf.\ \cite{CG,GW-willmore,Grant,
  Stafford}.

To begin, let $\sm$ denote a hypersurface in $\Mg$ as in
Section~\ref{IntroSect}, and let $\dfn$ denote any local defining
function for $\sm$, as in Section~\ref{basics}.  We are concerned with
local theory here, so without loss of generality, we may assume that
$d\dfn$ is nowhere zero.  Let $\Nv_a$ be as in \nn{pndef}.  Recall
that the level sets of $\dfn$ define a foliation $\sm_{\dfn}$, with
$\sm=\sm_{0}$.  For each leaf $\dfn=\dfn_0$, the embedding $\iota:
\sm_{\dfn_0} \to M$ induces an injective bundle map $\iota_*:
T\sm_{\dfn_0} \to TM$, and we shall simply identify $T\sm_{\dfn_0}$
with its image. Dually, $T^*\sm_{\dfn_0}$ is naturally a quotient of
$T^*M $.  The bundle epimorphism $T^*M\to T^*\sm_{\dfn_0}$ is split by
the metric, and so $T^*\sm_{\dfn_0}$ is naturally identified with the
subbundle of $T^*M$ consisting of 1-forms orthogonal to the conormal
bundle to $\sm_{\dfn_0}$.  We shall thus use the same abstract indices
for $T\sm_{\dfn_0}$ as we do for $TM$.  We will, however, use a bar to
denote objects intrinsic to $\sm$ or to any leaf of the foliation.
Thus $\ice^a$ (resp.\ $\ice_a$) will denote the subbundle of $\ce^a$
defined by the foliation (resp.\ the subbundle of 1-forms annihilating
$n^a$), the spaces of smooth sections of these, or the restrictions of
these objects to a leaf $\sm_{\dfn_0}$ (for any $\dfn_0\in \mathbb{R}$
where the foliation is defined).  For example, $u_a\in \ce_a$ is a
section of $\ice_a$ if and only if $u_a n^a=0$. In particular, this
applies along $\sm=\sm_0$, where our interest really lies, and any
section of $\ice^a$ (resp.\ $\ice_a$) along $\sm $ will be assumed to
be the restriction to $\sm$ of a section of $\ice^a$ (resp.\ $\ice_a$)
defined in a neighbourhood of $\sm$. This idea will be extended to
tensor powers in an obvious way with little further mention.

Next, let $\PP{a}{b}: =\delta_a{}^b-\Nv_a\Nv^b$. As a section of $\End
(TM)$, this defines projections $TM\to T\sm_{\dfn_0}$ and $T^*M\to
T^*\sm_{\dfn_0}$ in the obvious way. Thus, for example, the formula
$\ig_{ab}:=\g_{ab}-n_an_b$ defines a symmetric 2-tensor that restricts
to give an induced intrinsic metric along any
$\sm_{\dfn_0}$. Similarly $\ig^{ab}=\g^{ab}-n^a n^b$, which is
consistent with raising indices using the ambient metric $\g^{ab}$. We
write $\ilc$ for the corresponding intrinsic Levi-Civita connection
along the leaves, and $\iRR$ denotes the corresponding Riemannian
curvature. Although we shall be finally interested in these quantities
only along $\sm=\sm_0$, it is convenient to have fixed an extension
off $\sm$ in this way; this is consistent with our treatment of vector
fields and tensors in general, as discussed above. They depend
smoothly on points in the foliated neighbourhood of $\sm$.

Along $\sm$, and indeed along each leaf $\sm_{\dfn_0}$, the second
fundamental form of the embedding, $L_{ab}$, is given by the formula
$\sff_{ab}=\PP{a}{c}\nd_c\Nv_b$.  (Note that the sign of $\sff$ here
differs from that of many sources, including \cite{Spivak}.)  By the
Weingarten equations  $\sff_{ab}$ is
symmetric.  An easy exercise shows that $n^aL_{ab}=0$, so $L$ defines
a smooth section of $\ice_{(ab)}$, the second symmetric power of
$\ice_a$.  Let $V^a,T^b\in\ice^a$ be given.  Then
$\sff_{ab}V^aT^b=-\g(\Nv^a,\nd_{V}T)$ along $\sm$, again by the
Weingarten equations.  Thus if $d\dfn$ is compatible with a given
orientation on $\sm$, then $\sff_{ab}$, and hence also the mean
curvature $\mc$, are independent of the choice of the defining
function $\dfn$.
The metric {\em trace-free} part of $L$, denoted by $\sffo$, is given
by $\sffo_{ab}=\sff_{ab}-H \ig_{ab}$.

An easy computation shows that $\mc=(1/(n-1))\nd_a\Nv^a$.
Hypotheses~\ref{NaturalHyp}, below, will refer to a modified mean
curvature tensor $\mmc$ given by $\mmc=\nd_a\Nv^a$.  The purpose of
defining $\mmc$ in this way is to obtain a curvature given by a
symbolic formula which involves $\nd_a$ and $\Nv^a$ but not the
dimension $n$.

It is easily verified that $\ilc$ is
related to the ambient Levi-Civita connection $\nabla$ by the {\em
  Gauss formula}
\begin{equation}\label{gf}
\ilc_a{}V^b=\PP{a}{c}\nd_{c}V^b+\Nv^bV^{c}\sff_{ac}~,
\end{equation}
where $V^a\in\ice^a$; in particular
$\ilc_aV^b=\PP{a}{c}\PP{d}{b}\nd_cV^d $. From this follows the
classical Gauss equation, which we give in our current notation:
%
%%%%%%%%%%%%%%%%%%%%%%%%%%%%%%%%%%%%%%%%%%%%%%%%%%%%%%%%%%%%%%%%
% Gauss' equation
%%%%%%%%%%%%%%%%%%%%%%%%%%%%%%%%%%%%%%%%%%%%%%%%%%%%%%%%%%%%%%%%
%
\begin{proposition}\label{Gauss}
Let $\iR_{ab}{}^{c}{}_{d}$ denote the intrinsic Riemannian curvature
tensor.  Then
\[
\iR_{abcd}=
\PP{a}{i}\PP{b}{j}\PP{c}{k}\PP{d}{l}
R_{ijkl}+ L_{ac}L_{bd}-L_{ad}L_{bc}~.
\]
\end{proposition}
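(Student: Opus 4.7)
The plan is to compute the commutator $[\ilc_a, \ilc_b] V^c$ for a tangential vector field $V^c \in \ice^c$ (extended to the foliated neighbourhood of $\sm$) and to read off $\iR_{ab}{}^c{}_d$ via the Ricci identity $[\ilc_a, \ilc_b] V^c = \iR_{ab}{}^c{}_d V^d$, which is valid because $\ilc$, being Levi-Civita for $\ig$, is torsion-free. The tools are the Gauss formula \nn{gf}, the ambient Ricci identity $[\nd_p, \nd_q]V^e = R_{pq}{}^e{}_f V^f$, and the orthogonality relations $\PP{e}{c}\Nv^e = 0$, $\Nv^b L_{ab} = 0$, $\Nv_e \nd_p \Nv^e = 0$ (the last from $|\Nv|^2 = 1$).

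First I would apply \nn{gf} to obtain $W^c{}_b := \ilc_b V^c = \PP{b}{i}\nd_i V^c + \Nv^c L_{bi} V^i$; using $\Nv_c V^c = 0$ and $\Nv^b L_{bi} = 0$ one checks that $W$ is tangential in both slots, so $\ilc_a$ acts on it by the projection rule $\ilc_a T^c{}_b = \PP{a}{p}\PP{b}{q}\PP{e}{c}\nd_p T^e{}_q$ for doubly-tangential $T$. Expanding $\nd_p W^e{}_q$ by Leibniz yields five pieces: the pure second-derivative piece $\PP{q}{i}\nd_p\nd_i V^e$ and the four correction pieces arising when $\nd_p$ hits $\PP{q}{i}$, $\Nv^e$, $L_{qi}$, or $V^i$ in $W^e{}_q$.

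Upon antisymmetrizing in $[a,b]$, the second-derivative piece gives, via the ambient Ricci identity, $\PP{a}{p}\PP{b}{i}\PP{e}{c}R_{pi}{}^e{}_f V^f$; lowering $c$ and writing $V^f = \PP{d}{f}V^d$ produces the claimed first term $\PP{a}{i}\PP{b}{j}\PP{c}{k}\PP{d}{l}R_{ijkl}V^d$. The two correction pieces carrying an explicit $\Nv^e$ (the Codazzi-type $\Nv^e(\nd_p L_{qi})V^i$ and the $\Nv^e L_{qi}\nd_p V^i$) are killed by $\PP{e}{c}\Nv^e = 0$. The $(\nd_p \PP{q}{i})\nd_i V^e$ piece reduces, after $\PP{a}{p}\PP{b}{q}$, to a quantity proportional to $L_{ab}$ and hence symmetric in $(a,b)$, dropping out under antisymmetrization. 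The final piece, $(\nd_p \Nv^e) L_{qi} V^i$, simplifies via $\PP{a}{p}\PP{e}{c}\nd_p \Nv^e = L_a{}^c$ (using $\Nv_e \nd_p \Nv^e = 0$) and $\PP{b}{q}L_{qi} = L_{bi}$ to $L_a{}^c L_{bi} V^i$; antisymmetrizing and lowering $c$ yields $(L_{ac}L_{bd} - L_{ad}L_{bc})V^d$. The main obstacle is purely combinatorial, namely sorting each of the five Leibniz pieces into one of the three fates (ambient curvature, $L \wedge L$ correction, or vanishing by orthogonality or by symmetry in $(a,b)$); no analytic or algebraic subtlety arises beyond the basic orthogonality identities.
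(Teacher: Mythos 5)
Your proof is correct and follows the standard derivation that the paper alludes to but does not write out: it applies the Gauss formula \nn{gf} twice, invokes the ambient Ricci identity on the pure second-derivative piece, and checks that the remaining Leibniz pieces either vanish by orthogonality ($\PP{e}{c}\Nv^e=0$, $\Nv^bL_{ab}=0$) or combine to give the $L\wedge L$ term. Since the paper simply asserts that the classical Gauss equation follows from \nn{gf} (citing \cite{Spivak}), your write-up is a correct filling-in of that omitted computation rather than a different route.
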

%
% %

\subsection{Conformal structures}\label{confsec}

A conformal geometry is a manifold of dimension at least 2 equipped
with a conformal structure, i.e.\ an equivalence class $\cc$ of
Riemannian metrics such that if $\g,\,\hatg\in\cc$, then
$\hatg=e^{2\Up}\g$ for some $\Up\in\ce$.  Our operator constructions
will require several results and techniques from conformal geometry.
For further details see \cite{BEGo,CapGoamb,GP-CMP}, or \cite{CG} for
a recent overview.

We begin by interpreting a conformal structure $\cc$ as a ray
subbundle $\amQ\subseteq S^2T^*M$ whose fibre over $x\in M$ consists
of the set of all metrics at $x$ which are conformally related to some
given metric $\g$ at the point $x$.  The principal bundle $\pi:\amQ\to
M$ has structure group $\Bbb R_{>0}$, so for any $w\in\Reals$, the
representation ${\Bbb R}_{>0}\ni x \mapsto x^{-w/2}\in {\rm End}(\Bbb
R)$ induces a natural (oriented) line bundle on $M$ that we term the
bundle of \textit{conformal densities} of weight $w$.  We let $\ce[w]$
denote the space of sections of this bundle or the bundle itself.
There is a tautological section $\bg_{ab}$ of $S^2T^*M\otimes \ce[2]$
that is termed the {\em conformal metric}.  Similarly, $\bg^{-1}$ is a
section $\bg^{ab}$ of $S^2TM\otimes\ce[-2]$.  Henceforth, $\bg_{ab}$
and $\bg^{ab}$ will be the default objects that will be used to
identify $TM$ with $T^*M\otimes\ce[2]$ and to raise and lower indices
associated to these bundles (even when a metric $\g\in \cc $ has been
chosen).  As a consequence, tensors, hypersurface invariants, and
natural differential operators will now carry weights.  For example,
if $R_{ab}{}^c{}_{d}$ is the Riemannian curvature tensor associated to
a metric $\g\in\cc$, then $R_{ab}{}^c{}_{d}$ will have weight $0$.
Similarly, $R_{abcd}$ will denote $\bg_{ce}R_{ab}{}^e{}_d$, and this
tensor will have weight $2$.  The weights of other such curvature
tensors will be evident from their definitions.  We will assign
weights to $\Nv_{a}$, $\sff_{ab}$, $\mc$, and $\tfsff{ab}$ in
Section~\ref{confhy}.  If $P$ is a natural differential operator
acting between densities, then for some $\lowerw\in\Reals$ and all
conformal manifolds $\Mcc$ (or all conformally flat conformal
manifolds $\Mcc$) of appropriate dimensions, $P$ will map $\ce[w]$ to
$\ce[w-\lowerw]$.  For example, for any $f\in\ce[w]$, we have $\Delta
f=\bg^{ab}\nd_a\nd_bf\in\ce[w-2]$.

A metric $\g\in\cc$ is equivalent to a positive section $\xig$ of
$\ce[1]$ via the relation $\g_{ab}=\xigw{-2}\bg_{ab}$.  We say that
$\xig$ is the \textit{scale density} associated to $\g$.  Let
$w\in\Reals$ and a section $\sigma$ of $\ce[w]$ be given.  We may
write $\sigma=(\xig)^wf$ for some $f\in\ce$.  It is easily verified
that the Levi-Civita connection $\nd$, of $\g$, acting on $\ce[w]$, is
the connection mapping $\sigma$ to $(\xig)^wdf$, where $d$ is the
exterior derivative.  Let $\ce_b[w]=\ce_b\otimes\ce[w]$, and let
$\widehat{\nd}$ denote the Levi-Civita connection associated to the
metric $\hatg=e^{2\Upsilon}\g$.  Then for all $\mu_b\in\ce_b[w]$,
\begin{equation}\label{NabTran}
\widehat{\nd}_a\mu_b
=
\nd_a\mu_b+(w-1)\Upsilon_a\mu_b-\Upsilon_b\mu_a+\bg_{ab}\Upsilon^c\mu_c~,
\end{equation}%
where $\Upsilon_a=\nd_a\Upsilon$.

On a general conformal manifold there is no distinguished connection
on $TM$.  There is, however, a canonical conformally invariant
connection on a slightly larger bundle, and this is called the
(conformal) {\em tractor connection} \cite{BEGo}.  It is linked, and
equivalent to, the normal conformal Cartan connection of Elie Cartan
\cite{CGtams}.  On a conformal manifold $\Mcc$ of dimension $n\geq 3$,
let $\ct$ (or $\ct^A$ as the abstract index notation) denote the
(standard) tractor bundle.  This bundle is a canonical rank $n+2$
vector bundle equipped with the canonical (normal) tractor connection
$\nabla^\ct$.  This connection is conformally invariant.  We usually
write $\nd$ instead of $\nabla^{\ct}$.  We let $\tbPhi$ denote any
tensor power of $\ct$, including $\ce$.  To distinguish different (or
potentially different) powers of $\ct$, we write $\tbnPhiOne$ and
$\tbnPhiTwo$.  Let $\tbPhi[w]=\tbPhi\otimes\ce[w]$.  Also let $\ct$,
$\tbPhi$ and $\tbPhi[w]$ denote the spaces of sections of these
bundles.  A choice of metric $\g\in\cc$ determines a splitting of
$\ct$, i.e.\ an isomorphism
\begin{equation}\label{split}
\mathcal{T} \stackrel{\g}{\cong} \ce[1]\oplus\ce_b[1]\oplus
\ce[-1]~.
\end{equation}
We may write $T\gequal(\si,~\mu_b,~\rho)$ to indicate that $T$ is an
invariant section of $\ct$, and $(\si,~\mu_b,~\rho) $ is its image
under the splitting given by \nn{split}.  In general, a conformally
related metric $\hatg$
determines a different splitting of
$\ct$.  If $T\gequal(\si,~\mu_b,~\rho)$, then
\begin{equation}\label{transf}
T\ghatequal
(\si,\,\mu_b+\si\Up_b,\,\rho-\bg^{cd}\Up_c\mu_d-
\tfrac{1}{2}\si\bg^{cd}\Up_c\Up_d)~.
\end{equation}

To facilitate our computations, we introduce three algebraic splitting
operators,
\[
Y^A\in \ce^A[-1]~, \quad 
Z^{Ab}\in\tbn^{Ab}[-1]:=\tbn^A\otimes \ce^b\otimes \ce [-1]~, \quad 
X^A\in \ce^A[1]~,
\]
which administer the isomorphism \nn{split} determined by the metric
$\g\in\cc$.  If $T^A\gequal(\sigma,\mu_b,\rho)$, then $T^A= \sigma
Y^A+\mu_{b}Z^{Ab}+\rho X^A$.  Note that \nn{transf} determines the
transformations of $Y^A$, $Z^{Ab}$, and $X^A$ under conformal change
of metric. These are easily computed and are given explicitly in
\cite{GP-CMP}.

There is a conformally invariant {\em tractor metric} $\tm$ on $\tbn$
which is preserved by $\nd^{\ct}$.  We let $\tmco$ denote the
co-metric associated to $\tm$ on the dual bundle to $\tbn$.  In terms
of the splitting operators, the tractor metric is given by
Figure~\ref{TrIProd}.
\begin{figure}
$$
\begin{array}{l|ccc}
& Y^A & Z^{Ac} & X^{A}
\\
\hline
Y_{A} & 0 & 0 & 1
\\
Z_{A}{}^b & 0 & \bg^{bc} & 0
\\
X_{A} & 1 & 0 & 0
\end{array}
$$
\caption{The tractor metric}
\label{TrIProd}
\end{figure}
In a symbolic tractor expression with tractor indices, one may
eliminate all references to $\tm$ and $\tmco$ as follows.  First, if
one index of $\tm$ or $\tmco$ is contracted with an index of some
other tractor, one may eliminate the reference to $\tm$ or $\tmco$ by
raising or lowering the index of this other tractor.  On the other
hand, if $\tm$ or $\tmco$ has only free indices, then one may express
$\tm$ and $\tmco$ in terms of the splitting operators $X$, $Y$, and
$Z$.  For example, $\tm_{AB}=Z_A{}^cZ_{Bc}+X_AY_B+Y_AX_B$, as noted in
\cite{GP-CMP}.

The tractor connection is usefully encoded in the formulae for the
tractor Levi-Civita coupled derivatives of the splitting operators:
\begin{equation}\label{connids}
\begin{array}{rcl}
\nd_aY^A=\Rho_{ab}Z^{Ab},&
\nd_aZ^{A}{}_{b}=-\Rho_{ab}X^A-\bg_{ab}Y^A,&
\nd_aX^A=Z^A{}_{a}~.
\end{array}
\end{equation}
The curvature $\Omega$ of the tractor connection is defined by
\begin{equation}\label{TracCurv}
(\nabla_i\nabla_j-\nabla_j\nabla_i)V^A=\Omega_{ij}{}^A{}_BV^B
\end{equation}
for $V^A\in\tbn^A$.   A basic
computation using \nn{connids}  shows that
\begin{equation}\label{TracCurveOne}
\tcurve_{ij}{}^A{}_{B}=
\C_{ij}{}^{k}{}_mZ^{A}{}_kZ_{B}{}^m
+2(\nd_{[i}\Rho_{j]}{}^k)Z^A{}_{k}X_B
-2(\nd_{[i}\Rho_{j]m})X^AZ_B{}^{m}~.
\end{equation}
We will also use the conformally invariant tractor $W$-curvature
$W_{ABCE}$ as described in e.g.\ \cite{GP-CMP}:
\begin{equation}\label{TracCurveTwo}
W_{ABCE}=(n-4)Z_{A}{}^{a}Z_{B}{}^b\tcurve_{abCE}
-2X_{[A}Z_{B]}{}^b\nd^p\tcurve_{pbCE}~.
\end{equation}
From this formula and well known results it follows easily that $\cc$
is conformally flat if and only if $W_{ABCE}=0$.

The notation $\psi:\tbnPhiOne[w]\rightarrow\tbnPhiTwo[w-c]|_{\sm}$ or
$\psi:\tbnPhiOne[w]\rightarrow\tbnPhiTwo[w-c]$ will indicate that
$\psi$ is a family of natural differential operators parametrised by
$w$ and $n$.  Here $\lowerw$ is a real constant and $w$ is a real
number.
The family $\psi$ is a rule given by a universal symbolic formula.
This rule defines an operator for all $\Mcc$ of dimension $n$, all
bundles $\tbnPhiOne$, and all $w\in\Reals$.  (We may require $\Mcc$ to
meet various conditions, and $\tbnPhiTwo$ will depend on $\tbnPhiOne$
and $\psi$.)  If $w$ appears in the symbolic formula for $\psi$, we
set $w$ equal to the weight of the bundle on which $\psi$ acts, unless
we explicitly indicate otherwise.  Since $\psi$ is given by a
universal symbolic formula, we will sometimes refer to $\psi$ as an
``operator'' rather than a family of operators.

One important family of conformally invariant natural operators on
weighted tractors is the family
$D:\tbPhi[w]\rightarrow\ct^A\otimes\tbn^{\Phi}[w-1]$ defined as
follows:
\begin{equation}\label{tractorD}
D^AV=
w(n+2w-2)Y^AV+
(n+2w-2)Z^{Ab}\nd_bV
-X^A(\Delta+w\J)V~.
\end{equation}
Cf.\ \cite{BEGo}.  For developments of $D$ and proofs of its conformal
invariance, see \cite{Esrni,G-srni1}.  Another important family of
natural operators is the family
$\bop:\tbPhi[w]\rightarrow\tbPhi[w-2]$ given by
\begin{equation}\label{YamOp}
\bop V=(\Delta+w\J)V~.
\end{equation}
If $w=1-n/2$, then $\bop$ is the Yamabe operator, which is conformally
invariant.  We will use $D$ and $\Box$ in our main operator
constructions.

\subsection{Conformal hypersurfaces}\label{confhy}
Let a hypersurface $\iota:\sm\to M$ be given.  In this subsection, we
present the necessary elements of basic conformal hypersurface
geometry.

Let $\g,\,\hatg\in\cc$ be given.  These metrics induce conformally
related metrics $\ig$ and $\hatig$ on $\sm$, and so $\cc$ induces a
conformal structure $\ic$ on $\sm$. We term $\ic$ the {\em intrinsic
  conformal structure} of $\sm$.  If $n\geq 4$, this conformal
structure determines an intrinsic version of each of the constructions
and results from Section~\ref{confsec} in the usual way.  (We treat
the $n=3$ case in Section~\ref{cgt}, below.)  Let $\ibg$ denote the
intrinsic conformal metric, and let $(\tbS, \ilc^{\tbSsup})$ denote
the intrinsic tractor bundle and its connection on $\sm$. (In fact, we
shall usually write $\ilc$ rather than $\ilc^{\tbSsup}$.)  The
conformally invariant (and $\ilc$-parallel) metric on $\tbS$, the
intrinsic tractor metric, shall be denoted $\ih$ and has signature
$(n,1)$.  We identify $\ce[w]|_\Sigma$ with $\ice[w]$ in the obvious
way.

Now let a local defining function, $\dfn$, for $\sm$, as in
Section~\ref{basics}, be given.  Henceforth, we let
\begin{equation}\label{ndef}
\Nv_a:= d\dfn/|d\dfn|_{\bg}~.
\end{equation}
Thus $\Nv_a$ is now a weight $1$ conformally invariant conormal field
for $\sm$ and, more generally, for the $\sm_{\dfn}$ foliation.  Let
$\ibg_{ab}:=\bg_{ab}-n_an_b$.  This tensor extends the intrinsic
conformal metric of $(\sm,\ic)$ to a neighbourhood of $\sm$; its
restriction gives the conformal metric on each leaf $\sm_{\dfn_0}$.
Given $\g\in \cc$, we again define $\sff_{ab}$, $\mc$, and
$\tfsff{ab}$ on a neighbourhood of $\sm$ as in Section~\ref{RH},
above.  This time, however, we replace $\ig$ with $\ibg$, and we use
\nn{ndef} to define $\Nv_a$.  As a consequence, $\sff_{ab}$, $\mc$,
and $\tfsff{ab}$ will have weights $1$, $-1$, and $1$, respectively.
This convention will hold for the remainder of this paper.  In fact,
\textit{all} formulae will now carry conformal weights, unless we note
otherwise.  This use of weights simplifies the transformation of the
formulae under conformal rescaling.  The formulae and results from
Section~\ref{RH} carry over to the present setting in the obvious way.

These remarks apply to natural differential operators, of course, and
this leads to the following proposition.%
\begin{proposition}\label{ReplaceCCov}
Let a pair $(w_1,w_2)\in\Reals^{2}$ and natural differential operators
$P:\tbPhi[-w_1]\rightarrow\tbPhi[-w_2]$ and
$P':\tbPhi\rightarrow\tbPhi$ be given.  Finally, suppose that
$P=(\xig)^{-w_2}P'(\xig)^{w_1}$ for all $\g\in\cc$.  Then $P'$ is
conformally covariant of bidegree $(w_1,w_2)$ if and only if $P$ is
conformally invariant.  A similar statement holds for operators
mapping $\tbPhi[-w_1]$ to $\left.\tbPhi[-w_2]\right|_{\sm}$ and
$\tbPhi$ to $\left.\tbPhi\right|_{\sm}$.
\end{proposition}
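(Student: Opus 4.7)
The plan is to unwind the definitions on both sides and derive one from the other by a direct algebraic manipulation that uses only the transformation law for the scale density $\xig$ under conformal rescaling. Since $P=(\xig)^{-w_2}P'^{\g}(\xig)^{w_1}$ is stipulated to hold for \emph{every} choice of $\g\in\cc$, the conformal invariance of $P$ is equivalent to the assertion that, for all conformally related $\g$ and $\hatg=e^{2\Upsilon}\g$,
\[
(\xig)^{-w_2}P'^{\g}(\xig)^{w_1}
=
(\xi^{\hatg})^{-w_2}P'^{\hatg}(\xi^{\hatg})^{w_1},
\]
viewed as operators $\tbPhi[-w_1]\to\tbPhi[-w_2]$. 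The first key ingredient is the transformation rule for the scale density: the relations $\g_{ab}=(\xig)^{-2}\bg_{ab}$ and $\hatg_{ab}=e^{2\Upsilon}\g_{ab}$ immediately give $\xi^{\hatg}=e^{-\Upsilon}\xig$.

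Substituting $\xi^{\hatg}=e^{-\Upsilon}\xig$ into the displayed equality, pulling the scalar factors past $P'^{\hatg}$, and cancelling the common invertible factor $(\xig)^{-w_2}$, the identity reduces to
\[
P'^{\g}\bigl((\xig)^{w_1}V\bigr)
=e^{w_2\Upsilon}P'^{\hatg}\bigl(e^{-w_1\Upsilon}(\xig)^{w_1}V\bigr)
\]
for every $V\in\tbPhi[-w_1]$. Now I would perform a change of variable: as $V$ ranges over $\tbPhi[-w_1]$, the product $f:=(\xig)^{w_1}V$ ranges over all of $\tbPhi$, since multiplication by $(\xig)^{w_1}$ is an isomorphism of density-weighted bundles. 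Setting $\tilde f:=e^{-w_1\Upsilon}f$, which again runs over all sections of $\tbPhi$, rearranges the previous identity into
\[
P'^{\hatg}\tilde f=e^{-w_2\Upsilon}P'^{\g}\bigl(e^{w_1\Upsilon}\tilde f\bigr),
\]
which is precisely conformal covariance of $P'$ of bidegree $(w_1,w_2)$ in the sense of Definition~\ref{Bid}. Each algebraic step above is reversible, so the same chain gives the converse implication simultaneously.

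The hypersurface case proceeds by exactly the same sequence of steps, because restriction to $\sm$ commutes with the pointwise operations of multiplication by $\xig$ and by $e^{\Upsilon}$; no further argument is required. There is no substantial obstacle here beyond keeping the signs on the exponents straight, and the one point that requires genuine care is the sign in $\xi^{\hatg}=e^{-\Upsilon}\xig$, which is opposite to what one might first guess because $\xi$ enters the conformal metric through a negative power.
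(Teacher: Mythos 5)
Your proof is correct and takes exactly the route the paper has in mind: the noted fact $\xi^{\hatg}=e^{-\Upsilon}\xig$ (equivalently $(\xi^{\hatg})^{w}=e^{-w\Upsilon}(\xig)^{w}$) is the single ingredient the paper's proof records, and your explicit change of variables $f=(\xig)^{w_1}V$, $\tilde f=e^{-w_1\Upsilon}f$ is precisely the ``elementary argument'' the paper leaves to the reader.
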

\begin{proof}
Note that $(\xi^{\hatg})^w=e^{-w\Upsilon}(\xig)^w$ for any
$w\in\Reals$.  The result thus follows from an elementary argument.
\end{proof}
\begin{remark}\label{IdentificationRem}
Proposition~\ref{ReplaceCCov} allows us to identify conformally
covariant and conformally invariant operators.  In the main new
operator constructions of this paper, we will work with conformally
invariant operators, so for the rest of this paper, we will usually
replace property~(2) of Problem~\ref{GenProblem} with the following
equivalent property: $P^{\g}_{w,K}:\ce[w]\rightarrow\ice[w-K]$ is
conformally invariant.  Here $K$ is as in Problem~\ref{GenProblem}.
\end{remark}
\subsection{Tractors and conformal Gauss theory} \label{cgt} We will
subsequently exploit a conformally invariant replacement for the Gauss
formula \nn{gf}. Here we develop this machinery. In the following, we
work along $\sm$, but the discussion applies to any leaf of the
foliation without adjustment.  Thus all quantities defined are
extended into a neighbourhood of $\sm$.  We assume $n\geq 3$, except
as noted.

An elementary computation shows that
\begin{equation}\label{full}
\sffh_{ab}=\sff_{ab}+\ibg_{ab}\Upsilon^{c}\Nv_{c}~,
\end{equation} 
whence 
\begin{equation}\label{meancurhat}
\mch=\mc+\Up_a\Nv^a~.
\end{equation}
(Since $\mc$ and $\Nv^a$ now carry weights, \nn{meancurhat} differs
from the conformal transformation law for $\mc$ that we discussed in
Section~\ref{IntroSect}, above.)  It follows from \nn{full} that
$\tfsff{ab}$ is conformally invariant.  From \nn{meancurhat} and
\nn{transf} it follows that
\begin{equation}\label{NormalTractor}
\Nt^A=\Nv_bZ^{A}{}^b-HX^A
\end{equation}
is conformally invariant along $\sm$ and, more generally, along each
leaf of the foliation $\sm_\dfn$. This is the \textit{normal tractor}
as defined in \cite{BEGo}.  It has conformal weight 0.  From
$\bg^{ab}n_an_b=1$, it follows that $h^{AB}N_AN_B=1$.  Let $\tbp$ be
the (conformally invariant) subbundle of $\tb|_{\sm}$ whose fibre is
the orthogonal complement (with respect to $h$) of $\Nt^A$.  If $n\geq
4$, then $\tbS$ exists and has the same rank as $\tbp$.  This suggests
the following proposition.  This proposition follows
\cite{Grant,Stafford}, which in turn follow an equivalent argument in
\cite{BrGonon}.
\begin{proposition}\label{Tisom}
  Suppose $n\geq 4$.  Then along any leaf of the foliation, we may
  canonically and isometrically identify the bundles $\tbS$ and
  $\tbp$. \end{proposition}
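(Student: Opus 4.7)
The plan is to produce the canonical isomorphism $\phi:\tbS\to\tbp$ by writing down an explicit formula in a chosen scale and then verifying three properties: that it is $\tbp$-valued, that it is isometric, and that it is independent of the choice of scale. Fix $\g\in\cc$. Using \nn{split} and the analogous intrinsic splitting of $\tbS$ via intrinsic splitting operators $\YS^A$, $\ZS^{Ab}$, $\XS^A$, represent a section $\overline{U}^A\in\tbS$ as a triple $(\bar\sigma,\bar\mu_b,\bar\rho)$ with $\bar\mu_b\in\ice_b[1]$ and $\bar\mu_b n^b=0$. Define
\[
\phi^{\g}(\bar\sigma,\bar\mu_b,\bar\rho)
\gequal
\bigl(\bar\sigma,\ \bar\mu_b+\bar\sigma\,\mc\, n_b,\ \bar\rho-\tfrac{1}{2}\bar\sigma\,\mc^2\bigr)\in\tbn|_{\sm}.
\]

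I would then verify the three properties in turn. First, $\phi^{\g}$ is $\tbp$-valued: lowering \nn{NormalTractor} using Figure~\ref{TrIProd} gives $N_A\gequal(-\mc,n_b,0)$, and contracting with $\phi^{\g}(\overline{U})$ yields $\bar\mu_b n^b+\bar\sigma\mc-\bar\sigma\mc=0$. Second, $\phi^{\g}$ is isometric: expanding the ambient pairing of $\phi^{\g}(\overline{U}_1)$ and $\phi^{\g}(\overline{U}_2)$, the middle-slot cross term $(\bar\mu_{1,b}+\bar\sigma_1\mc\, n_b)(\bar\mu_2^b+\bar\sigma_2\mc\, n^b)$ produces an excess $\bar\sigma_1\bar\sigma_2\mc^2$ (using $\bar\mu_b n^b=0$ and $n^b n_b=1$) that is exactly cancelled by the $-\tfrac{1}{2}\bar\sigma_i\mc^2$ corrections in the bottom slots, leaving the intrinsic pairing $\ih(\overline{U}_1,\overline{U}_2)$. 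Third, and this is the crux, $\phi^{\g}$ is independent of $\g$: under $\hat\g=e^{2\Upsilon}\g$ the ambient splitting transforms by \nn{transf}, the intrinsic splitting by the same formula with $\Upsilon_a$ replaced by its tangential part $\bar\Upsilon_a:=\PP{a}{b}\Upsilon_b$, and $\mch=\mc+\Upsilon_a n^a$ by \nn{meancurhat}. Comparing $\phi^{\hat\g}$ composed with the intrinsic transformation against the ambient transformation composed with $\phi^{\g}$, the middle slots agree by the identity $\bar\Upsilon_b+(\Upsilon_c n^c)n_b=\Upsilon_b$, and the bottom slots agree by $\bar\Upsilon^c\bar\Upsilon_c=\Upsilon^c\Upsilon_c-(\Upsilon_c n^c)^2$ together with the expansion of $\mch^2$.

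Bijectivity is then automatic: $\tbS$ and $\tbp$ both have rank $n+1$, and $\phi^{\g}$ is injective by the isometry since $\ih$ is non-degenerate. The hard part will be the algebraic bookkeeping in the scale-independence step; the correction term $-\tfrac{1}{2}\bar\sigma\mc^2$ in the third slot is not obvious a priori (it is forced by the isometry), but once it is in hand, all the $\Upsilon$-dependent cross terms align via the orthogonal decomposition $\Upsilon_a=\bar\Upsilon_a+(\Upsilon_c n^c)n_a$.
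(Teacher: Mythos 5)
Your map $\phi^{\g}$ is exactly the embedding $\trem$ given by \nn{Tembed} in the paper's proof, and your verification of $\tbp$-valuedness, isometry, and scale-independence (via \nn{transf}, \nn{meancurhat}, and the orthogonal decomposition of $\Upsilon_a$) is precisely the ``basic calculation'' the paper invokes. This is the same approach with the details written out.
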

\begin{proof}
Let $\g\in\cc$ be given, and along any leaf of the foliation $\sm_{\dfn}$,
define an embedding $\trem:\tbS\hookrightarrow\ct$ as follows:
\begin{equation}\label{Tembed}
\tbS \ni T\igequal(\isigma,\,\imu_b,\,\irho) \mapsto
(\isigma,\,\imu_b+\isigma
H\Nv_b,\,\irho-\textstyle{\frac{1}{2}}\mc^2\isigma)\gequal iT~.
\end{equation}
The range of $\trem$ is clearly orthogonal to $N^A$.  By \nn{transf},
\nn{meancurhat}, and a basic calculation, it follows that
$\trem:\tbS\rightarrow\tbp$ is a conformally invariant bundle
isomorphism compatible with  the tractor metrics.
\end{proof}

In the case $n=3$, we can still define the intrinsic tractor bundle
$\tbS$ in the usual way (i.e.\ as in higher dimensions), or we can
equivalently identify $\tbS=\tbp$ along $\sm$ as in \cite{BrGonon}. In
any case, we then define the intrinsic tractor connection $\ilc$ and
tractor $D$-operator, $\iD$, on $\tbS$ as in that reference.

For all $n\geq 3$, we shall henceforth identify $\tbS$ and $\tbp$.  We
can thus use the same abstract indices for the intrinsic and ambient
tractor bundles; sections of $\tbS$ are characterised by orthogonality
to $N^A$. In an obvious way, these conventions are extended to the
dual tractor bundle, tensor products, and so forth.

The normal tractor gives a conformally invariant tractor analogue of a
Riemannian hypersurface conormal.  Along any hypersurface $\sm$, the
ambient tractor bundle $\ct$ decomposes directly and orthogonally into
$\tbS \oplus \cN$, where $\cN$ is the tractor subbundle generated by
the normal tractor field $N^A$. Note that
$\Pi_A{}^B:=\delta_A{}^B-N_AN^B$, as a section of $\End (\ct)$, gives
the projection $\ct\to \tbS$, and using the abstract index notation,
we write $\Pi_A{}^B: \ct^A \to \tbS^B$.%

Although Proposition~\ref{Tisom} is not surprising, there is a slight
subtlety involved.  Specifically, in the case $n\geq 4$, the
proposition shows that the tractor splitting \nn{split} determined by
$\ig\in \ic$ is {\em not} related to the splitting determined by
$\g\in\cc$ in the most na\"{\i}ve way.  (But it \textit{is}, if, on a
particular hypersurface, we work in a scale with $H\equiv 0$.  Compare
to \cite{BrGonon,Go-al}.)

By Proposition~\ref{Tisom} and the above discussion, we have the
following.
%
%%%%%%%%%%%%%%%%%%%%%%%%%%%%%%%%%%%%%%%%%%%%%%%%%%%%%%%%%%%%%%%%%%
%
\begin{proposition}\label{embedtmetric}
$\hS_{AB}=\tm_{AB}-\Nt_{A}\Nt_{B}$ and
  $\hS^{AB}=\tm^{AB}-\Nt^{A}\Nt^{B}$ for all $n\geq 3$.
\end{proposition}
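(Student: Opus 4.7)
The plan is to pass both identities through the isometric identification of Proposition~\ref{Tisom}. By construction, $\hS_{AB}$ is the intrinsic tractor metric on $\tbS$, and under the identification $\tbS \cong \tbp$ given by the embedding $\trem$ of \nn{Tembed}, this metric coincides with the restriction of $\tm_{AB}$ to $\tbp$. Thus, for any $V^A, W^B$ lying in $\tbp$ (so that $\Nt^A V_A = \Nt^B W_B = 0$), we already have $\hS_{AB} V^A W^B = \tm_{AB} V^A W^B = (\tm_{AB} - \Nt_A \Nt_B) V^A W^B$.

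To promote this to a global identity of tensors on $\ct$, I would extend $\hS_{AB}$ by zero on the line bundle $\cN$ spanned by $\Nt^A$, using the conformally invariant orthogonal projector $\Pi_A{}^B = \delta_A{}^B - \Nt_A \Nt^B$. This gives the explicit formula $\hS_{AB} = \Pi_A{}^C \Pi_B{}^D \tm_{CD}$. Expanding directly and using $\Nt^C \Nt_C = 1$,
\[
\hS_{AB} = \tm_{AB} - \Nt_A \Nt_B - \Nt_A \Nt_B + \Nt_A \Nt_B (\Nt^C \Nt_C) = \tm_{AB} - \Nt_A \Nt_B,
\]
which is the first formula. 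For the co-metric, it suffices to verify that the candidate $\tm^{AB} - \Nt^A \Nt^B$ is the appropriate inverse on $\tbp$; a one-line computation gives $(\tm^{AC} - \Nt^A \Nt^C)(\tm_{CB} - \Nt_C \Nt_B) = \delta^A{}_B - \Nt^A \Nt_B = \Pi^A{}_B$, again using $\Nt^C \Nt_C = 1$.

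The case $n = 3$ requires no separate argument: in that dimension the intrinsic tractor bundle, connection, and metric are \emph{defined} (following \cite{BrGonon}, as recorded in the paragraph after Proposition~\ref{Tisom}) as the corresponding structures inherited from $\tbp \subset \ct|_{\sm}$, so the displayed formulae hold by convention. The only piece of genuine content is the isometry property of $\trem$ in dimensions $n \geq 4$, which was already asserted during the proof of Proposition~\ref{Tisom} and which one checks directly from the explicit formula \nn{Tembed} together with the inner-product table of Figure~\ref{TrIProd}; this is the main (though routine) step.
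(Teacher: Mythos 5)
Your proposal is correct and takes essentially the same route as the paper. The paper gives no separate proof — it simply states the proposition as an immediate consequence of Proposition~\ref{Tisom} and the orthogonal decomposition $\ct|_{\sm}=\tbS\oplus\cN$ — and your argument (restrict $\tm$ via the projector $\Pi_A{}^B$, expand using $\Nt^A\Nt_A=1$, verify the co-metric is inverse on $\tbp$, and invoke the $n=3$ convention) is exactly the verification the authors leave implicit.
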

%
%%%%%%%%%%%%%%%%%%%%%%%%%%%%%%%%%%%%%%%%%%%%%%%%%%%%%%%%%%%%%%%%%%
%

Now suppose $n\geq 4$.  Let $\YS^A$, $\ZS^{Ab}$, and $\XS^A$, in
$\tb[-1]|_{\sm}$, $\tb{}^{b}[-1]|_{\sm}$, and $\tb[1]|_{\sm}$,
respectively, denote the algebraic splitting operators associated to
$\tbS$ as determined by the induced metric $\ig$.  From \nn{Tembed},
we see that
\[
\YS^{A}=Y^A+\mc\Nv_bZ^{Ab}-\textstyle{\frac{1}{2}}\mc^2X^A,\hspace{3.0ex}
\ZS^{Ab}=\PP{c}{b}Z^{Ac},\hspace{3.0ex}
\XS^{A}=X^A~.
\]
Let a section $V^A$ of $\tbS$ be given, and suppose that
$V^A\igequal(\isigma,\,\imu_b,\,\irho)$.  Then
\[
V^A=
\isigma\YS^A+\imu_b\ZS^{Ab}+\irho\XS^A~.
\]
On the other hand, if $V^A=\sigma Y^A+\mu_b Z^{Ab}+\rho
X^A\in\tbp$, then
\begin{equation}\label{L23Nov8a}
V^A
=
  \sigma\YS^A
  +\mu_a\PP{b}{a}\ZS^{Ab}
  +(\rho+\textstyle{\frac{1}{2}}\mc^2\sigma)\XS^{A}~.
\end{equation}
Equation \nn{L23Nov8a} describes the inverse of the isomorphism
$\tbS\rightarrow\tbp$ (as in \cite{Stafford}).

Suppose again that $n\geq 4$, and define a projected ambient
connection $\ptc$ on $\tbS$ as in \cite{Stafford} by letting
\[
\ptc_cT^A=\PP{B}{A}\PP{c}{e}\nd_eT^B
\]
for all sections $T^A$ of $\tbS$.  In \cite{Stafford}, Stafford
defines the tractor contorsion, which has the property that
\begin{equation}\label{tracgf}
\ilc_cT^B-\ptc_cT^B=\tcon_{cA}{}^BT^A
\end{equation}
for any section $T^B$ of $\tbS$.  By \cite{Stafford},
\begin{equation}\label{t-cont}
\tcon_{cA}{}^{B}=
X_AZ^{Bb}\cF_{bc} -
X^BZ_{A}{}^a\cF_{ac},
\end{equation}
where $\cF_{bc}: = \iRh_{bc}-\PP{b}{i}\PP{c}{j}\Rho_{ij}-\mc\sffo_{bc}
-\frac{1}{2}\ibg_{bc}\mc^2$
is the conformally invariant {\em Fialkow tensor}. 
Formulae~\nn{tracgf} and \nn{t-cont} will be useful in
Section~\ref{OpBases} below.
%
%%%%%%%%%%%%%%%%%%%%%%%%%%%%%%%%%%%%%%%%%%%%%%%%%%%%%%%%%%%%%%%%%%%%%%%%
%% \edz{RS: the above even works in dimension $n=3$ but the {\em
%%     Fialkow tensor} appearing there as
%%   $\mathcal{F}_{ab}:=\iRh_{bc}-\PP{b}{i}\PP{c}{j}\Rho_{ij}-\mc\sffo_{bc}
%% -\frac{1}{2}\ibg_{bc}\mc^2$ (up to a constant) is then zero  {\em by
%%   definition} ( of $\iRh_{bc}$).  Need to be
%% careful with dimensional continuation }%
%%%%%%%%%%%%%%%%%%%%%%%%%%%%%%%%%%%%%%%%%%%%%%%%%%%%%%%%%%%%%%%%%%%%%%%%
%

Some of the examples in Section~\ref{exs}, below, will refer to the
tractor second fundamental form.  The tractor second fundamental form
is a conformally invariant tractor prolongation
$\tsff_{AB}\in\tbS_{AB}[-1]$ of the second fundamental form
$\sff_{ab}$.  We use the definition given in \cite{Grant}.
\begin{definition}
In dimensions $n=\dim(M)\geq 4$, the \textit{tractor second
  fundamental form} is given by
\[
\tsff_{AB}=(n-3)\ZS_A{}^a\PP{a}{d}\nd_d\Nt_B
-\XS_{A}\ilc^a\PP{a}{d}\nd_d\Nt_B~.
\]
along $\sm$.
\end{definition}
The tractor second fundamental form is conformally invariant.  To see
this, begin by letting $w\in\Reals$ and $n\geq 4$ be given.  Define an
operator
$E_{AB}{}^{aC}:\ice_a\otimes\tbS_{C}[w]\rightarrow\tbS_{AB}[w-1]$ as
follows.  For any $T_{aC}\in\ice_a\otimes\tbS_{C}[w]$, let
\[
E_{AB}{}^{aC}T_{aC}=
(n+w-3)\ZS_{A}{}^aT_{aB}-\XS_A\ilc^aT_{aB}~.
\]
In this definition, $\ilc$ acts as the intrinsic Levi-Civita
connection on $\ceS_a[w]$ and as the intrinsic tractor connection on
$\tbS_C$.  An easy adaptation of \nn{NabTran} shows that
$E_{AB}{}^{aC}$ is conformally invariant.  A substitution then shows
that $\tsff_{AB}=E_{AB}{}^{aC}\PP{a}{d}\nd_{d}\Nt_C$.  Thus
$\tsff_{AB}$ is conformally invariant.

Various versions of the following proposition appear in
\cite{GW-willmore,Grant, Stafford}.
\begin{proposition}\label{tsfftwo}
  In dimensions $n\geq 4$,
\[
\begin{array}{ll}
\lefteqn{
\textstyle\tsff_{AB}=
(n-3)\ZS_A{}^a\ZS_{B}{}^{b}\sffo_{ab}
-2\,\frac{n-3}{n-2}\,\XS_{(A}\ZS_{B)}{}^{a}\ilc^b\sffo_{ab}}
&\\
&

+\XS_A\XS_B(
\sffo_{ab}\iRh^{ab}+\frac{1}{n-2}\ilc^a\ilc^b\sffo_{ab})~.
\hspace{35mm}
\end{array}
\]
Here $(A\ B)$ indicates symmetrisation over $A$ and $B$.
\end{proposition}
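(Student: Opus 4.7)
The plan is a direct unwinding of the definition
$$\tsff_{AB}=(n-3)\ZS_A{}^a\PP{a}{d}\nd_d\Nt_B -\XS_{A}\ilc^a\PP{a}{d}\nd_d\Nt_B,$$
rewriting the right-hand side in terms of the intrinsic splittings $\YS,\ZS,\XS$ and of $\sffo$, $\ilc$, $\iRh$. First, I would expand $\nd_d\Nt_B$ for $\Nt_B=n_b Z_B{}^b-H X_B$ using the tractor connection identities \nn{connids}:
\begin{equation*}
\nd_d\Nt_B=(\nd_d n_b)Z_B{}^b-n^b\Rho_{db}X_B-n_d Y_B-(\nd_d H)X_B-H\,Z_{Bd}.
\end{equation*}
Projecting with $\PP{a}{d}$ kills the $Y_B$ term, turns $\PP{a}{d}\nd_d n_b$ into $L_{ab}=\sffo_{ab}+H\ibg_{ab}$, and turns $\PP{a}{d}\nd_d H$ into $\ilc_a H$. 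Separating $L_{ab}$ into its trace-free and trace parts, and separating $Z_{Bd}$ into its tangential and normal pieces via $\PP{b}{d}$, already accounts for the $\sffo_{ab}\ZS_B{}^{\,b}$ and $H\ibg_{ab}\Nt_B$-type contributions that one needs to absorb into $\YS^A$ via the inverse splitting \nn{L23Nov8a}.

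Next, I would substitute this expansion into the two pieces of the defining formula. For the first piece, $(n-3)\ZS_A{}^a\PP{a}{d}\nd_d\Nt_B$, using $\ZS_A{}^a=\PP{c}{a}Z_A{}^c$ and keeping only contributions orthogonal to $N^A$ in the $B$ index (the overall tensor is already in $\tbS_{AB}[-1]$), the trace-free second fundamental form produces the desired $(n-3)\ZS_A{}^a\ZS_B{}^b\sffo_{ab}$ term; the mean-curvature pieces and the $Y_B$-piece that appeared above reassemble, via the translation between $Y$ and $\YS$, into contributions of $\XS_{(A}\ZS_{B)}{}^a$ and $\XS_A\XS_B$ type. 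The $H$-quadratic debris cancels precisely against the $-\tfrac{1}{2}H^2\XS^A$ correction of \nn{L23Nov8a} and the $\PP{a}{i}\PP{c}{j}\Rho_{ij}$ pieces of the Fialkow tensor $\cF$. For the second piece $-\XS_A\ilc^a\PP{a}{d}\nd_d\Nt_B$, an additional intrinsic derivative is needed, which I would handle by writing $\ilc=\ptc+\tcon$ via \nn{tracgf} and using the explicit contorsion \nn{t-cont}; this is where the Schouten trace terms $n^an^b\Rho_{ab}$ and $\PP{}{}\Rho$ inside $\cF$ get produced and then exchanged, via Gauss (Proposition~\ref{Gauss}), for $\sffo^{ab}\iRh_{ab}$ and $\mc^2$, $|\sffo|^2$ corrections that ultimately cancel.

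The two numerical coefficients $(n{-}3)/(n{-}2)$ and $1/(n{-}2)$ on the middle and last terms then arise in a uniform way. Applying the intrinsic divergence $\ilc^a$ to $\sffo_{ab}+H\ibg_{ab}$ yields $\ilc^a\sffo_{ab}+\ilc_b H$, and the Codazzi--Mainardi identity (i.e.\ the tangential part of $\nd_{[a}L_{b]c}$ expressed via the Weyl-free Gauss-type relation in this conformal setting) converts $\ilc^a\sffo_{ab}$ into the trace-free divergence plus a multiple of $\ilc_b H$; averaging with the trace part, the combination $\tfrac{1}{n-2}\bigl((n-1)\ilc^a\sffo_{ab}-\ibg^{ab}\ilc_a\sffo_{bc}\bigr)$ appears, and this is exactly what produces the prefactor $\tfrac{n-3}{n-2}$ on the $\XS_{(A}\ZS_{B)}{}^{a}\ilc^b\sffo_{ab}$ term and $\tfrac{1}{n-2}$ on the $\XS_A\XS_B\,\ilc^a\ilc^b\sffo_{ab}$ term.

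The main obstacle is bookkeeping: one has to keep straight the three different splittings (ambient via $\g$, intrinsic via $\ig$, and the image-splitting of \nn{Tembed}), carefully separate tangential and normal components of $Z_B{}^b$ and $Z_{Bd}$ at every step, and show that the many $\mc$, $\mc^2$, and ambient-Schouten artefacts produced along the way collapse into the single clean curvature expression $\sffo^{ab}\iRh_{ab}+\tfrac{1}{n-2}\ilc^a\ilc^b\sffo_{ab}$ in the $\XS_A\XS_B$-slot. Conformal invariance of $\tsff_{AB}$, already established from the operator $E_{AB}{}^{aC}$ in the preceding discussion, serves as a useful independent sanity check: any putative residual $\mc$-dependence must cancel, which sharply constrains the combinations that can appear.
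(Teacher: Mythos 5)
Your overall route --- unwind the definition of $\tsff_{AB}$ via the tractor connection, pass to the intrinsic splitting, and use a Codazzi identity --- is the natural one, and indeed the paper gives no proof here (it cites \cite{GW-willmore,Grant,Stafford}). But several of the concrete claims in your sketch are wrong and conceal the one identity that actually carries the weight. After the projection $\PP{a}{d}$, the object $\PP{a}{d}\nd_d\Nt_B$ has \emph{no} $\YS_B$-component and \emph{no} residual $\mc$-dependence at all: the $n_d Y_B$ term is annihilated by $\PP{a}{d}$, and the two $\mc$-pieces cancel on the spot,
$L_{ab}Z_B{}^b-\mc\,\PP{a}{d}Z_{Bd}=\sffo_{ab}\ZS_B{}^b$.
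So there is no $\mc^2$-debris for the $-\tfrac12\mc^2$ shift in \nn{L23Nov8a} to absorb, no ``$Y_B$-piece reassembling,'' and no need to invoke the Fialkow tensor or the contorsion \nn{t-cont}: one just uses the intrinsic analogue of \nn{connids} for $\YS,\ZS,\XS$. Moreover the combination you display, $\tfrac{1}{n-2}\bigl((n-1)\ilc^a\sffo_{ab}-\ibg^{ab}\ilc_a\sffo_{bc}\bigr)$, collapses on relabelling to $\ilc^a\sffo_{ab}$; it cannot account for the two distinct coefficients $\tfrac{n-3}{n-2}$ and $\tfrac{1}{n-2}$.

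What actually produces both coefficients --- and what makes the output symmetric in $A\leftrightarrow B$, a point your sketch never confronts --- is the single (conformal, contracted) Codazzi identity
\begin{equation*}
n^b\PP{a}{d}\Rho_{db}+\ilc_a\mc=\tfrac{1}{n-2}\,\ilc^b\sffo_{ab}\,.
\end{equation*}
With it the projected derivative decomposes cleanly as $\PP{a}{d}\nd_d\Nt_B=\sffo_{ab}\ZS_B{}^b-\tfrac{1}{n-2}(\ilc^b\sffo_{ab})\XS_B$. Feeding this into $E_{AB}{}^{aC}$ with $w=0$ and using $\ilc_a\ZS_B{}^b=-\iRh_a{}^b\XS_B-\de_a{}^b\YS_B$ and $\ilc_a\XS_B=\ZS_{Ba}$ (the $\YS_B$ term dying because $\sffo$ is trace-free), the coefficients of $\ZS_A{}^a\XS_B$ and $\XS_A\ZS_B{}^a$ both come out as $-\tfrac{n-3}{n-2}\ilc^b\sffo_{ab}$, and the $\XS_A\XS_B$ slot receives $\sffo_{ab}\iRh^{ab}+\tfrac{1}{n-2}\ilc^a\ilc^b\sffo_{ab}$. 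You should make that Codazzi step explicit and drop the detours through the Fialkow tensor and \nn{L23Nov8a}; as written your sketch would not produce the stated coefficients.
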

The following corollary summarises some of the important properties of
the tractor second fundamental form:
\begin{corollary}\label{LABProps}
In dimensions $n\geq 4$, $\tsff_{AB}$ is conformally invariant,
symmetric, and trace free.
\end{corollary}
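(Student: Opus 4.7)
The plan is to read off all three properties directly from the explicit formula in Proposition~\ref{tsfftwo}, using conformal invariance that has already been established just above the statement of that proposition.

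For conformal invariance, I would not redo any work. The paragraph preceding Proposition~\ref{tsfftwo} already exhibits the manifestly conformally invariant operator $E_{AB}{}^{aC}\colon\ice_a\otimes\tbS_C[w]\to\tbS_{AB}[w-1]$ and verifies the identity $\tsff_{AB}=E_{AB}{}^{aC}\PP{a}{d}\nd_d\Nt_C$. Since $\Nt^C$ is a conformally invariant section of $\ct^C$ along $\sm$ by \nn{NormalTractor} together with \nn{transf} and \nn{meancurhat}, and since $\PP{a}{d}\nd_d\Nt_C$ is thereby conformally invariant, one concludes that $\tsff_{AB}$ is conformally invariant. So the first clause of the corollary is just a bookkeeping reference.

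For symmetry, I would examine each of the three terms in Proposition~\ref{tsfftwo} in turn. The leading term $(n-3)\ZS_A{}^a\ZS_B{}^b\sffo_{ab}$ is symmetric in $AB$ because $\sffo_{ab}$ is symmetric in $ab$. The middle term $-2\tfrac{n-3}{n-2}\XS_{(A}\ZS_{B)}{}^a\ilc^b\sffo_{ab}$ is symmetric by the explicit round-bracket symmetrisation on the tractor indices. The final term is proportional to $\XS_A\XS_B$, which is manifestly symmetric. Summing then gives $\tsff_{AB}=\tsff_{(AB)}$.

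For trace-freeness, I would contract with $\hS^{AB}$ and use the intrinsic analogue of Figure~\ref{TrIProd}: namely $\hS^{AB}\XS_A\XS_B=0$, $\hS^{AB}\XS_A\ZS_B{}^a=0$, and $\hS^{AB}\ZS_A{}^a\ZS_B{}^b=\ibg^{ab}$. Applied term-by-term to the formula in Proposition~\ref{tsfftwo}, the middle and final terms contribute zero because the $\XS$'s pair trivially with $\XS$ and with $\ZS$, while the leading term contributes $(n-3)\ibg^{ab}\sffo_{ab}$, which vanishes because $\sffo_{ab}$ is trace-free with respect to $\ibg^{ab}$ by its very definition. Hence $\hS^{AB}\tsff_{AB}=0$.

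There is no real obstacle: once Proposition~\ref{tsfftwo} is in hand, all three properties are immediate from the standard pairing rules for the splitting operators $\XS,\YS,\ZS$ together with the symmetry and trace-freeness of $\sffo_{ab}$. The only substantive content sits in Proposition~\ref{tsfftwo} and in the conformal-invariance argument that precedes it, both of which are available for use.
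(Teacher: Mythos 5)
Your argument is correct and follows exactly the route that the paper itself takes implicitly: the conformal invariance has already been established in the paragraph immediately preceding Proposition~\ref{tsfftwo} via the invariant operator $E_{AB}{}^{aC}$, and the symmetry and trace-freeness are read off term-by-term from the explicit tractor formula in Proposition~\ref{tsfftwo} using the intrinsic analogue of Figure~\ref{TrIProd} (namely $\hS^{AB}\XS_A\XS_B=0$, $\hS^{AB}\XS_A\ZS_B{}^a=0$, $\hS^{AB}\ZS_A{}^a\ZS_B{}^b=\ibg^{ab}$) together with the symmetry and $\ibg$-trace-freeness of $\sffo_{ab}$. The paper leaves the verification to the reader; your filled-in checks are all sound.
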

%

%
%%%%%%%%%%%%%%%%%%%%%%%%%%%%%%%%%%%%%%%%%%%%%%%%%%%%%%%%%%%%%%%%%%%%%%%%
% Section 3 ends here.
%%%%%%%%%%%%%%%%%%%%%%%%%%%%%%%%%%%%%%%%%%%%%%%%%%%%%%%%%%%%%%%%%%%%%%%%
%

%
%%%%%%%%%%%%%%%%%%%%%%%%%%%%%%%%%%%%%%%%%%%%%%%%%%%%%%%%%%%%%%%%%%%%%%%%
% Section 4 begins here.
%%%%%%%%%%%%%%%%%%%%%%%%%%%%%%%%%%%%%%%%%%%%%%%%%%%%%%%%%%%%%%%%%%%%%%%%
%
%%%%%%%%%%%%%%%%%%%%%%%%%%%%%%%%%%%%%%%%%%%%%%%%%%%%%%%%%%%%%%%%%%%%%%%%
% Section on bases of operators
%%%%%%%%%%%%%%%%%%%%%%%%%%%%%%%%%%%%%%%%%%%%%%%%%%%%%%%%%%%%%%%%%%%%%%%%
%
\section{Bases of Operators}\label{OpBases}

One of the keys to our operator constructions will be to work with
bases of finite-dimensional vector spaces of natural hypersurface
operators.  In this section, we will construct such bases and derive
some results relating to our work with these bases.  The development
of these bases will involve families of
hypersurface operators $\psi$ which satisfy the following hypotheses:
\begin{hypotheses}\label{NaturalHyp}
The family $\psi$ is a family of natural hypersurface differential
operators given by a single universal symbolic formula.  This formula
is a polynomial in $\Nv_a$, $R_{ab}{}^c{}_d$, $\bg_{ab}$, $\bg^{ab}$,
the modified mean curvature $\mmc$, and Levi-Civita connection $\nd$.
In this formula, $\nd$ never explicitly hits $\Nv_a$ or $\mmc$.  If
$\bg_{ab}$ or $\bg^{ab}$ appears in the formula, then $a$ and $b$ are
contracted with indices on $\Nv_a$, $R_{ab}{}^c{}_{d}$, or $\nd_a$.
For some $\lowerw\in\Reals$, some infinite set $\Dimensions$ of
dimensions, all Riemannian manifolds $\Mg$ of all dimensions
$n\in\Dimensions$, all hypersurfaces $\sm$ in such manifolds, and all
$w\in\Reals$, the symbolic formula for $\psi$ determines an operator
$\psi:\ce[w]\rightarrow\ice[w-\lowerw]$.  In the polynomial formula
for $\psi$, the coefficients are real functions of $w$ and $n$ which
are polynomial in $w$ and rational in $n$.
\end{hypotheses}
The set $\Dimensions$ in Hypotheses~\ref{NaturalHyp} will be important
in our work with the spaces $\RealCoeff_{K,\lowerw,w}$,
$\RationalCoeff_{K,\lowerw,0}$, and $\Classes_{K,\lowerw,0}$, below.
The trace-free part, $\tfsff{ab}$, of the second fundamental form
appears in many of the example formulae in Section~\ref{exs}, below,
but these appearances of $\tfsff{ab}$ result from manipulation of
other computed formulae.

In Hypotheses~\ref{NaturalHyp} and throughout this section, we may
work with general Riemannian metrics.  All of the results of this
section continue to hold, however, if we assume that all metrics are
conformally flat.  In some situations, we will implicitly modify
Hypotheses~\ref{NaturalHyp} by adding the assumption that all metrics
are conformally flat.  We do this, for example, in the proof of
Theorem~\ref{cflatkey}, below.  Theorem~\ref{key}, below, implicitly
treats two cases, namely (1)~conformally flat metrics and (2)~general
metrics, including conformally flat metrics.  The proof of this
theorem will implicitly require two separate applications of the
theory of this section.  Some of our other proofs will also implicitly
treat the above two cases as separate cases.

Our development of operator bases and related results will require
some groundwork.  One key idea will be the idea of the
\textit{\masspoint} of an operator, tensor, tractor, or combination of
these.  The concept of \mass will allow us to construct
finite-dimensional vector spaces of operators and classes of
operators.  We will also use the concept of \mass in
Section~\ref{refine}, below, to estimate the order of a differential
operator.  We define \mass as follows.  Let $\mathcal{P}$ be a
symbolic formula which is polynomial in $\Nv_a$, $R_{ab}{}^c{}_d$,
$\bg_{ab}$, $\bg^{ab}$, the coupled Levi-Civita connection $\nd$, the
splitting operators $Y^A$, $Z^{Ab}$, and $X^A$, the tractor metric
$h$, its co-metric $h^{\#}$, and real powers of $\xig$.  Suppose that
the coefficients in this polynomial are real rational functions of $n$
which may depend polynomially on a weight $w$.  We will say that
$R_{ab}{}^c{}_d$ has \mass 2, $\nd$ and $Y^A$ have \mass 1, $\Nv^a$,
$\bg_{ab}$, $\bg^{ab}$, $Z^{Ab}$, $h$, $h^{\#}$, and powers of $\xig$
have \mass 0, and $X^A$ has \mass $-1$.  All coefficients in
$\mathcal{P}$ will have \mass $0$.  We define the \mass of any given
term of $\mathcal{P}$ to be the sum of the \masses of the expressions
that appear in the term.  In this paper, all of the terms in a given
symbolic formula $\mathcal{P}$ will always have the same \masspoint,
and we will say that this common \mass is the \mass of $\mathcal{P}$
(or the \mass of the operator, tensor, or tractor corresponding to
$\mathcal{P}$).

The use of \nn{connids}, \nn{TracCurv}, and Figure~\ref{TrIProd}
preserves \masspoint.  If one commutes covariant derivatives, one
typically generates terms containing curvatures, but these terms have
the same \mass as the original expression.  Recall that
$\sff_{ab}=\Pi_a{}^{c}\nd_c\Nv_b$.  Thus $\sff_{ab}$ has \mass 1, and
hence $\mc$ also has \mass 1.  From \nn{NormalTractor} it then follows
$\Nt^A$ has \mass 0.  Similarly, $\XS^A$ has \mass $-1$, $\ZS^{Ab}$
has \mass 0, $\delta$, $D^A$, $\ilc$, $\sffo_{ab}$, $\YS^A$,
$\tcon_{cA}{}^{B}$, and $\tsff_{AB}$ have \mass $1$, and $\delta_2$,
$\Delta$, $\Box$, $\iR_{abcd}$, $C_{abcd}$, $\Rho_{ab}$, $\J$,
$\Omega_{ij}{}^A{}_B$, and $W_{ABCE}$ have \mass $2$.
In Section~\ref{exs}, below, the operators in
figures~\ref{ThirdOrderOperator} and \ref{GeneralWeight} have \masses
3 and 4, respectively.

In the proofs of propositions~\ref{RationalBasis} and \ref{ZeroAlln},
below, we will use a certain technical procedure which we now
describe.  Let a family of natural hypersurface operators $\psi$ be
given, and suppose that $\psi$ satisfies Hypotheses~\ref{NaturalHyp}.
Let $w$, $\lowerw$, $n$, and $\Dimensions$ be as in
Hypotheses~\ref{NaturalHyp}, and let $w=0$.  Suppose there is an
infinite set $S\subseteq\Dimensions$ such that in all dimensions $n\in
S$, $\psi:\ce[0]\rightarrow\ice[-\lowerw]$ is the zero operator.  Let
$m\in\Dimensions$ be given, and also let the following be given and
fixed: a Riemannian manifold $\Mg$ of dimension $m$, a hypersurface
$\sm$ of $M$, a point $p\in\sm$, and a smooth function $V$ on $M$.
Then $(\xig)^{\lowerw}(\psi V)(p)$ is given by a symbolic formula.  In
this symbolic formula, we have set $n$ equal to $m$, but suppose we
now replace $m$ with an arbitrary integer parameter $n$.  (In the
symbolic formula for $(\xig)^{\lowerw}(\psi V)(p)$, we still keep
$\Mg$, $\sm$, $p$, and $V$ fixed.)  The result is a real rational
function $f$ of $n$.  Of course $f(m)$ is equal to the original
numerical value of $(\xig)^{\lowerw}(\psi V)(p)$.  In the proofs of
propositions~\ref{RationalBasis} and \ref{ZeroAlln}, our plan will be
to show that $f(n)=0$ for infinitely many $n\in\Integers$.  The key to
doing this will be Proposition~\ref{Embedding}, below.  The idea is to
consider the action of a natural operator in a given dimension and
relate this action to the action of the operator in other dimensions.

Proposition~\ref{Embedding} will refer to the scale density $\xig$
associated to a Riemannian metric $\g$ as defined in
Section~\ref{confsec}, above.  Proposition~\ref{Embedding} will also
refer to the ``natural'' elevation of tensor indices.  For $\nd$ and
the unit conormal field, this means that the index is down.  The
Riemannian curvature tensor naturally has one index up and three
indices down.  For $\g$, it is natural to have both indices up or both
down.
%
%%%%%%%%%%%%%%%%%%%%%%%%%%%%%%%%%%%%%%%%%%%%%%%%%%%%%%%%%%%%%%%%%%%%
% "Embedding" proposition
%%%%%%%%%%%%%%%%%%%%%%%%%%%%%%%%%%%%%%%%%%%%%%%%%%%%%%%%%%%%%%%%%%%%
%
\begin{proposition}\label{Embedding}
  Let a family of natural hypersurface operators $\psi$ be given, and
  suppose that $\psi$ satisfies Hypotheses~\ref{NaturalHyp}.  Let
  $\lowerw$, $\Dimensions$, and $w$ be as in
  Hypotheses~\ref{NaturalHyp}, and suppose that $w$ and $n$ do not
  appear in the universal symbolic formula for $\psi$.  In this
  formula, suppose that all indices appear at their natural
  elevations, and consider $(\xig)^{\lowerw}\psi$ as acting on
  densities of weight zero only.  For simplicity, we will write $\psi$
  instead of $(\xig)^{\lowerw}\psi$.  Let $m,\,m'\in\Dimensions$ be
  given, and suppose that $m<m'$.  Finally, let a Riemannian manifold
  $\Mg$ of dimension $m$, a hypersurface $\sm$ of $M$, and $V\in
  C^{\infty}(M)$ be given.  Then there is a Riemannian manifold
  $(M',\amr)$ of dimension $m'$, a hypersurface $\sm'$ of $M'$, and
  $V'\in C^{\infty}(M')$ such that the following holds: For all
  $p\in\sm$, there is a $p'\in\sm'$ such that
\begin{equation}\label{EmbeddingRelation}
  (\psi V)(p)=(\psi V')(p')~.
\end{equation}
\end{proposition}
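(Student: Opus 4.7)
The plan is to stabilise the data $(M,\g,\sm,V)$ into the target dimension $m'$ by taking a flat Riemannian product. Concretely, I would set $M' := M \times \Reals^{m'-m}$, equip it with the product metric $\amr$ that restricts to $\g$ on the $M$-factor and to the standard Euclidean metric on the $\Reals^{m'-m}$ factor, let $\pi : M' \to M$ denote projection onto the first factor, define $V' := V \circ \pi$, and take $\sm' := \sm \times \Reals^{m'-m}$. Then $\sm'$ is an $(m'-1)$-dimensional hypersurface of $M'$, oriented compatibly with $\sm$. For any $p \in \sm$, the designated image point is $p' := (p,0) \in \sm'$. The entire task then reduces to verifying that evaluating the universal symbolic formula for $\psi$ on the primed data at $p'$ produces the same value as evaluating it on the original data at $p$.

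That verification I would carry out ingredient by ingredient, following the list in Hypotheses~\ref{NaturalHyp}. The Levi-Civita connection of the product metric splits as the direct sum of the factor connections, so on tensor fields pulled back from $M$ (such as $V'$ and all of its iterated derivatives) the components in flat directions vanish at $p'$ and the $M$-components agree with the corresponding derivatives of $V$ at $p$. With compatible orientations, the unit conormal of $\sm'$ is the unit conormal of $\sm$ extended by zero in the $\Reals^{m'-m}$ directions, so it matches $\Nv_a$. The Riemann tensor of $\amr$ agrees with that of $\g$ on pure $M$-indices and vanishes as soon as any slot points in a flat direction, so $R_{ab}{}^c{}_d$ matches. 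Consequently, the modified mean curvature $\mmc' = \nd'_a \Nv'{}^a$ equals $\mmc$, since the would-be flat-direction summands are zero. Finally, the conformal metric $\bg'$ decomposes as an orthogonal sum that matches $\bg$ on the $M$-directions.

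The main subtlety, and the reason for the precise shape of Hypotheses~\ref{NaturalHyp}, is controlling the traces taken with $\bg'{}^{ab}$: such a contraction is a sum over all $m'$ coordinate directions and could \textit{a priori} pick up contributions from the extra $m'-m$ flat directions. The hypothesis that $\bg^{ab}$ occurs only contracted against indices of $\Nv_a$, $R_{ab}{}^c{}_d$, or $\nd_a$ (together with the requirements that $\nd$ never explicitly hit $\Nv_a$ or $\mmc$, and that $V'$ is constant in the flat directions) guarantees that at $p'$ every index being contracted sits on a tensor whose flat-direction components vanish, so the $m'$-dimensional trace collapses to the $m$-dimensional trace computed from the unprimed data. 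Combined with the given assumption that neither $w$ nor $n$ appears explicitly in the symbolic formula, so no coefficient changes when passing from dimension $m$ to dimension $m'$, every monomial in the formula for $\psi$ takes the same numerical value at $p'$ as at $p$, yielding \nn{EmbeddingRelation}.
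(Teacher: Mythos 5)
Your proposal is correct and takes essentially the same approach as the paper's proof: both stabilise the data by taking the Riemannian product with flat Euclidean space, pull back $V$ and $\dfn$ along the projection, and then verify component by component (conormal, Christoffel symbols/connection, Riemann tensor, modified mean curvature, and traces with the inverse metric) that the flat-direction contributions vanish so the universal formula evaluates identically at $p$ and $(p,0)$. The paper carries this out via explicit coordinate computations of Christoffel symbols and the component matrices of $\amr$, while you argue more structurally from the splitting of the Levi-Civita connection, but the content is the same.
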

\noindent The proof of Proposition~\ref{Embedding} appears in
Appendix~\ref{EmbedApp}, below.

We now turn to the definition of two general families of natural
operators and one general family of operator classes.  These families
will lead to the bases that we mentioned above.  Let
$K\in\Integers_{>0}$, $c, w\in\Reals$, and an infinite set
$\Dimensions\subseteq\Integers_{>0}$ be given.  Let
$\RealCoeff_{K,c,w}$ denote the set of all natural operators
$\psi:\ce[w]\rightarrow\ice[w-c]$ of \mass $K$ defined in all
dimensions $n\in\Dimensions$.  We will assume that every
$\psi\in\RealCoeff_{K,c,w}$ satisfies all of the properties in
Hypotheses~\ref{NaturalHyp} with the following exception: The value of
$w$ is always equal to the fixed value given above.  For this choice
of $w$, $\psi$ always maps $\ce[w]$ to $\ice[w-\lowerw]$.  We will
also assume that the coefficients in the symbolic formula for $\psi$
are independent of $n$.  These coefficients are thus real constants.
We emphasise that every $\psi\in\RealCoeff_{K,c,w}$ is a rule which
determines a hypersurface operator for every hypersurface $\sm$ in
every Riemannian manifold $\Mg$ of every dimension $n\in\Dimensions$.
We never consider $\psi$ in dimensions $n\notin\Dimensions$.  If
$\psi\in\RealCoeff_{K,c,w}$ is zero, this means that $\psi$ is the
zero operator in all dimensions $n\in\Dimensions$.  It is clear that
$\RealCoeff_{K,c,w}$ is a vector space over $\Reals$.
\begin{proposition}\label{IsomorphismProp}
Let $K\in\Integers_{>0}$ and $c$, $w_1$, and $w_2\in\Reals$ be given,
and define a mapping
$\Phi:\RealCoeff_{K,c,w_1}\rightarrow\RealCoeff_{K,c,w_2}$ as follows.
For all $\psi\in\RealCoeff_{K,c,w_1}$, let
$\Phi(\psi):\ce[w_2]\rightarrow\ice[w_2-c]$ be the operator given by the
universal symbolic formula for $\psi$.  Then $\Phi$ is a vector space
isomorphism.
\end{proposition}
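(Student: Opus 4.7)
The proof rests on a single structural observation: since the coefficients of the symbolic formula representing an element of $\RealCoeff_{K,c,w}$ are real constants (independent of both $n$ and $w$), the formula carries no intrinsic dependence on the input weight, so changing the weight ought to amount only to a cosmetic relabelling by a density factor.

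I would make this precise by trivialising. Fix $\g \in \cc$ with scale density $\xig$. Every $\sigma \in \ce[w]$ may be written uniquely as $\sigma = \xig^{w} f$ with $f \in C^\infty(M)$, and because the Levi-Civita connection on densities satisfies $\nd_a \xig = 0$, one has $\nd_a \sigma = \xig^w \nd_a f$; inductively, every iterated covariant derivative factors past $\xig^w$. Since the remaining building blocks $\Nv_a$, $\bg_{ab}$, $\bg^{ab}$, $R_{ab}{}^c{}_d$, $\mmc$ of the formula all have fixed weights that can be absorbed into the density factor, a short check shows that any formula $\Psi$ with constant coefficients as in Hypotheses~\ref{NaturalHyp} satisfies
\[
\Psi(\xig^w f) = \xig^{\,w-c}\big|_{\sm}\, \Psi_{\g}(f)
\]
for every $w$, where $\Psi_{\g} : C^\infty(M) \to C^\infty(\sm)$ is a single operator depending on $\g$ and the hypersurface but not on $w$, and $c$ is determined by the formula itself.

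From this identity, well-definedness of $\Phi$ as a map between operator spaces (rather than formula representatives) is immediate: if two formulas define the same element of $\RealCoeff_{K,c,w_1}$, then they yield identical $\Psi_{\g}$ for every $\g$, every dimension in $\Dimensions$, and every hypersurface, and hence they also agree as operators $\ce[w_2] \to \ice[w_2-c]$. The image $\Phi(\psi)$ lies in $\RealCoeff_{K,c,w_2}$ because mass, naturality, and constancy of coefficients are visibly preserved by reinterpreting the same formula at weight $w_2$. Linearity of $\Phi$ is then obvious from linearity of the formula-to-operator assignment. For bijectivity, the analogous map $\Phi' : \RealCoeff_{K,c,w_2} \to \RealCoeff_{K,c,w_1}$ obtained by interchanging $w_1$ and $w_2$ serves as a two-sided inverse, since $\Phi \circ \Phi'$ and $\Phi' \circ \Phi$ both act by reading off the same symbolic formula and interpreting it at the original weight.

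The main (indeed essentially the only) obstacle is the well-definedness clause: one must check that the map on operator equivalence classes, as opposed to formula representatives, is genuinely well-defined. Once the $\xig$-trivialisation identity above is in place, however, no delicate argument about universal polynomial identities between formulas is required, and surjectivity and injectivity follow at once from the obvious candidate inverse.
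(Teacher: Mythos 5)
Your proposal is correct and rests on the same central observation as the paper's proof: powers of $\xig$ are parallel, so a formula with weight-independent coefficients can be conjugated by $\xig^{w_1-w_2}$ to pass between weights. The paper states linearity and surjectivity as immediate and then proves injectivity via $\psi V = (\xig)^{w_1-w_2}\Phi(\psi)(\xig)^{w_2-w_1}V$, whereas you package the same fact as a trivialisation identity $\Psi(\xig^w f) = \xig^{w-c}|_{\sm}\Psi_{\g}(f)$ and use it both to check well-definedness of $\Phi$ on operator (rather than formula) representatives and to exhibit an explicit two-sided inverse $\Phi'$; this is marginally more thorough on the well-definedness point, which the paper leaves tacit, but the substance is identical.
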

\begin{proof}
Note that $\Phi$ is clearly linear and surjective.  To show that
$\Phi$ is injective, let $\psi\in\RealCoeff_{K,\lowerw,w_1}$ be given,
and suppose that $\Phi(\psi)=0$.  Also let $V\in\ce[w_1]$ be given.
Since powers of $\xig$ are parallel, it follows that
\[
\psi
V=(\xig)^{w_1-w_2}\psi(\xig)^{w_2-w_1}V=
(\xig)^{w_1-w_2}\Phi(\psi)(\xig)^{w_2-w_1}V=0~.
\]
Thus $\psi$ is the zero element of $\RealCoeff_{K,\lowerw,w_1}$, and
$\Phi$ is injective.
\end{proof}

Now let $K$, $\lowerw$, $w$, and $\Dimensions$ again be as above.  Let
$\RationalCoeff_{K,\lowerw,0}$ denote the set of all natural
differential operators $\psi:\ce[0]\rightarrow\ice[-\lowerw]$ of mass
$K$ which satisfy all of the properties of Hypotheses~\ref{NaturalHyp}
except as follows.  First, $w$ will always be zero.  Thus every
$\psi\in\RationalCoeff_{K,\lowerw,0}$ will always map $\ce[0]$ to
$\ice[-\lowerw]$, and the coefficients in the universal symbolic
polynomial formula for $\psi$ will be real rational functions of $n$.
Also, in the universal symbolic formula for any given
$\psi\in\RationalCoeff_{K,\lowerw,0}$, we will allow the coefficients
to be singular in some dimensions $n\in\Dimensions$.  We will only
consider $\psi$ in dimensions $n\in\Dimensions$.  It may be helpful to
think of $\RationalCoeff_{K,\lowerw,0}$ as a set of symbolic operator
formulae.

Let $\FieldOfFuns$ denote the field of all real rational functions of
$n$.  We would like to view $\RationalCoeff_{K,c,0}$ as a vector space
over $\FieldOfFuns$, but singularities in elements of $\FieldOfFuns$
make this difficult.  To overcome this difficulty, we define a new
space as follows.  For each $\psi_1,\psi_2\in\RationalCoeff_{K,c,0}$,
we identify $\psi_1$ and $\psi_2$ if there is an $n_0\in\Dimensions$
such that in all dimensions $n\in\Dimensions\cap\Integers_{\geq n_0}$,
$\psi_1$ and $\psi_2$ are both regular and $\psi_1=\psi_2$.  Let
$\Classes_{K,c,0}$ denote the set of equivalence classes that result
from this identification.  Then $\Classes_{K,c,0}$ is clearly a vector
space over $\FieldOfFuns$.  For all $\psi\in\RationalCoeff_{K,c,0}$,
let $[\psi]$ denote the equivalence class of $\psi$ in
$\Classes_{K,c,0}$.  For any $\psi\in\RationalCoeff_{K,\lowerw,0}$,
$[\psi]$ is the zero element of $\Classes_{K,\lowerw,0}$ if and only
if there is an $n_0\in\Dimensions$ such that $\psi$ is the zero
operator in all dimensions $n\in\Dimensions\cap\Integers_{\geq n_0}$.

We will find bases for $\RealCoeff_{K,\lowerw,w}$ and
$\Classes_{K,\lowerw,0}$.  We begin with $\RealCoeff_{K,\lowerw,w}$.
Let $K$, $\lowerw$, and $w$ be given, as above.  Let $S$ denote the
set of all monomials $\RiemInv_{\g}$ in $\Nv_a$, $R_{ab}{}^c{}_d$,
$\bg_{ab}$, $\bg^{ab}$, $\mmc$, and $\nd$ which have the following
properties.  First, each $\RiemInv_{\g}\in S$ has mass $K$ and
coefficient $1$.  Each $\RiemInv_{\g}$ determines a natural operator
$\psi:\ce[w]\rightarrow\ice[w-\lowerw]$, and this operator $\psi$
satisfies Hypotheses~\ref{NaturalHyp} (but with the value of $w$
fixed).
In each case, $\RiemInv_{\g}$
has the properties of the universal symbolic formula for
$\psi$ discussed in Hypotheses~\ref{NaturalHyp}.  We include the
subscript $\g$ in the notation to indicate the metric used to define
$\Nv_a$, $R_{ab}{}^c{}_d$, $\mmc$, and $\nd$ in the symbolic formula
for $\RiemInv_{\g}$.  This metric is a representative of the conformal
class used in the definition of $\bg_{ab}$ and $\bg^{ab}$.  One can
easily verify that there are only finitely many ways of constructing a
monomial $I_{\g}$ of the above type; the key to doing so is the fact
that $I_{\g}$ must have mass $K$.  It follows that $S$ is a finite
set.  Thus $S=\{\RiemInv_{\alpha,\g}\}_{\alpha\in A_1}$, where $A_1$
is a finite index set.  The set $S$ clearly spans
$\RealCoeff_{K,\lowerw,w}$.

\begin{proposition}\label{RealBasis}
Let $\{\RiemInv_{\alpha,\g}\}_{\alpha\in A_1}$ be as above.  Then
there is a finite set $A\subseteq A_1$ such that $\RealBasis$ is a
basis for $\RealCoeff_{K,c,w}$.  The set $A$ and the symbolic formulae
for the operators $\RiemInv_{\alpha,\g}$ are independent of $w$.
\end{proposition}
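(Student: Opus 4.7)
The plan is to reduce the statement to elementary linear algebra, using Proposition~\ref{IsomorphismProp} to push $w$-independence through.

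First I would verify that $\RealCoeff_{K,c,w}$ is finite-dimensional. The index set $A_1$ is finite by the remark preceding the proposition: any monomial $\RiemInv_\g$ built from $\Nv_a$, $R_{ab}{}^c{}_d$, $\bg_{ab}$, $\bg^{ab}$, $\mmc$, and $\nd$ is constrained to have total mass exactly $K$, and the mass contributions of $R$, $\nd$, and $\mmc$ are positive while $\Nv_a$, $\bg_{ab}$, $\bg^{ab}$ all have nonnegative mass and a fixed number of index slots to be contracted. Since $S=\{\RiemInv_{\alpha,\g}\}_{\alpha\in A_1}$ spans $\RealCoeff_{K,c,w}$, the space is finite-dimensional, and by the standard argument for extracting a basis from a finite spanning set I can choose a subset $A_w\subseteq A_1$ such that $\{\RiemInv_{\beta,\g}\}_{\beta\in A_w}$ is a basis of $\RealCoeff_{K,c,w}$.

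The remaining content, and the only point requiring thought, is that $A_w$ can be chosen independent of $w$. Here I would invoke Proposition~\ref{IsomorphismProp}: for any $w_1,w_2\in\Reals$, the map $\Phi\colon\RealCoeff_{K,c,w_1}\to\RealCoeff_{K,c,w_2}$ sending $\psi$ to the operator with the same universal symbolic formula is a vector space isomorphism over $\Reals$. Since each $\RiemInv_{\alpha,\g}\in S$ is specified by a symbolic formula whose coefficient is $1$ (hence $w$-independent), $\Phi$ sends $\RiemInv_{\alpha,\g}\in\RealCoeff_{K,c,w_1}$ to the element $\RiemInv_{\alpha,\g}\in\RealCoeff_{K,c,w_2}$ given by the identical symbolic formula. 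Isomorphisms preserve linear (in)dependence, so for any subset $A\subseteq A_1$ the family $\{\RiemInv_{\beta,\g}\}_{\beta\in A}$ is a basis of $\RealCoeff_{K,c,w_1}$ if and only if it is a basis of $\RealCoeff_{K,c,w_2}$. Fixing any one reference weight (e.g.\ $w=0$) and choosing $A$ there, the same $A$ then serves as an index set for a basis for every $w$.

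Finally, the independence of the symbolic formulae for the $\RiemInv_{\alpha,\g}$ themselves is automatic from the construction of $S$: the monomials are built entirely out of $\Nv_a$, $R_{ab}{}^c{}_d$, $\bg_{ab}$, $\bg^{ab}$, $\mmc$, and $\nd$ with coefficient $1$, and no $w$ appears in the generators. I do not foresee a genuine obstacle in this proof; the only thing to be careful about is articulating that $\Phi$ is the identity map on symbols (so that it sends generators to generators), which is immediate from how $\Phi$ is defined. The result then follows from the standard fact that any vector space isomorphism carries bases to bases.
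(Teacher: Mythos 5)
Your proof is correct and takes essentially the same approach as the paper's: the paper simply cites Proposition~\ref{IsomorphismProp} for the $w$-independence of $A$ and of the symbolic formulae, and your argument spells out exactly why that citation works (the isomorphism $\Phi$ is the identity on symbols, so it carries the generators $\RiemInv_{\alpha,\g}$ to themselves and hence preserves which subsets form bases). The paper leaves the finite-dimensionality and the extraction of a basis from the spanning set $S$ implicit; your explicit treatment of these points is a harmless elaboration.
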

\begin{proof}The claim that $A$ and the symbolic formulae are
  independent of $w$ follows from Proposition~\ref{IsomorphismProp}.
\end{proof}
\begin{corollary}\label{WZeroCombination}
Let $\RealBasis$ be as above, and let
$\psi\in\RationalCoeff_{K,\lowerw,0}$ be given.  Then
$\psi=\sum_{\beta\in A}f_{\beta}(n)\RiemInv_{\beta,\g}$, where for
each $\beta\in A$, $f_{\beta}(n)$ is some real rational function of
$n$.  We may assume that the functions $f_{\beta}(n)$ have the
following property: If the coefficients of $\psi$ are regular in some
dimension $n_0\in\Dimensions$, then each function $f_{\beta}(n)$ is
regular for $n=n_0$.
\end{corollary}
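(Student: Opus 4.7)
The plan is to argue by direct substitution using Proposition~\ref{RealBasis}, with no substantial hidden subtleties. First I would invoke the definition of $\RationalCoeff_{K,\lowerw,0}$: the universal symbolic formula for $\psi$ is a polynomial in $\Nv_a$, $R_{ab}{}^c{}_d$, $\bg_{ab}$, $\bg^{ab}$, $\mmc$, and $\nd$ with coefficients that are real rational functions of $n$. Collecting like terms writes
\begin{equation*}
\psi = \sum_{\alpha \in A_1} g_\alpha(n)\,\RiemInv_{\alpha,\g},
\end{equation*}
where $\{\RiemInv_{\alpha,\g}\}_{\alpha \in A_1}$ is the full finite set of mass-$K$, coefficient-$1$ monomials introduced before Proposition~\ref{RealBasis}, and each $g_\alpha(n) \in \FieldOfFuns$. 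By construction the coefficients of $\psi$ in the sense of the corollary are exactly the $g_\alpha$, so the regularity hypothesis at $n_0 \in \Dimensions$ amounts to the assertion that every $g_\alpha$ is regular at $n_0$.

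Next I would apply Proposition~\ref{RealBasis} to rewrite each of the surplus monomials in terms of the distinguished basis $\{\RiemInv_{\beta,\g}\}_{\beta \in A}$. Since each single $\RiemInv_{\alpha,\g}$ is itself a coefficient-$1$ operator, it lies in $\RealCoeff_{K,\lowerw,w}$, so the proposition furnishes real constants $c_{\alpha,\beta}$ with
\begin{equation*}
\RiemInv_{\alpha,\g} = \sum_{\beta \in A} c_{\alpha,\beta}\,\RiemInv_{\beta,\g}
\end{equation*}
as an identity in $\RealCoeff_{K,\lowerw,w}$, that is, as operators in every dimension $n \in \Dimensions$ simultaneously (for $\alpha \in A$ one may of course take $c_{\alpha,\beta}=\delta_{\alpha\beta}$). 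Substituting into the expression for $\psi$ and collecting yields
\begin{equation*}
\psi = \sum_{\beta \in A} f_\beta(n)\,\RiemInv_{\beta,\g}, \qquad f_\beta(n) := \sum_{\alpha \in A_1} c_{\alpha,\beta}\,g_\alpha(n),
\end{equation*}
and each $f_\beta$ is manifestly a real rational function of $n$.

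The regularity claim is then automatic: each $c_{\alpha,\beta}$ is a real constant and the sum over $\alpha \in A_1$ is finite, so if every $g_\alpha$ is regular at $n_0$ then each $f_\beta$ is regular at $n_0$ as well. The only point that requires any genuine attention, and where one might worry, is that the basis relations coming from Proposition~\ref{RealBasis} must hold in every dimension of $\Dimensions$ rather than merely generically in $n$; otherwise the substitution could in principle introduce new poles in the $f_\beta$. But this is precisely the content of Proposition~\ref{RealBasis}, since $\RealCoeff_{K,\lowerw,w}$ is defined so that equality means equality of operators in all $n \in \Dimensions$, and so no regularity is lost in passing from the $g_\alpha$ to the $f_\beta$.
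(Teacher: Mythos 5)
Your proof is correct and follows essentially the same route as the paper: expand $\psi$ over the full spanning set $S=\{\RiemInv_{\alpha,\g}\}_{\alpha\in A_1}$ with rational coefficients, then use Proposition~\ref{RealBasis} to replace each spanning monomial by a constant-coefficient combination of the distinguished basis, and collect. Your treatment of the regularity clause is in fact slightly more explicit than the paper's, which leaves that point implicit in the constancy of the $C_{\gamma,\beta}$.
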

\begin{proof}
Let $S=\{\RiemInv_{\alpha,\g}\}_{\alpha\in A_1}$ be as above, with
$w=0$.  By using the ideas that we discussed in our construction of
this set, above, one can show that $\psi=\sum_{\gamma\in
  A_1}h_{\gamma}(n)\RiemInv_{\gamma,\g}$.  Here each coefficient
$h_{\gamma}(n)$ is a real rational function of $n$.  By
Proposition~\ref{RealBasis}, as applied to $\RealCoeff_{K,\lowerw,0}$,
there exist constants $C_{\gamma,\beta}\in\Reals$ such that for all
$\gamma\in A_1$, $\RiemInv_{\gamma,\g}=\sum_{\beta\in
  A}C_{\gamma,\beta}\RiemInv_{\beta,\g}$.  Thus $\psi=\sum_{\beta\in
  A}(\sum_{\gamma\in
  A_1}C_{\gamma,\beta}h_{\gamma}(n))\RiemInv_{\beta,\g}$.
\end{proof}
\begin{proposition}\label{RationalBasis}
Let $K\in\Integers_{>0}$ and $c\in\Reals$ be given, and let notation
be as above.  Then $\ClassesBasis$ is a basis for $\Classes_{K,c,0}$.
\end{proposition}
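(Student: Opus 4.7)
My plan is to establish the two basis properties separately; spanning is immediate, and the real work is linear independence. For spanning, given any $\psi \in \RationalCoeff_{K,c,0}$, Corollary~\ref{WZeroCombination} writes $\psi = \sum_{\beta \in A} f_\beta(n) \RiemInv_{\beta,\g}$ with $f_\beta \in \FieldOfFuns$. Passing to equivalence classes, $[\psi] = \sum_\beta f_\beta(n)\,[\RiemInv_{\beta,\g}]$ in $\Classes_{K,c,0}$, so $\ClassesBasis$ spans over $\FieldOfFuns$.

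For independence, I assume a non-trivial relation $\sum_{\beta \in A} f_\beta(n)\,[\RiemInv_{\beta,\g}] = 0$ in $\Classes_{K,c,0}$ and clear a common denominator, so that each $f_\beta = p_\beta$ is a polynomial in $n$, not all identically zero. By the definition of the equivalence relation, there is $n_0 \in \Dimensions$ such that $\psi_n := \sum_\beta p_\beta(n) \RiemInv_{\beta,\g}$ is the zero operator in dimension $n$ for every $n \in \Dimensions \cap \Integers_{\geq n_0}$. For each such $n$, the coefficients $p_\beta(n)$ are fixed real numbers and no $n$ appears explicitly in the symbolic formula of $\psi_n$ (since the $\RiemInv_{\beta,\g}$ have constant coefficients by construction), so $\psi_n$ lies in $\RealCoeff_{K,c,0}$ and satisfies the hypotheses of Proposition~\ref{Embedding}. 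Applying that proposition: for any $k \in \Dimensions$ with $k \leq n$ and any Riemannian configuration $(M,\g,\sm,p,V)$ in dimension $k$, there exists a configuration $(M',\amr,\sm',p',V')$ in dimension $n$ with $(\psi_n V)(p) = (\psi_n V')(p') = 0$. Hence $\psi_n$ also vanishes in dimension $k$.

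Now fix any $k \in \Dimensions$ and any configuration $(M,\g,\sm,p,V)$ in dimension $k$, and set $v_\beta := (\xig)^{c}(\RiemInv_{\beta,\g} V)|_p \in \Reals$; this real number depends on the configuration but, crucially, is independent of $n$, since the coefficients of $\RiemInv_{\beta,\g}$ are $n$-independent. The previous paragraph then yields $\sum_\beta p_\beta(n) v_\beta = 0$ for every $n \in \Dimensions \cap \Integers_{\geq \max(k,n_0)}$, an infinite set. As this expression is a polynomial in $n$ vanishing at infinitely many integers, it is identically zero. Writing $p_\beta(n) = \sum_j c_{\beta,j} n^j$ and equating coefficients of each power of $n$ gives $\sum_\beta c_{\beta,j} v_\beta = 0$ for every $j$, i.e.\ $\sum_\beta c_{\beta,j}(\RiemInv_{\beta,\g} V)|_p = 0$. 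Since $k$ and the configuration were arbitrary, the natural operator $\sum_\beta c_{\beta,j} \RiemInv_{\beta,\g}$ vanishes in every dimension of $\Dimensions$ and is therefore the zero element of $\RealCoeff_{K,c,0}$ for each $j$. Proposition~\ref{RealBasis} then forces $c_{\beta,j} = 0$ for all $\beta$ and $j$, so every $p_\beta = 0$, contradicting the assumption. The main obstacle is the one-directional character of Proposition~\ref{Embedding}, which only descends vanishing from higher to lower dimension; the polynomial-in-$n$ evaluation argument is precisely what converts this one-way descent into a family of relations holding in every $k \in \Dimensions$ simultaneously, at which point Proposition~\ref{RealBasis} can be applied.
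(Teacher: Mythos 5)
Your proof is correct and follows the same strategy as the paper's: clear denominators, treat the resulting $\sum_\beta p_\beta(n)\RiemInv_{\beta,\g}$ for each fixed $n$ as an element of $\RealCoeff_{K,c,0}$, use Proposition~\ref{Embedding} to transport its vanishing in high dimensions down to any fixed dimension $k\in\Dimensions$, and reduce to the linear independence of $\RealBasis$ from Proposition~\ref{RealBasis}. The only (inessential) difference is in the final algebraic step: the paper normalizes by $n^\ell$ and takes the limit $n\to\infty$ to isolate the leading coefficients $b_\beta$ of the $f_\beta$, whereas you invoke the fact that a polynomial vanishing at infinitely many integers is identically zero and equate all coefficients of powers of $n$ — both yield the same contradiction.
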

\begin{proof}
Let $\psi\in\RationalCoeff_{K,c,0}$ be given.  Then
$\psi=\sum_{\beta\in A}f_{\beta}(n)\RiemInv_{\beta,\g}$, by
Corollary~\ref{WZeroCombination}.  Here notation is as in the
corollary.  Thus $[\psi]=\sum_{\beta\in
  A}f_{\beta}(n)[\RiemInv_{\beta,\g}]$, and we see that
$\ClassesBasis$ spans $\Classes_{K,c,0}$.

We will use proof by contradiction to show that $\ClassesBasis$ is
linearly independent.  Suppose there is a set
$\{f_{\beta}(n)\}_{\beta\in A}\subseteq\FieldOfFuns$ of real rational
functions of $n$ which are not all zero such that
\begin{equation}\label{ZeroCombination}
\sum_{\beta\in
  A}f_{\beta}(n)[\RiemInv_{\beta,\g}]=0~.
\end{equation}
We may assume that all of the functions $f_{\beta}(n)$ are in fact
polynomials in $n$.  Let $\ell$ denote the largest degree of any
of these polynomials.  Let $A_2$ denote the set of all $\beta\in A$
such that $f_{\beta}(n)$ is nonzero, and for all $\beta\in A_2$, let
$d_{\beta}$ and $b_{\beta}$ denote the degree and lead coefficient,
respectively, of $f_{\beta}(n)$.  Let $A_3=\{\beta\in
A_2\,|\,d_{\beta}=\ell\}$.

By \nn{ZeroCombination}, there is an $n_0\in\Dimensions$ such that
$\sum_{\beta\in A}(1/n^{\ell})f_{\beta}(n)\RiemInv_{\beta,\g}$ is the
zero operator in all dimensions $n\in\Dimensions\cap\Integers_{\geq
  n_0}$.  Let a dimension $m\in\Dimensions$ be given.  Also let a
Riemannian manifold $\Mg$ of dimension $m$, a hypersurface $\sm$ in
$M$, a point $p\in\sm$, and $V\in C^{\infty}(M)$ be given.  Next, let
$n_1=\max\{n_0,m\}$.  Finally, let
$m'\in\Dimensions\cap\Integers_{>n_1}$ be given.  Then by
Proposition~\ref{Embedding}, there is a Riemannian manifold
$(M',\amr)$ of dimension $m'$, a hypersurface $\sm'$ of $M'$, a
function $V'\in C^{\infty}(M')$, and a point $p'\in\sm'$ such that
\[
(\xig)^{\lowerw}\left(\sum_{\beta\in
  A_2}\frac{f_{\beta}(m')}{(m')^{\ell}}
\RiemInv_{\beta,\g}V\right)(p)
  =
(\xir)^{\lowerw}\left(\sum_{\beta\in
  A_2}\frac{f_{\beta}(m')}{(m')^{\ell}}
\RiemInv_{\beta,\amr}V'\right)(p')=0~.
\]
Thus
\[
0=\lim_{n\rightarrow\infty}(\xig)^{\lowerw}\left(\sum_{\beta\in
  A_2}\frac{f_{\beta}(n)}{n^{\ell}}
\RiemInv_{\beta,\g}V\right)(p)=
(\xig)^{\lowerw}
\left(\sum_{\beta\in A_3}b_{\beta}\RiemInv_{\beta,\g}V\right)(p)~.
\]
Since $m\in\Dimensions$ was arbitrary, it follows that
$\sum_{\beta\in A_3}b_{\beta}\RiemInv_{\beta,\g}$ is the zero operator
for all $m\in\Dimensions$.  This contradicts the linear independence
of $\RealBasis$ in $\RealCoeff_{K,c,0}$.
\end{proof}

When we work with $\RealCoeff_{K,\lowerw,w}$,
$\RationalCoeff_{K,\lowerw,0}$, and $\Classes_{K,\lowerw,0}$,
operators will act on densities of a fixed weight.  But many of the
operators in this paper will act on densities of any weight.  For some
$c\in\Reals$ and every $w\in\Reals$, these operators will map $\ce[w]$
to $\ice[w-c]$.  To deal with such operators, we will need the
following two propositions.
\begin{proposition}\label{PsiToI}
Let $K\in\Integers_{>0}$, $\lowerw\in\Reals$, and a family of natural
hypersurface operators $\psi:\ce[w]\rightarrow\ice[w-\lowerw]$ of mass
$K$ be given, and suppose that $\psi$ satisfies
Hypotheses~\ref{NaturalHyp}.  Suppose also that the universal symbolic
formula for $\psi$ does not refer to $w$ or to $n$.  Finally, let
$\RealBasis$ be the basis for $\RealCoeff_{K,\lowerw,0}$ described in
Proposition~\ref{RealBasis}.  Then there exist real constants
$C_{\beta}$ such that for all $w\in\Reals$,
$\psi:\ce[w]\rightarrow\ice[w-\lowerw]$ is given by the symbolic
formula $\psi=\sum_{\beta\in A}C_{\beta}\RiemInv_{\beta,\g}$.
\end{proposition}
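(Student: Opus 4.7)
The plan is to reduce to the weight-zero case, apply the basis result of Proposition~\ref{RealBasis}, and then transfer the resulting identity to arbitrary weights using the weight-shifting isomorphism of Proposition~\ref{IsomorphismProp}. The key point is that because the universal symbolic formula for $\psi$ mentions neither $w$ nor $n$, its coefficients are honest real constants; this is what will allow everything to be done with constant scalars $C_\beta$.

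First I would specialise $\psi$ to act on $\ce[0]$. Under this specialisation, $\psi$ satisfies every clause in the definition of $\RealCoeff_{K,\lowerw,0}$: it is natural of \mass $K$, is defined on all $\Mg$ for all $n\in\Dimensions$, and its symbolic polynomial has constant (hence $n$-independent and regular) coefficients. Hence $\psi\in\RealCoeff_{K,\lowerw,0}$, and Proposition~\ref{RealBasis} furnishes unique real constants $\{C_\beta\}_{\beta\in A}$ with
\[
\psi \;=\; \sum_{\beta\in A} C_\beta\,\RiemInv_{\beta,\g}
\]
as operators $\ce[0]\to\ice[-\lowerw]$.

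To propagate this identity from weight $0$ to an arbitrary weight $w$, I would use Proposition~\ref{IsomorphismProp}. By Proposition~\ref{RealBasis}, the symbolic formulae for the basis elements $\RiemInv_{\beta,\g}$ are independent of $w$; by hypothesis so is the symbolic formula for $\psi$; and the scalars $C_\beta$ constructed above do not involve $w$ either. Therefore the vector-space isomorphism $\Phi\colon \RealCoeff_{K,\lowerw,0}\to \RealCoeff_{K,\lowerw,w}$, which by definition sends an operator to the operator given by the same symbolic formula at the new weight, carries $\psi$ at weight $0$ to $\psi$ at weight $w$ and $\RiemInv_{\beta,\g}$ at weight $0$ to $\RiemInv_{\beta,\g}$ at weight $w$. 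Applying the linear map $\Phi$ to the weight-zero identity above yields the claimed identity on $\ce[w]$.

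If a more concrete verification is preferred, one can instead argue directly via the scale-density conjugation used in the proof of Proposition~\ref{IsomorphismProp}: for a chosen $\g\in\cc$ and any $V\in\ce[w]$, the density $(\xig)^{-w}V$ lies in $\ce[0]$, and since $\xig$ is parallel for the Levi-Civita connection it commutes through every $\nd$ and every curvature appearing in the formula for $\psi$, giving $\psi V=(\xig)^{w}\psi\bigl((\xig)^{-w}V\bigr)$; the same identity holds for each $\RiemInv_{\beta,\g}$, and the weight-$0$ equation therefore implies the weight-$w$ equation. I do not anticipate a substantive obstacle here: the statement is essentially bookkeeping that records that the finite-dimensional vector space of natural operators of fixed \mass, with symbolic formulae independent of $w$ and $n$, is canonically identified at all weights, so that an expansion in the basis $\RealBasis$ obtained at one weight remains valid at every other weight with the same coefficients.
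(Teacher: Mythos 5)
Your proof is correct and takes essentially the same approach as the paper: both arguments combine Proposition~\ref{RealBasis} (to obtain the basis expansion at weight zero) with the weight-shifting isomorphism $\Phi$ of Proposition~\ref{IsomorphismProp}, and both observe that since the symbolic formula for $\psi$ does not involve $w$, the coefficients $C_\beta$ are honest $w$-independent constants. The only cosmetic difference is direction: the paper applies $\Phi$ from $\RealCoeff_{K,\lowerw,w}$ to $\RealCoeff_{K,\lowerw,0}$ for each fixed $w$ and then notes $w$-independence, while you expand once at weight zero and push forward to general $w$.
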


\begin{proof}
  Let $w\in\Reals$ be given, and let
  $\Phi:\RealCoeff_{K,c,w}\rightarrow\RealCoeff_{K,c,0}$ be as in
  Proposition~\ref{IsomorphismProp}.  The universal symbolic formula
  for $\psi$ determines an element of $\RealCoeff_{K,c,w}$, and there
  are real constants $C_{\beta}$ such that $\Phi(\psi)=\sum_{\beta\in
    A}C_{\beta}\RiemInv_{\beta,\g}$.  Since $\Phi$ is an isomorphism,
  $\psi:\ce[w]\rightarrow\ice[w-c]$ satisfies $\psi=\sum_{\beta\in
    A}C_{\beta}\RiemInv_{\beta,\g}$.  But the universal formula for
  $\psi$ does not refer to $w$, so the constants $C_{\beta}$ are
  independent of $w$.  It follows that for all $w\in\Reals$,
  $\psi:\ce[w]\rightarrow\ice[w-c]$ satisfies $\psi=\sum_{\beta\in
    A}C_{\beta}\RiemInv_{\beta,\g}$.
\end{proof}
\begin{proposition}\label{ACombination}
Let $K\in\Integers_{>0}$ and a family of natural operators $\psi$ be
given.  Suppose that this family satisfies Hypotheses~\ref{NaturalHyp}
and that the operators $\psi$ have mass $K.$ Let $\lowerw$ and
$\Dimensions$ be as in Hypotheses~\ref{NaturalHyp}, and let
$\RealBasis$ be the basis for $\RealCoeff_{K,c,0}$ described in
Proposition~\ref{RealBasis}.  Then there exist real functions
$f_{\beta}(w,n)$ such that for all $w\in\Reals$ and all
$n\in\Dimensions$, $\psi:\ce[w]\rightarrow\ice[w-\lowerw]$ is given by
the symbolic formula $\psi=\sum_{\beta\in
  A}f_{\beta}(w,n)\RiemInv_{\beta,\g}$.  For each $\beta\in A$,
$f_{\beta}(w,n)$ is polynomial in $w$ and rational in $n$.  If the
parameter $w$ does not appear in the universal formula for $\psi$,
then each function $f_{\beta}(w,n)$ is independent of $w$.
\end{proposition}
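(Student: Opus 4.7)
The plan has three steps; the crux is that the basis-expansion constants for a single mass-$K$ monomial carry no $w$- or $n$-dependence, so those parameters enter $\psi$ only through its own coefficient polynomials. First, by Hypotheses~\ref{NaturalHyp}, the universal symbolic formula for $\psi$ may be expanded as $\psi = \sum_{\gamma\in A_1} h_\gamma(w,n)\,\RiemInv_{\gamma,\g}$, where $A_1$ indexes the finite collection of coefficient-$1$, mass-$K$ monomials in $\Nv_a$, $R_{ab}{}^c{}_d$, $\bg_{ab}$, $\bg^{ab}$, $\mmc$, and $\nd$ of the form described in the proof of Proposition~\ref{RealBasis} and that map $\ce[w]\to\ice[w-\lowerw]$; each $h_\gamma(w,n)$ is polynomial in $w$ and rational in $n$. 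Note that every monomial contributing to $\psi$ must share its weight drop $\lowerw$, since operators with different weight shifts take values in different bundles and cannot combine.

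Second, each $\RiemInv_{\gamma,\g}$ therefore belongs to $\RealCoeff_{K,\lowerw,w}$, so Proposition~\ref{RealBasis} supplies real constants $C_{\gamma,\beta}$ with $\RiemInv_{\gamma,\g} = \sum_{\beta\in A} C_{\gamma,\beta}\,\RiemInv_{\beta,\g}$. I would emphasise here that these constants carry no $w$- or $n$-dependence: independence of $w$ follows from the final sentence of Proposition~\ref{RealBasis} together with the isomorphism $\Phi$ of Proposition~\ref{IsomorphismProp}, and independence of $n$ is built into the definition of $\RealCoeff_{K,\lowerw,w}$, whose elements and their linear relations are required to hold uniformly across all dimensions $n\in\Dimensions$.

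Third, substituting and re-indexing yields $\psi = \sum_{\beta\in A} f_\beta(w,n)\,\RiemInv_{\beta,\g}$ with $f_\beta(w,n) := \sum_{\gamma\in A_1} C_{\gamma,\beta}\,h_\gamma(w,n)$; each $f_\beta$ is a finite $\Reals$-linear combination of functions polynomial in $w$ and rational in $n$, hence itself of that form. The final clause of the proposition is then immediate: when $w$ does not appear in the formula for $\psi$, every $h_\gamma$ is $w$-independent and so is every $f_\beta$. No deep obstacle remains; the only mildly subtle point is the $w$- and $n$-independence of the reduction constants $C_{\gamma,\beta}$, and this has been carefully engineered into the preceding framework rather than left for us to grapple with here.
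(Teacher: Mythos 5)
Your proof is correct and follows essentially the same route as the paper, which simply cites Proposition~\ref{PsiToI} together with the argument of Corollary~\ref{WZeroCombination}; you have expanded that one-line pointer into the expand-over-monomials / reduce-via-constant-coefficient-relations / recombine sequence that those two references encapsulate. The only cosmetic difference is that you invoke Proposition~\ref{RealBasis} and Proposition~\ref{IsomorphismProp} directly in place of Proposition~\ref{PsiToI}, but since the proof of Proposition~\ref{PsiToI} consists precisely of those two results, the content is the same.
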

\begin{proof}
The result follows from Proposition~\ref{PsiToI} and an argument
similar to the one we used in the proof of
Corollary~\ref{WZeroCombination}, above.  
\end{proof}
We will use the following proposition in Section~\ref{Qsec}, below.

\begin{proposition}\label{ZeroAlln}
Let $K\in\Integers_{>0}$ and $\lowerw\in\Reals$ be given.  Let
$\Dimensions$ be as above, and let $\psi\in\RationalCoeff_{K,c,0}$ be
given.  Suppose that the coefficient of every term in the universal
symbolic formula for $\psi$ is regular for every $n\in\Dimensions$.
Suppose also that for some $n_1\in\Integers_{>0}$, $\psi$ is the zero
operator in all dimensions $n\in\Dimensions\cap\Integers_{>n_1}$.
Then $\psi$ is the zero operator in all dimensions $n\in\Dimensions$.
\end{proposition}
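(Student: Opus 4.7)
The plan is to reduce the statement to the linear independence of $\ClassesBasis$ in $\Classes_{K,c,0}$ provided by Proposition~\ref{RationalBasis}. The key observation is that the hypothesis ``regular for every $n\in\Dimensions$'' promotes a statement about rational functions (which is what the quotient space $\Classes_{K,c,0}$ naturally gives) into a pointwise statement on $\Dimensions$.

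First, I would invoke Corollary~\ref{WZeroCombination} to write
\[
\psi \;=\; \sum_{\beta\in A}\, f_{\beta}(n)\,\RiemInv_{\beta,\g},
\]
with each $f_{\beta}(n)\in\FieldOfFuns$ a real rational function of $n$. By the last sentence of the corollary, we may arrange that whenever the coefficients of $\psi$ are regular at some $n_0\in\Dimensions$, each $f_{\beta}$ is regular at $n_0$. Since by hypothesis the coefficients of $\psi$ are regular for \emph{every} $n\in\Dimensions$, we may assume each $f_{\beta}(n)$ is regular at every $n\in\Dimensions$.

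Next, the hypothesis that $\psi$ is the zero operator in all dimensions $n\in\Dimensions\cap\Integers_{>n_1}$ is exactly the condition for $[\psi]$ to be the zero element of $\Classes_{K,c,0}$. Thus in $\Classes_{K,c,0}$ we have the identity
\[
0 \;=\; [\psi] \;=\; \sum_{\beta\in A}\, f_{\beta}(n)\,[\RiemInv_{\beta,\g}].
\]
By Proposition~\ref{RationalBasis}, the family $\ClassesBasis$ is a basis of $\Classes_{K,c,0}$ over $\FieldOfFuns$, so each $f_{\beta}(n)$ must be the zero element of $\FieldOfFuns$, i.e.\ the identically zero rational function. In particular, $f_{\beta}(n)=0$ at every $n$ where $f_{\beta}$ is regular, which (by our arrangement above) is all of $\Dimensions$. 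Hence the symbolic formula for $\psi$ collapses term by term, and $\psi$ is the zero operator in every dimension $n\in\Dimensions$.

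The only non-routine point is the bookkeeping at the second step: we must use the regularity hypothesis on $\psi$ together with the regularity clause of Corollary~\ref{WZeroCombination} to ensure that we do not need to excise any dimensions in $\Dimensions$ when reading off pointwise values from the identically-zero rational functions $f_{\beta}$. Once that is in place, the remainder is an immediate application of the basis result and requires no further geometric input or use of Proposition~\ref{Embedding}.
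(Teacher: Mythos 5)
Your argument is correct, but it follows a genuinely different route from the paper's own proof. After obtaining the decomposition $\psi=\sum_{\beta\in A}f_\beta(n)\RiemInv_{\beta,\g}$ from Corollary~\ref{WZeroCombination} (which both arguments share), the paper does \emph{not} pass to $\Classes_{K,c,0}$; instead it reruns the pointwise embedding argument. It fixes $m\in\Dimensions$, a manifold $(M,g)$ of dimension $m$, a hypersurface, a point $p\in\Sigma$, and $V\in C^\infty(M)$, applies Proposition~\ref{Embedding} to transfer the evaluation $(\psi V)(p)$ into each higher dimension $m'\in\Dimensions\cap\Integers_{>\max\{m,n_1\}}$, and concludes that the resulting scalar is a rational function of $m'$ with infinitely many zeros, hence zero at $m'=m$. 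Your proof instead leverages the algebraic packaging: the hypothesis literally says $[\psi]=0$ in $\Classes_{K,c,0}$, so $\sum_\beta f_\beta[\RiemInv_{\beta,\g}]=0$, and linear independence of $\ClassesBasis$ over $\FieldOfFuns$ (Proposition~\ref{RationalBasis}) forces each $f_\beta$ to vanish identically, whence $\psi$ vanishes at every $n\in\Dimensions$. Both routes ultimately rest on Proposition~\ref{Embedding}, but yours uses it only through the already-established basis theorem, giving a shorter and more conceptual argument; the trade-off is that it leans implicitly on the (stated-but-not-verified-in-detail) compatibility of the quotient construction of $\Classes_{K,c,0}$ with $\FieldOfFuns$-linear combinations, whereas the paper's repetition of the embedding argument avoids invoking that structure.
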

\begin{proof}
By Corollary~\ref{WZeroCombination}, we may may write
$\psi=\sum_{\beta\in A}f_{\beta}(n)\RiemInv_{\beta,\g}$, where each
coefficient $f_{\beta}(n)$ is a real rational function of $n$.  Here
$\RiemInv_{\beta,\g}$ and $A$ are as above.  We may assume that
$f_{\beta}(n)$ is regular at every $n\in\Dimensions$.

Let $m\in\Dimensions$ be given.  Also let a Riemannian manifold $\Mg$
of dimension $m$, a hypersurface $\sm$ in $M$, a point $p\in\sm$, and
$V\in C^{\infty}(M)$ be given.  Let $n_2=\max\{m,n_1\}$, and let
$m'\in\Dimensions\cap\Integers_{>n_2}$ be given.  Then by
Proposition~\ref{Embedding}, there is a Riemannian manifold
$(M',\amr)$ of dimension $m'$, a hypersurface $\sm'$ of $M'$, a
function $V'\in C^{\infty}(M')$, and a point $p'\in\sm'$ such that
\[
(\xig)^{\lowerw}
\left(\sum_{\beta\in A}f_{\beta}(m')\RiemInv_{\beta,\g}V\right)(p)=
(\xir)^{\lowerw}
\left(\sum_{\beta\in A}f_{\beta}(m')\RiemInv_{\beta,\amr}V'\right)(p')=
0~.
\]
Since $m'\in\Dimensions\cap\Integers_{>n_2}$ was arbitrary,
$(\xig)^{\lowerw}(\sum_{\beta\in
  A}f_{\beta}(m')\RiemInv_{\beta,\g}V)(p)$ is a real rational function
of $m'$ which is zero for infinitely many distinct values of $m'$.  Thus
$(\sum_{\beta\in A}f_{\beta}(m)\RiemInv_{\beta,\g}V)(p)=0$.
Since $p$ and $V$ were arbitrary, it follows that $\psi$ is the zero
operator in dimension $n=m$.
\end{proof}
%
%%%%%%%%%%%%%%%%%%%%%%%%%%%%%%%%%%%%%%%%%%%%%%%%%%%%%%%%%%%%%%%%%%%%%%%%
% Section 4 ends here.
%%%%%%%%%%%%%%%%%%%%%%%%%%%%%%%%%%%%%%%%%%%%%%%%%%%%%%%%%%%%%%%%%%%%%%%%
%

%
%%%%%%%%%%%%%%%%%%%%%%%%%%%%%%%%%%%%%%%%%%%%%%%%%%%%%%%%%%%%%%%%%%%%%%%%
% Section 5 begins here.
%%%%%%%%%%%%%%%%%%%%%%%%%%%%%%%%%%%%%%%%%%%%%%%%%%%%%%%%%%%%%%%%%%%%%%%%
%
%%%%%%%%%%%%%%%%%%%%%%%%%%%%%%%%%%%%%%%%%%%%%%%%%%%%%%%%%%%%%%%%%%%%%%%%
% Operator construction along a hypersurface
%%%%%%%%%%%%%%%%%%%%%%%%%%%%%%%%%%%%%%%%%%%%%%%%%%%%%%%%%%%%%%%%%%%%%%%%
%

\section{Invariant operator constructions\\along a
  hypersurface} \label{confi}

The basic example of a conformal boundary operator is the conformal
Robin operator, as given by the formula \nn{cr}. In this section, we
develop the $\delta_K$, $\ConFlatOp{K}$ and $\deltaJk$ operator
families that we discussed in Section~\ref{IntroSect}, above.  Each
gives, for most weights, higher-order analogues of $\delta$. The first
key observation is that the conformal Robin operator is {\em strongly
  conformally invariant}, which means that by coupling formula \nn{cr}
to the tractor connection, we obtain a conformally invariant operator
along $\Sigma$ which acts on any weighted tractor bundle.  We thus
obtain
 \begin{equation}\label{Robin}
 \left.\delta_1=\delta:\ct^\Phi[w]\to \ct^\Phi[w-1]\right|_{\sm}
 \end{equation} 
 along $\sm$ for any weight $w$. We use this to build the higher analogues of $\delta$.
These new constructions will lead to solutions to
problems~\ref{GenProblem} and \ref{CQprob}.  The motivation for many
steps of our procedure will center around the paramount Problem~\ref{CQprob}.
As we
noted in Remark~\ref{IdentificationRem}, above, we may identify
conformally covariant and conformally invariant operators.  This will
be an important point in what follows.

%
%%%%%%%%%%%%%%%%%%%%%%%%%%%%%%%%%%%%%%%%%%%%%%%%%%%%%%%%%%%%%%%%%%%
% New constructions, preliminary work
%%%%%%%%%%%%%%%%%%%%%%%%%%%%%%%%%%%%%%%%%%%%%%%%%%%%%%%%%%%%%%%%%%%
%

\subsection{Preliminary work}\label{main}%

As above, we work locally along a hypersurface $\sm$ in a Riemannian
conformal manifold $\Mcc$ with $\dim(M)=n\geq 3$.  For convenience,
the normal field, second fundamental form, and so on are extended off
$\sm$ via a foliation, as discussed in sections~\ref{RH} and
\ref{confhy}, above.  All quantities are thus defined on an open
neighbourhood of $\Sigma$.  We use that $\ce$ is the same as the space of smooth
sections of $\ce[0]$.  We begin by introducing a generalisation of
the set $E$ of Problem~\ref{GenProblem}.
\begin{definition}\label{ExSet}
Let $m\in\Integers_{>0}$, $\lowerw\in\Reals$, and a family of natural
hypersurface differential operators
$\psi:\tbPhi[w]\rightarrow\tbPhi[w-\lowerw]|_{\sm}$ of order at most
$m$ be given.  Let $E(\psi)$ denote the set of all weights $w$ such
that $\psi$ fails to have transverse order $m$.
\end{definition}
The set $E(\psi)$ of Definition~\ref{ExSet} may depend on $n$.  When
we discuss $E(\psi)$ in this paper, the value of $m$ will always be
clear from the context.  If $m=0$, then $E(\psi)=\emptyset$.
We will always find that $E(\psi)$ is independent of the bundle
$\tbPhi$.

The first main step in our operator constructions will be to define
the $\delta_K$ operator family.  We need some additional definitions
and terminology that we introduce now.
\begin{definition}\label{lotsdef}
Let $m\in\Integers_{\geq 3}$, a set
$\Dimensions\subseteq\Integers_{\geq 3}$, and a family of natural
differential operator $\psi$ be given, and suppose that $\psi$ is
well-defined in all dimensions $n\in\Dimensions$.  Also suppose that
$\psi$ has order at most $m$ in all dimensions $n\in\Dimensions$ and
for operands of all possible weights.  Then $\lots$ and $\ltots$ will
denote terms in a universal
symbolic formula
for $\psi$ which, in all dimensions $n\in\Dimensions$ and for operands
of all possible weights, have order less than $m$ and transverse
order less than $m$, respectively.
\end{definition}
To simplify our discussions it will be convenient to introduce a
notion that is very specific to the details of the formulae that we
use. First observe that a symbolic formula for a natural differential operator may
involve terms that move tractor indices in a way that we now describe.  Let
$T^{ab}\in\ce^{ab}$ be given, and define an operator
$\psi:\tbn^A[w]\rightarrow\tbn^A[w]|_{\sm}$ by letting
\begin{equation}\label{NoMove}
  \psi V^A=T^{ab}\nd_a\nd_bV^A
\end{equation}
for all $V^A\in\tbn^A[w]$.  Then
\begin{equation}\label{YesMove}
\psi V^A=T^{ab}\nd_b\nd_aV^A+T^{ab}\Omega_{ab}{}^A{}_BV^B~,
\end{equation}
by \nn{TracCurv}.  In one term of \nn{YesMove}, the tractor index $A$
has moved off of $V$ and onto $\Omega$.  We say that the operator
formula given by the right-hand side of \nn{YesMove} \textit{moves}
tractor indices.  The formula given by the right-hand side of
\nn{NoMove} \textit{fixes} tractor indices.  One can extend these
ideas to other operators and to tractor bundles of higher rank in the
obvious way.  We emphasise that the property of fixing or moving
tractor indices is a property of an operator formula and not a
property of an operator.

At many points in our operator constructions, we will work with a
symbolic operator formula which satisfies the following hypotheses:
\begin{hypotheses}\label{FixingHyp}
The symbolic operator formula fixes tractor indices and is a
polynomial in $\Nv^a$, $R_{ab}{}^{c}{}_d$, $\bg_{ab}$, $\bg^{ab}$,
$\mc$, and the coupled Levi-Civita tractor connection $\nd$.  The
coefficients of this polynomial are real functions of $w$ and $n$
which are polynomial in $w$ and rational in $n$.  Within the symbolic
operator formula, $\nd$ never explicitly hits $\Nv^a$ or $\mc$.  For
some $c\in\Reals$, the symbolic operator formula defines a family of
natural differential operators
$\psi:\tbPhi[w]\rightarrow\left.\tbPhi[w-c]\right|_{\sm}$.
\end{hypotheses}

All of our operator constructions will begin with the operator family
$\delta_K$ of the following lemma.
\begin{lemma}\label{basicversion}
For any $j\in\Integers_{\geq 0}$, let $K=j+1$.  Then the formula
\begin{equation}\label{deltajplusone}
\delta_{j+1}:= N^{A_1}N^{A_2}\cdots
N^{A_{j}}\delta D_{A_1}D_{A_2}\cdots D_{A_j}
\end{equation}%
defines a family of natural hypersurface operators
$\left.\delta_K:\tbPhi[w] \to \tbPhi [w-K]\right|_{\sm}$ of mass $K$.
These operators are conformally invariant and may be given by a
universal symbolic formula which satisfies Hypotheses~\ref{FixingHyp}
and is polynomial in $n$.
\end{lemma}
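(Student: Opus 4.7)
The plan is to view $\delta_{j+1}$ as a composition of three conformally invariant, natural building blocks---the tractor-coupled Robin operator $\delta$ from \nn{Robin}, the tractor $D$-operator $D^A$ from \nn{tractorD}, and the normal tractor $N^A$ from \nn{NormalTractor}---and to read off each claim from the known properties of these pieces.

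First I would track weights and tractor indices. Given $V\in\tbPhi[w]$, successive application of $D_{A_j},\ldots,D_{A_1}$ produces a section of $\ct^{A_1\cdots A_j}\otimes\tbPhi[w-j]$, after which $\delta$ lowers the weight by one more and restricts to $\sm$. The conformally invariant normal tractors $N^{A_i}$, of weight $0$ on $\sm$, then contract the auxiliary tractor indices, giving the desired map $\tbPhi[w]\to\tbPhi[w-K]|_{\sm}$. Since $D^A$ and $\delta$ are both strongly conformally invariant (hence well-defined on any weighted tractor bundle) and $N^A$ is a conformally invariant natural section, the composition is natural and conformally invariant. The mass count is routine from the conventions of Section~\ref{OpBases}: each $D^A$ has mass $1$ (all three terms in \nn{tractorD} do), $\delta$ has mass $1$, and $N^A$ has mass $0$, giving total mass $j+1=K$.

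For the symbolic form I would expand. The defining formulae for $D^A$, $\delta$, and $N^A$ are manifestly polynomial in the objects permitted by Hypotheses~\ref{FixingHyp}---namely $\Nv^a$, $\bg_{ab}$, $\bg^{ab}$, $\mc$, $R_{ab}{}^c{}_d$, and the coupled tractor connection $\nd$, together with the splitting operators and $\Rho_{ab}$, $\J$---with coefficients polynomial in $w$ and $n$. I would then apply \nn{connids} to commute $\nd$ past $Y,Z,X$, apply \nn{TracCurv} and \nn{TracCurveOne} to commute coupled derivatives past each other at the cost of curvature terms, and apply \nn{SchoutenDef} to express $\Rho_{ab}$ and $\J$ in terms of Ricci and scalar curvature components of $R_{abcd}$. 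The result is a formula in the exact vocabulary of Hypotheses~\ref{FixingHyp}. Because the only tractor indices introduced by the $D$'s are the $A_i$ indices, and these are immediately contracted with the $N^{A_i}$'s, the tractor indices carried by $V$ are never moved; the formula fixes tractor indices as required. Polynomiality in $n$ is preserved by every elementary step, as none of them introduce denominators in $n$.

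The main expected obstacle is the bookkeeping needed to verify that no rational-in-$n$ denominators sneak in during the expansion. This is essentially sidestepped by treating $\mc$ itself as a basic object---Hypotheses~\ref{FixingHyp} lists $\mc$ rather than $\mmc=\nd_a\Nv^a$---so that no factors of $1/(n-1)$ are ever created by rewriting mean curvatures, while the polynomial factors $(n+2w-2)$ appearing in \nn{tractorD} enter multiplicatively and remain polynomial under composition with $\delta$'s coefficient $-w$.
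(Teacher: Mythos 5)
Your proof takes essentially the same route as the paper's (which is a one-line citation to the defining formulae \nn{cr}, \nn{connids}, \nn{tractorD}, \nn{NormalTractor}, \nn{Robin}, and Figure~\ref{TrIProd}), and the bookkeeping of weights, conformal invariance, mass, and the fixing of tractor indices is correct. However, there is a concrete error in the polynomiality step.

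You propose to ``apply \nn{SchoutenDef} to express $\Rho_{ab}$ and $\J$ in terms of Ricci and scalar curvature components of $R_{abcd}$'' and then assert that polynomiality in $n$ is preserved because ``none of [the elementary steps] introduce denominators in $n$.'' That specific step \emph{does} introduce denominators: inverting \nn{SchoutenDef} gives $\Rho_{ab}=\tfrac{1}{n-2}\bigl(\Rc_{ab}-\J\,\g_{ab}\bigr)$ and $\J=\tfrac{1}{2(n-1)}\Sc$, so rewriting the $\Rho$'s and $\J$'s that come from \nn{connids} and from the $X^A(\Delta+w\J)$ term of \nn{tractorD} would produce coefficients with poles at $n=2$ and $n=1$, contradicting the lemma's polynomiality claim. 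The resolution is simply not to perform that rewriting: as the paper states at the end of the paragraph preceding Section~\ref{RH}, ``the term `Riemannian curvature tensor' will include these tensors and the traces of the Ricci and Schouten tensors.'' Thus $\Rho_{ab}$ and $\J$ are already part of the admissible curvature vocabulary of Hypotheses~\ref{FixingHyp}, and the expansion via \nn{connids}, \nn{tractorD}, \nn{NormalTractor}, and Figure~\ref{TrIProd} stays polynomial in $n$ because the only explicit $n$-dependence comes from the multiplicative factors $w(n+2w-2)$ and $(n+2w-2)$ in $D^A$ and from $-wH$ in $\delta$, all of which are polynomials. Your observation that $\mc$ rather than $\mmc$ is the basic object here, avoiding $1/(n-1)$, is correct and is part of the same point.
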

\begin{proof}
The claims of the lemma are clear by the formulae of \nn{cr},
\nn{connids}, \nn{tractorD}, \nn{NormalTractor}, \nn{Robin}, and
Figure~\ref{TrIProd}.
\end{proof}

We will show that $\delta_K$ has order at most $K$, and we will find
$E(\delta_{K})$.  To do this, we will need several results.  For any
$k\in\Integers_{>0}$, let $\nd_{\Nv}^k$ denote
$\Nv^{a_1}\cdots\Nv^{a_k}\nd_{a_1}\cdots\nd_{a_k}$, where $\nd$ is the
coupled Levi-Civita tractor connection.  This operator has transverse
order $k$.
\begin{lemma}\label{NNabnkD}
Let $s\in\Reals$ and $k\in\Integers_{>0}$ be given.  Then for any
$V\in\tbPhi[s]$,
\begin{equation}\label{NNabnkDDisplay}
\Nt^A\nd_n^kD_A V =
(n+2s-2-k)\nd_{\Nv}^{k+1}V+k(\nd_{\Nv}^{k+1}-\nd_{\Nv}^{k-1}\Delta)V
+
\lots~.
\end{equation}
Here the \lots\ can be given by a
symbolic formula which satisfies Hypotheses~\ref{FixingHyp}.
\end{lemma}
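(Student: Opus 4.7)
The plan is to substitute the formula \nn{tractorD} for $D_A V$, apply $\nd_n^k$ to each of the three resulting pieces, and contract with $\Nt^A$. Retaining only differential-order-$(k+1)$ contributions and absorbing everything else into $\lots$, the computation is driven by the projection identities
\[
\Nt^A Y_A = -\mc, \qquad \Nt^A Z_A{}^b = \Nv^b, \qquad \Nt^A X_A = 0,
\]
which come from $\Nt^A = \Nv_b Z^{Ab} - \mc X^A$ and Figure~\ref{TrIProd}, together with $\Nv^a \Nv_a = 1$ (a consequence of \nn{ndef}).

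The $s(n+2s-2) Y_A V$ summand has order $0$ in $V$, so $\nd_n^k$ of it has order $k$ and joins $\lots$. For the $(n+2s-2) Z_A{}^b \nd_b V$ summand, every derivative that lands on $Z_A{}^b$ replaces it, via \nn{connids}, by $X_A$ or $Y_A$ times a curvature or metric factor, which strictly lowers the order on $V$; the surviving top piece is $Z_A{}^b \nd_n^k \nd_b V$. Commuting $\nd_b$ through the symmetric string $\Nv^{a_1}\cdots\Nv^{a_k}\nd_{a_1}\cdots\nd_{a_k}$ introduces only tractor curvature and $\nd\Nv$ corrections, all of lower order, so modulo $\lots$ one may slide $\nd_b$ past the $\Nv$'s and recognise $\Nv^b\nd_b\nd_n^k V$ as $\nd_n^{k+1}V$ (again modulo $\lots$). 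Contracting with $\Nt^A Z_A{}^b = \Nv^b$ gives the contribution $(n+2s-2)\nd_n^{k+1}V$.

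The decisive piece is $-X_A \Box V$. The $s\J$ part contributes only to $\lots$. For $-X_A \Delta V$, expand $\nd_n^k$ via Leibniz. The order-$(k+2)$ contribution has all $k$ derivatives on $\Delta V$ and leaves $X_A$ untouched, giving $X_A \nd_n^k \Delta V$, which is killed by $\Nt^A X_A = 0$. The order-$(k+1)$ contribution arises when exactly one of the $k$ symmetric derivatives hits $X_A$: using $\nd_a X_A = Z_{Aa}$ from \nn{connids} together with the combinatorial factor $k$ produced by the symmetrisation by $\Nv^{a_1}\cdots\Nv^{a_k}$, this equals $k (\Nv^a Z_{Aa}) \nd_n^{k-1}\Delta V$, and $\Nt^A \Nv^a Z_{Aa} = 1$ collapses it to $k\nd_n^{k-1}\Delta V$; contributions with two or more derivatives on $X_A$ have order $\le k$ and are $\lots$. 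Summing the three parts yields $\Nt^A \nd_n^k D_A V = (n+2s-2)\nd_n^{k+1}V - k\nd_n^{k-1}\Delta V + \lots$, which rearranges to the stated form. At every step the operations used are built only from $\nd$, \nn{connids}, curvature, and the algebraic splittings, so tractor indices never move off $V$; any residual occurrences of $\nd\Nv$ or Schouten pieces that appear in $\lots$ can be absorbed via the obvious identities relating $\nd\Nv$ to $\sff$, $\mc$, and $\bg$, and via \nn{SchoutenDef}, giving a representation compatible with Hypotheses~\ref{FixingHyp}. The main obstacle is the combinatorial-and-commutator bookkeeping in this third summand, where the precise factor $k$ paired with $\Nt^A X_A = 0$ is exactly what supplies the $\nd_n^{k-1}\Delta V$ term that a naive computation of $\nd_n^k(\Nt^A D_A V)$ would miss.
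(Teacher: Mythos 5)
Your proof is correct and follows exactly the route the paper indicates as a ``trivial consequence'' of \nn{tractorD}: substitute the tractor-$D$ formula, expand $\nd_{\Nv}^k$ by Leibniz, and contract using \nn{connids}, \nn{NormalTractor}, and Figure~\ref{TrIProd}, with the projections $\Nt^A Y_A = -\mc$, $\Nt^A Z_A{}^b = \Nv^b$, $\Nt^A X_A = 0$ and the combinatorial factor $k$ supplying the two top-order pieces. One small simplification is worth noting: in the $Z_A{}^b$ term no commutation of covariant derivatives is needed, since after contracting with $\Nt^A Z_A{}^b = \Nv^b$ the remaining free index $b$ is already attached to the innermost $\nd$, so $\Nv^b \nd_{\Nv}^k \nd_b V = \nd_{\Nv}^{k+1}V$ exactly by relabelling the dummy index; keeping this form (rather than symmetrising) means the $\lots$ remainder never acquires tractor-curvature commutator terms $\Omega_{ab}{}^A{}_B V^B$, which is what makes the assertion that $\lots$ satisfies Hypotheses~\ref{FixingHyp} (in particular, fixing tractor indices) immediate rather than something to be argued away.
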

\begin{proof} This is a trivial consequence of the formula \nn{tractorD} for the
tractor $D$-operator. Verification in detail uses  \nn{connids},
\nn{NormalTractor}, and the rules for the tractor metric given
in Figure~\ref{TrIProd}.
\end{proof}

We want to base an induction around this result. To do this, we must
deal with terms similar to those in the second expression on the
\RHS\ of \nn{NNabnkDDisplay}.  The following is result is easily
verified.
\begin{lemma}\label{NablaLaplacian}
Let nonnegative integers $p$, $q$, $r$, $s$, and $k$ be given, and
suppose $p+2q=r+2s=k>0$.  Then $\nd_n^p\Delta^q-\nd_n^r\Delta^s$, as
an operator acting on densities or weighted tractors, is an operator
of transverse order less than $k$.
\end{lemma}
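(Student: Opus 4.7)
The plan is to show that, modulo operators of transverse order strictly less than $k$, the composition $\nd_\Nv^p\Delta^q$ equals $\nd_\Nv^k$ for every choice of nonnegative integers $(p,q)$ with $p+2q=k$; subtracting the corresponding identity for $(r,s)$ then yields the lemma. The argument rests on two preliminary facts that I would establish first.

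The first preliminary is that transverse order is subadditive under composition: if $B_1,B_2$ have transverse orders at most $m_1,m_2$, then $B_1B_2$ has transverse order at most $m_1+m_2$. This is verified by fixing a local defining function $\dfn$ for $\sm$, writing $B_2=\sum_{j=0}^{m_2}\alpha_j\partial_\dfn^j$ with $\alpha_j$ tangential, and checking via Leibniz that $B_2(\dfn^{m_1+m_2+1}V)=\dfn^{m_1+1}W$ for some smooth $W$, so that $B_1$ annihilates the result at $p\in\sm$ by Definition~\ref{TOrderDefinition}. The second preliminary is that $\Delta-\nd_\Nv^2$ has transverse order at most $1$. Using the splitting $\bg^{ab}=\Nv^a\Nv^b+\ibg^{ab}$,
\[
\Delta \;=\; \Nv^a\Nv^b\nd_a\nd_b \;+\; \ibg^{ab}\nd_a\nd_b \;=\; \nd_\Nv^2 \;+\; \ibg^{ab}\nd_a\nd_b,
\]
where the first equality is exactly the definition of $\nd_\Nv^2$ (both $\Nv$'s sit to the left of both $\nd$'s). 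Since $\ibg^{ab}\Nv_a=0$, in a Fermi chart adapted to the foliation $\sm_\dfn$ the tangential Laplacian $\ibg^{ab}\nd_a\nd_b$ has no $\partial_\dfn^2$-content and can produce $\partial_\dfn V$ only through (coupled) Christoffel terms of the shape $\ibg^{ij}\Gamma_{ij}^c\partial_c V$; hence its transverse order is at most $1$.

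With these tools in hand, an induction on $q$ would give $\Delta^q=\nd_\Nv^{2q}+E_q$ with $E_q$ of transverse order at most $2q-1$. The inductive step combines subadditivity with the rearrangement $\nd_\Nv^2\nd_\Nv^{2q-2}=\nd_\Nv^{2q}+(\text{transverse order}\le 2q-1)$, which follows by Leibniz: moving the two outer $\Nv$'s past the interior $\nd$'s yields the pass-through term $\nd_\Nv^{2q}$ together with commutator terms in which one $\Nv$ has been traded for a factor $\nd\Nv$ and the transverse order has accordingly dropped by one. Applying $\nd_\Nv^p$ on the left and performing the same pass-through one final time then gives
\[
\nd_\Nv^p\Delta^q \;=\; \nd_\Nv^k \;+\; (\text{transverse order} < k).
\]
Since the right-hand side depends only on $k$, not on $(p,q)$, the lemma follows upon subtraction.

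The only technical concern is bookkeeping when the operator acts on weighted tractor bundles: the coupled Levi-Civita/tractor connection produces additional algebraic terms when $\nd$ is moved past objects such as $\Nv^a$ or the splitting operators. These extra terms are built from $\nd\Nv$, curvature, and the splitting operators $X^A$, $Y^A$, $Z^{Ab}$, so they are algebraic (rather than differential) modifications of expressions of the required transverse order and do not inflate the transverse-order count. I expect this bookkeeping to be routine rather than a genuine obstacle.
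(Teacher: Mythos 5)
Your strategy works, but Preliminary~1 is false as a general statement about transverse order in the sense of Definition~\ref{TOrderDefinition}. On a half-space with defining function $t$, take $B_1=\partial_t^{m}$ and $B_2=t\,\partial_t^{m+1}$ with $m\geq 1$: one checks directly that $B_2$ has transverse order $0$ at $\sm=\{t=0\}$ and $B_1$ has transverse order $m$, yet $B_1B_2$ has transverse order $m+1$ there, so subadditivity fails. The hypothesis you actually invoke --- the Fermi representation $B_2=\sum_{j=0}^{m_2}\alpha_j\partial_\dfn^j$ with the sum terminating at $j=m_2$ --- is strictly stronger than ``transverse order $\leq m_2$ at $\sm$''; it is equivalent to saying the normal-derivative degree is $\leq m_2$ along \emph{every} leaf of the foliation $\sm_\dfn$ (the operator $t\,\partial_t^{m+1}$ has no such representation). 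This foliation-wide bound \emph{is} subadditive, and it does hold for $\nd_\Nv^j$, for $\ibg^{ab}\nd_a\nd_b$, and inductively for your error operators $E_q$, since these are built from $\nd$, $\Nv^a$, $\ibg^{ab}$ relative to the whole foliation. So your argument goes through once Preliminary~1 is restated with this sharper hypothesis; as written it would not.

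For the record, the paper supplies no proof --- the lemma is stated as ``easily verified'' --- and the shortest verification avoids the induction entirely. Since $\bg$ is parallel, for $p+2q=k$ one has
\[
\nd_\Nv^p\Delta^q
=\Nv^{c_1}\cdots\Nv^{c_p}\,\bg^{a_1b_1}\cdots\bg^{a_qb_q}\,
\nd_{c_1}\cdots\nd_{c_p}\,\nd_{a_1}\nd_{b_1}\cdots\nd_{a_q}\nd_{b_q}~.
\]
Applied to $\dfn^kV$ and restricted to $\sm$, the only surviving Leibniz term is the one where all $k$ derivatives fall on $\dfn^k$, namely
\[
k!\,\bigl(\Nv^c\nd_c\dfn\bigr)^p\bigl(\bg^{ab}(\nd_a\dfn)(\nd_b\dfn)\bigr)^qV
= k!\,|d\dfn|_{\bg}^{\,k}\,V~,
\]
which depends only on $k$ and not on the partition $(p,q)$. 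Hence $(\nd_\Nv^p\Delta^q-\nd_\Nv^r\Delta^s)(\dfn^kV)|_{\sm}=0$ for all $V$, which is exactly the assertion. On weighted tractors the coupled connection generates curvature terms upon commutation, but these are lower order and do not affect the surviving top Leibniz term, so the same computation applies verbatim.
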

Our induction will also require the following lemma.
\begin{lemma}\label{LTOTS}
Let $K\in\Integers_{>0}$, $c\in\Reals$, and a family of natural
differential operators
$Op\,:\tbPhi[w]\to\left.\tbPhi[w-c]\right|_{\sm}$ be given, and
suppose that $Op$ can be given by a symbolic formula which
satisfies Hypotheses~\ref{FixingHyp}.  Suppose also that, for all
$w\in\Reals$, $Op$ has order at most $K$.  Then $\Nt^A\,Op\,D_A$
%
%$$\Nt^A\, Op\,
%D_A:\tbPhi[w+1] \to \left.\tbPhi[w-c]\right|_{\sm}$$
%
can also be given by a symbolic
formula which satisfies Hypotheses~\ref{FixingHyp}, and when acting on
tractors of any weight $w\in\Reals$, this operator has order at most
$K+1$.  For any given $w\in\Reals$, if
$Op\,:\tbPhi[w]\to\left.\tbPhi[w-c]\right|_{\sm}$ has transverse order
at most $K-1$, then for this value of $w$,
\[
\Nt^A\,Op\,D_A:\tbPhi[w+1]\to\left.\tbPhi[w-c]\right|_{\sm}
\]
has transverse order at most $K$.
\end{lemma}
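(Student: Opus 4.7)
The plan is to expand $\Nt^A\,Op\,D_A V$ using the explicit formula \nn{tractorD} for the tractor $D$-operator on $V\in \tbPhi[w+1]$, namely
\[
D_A V = (w+1)(n+2w)\, Y_A V + (n+2w)\, Z_A{}^b\,\nd_b V - X_A\,\bop V,
\]
and to analyse the three resulting pieces. For the order bound, the first two contributions $\Nt^A Op(Y_A V)$ and $\Nt^A Op(Z_A{}^b\,\nd_b V)$ have orders at most $K$ and $K+1$, since $Op$ has order at most $K$. For the third piece a Leibniz expansion gives $Op(X_A\,\bop V) = X_A\,Op(\bop V) + [Op, X_A]\,\bop V$, where $[Op, X_A]$ is a differential operator of order at most $K-1$ by the standard commutator reduction. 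Contracting with $\Nt^A$ and using $\Nt^A X_A = 0$ annihilates the first summand, leaving $\Nt^A[Op,X_A]\,\bop V$, which has order at most $(K-1)+2 = K+1$. For the Hypotheses~\ref{FixingHyp} claim, the symbolic formula is assembled from $\Nt^A = \Nv_b Z^{Ab} - H X^A$ via \nn{NormalTractor}, the formula \nn{tractorD} for $D_A$, and the identities \nn{connids}, which rewrite any $\nd$ acting on a splitting operator $X$, $Y$, $Z$ in terms of $\Rho$ and splitting operators. The resulting formula is polynomial in $\Nv^a$, $R$, $\bg$, $\bg^{-1}$, $H$ and the coupled connection $\nd$, fixes tractor indices, and has $\nd$ never explicitly hitting $\Nv^a$ or $H$ by construction.

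For the transverse-order bound at a fixed $w$ where $Op$ has transverse order at most $K-1$, the same decomposition gives transverse orders at most $(K-1)+0$ and $(K-1)+1=K$ for the $Y_A V$ and $Z_A{}^b\,\nd_b V$ pieces. The delicate third piece $\Nt^A[Op,X_A]\,\bop V$ naively has transverse order only at most $(K-1)+2 = K+1$. To improve this to $K$ I use the characterisation that $Op$ has transverse order at most $K-1$ exactly when the normal principal symbol vanishes, i.e., $\sigma(Op)(\Nv):=T^{a_1\cdots a_K}\Nv_{a_1}\cdots\Nv_{a_K} = 0$, where $T^{a_1\cdots a_K}$ is the principal symbol tensor of $Op$. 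By a direct Leibniz expansion, the leading-order part of $[Op, X_A]$ equals $K\,T^{a_1\cdots a_K} Z_{A\,a_1}\,\nd_{a_2}\cdots\nd_{a_K}$. The fully-transverse contribution to $\Nt^A[Op,X_A]\,\bop V$ is obtained by contracting the remaining indices with $\Nv$, using the normal principal part $\nd_\Nv^2$ of $\bop$ together with $\Nt^A Z_{A\,a_1} = \Nv_{a_1}$; this produces a coefficient proportional to $T^{a_1\cdots a_K}\Nv_{a_1}\cdots\Nv_{a_K} = \sigma(Op)(\Nv) = 0$. Hence the putative transverse-order-$(K+1)$ piece vanishes, and the remainder, coming either from at least one tangential contraction or from subleading terms in $[Op, X_A]$ (including the contributions of $\nd^k X_A$ with $k\geq 2$), carries at most $K-2$ normal derivatives on $\bop V$ and so has transverse order at most $K$.

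The main obstacle is this last step: obtaining the sharper transverse-order estimate on $\Nt^A[Op,X_A]\,\bop V$. It requires identifying the coefficient of the fully-transverse part of the commutator after $\Nt^A$-contraction and recognising it as exactly the normal principal symbol of $Op$, which vanishes by hypothesis. The other steps — the order-$K+1$ bound and the Hypotheses~\ref{FixingHyp} claim — are routine composition estimates and tractor bookkeeping. This sharper bound is precisely what will permit the inductive construction of $\delta_{j+1}$ via \nn{deltajplusone} to maintain the correct relationship between order and transverse order.
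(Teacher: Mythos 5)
Your proof is correct and takes essentially the same route as the paper. The paper's proof also reduces to the leading piece $-X_A\Delta$ of $D_A$, uses $\Nt^A X_A=0$ to force at least one derivative onto $X_A$, and then computes explicitly with a defining function $\dfn$ that the fully-transverse contribution to $\Nt^A\,Op\,(-X_A\Delta)\dfn^{K+1}V$ has coefficient $T^{i_1\cdots i_K}\Nv_{i_1}\cdots\Nv_{i_K}$, which vanishes by the hypothesis on $Op$'s transverse order; your commutator formulation $\Nt^A[Op,X_A]\bop$ is just a compact packaging of that same Leibniz expansion, and your identification of the residual coefficient as the normal principal symbol $\sigma(Op)(\Nv)$ is exactly the step the paper performs via the defining function.
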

\begin{proof}
The result follows from \nn{connids}, \nn{tractorD},
\nn{NormalTractor}, and Figure~\ref{TrIProd}.  The key points are as
follows.  First, the leading part of $D_A$ is $-X_A\Delta$.  We may
assume that $Op$ is given by a symbolic formula which satisfies
Hypotheses~\ref{FixingHyp}, so in the symbolic formula for
$-\Nt^A\,Op\,X_A$, we may assume that the tractor indices $A$ are in
the locations shown.  But $\Nt^AX_A=0$, so in $-\Nt^A\,Op\,X_A$, we
may assume that at least one $\nd$ hits $X_A$.  It follows that
$-\Nt^A\,Op\,X_A$ is an operator of order at most $K+1$.

Now let $w\in\Reals$ be given, and suppose that
$Op:\tbPhi[w]\to\left.\tbPhi[w-c]\right|_{\sm}$ has transverse order
at most $K-1$.  To complete the proof of the lemma, we will show that
$\Nt^A\,Op\,D_A:\tbPhi[w+1]\to\left.\tbPhi[w-c]\right|_{\sm}$ has
transverse order at most $K$.  To do this, we need only consider the
leading part $-X_A\Delta$ of $D_A$.  As above, we assume that the
symbolic formula for $Op$ satisfies Hypotheses~\ref{FixingHyp}, so we
may write
\[
Op=T^{i_1i_2\ldots i_K}\nd_{i_1}\nd_{i_2}\ldots\nd_{i_K}+\lots~,
\]
for some component functions $T^{i_1i_2\ldots i_K}$.  Let a defining
function $\dfn$ for $\sm$ and a section $V$ of $\tbPhi[w+1]$ be given,
and consider
\begin{equation}\label{L20Feb11a}
\Nt^A T^{i_1\ldots i_K}\nd_{i_1}\ldots\nd_{i_K}(-X_A\Delta\dfn^{K+1}V)~.
\end{equation}
Along $\sm$, this is a linear combination of terms of the form
\begin{equation}\label{L27May12a}
\Nt^A T^{i_1\ldots
  i_K}(-Z_{A{i_a}})(\nd_{i_1}\dfn)\cdots(\nd_{i_{\hat{a}}}\dfn)\cdots
(\nd_{i_K}\dfn)(\nd_j\dfn)(\nd^j\dfn)V~.
\end{equation}
Here $\hat{a}$ indicates omission of $(\nd_{i_a}\dfn)$.  By
\nn{pndef}, $\nd_i\dfn$ is $(\xig)^{-1}n_i$, up to multiplication by a
nowhere zero function, so \nn{L27May12a} is equal to
\begin{equation}\label{L27May12b}
-(\xig)^{-K}\Nv_{i_1}\cdots\Nv_{i_K}T^{i_1\ldots i_K}(\xig)^{-1}V~,
\end{equation}
up to a nonzero scale.  But \nn{L27May12b}, and hence also
\nn{L20Feb11a}, vanish along $\sm$, since
$Op:\tbPhi[w]\to\left.\tbPhi[w-c]\right|_{\sm}$ has transverse order
at most $K-1$.
\end{proof}
\begin{proposition}\label{norder}
Let $K\in\Integers_{>0}$ be given.  Then for every real number $w$,
$\delta_K:\tbPhi[w]\rightarrow\tbPhi[w-K]|_{\sm}$ has order at most
$K$, and along $\sm$,
\begin{equation}\label{norderDisplay}
\delta_{K}
=
\big[ \prod_{i=1}^{K-1}(n+2w-K-i)\big] \nabla^{K}_{\Nv}
+\ltots~.
\end{equation}
Here the $\ltots$ can by given by a symbolic formula which satisfies
Hypotheses~\ref{FixingHyp}.  If $K=1$, we take the explicit product in
\nn{norderDisplay} to be $1$.
\end{proposition}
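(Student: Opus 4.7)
The plan is induction on $K$, based on the recursion
\[
\delta_{K+1}V \;=\; \Nt^{A_K}\,\delta_K\bigl(D_{A_K}V\bigr),
\]
which reads \nn{deltajplusone} by first forming $D_{A_K}V\in\ct_{A_K}\otimes\tbPhi[w-1]$ (the rightmost $D$ acts first), then applying the operator $\Nt^{A_1}\cdots\Nt^{A_{K-1}}\delta D_{A_1}\cdots D_{A_{K-1}}=\delta_K$ at weight $w-1$ to this (treating $A_K$ as an inert extra tractor index), and finally contracting $A_K$ with $\Nt^{A_K}$. The base case $K=1$ is immediate: $\delta_1V=\Nv^a\nd_aV-wHV$ has order and transverse order $1$ and leading transverse term $\nd_\Nv V$, matching the empty-product convention in \nn{norderDisplay}.

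For the inductive step, write $C_K(w)=\prod_{i=1}^{K-1}(n+2w-K-i)$ and assume at every weight that $\delta_K=C_K(w)\nd_\Nv^K+R_K$, with $R_K$ of transverse order at most $K-1$ and given by a symbolic formula satisfying Hypotheses~\ref{FixingHyp}. Lemma~\ref{LTOTS} immediately yields the order bound $\le K+1$ for $\delta_{K+1}$ and preserves Hypotheses~\ref{FixingHyp}. To extract the leading transverse term, evaluate $\delta_K$ at weight $w-1$ on $D_{A_K}V$ and contract with $\Nt^{A_K}$. The main piece $C_K(w-1)\,\Nt^{A_K}\nd_\Nv^K D_{A_K}V$ is computed by Lemma~\ref{NNabnkD} with $s=w$ and $k=K$; Lemma~\ref{NablaLaplacian} absorbs the parenthetical correction $\nd_\Nv^{K+1}-\nd_\Nv^{K-1}\Delta$ into ltots, leaving the leading contribution $C_K(w-1)(n+2w-2-K)\,\nd_\Nv^{K+1}V$. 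The remainder piece $\Nt^{A_K}R_K D_{A_K}V$ is handled by the second half of Lemma~\ref{LTOTS}: since $R_K$ has transverse order $\le K-1$, the composite $\Nt^A R_K D_A$ has transverse order $\le K$, which is ltots for an operator of order $K+1$. Combining,
\[
\delta_{K+1}V \;=\; C_K(w-1)(n+2w-2-K)\,\nd_\Nv^{K+1}V + \ltots.
\]

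It remains to verify the coefficient identity $C_K(w-1)(n+2w-2-K)=C_{K+1}(w)$. The substitution $j=i+2$ in the product for $C_K(w-1)$ gives $\prod_{j=3}^{K+1}(n+2w-K-j)$; appending the $j=2$ factor $(n+2w-K-2)$ yields $\prod_{j=2}^{K+1}(n+2w-K-j)$, which after reindexing $i=j-1$ is $\prod_{i=1}^{K}(n+2w-(K+1)-i)=C_{K+1}(w)$, closing the induction. The main obstacle is bookkeeping: one must recognise that the natural recursion uses the \emph{right}most $D_{A_K}$, so that $\delta_K$ is evaluated at the shifted weight $w-1$ — precisely the shift that makes the leading coefficients telescope — and one must exploit the transverse-order control (not merely the order bound) on $R_K$ to keep the error inside ltots after the outer $\Nt^A(\cdot)D_A$ sandwich, which is exactly the scenario Lemma~\ref{LTOTS} was designed for.
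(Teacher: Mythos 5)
Your proof is correct and takes essentially the same route as the paper's, which simply cites induction together with Lemmas~\ref{NNabnkD}, \ref{NablaLaplacian}, and \ref{LTOTS} without spelling out the details. You have correctly identified the recursion $\delta_{K+1}V=\Nt^{A_K}\delta_K(D_{A_K}V)$ with the weight shift $w\mapsto w-1$ on the inner $\delta_K$, applied Lemma~\ref{NNabnkD} (with $s=w$, $k=K$) to the leading piece, discarded the $\nd_\Nv^{K+1}-\nd_\Nv^{K-1}\Delta$ correction via Lemma~\ref{NablaLaplacian}, controlled the remainder via the transverse-order clause of Lemma~\ref{LTOTS}, and verified the telescoping of coefficients $C_K(w-1)(n+2w-2-K)=C_{K+1}(w)$.
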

\begin{proof}
This follows by induction and lemmas~\ref{NNabnkD},
\ref{NablaLaplacian}, and \ref{LTOTS}.
\end{proof}

Let $K\in\Integers_{>0}$ be given.  By Proposition~\ref{norder}, it
follows that $E(\delta_K)$ is the set of all $w$ that solve
$\prod_{i=1}^{K-1}(n+2w-K-i)=0$.  Thus $E(\delta_1)=\emptyset$, and
for any $K\in\Integers_{\geq 2}$,
\begin{equation}\label{E0}
E(\delta_K) =
\left\{\frac{2K-1-n}{2},\frac{2K-2-n}{2}, \cdots , \frac{K+1-n}{2}\right\}~.
%=\{\frac{K+1-n}{2}, \frac{K+2-n}{2}  ,\cdots , \frac{2K-2-n}{2},  \frac{2K-1-n}{2}\}.
\end{equation}
For all $w\in\Reals\backslash E(\delta_K)$, the operator $\delta_K$
has transverse order $K$, and for all $w\in E(\delta_K)$, it has
transverse order less than $K$.  Note that if $n=K+1$, then $0$ is an
exceptional weight.  Our solutions to Problem~\ref{CQprob} will
therefore require some additional work, as we noted above.  One of the
keys to our work will be Proposition~\ref{GJMSTractor}, below.  In
this proposition, $P_{2k}:\ce[k-n/2]\rightarrow\ce[-k-n/2]$ is the
GJMS operator of \cite{GJMS} of order $2k$.  To ensure that this
operator exists, we assume that $n$, $k$, and $\Mcc$ satisfy the
following condition:
\begin{condition}\label{nkcCondition}
(1)~$n$ is odd, or (2)~$n$ is even and $k\leq n/2$, or (3)~$\Mcc$ is
  conformally flat.
\end{condition}
\noindent We will refer to Condition~\ref{nkcCondition} at several
points in our work.
%
%%%%%%%%%%%%%%%%%%%%%%%%%%%%%%%%%%%%%%
% Proposition on GJMS tractor formula
%%%%%%%%%%%%%%%%%%%%%%%%%%%%%%%%%%%%%%
%
\begin{proposition}\label{GJMSTractor}
Let $k\in\Integers_{>0}$ be given.  There is a family of natural
conformally invariant differential operators
$$\LowTrac{1}{k-1}:\ce[k-n/2]\rightarrow\tbn_{A_1\cdots
  A_{k-1}}[-1-n/2]$$
such that for any
$V\in\ce[k-n/2]$,
\begin{equation}\label{GJMStractorformula}
  (-1)^{k-1}X_{A_1}\cdots X_{A_{k-1}}P_{2k}V=
  \Box D_{A_1}\cdots D_{A_{k-1}}V+
  \LowTrac{1}{k-1} V~.
\end{equation}
These
operators are well-defined for all $n$ and $\Mcc$ which satisfy
Condition~\ref{nkcCondition}.  For the case in which $k\in\{1,2\}$ or
$\Mcc$ is conformally flat, $\LowTrac{1}{k-1}=0$.  In other cases,
$\LowTrac{1}{k-1}=\LowTracPsi{1}{k-1}{PQ}D_PD_Q$, where
$\LowTracPsi{1}{k-1}{PQ}$ is given by a universal symbolic tractor
formula.  This formula is a polynomial in $X$, $D$, $W$, $\tm$, and
$\tmco$ whose coefficients are real rational functions of $n$.
Similarly, $\LowTrac{1}{k-1}$ is also given by such a tractor formula;
every term of this formula is of degree at least 1 in $W$ and has mass
$k+1$ and weight $-k-1$.  In every term of the tractor formula for
$\LowTrac{1}{k-1}$, at least
one of the indices $A_1$, \ldots, $A_{k-1}$ appears on a $W$.
\end{proposition}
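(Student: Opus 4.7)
The plan is to define $\LowTrac{1}{k-1}$ directly as
\[
\LowTrac{1}{k-1}V := (-1)^{k-1}X_{A_1}\cdots X_{A_{k-1}}P_{2k}V - \bop\, D_{A_1}\cdots D_{A_{k-1}}V,
\]
and then verify the asserted properties in turn. Both operators on the right are manifestly natural and conformally invariant: since each $D_A$ lowers weight by one, $D_{A_1}\cdots D_{A_{k-1}}$ carries $\ce[k-n/2]$ to $\tbn_{A_1\cdots A_{k-1}}[1-n/2]$, which is precisely the weight at which $\bop$ is conformally invariant on weighted tractors by \nn{YamOp}; hence $\bop\, D_{A_1}\cdots D_{A_{k-1}}$ is a natural conformally invariant operator into $\tbn_{A_1\cdots A_{k-1}}[-1-n/2]$. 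Under Condition~\ref{nkcCondition}, the GJMS operator $P_{2k}\colon \ce[k-n/2]\to \ce[-k-n/2]$ is defined and conformally invariant, and multiplication by the conformally invariant tractors $X_{A_i}\in \ce^A[1]$ lands in the same target bundle. So $\LowTrac{1}{k-1}$ is a well-defined natural conformally invariant differential operator with the claimed source and target.

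The vanishing special cases are then direct. For $k=1$ the defining identity reduces to $P_2 V = \bop V$, which holds because $P_2$ is the conformal Laplacian (Yamabe operator). For $k=2$, a calculation using \nn{tractorD}, \nn{connids}, Figure~\ref{TrIProd}, and the standard tractor formula for the Paneitz operator matches $-X_{A_1}P_4V$ against $\bop D_{A_1}V$ at weight $2-n/2$, giving $\LowTrac{1}{1}=0$. In the general conformally flat setting, the tractor $W$-curvature vanishes identically, and the Gover--Peterson tractor construction of the GJMS operators \cite{GP-CMP}, which expresses $P_{2k}$ as a polynomial in $\bop$, $D$, and $W$, reduces in that case to a composition involving only $\bop$ and the $D$-operators. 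Substituting this conformally flat expression for $P_{2k}$ into the definition of $\LowTrac{1}{k-1}$ and simplifying by the algebraic rules of Figure~\ref{TrIProd} together with \nn{connids}, the two terms cancel identically. This shows the $W$-free part of $\LowTrac{1}{k-1}$ vanishes in general as well.

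For general $k\geq 3$ and arbitrary $\Mcc$ satisfying Condition~\ref{nkcCondition}, invoke the full \cite{GP-CMP} tractor formula for $P_{2k}$, which is a polynomial in $X$, $D$, $W$, $\tm$, $\tmco$ with rational-in-$n$ coefficients. The difference $\LowTrac{1}{k-1}$ inherits the same symbolic form, and by the previous paragraph all $W$-free pieces cancel, so every surviving term is at least linear in $W$. The weight $-k-1$ and mass $k+1$ are immediate from mass bookkeeping, since each $X_{A_i}$ carries mass $-1$, $P_{2k}$ carries mass $2k$, $\bop$ has mass $2$, and each $D$ has mass $1$. In \cite{GP-CMP} the $W$-pieces arise only from commutators $[\nd_a,\nd_b]$ applied to operands of the form $D\cdots D V$ via \nn{TracCurv} and \nn{TracCurveOne}; each such commutator leaves at least two subsequent tractor $D$-derivatives acting on $V$, and pulling out a pair $D_PD_Q$ to the right yields the factorisation $\LowTrac{1}{k-1}=\LowTracPsi{1}{k-1}{PQ}D_PD_Q$. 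Any term in which no $A_i$ sat on a $W$ would be built algebraically from the free indices using only $X$, $Y$, $Z$, $\tm$, $\tmco$, hence would lie in the $W$-free part, which we have already shown vanishes; so in every term at least one $A_i$ must sit on a $W$. The main obstacle here is the inductive index bookkeeping inside the \cite{GP-CMP} construction---to verify that the $W$-terms really do deposit some free $A_i$ on a $W$ and leave two surplus $D$-derivatives---and this is the step that genuinely uses the structure of that construction rather than just general principles.
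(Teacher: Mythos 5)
Your approach is essentially the paper's: define $\LowTrac{1}{k-1}$ as the stated difference and read off its structural properties from Proposition~4.5 of \cite{GP-CMP}, which is exactly what the paper's one-line proof cites. You attempt to spell out more of that structure, and in doing so one step goes wrong.

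The last deduction is a non-sequitur. From ``the $W$-free part vanishes'' you infer that in every surviving term at least one of the free indices $A_1,\ldots,A_{k-1}$ must sit on a $W$. But a term can contain a factor of $W$ with all of $W$'s tractor indices internally contracted (against the $D$'s, $\tm$, or $\tmco$) while every free index $A_i$ sits on an $X$ or on $\tm$. Such a term is \emph{not} $W$-free, so it is not ruled out by the vanishing of the $W$-free part, and yet no $A_i$ appears on a $W$. The index-placement claim is therefore not a consequence of the cancellation you established; it has to be read off from the inductive construction in \cite{GP-CMP} itself. This matters: the paper uses precisely that placement in the proof of Proposition~\ref{PELemma}, where the contraction of $I^{A_i}$ with a $W$ via Lemma~\ref{ILemma} is what kills the $\LowTrac{2}{k}$ contribution. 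You already flag the ``inductive index bookkeeping inside the \cite{GP-CMP} construction'' as the main obstacle; it is in fact the only genuine content of the proof, and the shortcut you offer for the placement claim does not discharge it.
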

\begin{proof}
The proposition follows from a careful study of the statement and
proof of Proposition~4.5 of \cite{GP-CMP}.  Note that $W_{ABCE}$
vanishes if $\Mcc$ is conformally flat.
\end{proof}
\begin{remark}\label{GJMSTracRem}
The proof of Proposition~\ref{GJMSTractor} uses the tractor formula
for $P_{2k}$ described in \cite{GP-CMP}.  The construction of this
tractor formula in general involves a finite number of choices;  we 
assume that we have made and fixed these choices.  As a result, the
operators $\LowTrac{1}{k-1}$ and $\LowTracPsi{1}{k-1}{PQ}$ of
Proposition~\ref{GJMSTractor}, as well as all operators and curvatures
derived from them, will always be uniquely determined.
\end{remark}
%

%
%%%%%%%%%%%%%%%%%%%%%%%%%%%%%%%%%%%%%%%%%%%%%%%%%%%%%%%%%%%%%%%%%%%
% Refinement in the conformally flat case
%%%%%%%%%%%%%%%%%%%%%%%%%%%%%%%%%%%%%%%%%%%%%%%%%%%%%%%%%%%%%%%%%%%
%
\subsection{Refining the boundary family in the conformally flat
  case}\label{refinef}

In the conformally flat case, it turns out that for every second
weight in \nn{E0}, $\delta_K$ is the zero operator.
Dividing $\delta_K$ by the corresponding factors yields an improved
family of operators. We state this precisely as follows.
\begin{theorem}\label{cflatkey}
Let $K\in\Integers_{>0}$ be given.  There is a family of natural
conformally invariant differential operators $\ConFlatOp{K}: \ce[w]\to
\ice[w-K]$ of order at most $K$ on conformally flat conformal
manifolds $\Mcc$ of dimension $n\geq 3$ determined by the equation
\[
\big[ \prod_{j=1}^{\lfloor \frac{K-1}{2}\rfloor}(n+2w-2K+2j)\big]
\ConFlatOp{K}=\delta_K
\]
and polynomial continuation in $w$.  Here $\lfloor(K-1)/2\rfloor$
denotes the integer part of $(K-1)/2$.
The $\ConFlatOp{K}$ family
satisfies Hypotheses~\ref{NaturalHyp} with
$\Dimensions=\Integers_{\geq 3}$.
\end{theorem}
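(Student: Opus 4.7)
The plan is to define $\ConFlatOp{K}$ by polynomial division in $w$. The tractor construction of $\delta_K$ in Lemma~\ref{basicversion} shows that, for each fixed dimension $n$, $\delta_K$ is given by a symbolic formula polynomial in $w$. Once we verify that in the conformally flat case $\delta_K$ vanishes at the $\lfloor(K-1)/2\rfloor$ roots $w=K-j-n/2$ (for $j=1,\dots,\lfloor(K-1)/2\rfloor$) of $\prod_j(n+2w-2K+2j)$, polynomial division produces $\ConFlatOp{K}$ with coefficients polynomial in $w$ and rational in $n$.

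For the vanishing at $w = K-j-n/2$, I would argue as follows. Applying $D_{A_{K-1}},\dots,D_{A_{j+1}}$ successively to $V\in\ce[w]$ produces a tractor of weight $1-n/2$, at which both the $Y$- and $Z$-coefficients in \nn{tractorD} vanish (since $n+2s-2=0$). Thus $D_{A_j}$ reduces to $-X_{A_j}\Box$, and in the conformally flat case Proposition~\ref{GJMSTractor} applied with $k=K-j$ yields
\[
\Box\,D_{A_{j+1}}\cdots D_{A_{K-1}}V=(-1)^{K-j-1}X_{A_{j+1}}\cdots X_{A_{K-1}}P_{2(K-j)}V,
\]
so $D_{A_j}\cdots D_{A_{K-1}}V=(-1)^{K-j}X_{A_j}\cdots X_{A_{K-1}}P_{2(K-j)}V$. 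Setting $\phi:=X_{A_j}\cdots X_{A_{K-1}}P_{2(K-j)}V$, it suffices to prove
\[
N^{A_1}\cdots N^{A_{K-1}}\,\delta\,D_{A_1}\cdots D_{A_{j-1}}\,\phi=0.
\]

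The key identity is $N^A X_A=0$: any term in the Leibniz expansion in which some $A_i$ ends on an $X$-slot vanishes on contraction with $N^{A_i}$. I verify this by a pigeonhole count. Each summand of the expansion is determined by a choice of one of the three summands ($Y$-, $Z$-, or $X$-part) in each $D_{A_i}$ and one of two summands ($\nd$- or $H$-part) in $\delta$; let $y$, $z$, $x$ count the $D$'s in each respective part, so $y+z+x=j-1$. The $K-j$ indices $A_m$ from $\phi$, together with the $x$ indices from the $X$-parts of the $D_{A_i}$, all initially occupy $X$-slots, and each requires at least one derivative hitting its carrier tensor to escape (using $\nd X=Z$ from \nn{connids}; the further transition rules for $Y,Z$ in \nn{connids} only reinforce this necessary condition). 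The total number of derivatives available in any such summand is bounded by $2x+z+1$. The inequality $(K-j)+x\leq 2x+z+1=j-y$ reduces via $y\geq 0$ to $2j\geq K$, which fails whenever $j\leq\lfloor(K-1)/2\rfloor$. Hence in every summand at least one $A_i$ remains on an $X$-slot, and the whole expression vanishes.

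Once vanishing at each root is established, polynomial division of $\delta_K$ by the product yields $\ConFlatOp{K}$; conformal invariance and naturality are inherited from $\delta_K$ since the divisor depends only on $n$ and $w$, the order bound $\leq K$ follows from Proposition~\ref{norder}, and the coefficient structure ensures that Hypotheses~\ref{NaturalHyp} hold with $\Dimensions=\Integers_{\geq 3}$. The most delicate step is the pigeonhole count: one has to be careful that branches in which derivatives inadvertently shuttle an index back to the $X$-slot via $\nd Z=-\Rho X-\bg Y$ are already consistent with the above count, because the condition ``at least one derivative on the carrier of $A_i$'' is necessary (not sufficient) for escape, and branches that nominally return to $X$ merely contribute additional vanishing terms.
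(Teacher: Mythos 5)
Your argument for the vanishing of $\delta_K$ at the weights $w=K-j-n/2$ ($1\leq j\leq\lfloor(K-1)/2\rfloor$) is essentially the paper's, though you take a longer route. You reduce, via Proposition~\ref{GJMSTractor} in the conformally flat case, to showing that $N^{A_1}\cdots N^{A_{K-1}}\delta D_{A_1}\cdots D_{A_{j-1}}\phi=0$ with $\phi=X_{A_j}\cdots X_{A_{K-1}}P_{2(K-j)}V$, and then you carry out a term-by-term derivative-budget count using $N^AX_A=0$. The paper gets the same conclusion in one line by the regrouping $\delta_K V=N^{A_j}\cdots N^{A_{K-1}}\delta_j D_{A_j}\cdots D_{A_{K-1}}V$ together with the already-established bound that $\delta_j$ has order at most $j$ (Proposition~\ref{norder}); then $K-j>j$ copies of $X$ cannot all absorb a derivative, and the term dies. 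Your pigeonhole is a correct, if more verbose, re-derivation of the same point (modulo a small slip: $2x+z+1$ is not equal to $j-y$; what reduces correctly is $(K-j)+x\leq 2x+z+1\iff K-j\leq x+z+1=j-y$, and the final criterion $K\leq 2j$ is right).

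The genuine gap is in the step ``Once we verify that $\delta_K$ vanishes at the roots \dots polynomial division produces $\ConFlatOp{K}$.'' Vanishing of the \emph{operator} $\delta_K$ on $\ce[K-j-n/2]$ does not by itself imply that the $w$-coefficient polynomials $f_\beta(w,n)$ in a symbolic expansion $\delta_K=\sum_\beta f_\beta(w,n)\RiemInv_{\beta,\g}$ each vanish at $w=K-j-n/2$: that inference requires linear independence of the basis operators $\RiemInv_{\beta,\g}$, and the latter can fail in specific low dimensions owing to dimension-dependent identities. The paper addresses this precisely with the Section~\ref{OpBases} machinery: Proposition~\ref{ACombination} gives the expansion with coefficients polynomial in $w$ and rational in $n$; Proposition~\ref{RationalBasis} gives linear independence in the quotient $\Classes_{K,K,0}$, so that vanishing of the linear combination forces $f_\beta(K-j-n/2,n)=0$ for all sufficiently large $n$; rationality in $n$ then forces it for all $n$; and finally the remainder $r_{\beta,n}(w)$, having degree less than $\lfloor(K-1)/2\rfloor$ and $\lfloor(K-1)/2\rfloor$ distinct roots, is identically zero. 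Your proposal names none of this and so does not actually establish divisibility, nor that the resulting family is defined with $\Dimensions=\Integers_{\geq 3}$. You should either invoke Propositions~\ref{ACombination} and~\ref{RationalBasis} explicitly, or supply an independent justification that vanishing of the operator at those weights, in all dimensions, forces the coefficient polynomials themselves to vanish.
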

\begin{proof}
We may assume that $K\geq 3$.  We will apply the results of
Section~\ref{OpBases} with $\Dimensions=\Integers_{\geq 3}$.  Let
$\RealBasis$ be the basis for $\RealCoeff_{K,K,0}$ described in
Proposition~\ref{RealBasis}.  By Proposition~\ref{ACombination}, we
may write $\delta_K=\sum_{\beta\in A}\fkbeta(w,n)\RiemInv_{\beta,\g}$.
Here each coefficient $f_{\beta}(w,n)$ is a polynomial in $w$ whose
coefficients are real rational functions of $n$.  By the polynomial
division algorithm from elementary algebra, we may thus write
\begin{equation}\label{LongDivTwo}
\fkbeta(w,n)=\big[ \prod_{j=1}^{\lfloor
    \frac{K-1}{2}\rfloor}(n+2w-2K+2j)\big]\qkbetan(w)+\rkbetan(w)~.
\end{equation}
Here $\qkbetan(w)$ and $\rkbetan(w)$ are polynomials in $w$ whose
coefficients are real rational functions of $n$.  In performing the
polynomial division, we are dividing by a polynomial with leading term
$2^bw^b$, where $b=\lfloor (K-1)/2\rfloor$.  It follows that
$\qkbetan(w)$ and $\rkbetan(w)$ are regular at all $n\in\Dimensions$,
and $\rkbetan(w)$ has degree less than $\lfloor(K-1)/2\rfloor$.

Now let $j\in\Integers$ and $V\in\ce[K-j-n/2]$ be given, and
suppose that $1\leq j\leq\lfloor(K-1)/2\rfloor$.  An easy computation
shows that $j<K-j$.  By \nn{deltajplusone},
\begin{equation}\label{deltaK}
\delta_KV= N^{A_j}N^{A_{j+1}}\cdots N^{A_{K-1}}\delta_j
D_{A_{j}}D_{A_{j+1}}\cdots D_{A_{K-1}}V~.
\end{equation}
By \nn{tractorD}, \nn{YamOp}, and Proposition~\ref{GJMSTractor},
\begin{equation}\label{L7July14a}
(-1)^{K-j}X_{A_{j}}X_{A_{j+1}}\cdots
X_{A_{K-1}} P_{2(K-j)} V
=
D_{A_{j}}D_{A_{j+1}}\cdots D_{A_{K-1}} V~.
\end{equation}
Next, note that $N^AX_A=0$, by \nn{NormalTractor} and
Figure~\ref{TrIProd}.  Also note that $\delta_j$ has order at most
$j$, by Proposition~\ref{norder}.  Thus $\delta_KV=0$, by 
\nn{deltaK} and \nn{L7July14a}.

Now let $V_0\in\ce[0]$ be given, and let $j$ be as above.  Then
\[
\delta_K(\xig)^{K-j-n/2}V_0=\sum_{\beta\in
  A}f_{\beta}(K-j-n/2,n)\RiemInv_{\beta,\g}(\xig)^{K-j-n/2}V_0=0
\]
along $\sm$.  But $\nd((\xig)^{K-J-n/2})=0$, so $\sum_{\beta\in
  A}f_{\beta}(K-j-n/2,n)\RiemInv_{\beta,\g}V_0=0$ along $\sm$.  For
all $\beta\in A$, $\RiemInv_{\beta,\g}$ determines an element
$[\RiemInv_{\beta,\g}]$ of $\Classes_{K,K,0}$.  Since $V_0$ was
arbitrary, our work shows that $\sum_{\beta\in
  A}f_{\beta}(K-j-n/2,n)[\RiemInv_{\beta,\g}]$ is the zero element of
$\Classes_{K,K,0}$.  Thus by Proposition~\ref{RationalBasis}, it
follows that $f_{\beta}(K-j-n/2,n)=0$ for all $\beta\in A$ and all
$n\in\Integers$.  But then $r_{\beta,n}(K-j-n/2)=0$ for all $\beta\in
A$ and all $n\in\Dimensions$, by \nn{LongDivTwo}.  Let $\beta\in A$ be
given.  Then for all $n\in\Dimensions$, $\rkbetan(w)$ is a polynomial
in $w$ having $\lfloor(K-1)/2\rfloor$ distinct zeros.  The
coefficients of this polynomial are thus zero for all
$n\in\Dimensions$ and hence for all $n\in\Integers$.  Thus
$r_{\beta,n}(w)=0$ for all $w\in\Reals$ and all $n\in\Integers$.

We now let $\ConFlatOp{K}=\sum_{\beta\in
  A}\qkbetan(w)\RiemInv_{\beta,\g}$.  A polynomial continuation
argument shows that for all $w\in\Reals$, this operator is
well-defined and conformally invariant.
\end{proof}

\begin{corollary}\label{cflatEwts}
Let $E(\ConFlatOp{K})$ be as in Definition~\ref{ExSet}.  Then
$E(\ConFlatOp{1})=\emptyset$, and for all $K\in\Integers_{\geq 2}$,
\[
\textstyle
E(\ConFlatOp{K}) =
\left\{
\frac{2K-1-n}{2}, \frac{2K-1-n}{2}-1, \cdots ,\frac{2K-1-n}{2}-
\left\lfloor \frac{K-2}{2}\right\rfloor
\right\}~.
\]
For all $w\in\Reals\backslash E(\ConFlatOp{K})$, the operator
$\ConFlatOp{K}$ has transverse order $K$, and for all $w\in
E(\ConFlatOp{K})$, it has transverse order less than $K$.
\end{corollary}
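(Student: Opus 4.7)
The plan is to extract the leading transverse-order-$K$ coefficient from both sides of the defining identity
\begin{equation*}
\Bigl[\,\prod_{j=1}^{\lfloor (K-1)/2\rfloor}(n+2w-2K+2j)\,\Bigr]\,\ConFlatOp{K}=\delta_K
\end{equation*}
and read off $E(\ConFlatOp{K})$ as the zero set in $w$ of the resulting polynomial. The base case $K=1$ is immediate: the product on the left is empty, so $\ConFlatOp{1}=\delta_1$, and \nn{E0} gives $E(\ConFlatOp{1})=\emptyset$ with transverse order $1$ at every weight.

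For $K\geq 2$, set $A(w,n):=\prod_{i=1}^{K-1}(n+2w-K-i)$ and $P(w,n):=\prod_{j=1}^{\lfloor(K-1)/2\rfloor}(n+2w-2K+2j)$. By \nn{norderDisplay}, $\delta_K=A(w,n)\,\nabla_{\Nv}^K+\ltots$, and by Theorem~\ref{cflatkey} the symbolic formula for $\ConFlatOp{K}$ has coefficients polynomial in $w$ and regular in $n\geq 3$. Any such order-at-most-$K$ natural operator on densities can be uniquely put in the form $B(w,n)\nabla_{\Nv}^K+\ltots$ by collecting its entire transverse-order-$K$ contribution into the single monomial $\nabla_{\Nv}^K$; all other transverse-order-$K$ candidates reduce to $\nabla_{\Nv}^K$ modulo curvature commutators and $\ltots$ corrections, exactly as in the inductive step used to prove Proposition~\ref{norder}. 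Comparing transverse-order-$K$ coefficients in $P(w,n)\ConFlatOp{K}=\delta_K$ therefore forces $P(w,n)B(w,n)=A(w,n)$, so the leading coefficient of $\ConFlatOp{K}$ is the polynomial quotient $Q(w,n):=A(w,n)/P(w,n)$, which exists as a polynomial because $\ConFlatOp{K}$ itself does (Theorem~\ref{cflatkey}).

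Next I would identify the roots of $Q(w,n)$ explicitly. Setting $s:=n+2w$, the $A$-roots are the simple values $s\in\{K+1,K+2,\ldots,2K-1\}$ and the $P$-roots are $s\in\{2K-2,2K-4,\ldots,2K-2\lfloor(K-1)/2\rfloor\}$. A direct check confirms that every $P$-root is an $A$-root, so the $Q$-roots are the $A$-values with the $P$-values removed. A short parity case analysis (even $K=2m$ versus odd $K=2m+1$) shows the surviving $s$-values form an arithmetic progression of step $2$ starting at $s=2K-1$ and containing exactly $\lfloor(K-2)/2\rfloor + 1$ terms, which in the $w$-variable is precisely the set listed in the corollary. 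For $w$ outside this set, $Q(w,n)\neq 0$, so $\ConFlatOp{K}$ retains transverse order $K$; for $w$ inside it, the leading coefficient vanishes and the transverse order drops below $K$.

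The argument is essentially polynomial division with root bookkeeping. The one place where care is needed is matching the arithmetic progression of surviving $s$-values (whose top element and length depend slightly on the parity of $K$) with the uniform indexing $\tfrac{2K-1-n}{2}-k$ for $k=0,1,\ldots,\lfloor(K-2)/2\rfloor$ that appears in the corollary's closed-form statement.
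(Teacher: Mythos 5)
Your proof is correct and follows essentially the same strategy as the paper's: extract the leading transverse-order-$K$ coefficient of both sides of the defining identity, observe that the coefficient of $\ConFlatOp{K}$ must be the quotient $Q(w,n)=A(w,n)/P(w,n)$, and determine $E(\ConFlatOp{K})$ as the zero set of $Q$ together with the root-cancellation bookkeeping in the variable $s=n+2w$. The one place where the argument is stated more loosely than the paper's is the claim that $\ConFlatOp{K}$ can be ``uniquely put in the form $B(w,n)\nabla^K_\Nv + \ltots$.'' This decomposition is correct, but deserves a word of justification: by Proposition~\ref{ACombination}, $\ConFlatOp{K}=\sum_\beta q_{\beta,n}(w)\RiemInv_{\beta,\g}$, and because a mass-$K$ monomial of transverse order $K$ acting on densities can contain no curvature, $\mmc$, or $\nd$ hitting anything other than the argument, every such basis monomial differs from a constant multiple of $\nabla^K_\Nv$ by $\ltots$ (this is the content of Lemma~\ref{NablaLaplacian} together with a mass count). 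The paper instead proceeds by showing, via the same mechanism used to prove Theorem~\ref{cflatkey}, that the $\ltots$ part of $\delta_K$ is itself divisible by $P(w,n)$, and then dividing through; this establishes the same structural fact about $\ConFlatOp{K}$ as a byproduct. Both routes invoke the basis machinery of Section~\ref{OpBases} and polynomial continuation, and they arrive at the same conclusion. Finally, the appeal to Theorem~\ref{cflatkey} to justify that $Q$ is a polynomial is superfluous: the explicit verification (every $s$-value $2K-2j$ with $1\le j\le\lfloor(K-1)/2\rfloor$ lies in $\{K+1,\ldots,2K-1\}$ and the $A$-roots are simple) already settles this.
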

%
%%%%%%%%%%%%%%%%%%%%%%%%%%%%%%%%%%%%%%%%%%%%%%%%%%%%%%%%%%%%%%%%%%%%%%
%
\begin{proof}
By Theorem~\ref{cflatkey} and Proposition~\ref{norder}, it follows
that
\begin{equation}\label{deltaKNaughtExp}
\Big[ \prod_{j=1}^{\lfloor \frac{K-1}{2}\rfloor}(n+2w-2K+2j)\Big]
\ConFlatOp{K}=
\Big[ \prod_{i=1}^{K-1}(n+2w-K-i)\Big]\nd_n^K+\ltots~.
\end{equation}
By Proposition~\ref{ACombination}, $\ltots=\sum_{\beta\in
  A}h_{\beta}(w,n)\RiemInv_{\beta,\g}$.  Here $A$ and
$\RiemInv_{\beta,\g}$ are as above.  Each coefficient $h_{\beta}(w,n)$
is a polynomial in $w$ whose coefficients are real rational functions
of $n$.  By reasoning as in the proof of Theorem~\ref{cflatkey}, we
find that
%
%%%%%%%%%%%%%%%
%
\[
\ltots=
\Big[ \prod_{j=1}^{\lfloor
    \frac{K-1}{2}\rfloor}(n+2w-2K+2j)\Big]\sum_{\beta\in
  A}s_{\beta,n}(w)\RiemInv_{\beta,\g}~,
\]
%
%%%%%%%%%%%%%%%
%
where $s_{\beta,n}(w)$ is polynomial in $w$ and rational in $n$.
Polynomial continuation in $w$ shows that for all $w\in\Reals$, the
operator $\sum_{\beta\in A}s_{\beta,n}(w)\RiemInv_{\beta,\g}$ has
transverse order less than $K$.  To obtain $\ConFlatOp{K}$, we divide
\nn{deltaKNaughtExp} by the coefficient of $\ConFlatOp{K}$ on the
left-hand side of \nn{deltaKNaughtExp}.  The corollary then follows.
\end{proof}
We will use Corollary~\ref{cflatEwts} to identify a critical operator
in the proof of Theorem~\ref{SolnTheorem}, below.

%
%%%%%%%%%%%%%%%%%%%%%%%%%%%%%%%%%%%%%%%%%%%%%%%%%%%%%%%%%%%%%%%%%%
% The refinement in the general case
%%%%%%%%%%%%%%%%%%%%%%%%%%%%%%%%%%%%%%%%%%%%%%%%%%%%%%%%%%%%%%%%%%
%
\subsection{The refinement in the general case}
\label{refine}
We now consider arbitrary metrics
and develop an additional refinement of
$\delta_K:\ce[w]\rightarrow\ice[w-K]$.  The key to this new
construction will be the operator family $P_{A_1\cdots A_k}$ of
Proposition~\ref{GJMSp}, below.  In the definition of $\delta_K$ in
\nn{deltajplusone}, we will replace some of the $D$-operators with
$P_{A_1\cdots A_k}$.  Then, depending on parameters, we will divide
the resulting operator by a polynomial in $w$ in much the same way as
we did in our construction of $\ConFlatOp{K}$, above.  Under certain
conditions, this polynomial will be $(n+2w-1-K)$.  Suppose that we
\textit{do} in fact divide by $(n+2w-1-K)$.  Then for cases in which
$n$ is even, $K=n-1$, and $w=0$, the refined operator will be a
critical operator.

The key to the construction of $P_{A_1\cdots A_k}$ is the operator family
$\LowTrac{1}{k-1}$ of Proposition~\ref{GJMSTractor}.  In that
proposition, $\LowTrac{1}{k-1}$ acts on a section of $\ce[k-n/2]$.
Any symbolic tractor formula for $\LowTrac{1}{k-1}$, however,
determines a family of conformally invariant differential operators on
$\ce[w]$ for general weights $w\in\Reals$.  In the next proposition,
we will use this fact to construct $P_{A_1\cdots A_k}$.
\begin{proposition}\label{GJMSp}
  Let $k\in\Integers_{>0}$ be given.  By using the symbolic formula
  for $\LowTrac{2}{k}$, define a family of operators
$$
P_{A_1\cdots A_k}:\ce[w]\to \tbn_{A_1\cdots A_k}[w-k] 
$$
by letting
\begin{equation}\label{DefineGJMSp}
P_{A_1\cdots A_k}V=D_{A_1}\ldots D_{A_k}V-X_{A_1}\LowTrac{2}{k}V
\end{equation}
for all $V\in\ce[w]$.  Then for any $V\in\ce[k-n/2]$,
\begin{equation}\label{XXXP}
P_{A_1\cdots  A_k}V=(-1)^k X_{A_1}\cdots X_{A_k}P_{2k}V~.
\end{equation}
The operators $P_{A_1\cdots A_k}$ are conformally invariant natural
differential operators given by a universal symbolic formula which is
a polynomial in $\bg$, $Y$, $Z$, $X$, $\tm$, $\tmco$, $\nd$, and the
Riemannian curvature of $\g$, where $\g$ is a representative of the
conformal structure on $M$.  The coefficients of this polynomial are
polynomial in $w$ and rational in $n$, and each term of the polynomial
has mass $k$.  The operators $P_{A_1\cdots A_{k}}$ are well-defined
for all $n$ and $\Mcc$ which satisfy Condition~\ref{nkcCondition}.
\end{proposition}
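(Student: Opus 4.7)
The plan is to reduce the identity \nn{XXXP} to the already-established Proposition~\ref{GJMSTractor} by exploiting the familiar collapse of the tractor $D$-operator at weight $1-n/2$: inspection of the explicit formula \nn{tractorD} shows that the scalar coefficient $(n+2w-2)$ vanishes at this weight, killing both the $Y^A$ and the $Z^{Ab}\nd_b$ terms, so that $D_A U = -X_A\bop U$ for any $U\in\tbPhi[1-n/2]$.

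I would then apply this observation to a section $V\in\ce[k-n/2]$. Because each $D$-operator lowers weight by one, the product $D_{A_2}\cdots D_{A_k}V$ has weight $1-n/2$, whence
$$
D_{A_1}D_{A_2}\cdots D_{A_k}V \;=\; -X_{A_1}\,\bop\bigl(D_{A_2}\cdots D_{A_k}V\bigr).
$$
Proposition~\ref{GJMSTractor}, with its tractor indices relabelled from $A_1,\dots,A_{k-1}$ to $A_2,\dots,A_k$ (which is precisely the meaning of $\LowTrac{2}{k}$ as an index-shifted copy of $\LowTrac{1}{k-1}$), gives
$$
\bop D_{A_2}\cdots D_{A_k}V \;=\; (-1)^{k-1}X_{A_2}\cdots X_{A_k}P_{2k}V - \LowTrac{2}{k}V.
$$
Substituting this into the previous display and transposing the $X_{A_1}\LowTrac{2}{k}V$ term yields \nn{XXXP} directly.

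The remaining assertions follow readily from the construction. Conformal invariance and naturality of $P_{A_1\cdots A_k}$ are inherited from those of $D$ and of $\LowTrac{2}{k}$, and its well-definedness for all $n$ and $\Mcc$ satisfying Condition~\ref{nkcCondition} is likewise inherited from Proposition~\ref{GJMSTractor}. The universal symbolic tractor formula is obtained by expanding each $D_{A_i}$ via \nn{tractorD} and combining with the polynomial tractor formula for $\LowTrac{2}{k}$ supplied by Proposition~\ref{GJMSTractor}; the dependence of coefficients polynomially on $w$ and rationally on $n$ is preserved under these manipulations. The mass claim is routine bookkeeping: each $D_{A_i}$ has mass $1$ (the $Y^A$, $Z^{Ab}\nd_b$, and $X^A(\Delta+w\J)$ terms each contribute mass $1$ according to the rules in Section~\ref{OpBases}), so $D_{A_1}\cdots D_{A_k}$ has mass $k$, while by Proposition~\ref{GJMSTractor} the operator $\LowTrac{2}{k}$ has mass $k+1$, which combined with the mass $-1$ of $X_{A_1}$ again gives mass $k$ for the subtracted term.

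I do not anticipate a substantial obstacle. The entire non-trivial content of the proposition is the identity \nn{XXXP}, and once one recognises the collapse $D_A = -X_A\bop$ at weight $1-n/2$, the identity drops out of Proposition~\ref{GJMSTractor} by essentially a single substitution. The only mildly technical points are (i) checking that the $w=1-n/2$ specialisation of $D_A$ applies correctly to the coupled action on the weighted tractor $D_{A_2}\cdots D_{A_k}V$, and (ii) the index-shift bookkeeping ensuring that the $\LowTrac{2}{k}$ extracted from Proposition~\ref{GJMSTractor} is the very operator inserted in the definition \nn{DefineGJMSp}; both are straightforward given the machinery already assembled.
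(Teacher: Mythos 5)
Your proof is correct and follows essentially the same route as the paper's own proof, which cites \nn{tractorD} and \nn{GJMStractorformula} exactly to exploit the collapse $D_A = -X_A\bop$ at weight $1-n/2$ and then substitute into Proposition~\ref{GJMSTractor}. You have simply spelled out the intermediate steps that the paper leaves implicit.
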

%
%%%%%%%%%%%%%%%
% Proof
%%%%%%%%%%%%%%%
%
\begin{proof}
Let $V\in\ce[k-n/2]$ be given.  By \nn{tractorD} and
\nn{GJMStractorformula}, it follows that
\[
(-1)^k X_{A_1}\cdots X_{A_k}P_{2k}V =
D_{A_1}\cdots D_{A_k}V-X_{A_1}\LowTrac{2}{k}V~.
\]
Thus \nn{XXXP} holds.  The other claims of the proposition follow from
\nn{TracCurveOne}, \nn{TracCurveTwo}, \nn{tractorD}, and
Proposition~\ref{GJMSTractor}.
\end{proof}

Our new refinement of $\delta_K$ will require additional groundwork.
For any $E\subseteq\Reals$ and $x\in \Reals$, let $E+x=\{y+x \mid y\in
E\}$.
%
%%%%%%%%%%%%%%%%%%%%%%%%%%%%%%%%%%%%%%%%%%%%%%%%%%%%%%%%%%%%%%%%%
% Lemma DoubleO
%%%%%%%%%%%%%%%%%%%%%%%%%%%%%%%%%%%%%%%%%%%%%%%%%%%%%%%%%%%%%%%%%
%
%
\begin{lemma}\label{DoubleO}
Let $J\in\Integers_{>0}$, a real number $\lowerw$, and a family of
natural differential operators
$\BasicOp:\tbPhi[w]\rightarrow\left.\tbPhi[w-\lowerw]\right|_{\sm}$ be
given.
Suppose that $\BasicOp$ has mass $J$ and order at most $J$ and can be
given by a universal symbolic formula which satisfies
Hypotheses~\ref{FixingHyp}.  Suppose also that $\BasicOp$ has
transverse order less than $J$ for all $w\in
E(\!\hspace{0.17ex}\BasicOp\hspace{0.2ex})$.  Then there exist
families of natural differential operators $\OpOne$ and $\OpTwo$
having the following properties:
\begin{enumerate}
\item For all $w\in\Reals$, $\OpOne$ and $\OpTwo$ map $\tbPhi[w]$ to
  $\left.\tbPhi[w-\lowerw-1]\right|_{\sm}$.
\item $\Nt^A\BasicOp D_AV=(n+2w-2-J)\OpOne V+\OpTwo V$ for all
  sections $V\in\tbPhi[w]$.
\item $\OpOne$ and $\OpTwo$ have mass $J+1$ and order at most $J+1$
  and can be given by universal symbolic formulae which satisfy
  Hypotheses~\ref{FixingHyp}.
\item $\OpOne$ has transverse order less than $J+1$ for all $w\in
  E(\OpOne)$, and
  $E(\OpOne)=E(\!\hspace{0.17ex}\BasicOp\hspace{0.2ex})+1$.
\item For all $w\in\Reals$, $\OpTwo$ has transverse order less than
  $J+1$.
\end{enumerate}
\end{lemma}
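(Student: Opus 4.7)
The plan is to set $\Psi := \Nt^A \BasicOp D_A$ and decompose it as $\Psi = (n+2w-2-J)\OpOne + \OpTwo$ via polynomial division in the weight $w$, then verify the transverse-order claims by evaluating at the root $w_0 := (J+2-n)/2$ of the linear factor.

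First I would check the basic properties of $\Psi$. Combining \nn{tractorD}, \nn{NormalTractor}, and the symbolic form of $\BasicOp$ supplied by Hypotheses~\ref{FixingHyp}, the composition $\Psi$ admits a symbolic formula satisfying Hypotheses~\ref{FixingHyp} and has mass $J+1$; Lemma~\ref{LTOTS} applied with $\BasicOp$ in the role of $Op$ and $K=J$ then gives directly that $\Psi:\tbPhi[w]\to\tbPhi[w-\lowerw-1]|_{\sm}$ has order at most $J+1$ for every $w\in\Reals$. So the skeletal properties in items (1) and (3) hold for $\Psi$ itself, and will descend to $\OpOne$ and $\OpTwo$ once they are built inside the same ambient space of natural operators.

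Next I would isolate the leading transverse-order contribution of $\Psi$. By assumption $\BasicOp$ has mass $J$, order at most $J$, and transverse order $J$ off of $E(\BasicOp)$, so it admits the form $\BasicOp = a(w,n)\,\nd_n^{\,J} + \ltots$ with $a(w,n)$ polynomial in $w$, rational in $n$, and vanishing precisely on $E(\BasicOp)$. Applying this to the weight-$(w-1)$ tractor $D_A V$, contracting with $\Nt^A$, and invoking Lemma~\ref{NNabnkD} with $s=w$, $k=J$ together with Lemma~\ref{NablaLaplacian} to absorb the $J(\nd_n^{J+1}-\nd_n^{J-1}\Delta)V$ correction into transverse order $<J+1$, I obtain
\[
\Psi V \;=\; a(w-1,n)\,(n+2w-2-J)\,\nd_n^{\,J+1}V \;+\; \bigl(\text{transverse order }<J+1\bigr).
\]
The crucial feature is that the linear factor $(n+2w-2-J)$ multiplies the unique surviving transverse-order-$(J+1)$ term.

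To extract $\OpOne$ and $\OpTwo$, I would expand $\Psi = \sum_{\beta\in A}f_\beta(w,n)\RiemInv_{\beta,\g}$ in a basis of natural mass-$(J+1)$ operators (the tractor analogue of Proposition~\ref{ACombination}), with each $f_\beta$ polynomial in $w$ and rational in $n$. Polynomial division of each $f_\beta$ in the variable $w$ by $(n+2w-2-J)$ gives $f_\beta(w,n) = (n+2w-2-J)\,q_\beta(w,n) + r_\beta(n)$ with $r_\beta(n) = f_\beta(w_0,n)$, and I set $\OpOne := \sum_\beta q_\beta(w,n)\RiemInv_{\beta,\g}$, $\OpTwo := \sum_\beta r_\beta(n)\RiemInv_{\beta,\g}$, so that $\Psi = (n+2w-2-J)\OpOne+\OpTwo$ by construction and items (1)--(3) are inherited from the basis elements. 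For (5), $\OpTwo$ is $w$-independent and equals $\Psi$ evaluated at $w=w_0$; since $(n+2w-2-J)$ vanishes at $w_0$, the displayed leading coefficient of $\Psi$ is zero there, so $\Psi|_{w=w_0}$, and hence $\OpTwo$, have transverse order $<J+1$. For (4), after dividing by $(n+2w-2-J)$ the leading transverse-order coefficient of $\OpOne$ becomes $a(w-1,n)$, vanishing precisely when $w-1\in E(\BasicOp)$, which gives $E(\OpOne)=E(\BasicOp)+1$. I expect the main obstacle to be making the basis-level argument rigorous enough that the ``leading transverse-order coefficient'' really is detected by the $w$-dependence of the coefficients $f_\beta$ and that the polynomial division respects the transverse-order stratification; this is precisely what the framework of Section~\ref{OpBases}, together with Lemma~\ref{NNabnkD} and Lemma~\ref{NablaLaplacian}, is designed to handle.
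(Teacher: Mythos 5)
Your approach is correct in outline but takes a genuinely different route from the paper's, and the comparison is worth spelling out. The paper proves this lemma \emph{without} any appeal to the basis machinery of Section~\ref{OpBases}. Instead it writes $\BasicOp = T^{i_1\ldots i_J}\nd_{i_1}\cdots\nd_{i_J}+\lots$, directly \emph{defines} $\OpOne := T^{i_1\ldots i_J}\Nv^k\nd_{i_1}\cdots\nd_{i_J}\nd_k$ from the leading symbol of $\BasicOp$, and then does an explicit defining-function calculation with the three-slot form \nn{tractorD} of $D_A$ to show $\Nt^A T^{i_1\ldots i_J}\nd_{i_1}\cdots\nd_{i_J}D_A\, t^{J+1}V\big|_{\sm}=(n+2w-2-J)\,\OpOne\, t^{J+1}V\big|_{\sm}$, sweeping the difference (together with $\Nt^A\lots\,D_A$, which Lemma~\ref{LTOTS} handles) into $\OpTwo$. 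The transverse-order claims (4) and (5) are then read off by evaluating $\OpOne t^{J+1}V$ and tracking the weight shift $w\mapsto w-1$. Your proposal instead expands $\Psi=\Nt^A\BasicOp D_A$ in a basis $\{\RiemInv_{\beta,\g}\}$ and divides each coefficient $f_\beta(w,n)$ by $(n+2w-2-J)$ — precisely the polynomial-division-in-a-basis strategy the paper reserves for Theorem~\ref{cflatkey} and Theorem~\ref{key} (for which this very lemma is an input). The two routes share the same leading-symbol bookkeeping (you via Lemma~\ref{NNabnkD} and Lemma~\ref{NablaLaplacian}, the paper via direct evaluation on $t^{J+1}V$); what differs is the packaging. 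Your route buys a slick, uniform mechanism for producing the decomposition; the paper's buys an explicit $\OpOne$ with no dependence on the operator-basis infrastructure, which keeps Lemma~\ref{DoubleO} self-contained and available as a primitive before the basis arguments are deployed.

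One point you should flag and tighten: you invoke ``the tractor analogue of Proposition~\ref{ACombination},'' which is not established anywhere in the paper — Section~\ref{OpBases} is set up for density operators $\ce[w]\to\ice[w-c]$, not for $\tbPhi[w]\to\tbPhi[w-c]|_{\sm}$. Because Hypotheses~\ref{FixingHyp} formulae fix tractor indices, the decomposition and the polynomial division are determined by their restriction to the trivial bundle $\tbPhi=\ce$ and then lift by coupling $\nd$ to the tractor connection, so this can be repaired; but as written it is an unjustified step. Relatedly, the claim that ``after dividing by $(n+2w-2-J)$ the leading transverse-order coefficient of $\OpOne$ becomes $a(w-1,n)$'' needs a word about why the basis-wise division respects the transverse-order stratification: this follows because $\Psi(t^{J+1}V)|_{\sm}=a(w-1,n)(n+2w-2-J)\cdot(\text{const})\cdot V|_{\sm}$, $\OpTwo(t^{J+1}V)|_{\sm}=0$, so $\OpOne(t^{J+1}V)|_{\sm}=a(w-1,n)\cdot(\text{const})\cdot V|_{\sm}$ for $w\neq w_0$ and hence, by polynomial continuation, for all $w$. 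Neither issue is fatal, but the paper's explicit construction avoids both.
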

\begin{proof}
Let $w\in\Reals$ be given.
We will assume that $\BasicOp$ is given by a symbolic formula which
satisfies Hypotheses~\ref{FixingHyp}.  Thus
$\BasicOp=T^{i_1\ldots i_J}\nd_{i_1}\ldots\nd_{i_J}+\lots$,
where $T^{i_1\ldots i_J}$ are some component functions.  By
Lemma~\ref{LTOTS}, $\Nt^A\lots\,D_A$ is an operator of order less than
$J+1$, which we include in $\OpTwo$.  Let $V\in\tbPhi[w]$ and a
defining function $\dfn$ for $\sm$ be given, and consider the
behaviour of $\Nt^AT^{i_1\ldots
  i_J}\nd_{i_1}\ldots\nd{_{i_J}}D_A\dfn^{J+1}V$ along $\sm$.  We
consider the various parts of $D_A$, beginning with $-X_A\Delta$.
Along $\sm$, we have
\begin{equation}\label{L26June11a}
\begin{array}{ll}
\lefteqn{\Nt^AT^{i_1\ldots
    i_J}\nd_{i_1}\ldots\nd_{i_J}(-X_A)\Delta\dfn^{J+1}V=}\vspace{1mm}
\\
&-\sum_{a=1}^J(J+1)!\,\Nt^{A}T^{i_1\ldots
  i_J}Z_{Ai_a}\bg^{kl}
(\nd_{i_1}\dfn)\cdots(\nd_{i_{\hat{a}}}\dfn)\cdots
(\nd_{i_J}\dfn)(\nd_{k}\dfn)(\nd_{l}\dfn)V=\vspace{1mm}
\\
&-\sum_{a=1}^J(J+1)!\,\Nv_{i_a}T^{i_1\ldots i_J}\bg^{kl}
(\nd_{i_1}\dfn)\cdots(\nd_{i_{\hat{a}}}\dfn)\cdots
(\nd_{i_J}\dfn)(\nd_{k}\dfn)(\nd_{l}\dfn)V~.
\end{array}
\end{equation}
Here $\hat{a}$ denotes omission of the factor.  By \nn{ndef},
\[
\Nv_{i_a}\bg^{kl}(\nd_kt)\nd_lt=
\frac{\nd_{i_a}t}{|dt|_{\g}}\bg^{kl}(\nd_kt)\nd_lt=
    (\nd_{i_a}t)\bg^{kl}(\nd_kt)\Nv_l=
  (\nd_{i_a}t)\Nv^k\nd_kt~.
\]
Thus by \nn{L26June11a},
\[
\begin{array}{ll}
\lefteqn{\Nt^AT^{i_1\ldots
    i_J}\nd_{i_1}\ldots\nd_{i_J}(-X_A)\Delta\dfn^{J+1}V=}\vspace{0.8ex}
\\
&-J(J+1)!\,T^{i_1\ldots
  i_J}\Nv^k(\nd_{i_1}\dfn)\cdots(\nd_{i_J}\dfn)(\nd_k\dfn)V
\end{array}
\]
along $\sm$.

Now consider the part of $D_A$ corresponding to
$(n+2w-2)Z_A{}^{k}\nd_k$.  Along $\sm$, we have
\[
\begin{array}{ll}
\lefteqn{\Nt^{A}T^{i_1\ldots i_J}
\nd_{i_1}\ldots\nd_{i_J}(n+2w-2)Z_{A}{}^{k}\nd_{k}\dfn^{J+1}V=}\vspace{1mm}
\\
&
(n+2w-2)T^{i_1\ldots i_J}\Nv^{k}
\nd_{i_1}\ldots\nd_{i_J}\nd_{k}\dfn^{J+1}V=\vspace{1mm}
\\
&
(n+2w-2)(J+1)!\,T^{i_1\ldots i_J}\Nv^k
(\nd_{i_1}\dfn)\cdots(\nd_{i_J}\dfn)(\nd_k\dfn)V~.
\end{array}
\]
Thus
\[
\begin{array}{ll}
\lefteqn{\Nt^{A}T^{i_1\ldots i_J}\nd_{i_1}\ldots\nd_{i_J}D_A
  \dfn^{J+1}V=}\vspace{1mm}
\\
&
(n+2w-2-J)(J+1)!\,T^{i_1\ldots
  i_J}\Nv^{k}(\nd_{i_1}\dfn)\cdots(\nd_{i_J}\dfn)(\nd_k\dfn)V
\end{array}
\]
along $\sm$.  Let $\OpOne:=T^{i_1\ldots
  i_J}\Nv^k\nd_{i_1}\ldots\nd_{i_J} \nd_k$.  Then
\[
\begin{array}{ll}
\lefteqn{\Nt^{A}T^{i_1\ldots i_J}\nd_{i_1}\ldots\nd_{i_J}D_A=}\vspace{0.3ex}
\\
&
(n+2w-2-J)\OpOne+\Nt^{A}T^{i_1\ldots
  i_J}\nd_{i_1}\ldots\nd_{i_J}D_{A}\vspace{0.3ex}
-(n+2w-2-J)\OpOne~.
\end{array}
\]
The indicated sum of the second and third operators on the \RHS\ of
this equation is an operator of transverse order less than $J+1$, by
our above work, and we include this operator in $\OpTwo$.

Let
$w\in\Reals\backslash(E(\!\hspace{.17ex}\BasicOp\hspace{0.2ex})+1)$
and $p\in\sm$ be given, and choose a section $V$ of $\tbPhi[w-1]$ such
that $T^{i_1\ldots i_J}\nd_{i_1}\ldots\nd_{i_J}\dfn^JV$ is nonzero at
$p$.  Then $T^{i_1\ldots
  i_J}(\nd_{i_1}\dfn)\cdots(\nd_{i_J}\dfn)V$ is nonzero at $p$.  Thus
$\OpOne\dfn^{J+1}\xig V$ is nonzero at $p$, and hence
$w\in\Reals\backslash E(\OpOne)$.

Finally, let $w\in E(\!\hspace{0.17ex}\BasicOp\hspace{0.2ex})+1$ and
$V\in\tbPhi[w]$ be given.  Along $\sm$, we have
\begin{equation}\label{TranslateEwt}
\OpOne\dfn^{J+1}V
=
(J+1)\Nv^k(\nd_k\dfn)\xig T^{i_1\ldots
  i_J}\nd_{i_1}\ldots\nd_{i_J}\dfn^{J}(\xig)^{-1}V~.
\end{equation}
But $w-1\in E(\!\hspace{0.17ex}\BasicOp\hspace{0.2ex})$, and
$\BasicOp$ has transverse order less than $J$ for all weights in
$E(\!\hspace{0.17ex}\BasicOp\hspace{0.2ex})$.  It follows that
\nn{TranslateEwt} vanishes along $\sm$.  Thus $w\in E(\OpOne)$, and
$\OpOne:\tbPhi[w]\to\tbPhi[w-\lowerw-1]|_{\sm}$ has transverse order
less than $J+1$.
\end{proof}

The following theorem gives our refinement of $\delta_K$ in the
general case.
%
%%%%%%%%%%%%%%%%%%%%%%%%%%%%%%%%%%%%%%%%%%%%%%%%%%%%%%%%%%%%%%%%%
% "key" theorem, i.e. theorem on \delta_{J,k}
%%%%%%%%%%%%%%%%%%%%%%%%%%%%%%%%%%%%%%%%%%%%%%%%%%%%%%%%%%%%%%%%%
%
\begin{theorem}\label{key}
Let $J,\,k\in\Integers_{>0}$ be given.  There is a family of natural
conformally invariant differential operators
$
\deltaJk:  \ce[w]\to \ice[w-k-J]
$
determined as follows.
%
% k \leq J
%
For $k\leq J$,
\[
  \deltaJk =N^{A_1}\cdots N^{A_k}\delta_J P_{A_1\cdots A_k}~.
\]
%
% k > J
%
If $k>J$, then $\deltaJk$ is determined by the equation
\[
  (n+2w-2k)\deltaJk =N^{A_1}\cdots N^{A_k}\delta_J P_{A_1\cdots A_k}
\]
and polynomial continuation in $w$.  The $\deltaJk$ family
satisfies Hypotheses~\ref{NaturalHyp}, and the operators $\deltaJk$
have mass $J+k$ and order at most $J+k$ and are well-defined for all
$n$ and $\Mcc$ which satisfy Condition~\ref{nkcCondition}.  Moreover,
\begin{equation}\label{exceptw}
E(\deltaJk)=E(\delta_{J+k})\backslash\{k-n/2\}~,
\end{equation}
and $\deltaJk$ has transverse order $J+k$ for all $w\in
\Reals\backslash E(\deltaJk)$ and transverse order less than $J+k$ for
all $w\in E(\deltaJk)$.
\end{theorem}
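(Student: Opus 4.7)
The plan is to introduce the ``undivided'' operator $\tildedeltaJk := N^{A_1}\cdots N^{A_k}\delta_J P_{A_1\cdots A_k}$, establish its naturality, conformal invariance and basic symbol properties, and then in the case $k>J$ divide out the factor $(n+2w-2k)$ using the polynomial‐continuation machinery of Section~\ref{OpBases} exactly as in the proof of Theorem~\ref{cflatkey}. As a composition of conformally invariant natural building blocks --- $\delta_J$ from Lemma~\ref{basicversion}, the tractor‐valued GJMS‐type operator $P_{A_1\cdots A_k}$ from Proposition~\ref{GJMSp}, and contraction with the normal tractor $N^A$ --- the operator $\tildedeltaJk$ is immediately seen to be natural, conformally invariant, of mass $J+k$, and to map $\ce[w]\to\ice[w-J-k]$; it is defined under Condition~\ref{nkcCondition}, and tractor calculus (together with Lemma~\ref{basicversion} and Proposition~\ref{GJMSp}) shows that its universal symbolic expression has coefficients that are polynomial in $w$ and rational in $n$, so it satisfies Hypotheses~\ref{NaturalHyp} for an appropriate $\Dimensions$.

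Next I will compute the leading transverse‐order symbol of $\tildedeltaJk$ on $\ce[w]$. Writing $P_{A_1\cdots A_k}V = D_{A_1}\cdots D_{A_k}V - X_{A_1}\LowTrac{2}{k}V$ via \nn{DefineGJMSp}, the correction $X_{A_1}\LowTrac{2}{k}V$ carries an explicit $X_{A_1}$, which is annihilated by $N^{A_1}$ to top order, and, whenever nonzero, is of degree at least one in the tractor $W$-curvature (Proposition~\ref{GJMSTractor}); a Lemma~\ref{DoubleO}‐style bookkeeping then shows it contributes only to transverse order strictly less than $J+k$. The main term $N^{A_1}\cdots N^{A_k}\delta_J D_{A_1}\cdots D_{A_k}$ is handled by repeated application of Lemma~\ref{DoubleO}: starting from $\delta_J$, whose leading symbol is given by Proposition~\ref{norder}, and iteratively contracting with $\Nt^A\cdot D_A$ while tracking the weight shifts introduced by each $D_A$, one obtains a leading symbol of the form $\bigl[\prod_{i=1}^{J+k-1}(n+2w-(J+k)-i)\bigr]\nabla_{\Nv}^{J+k}$ plus $\ltots$; modulo $\ltots$ the reorderings of $D$'s among themselves and past $\delta$ produce only commutator curvature corrections of strictly lower transverse order. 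This product contains the factor $(n+2w-2k)$ precisely when $k-J\in\{1,\dots,J+k-1\}$, i.e.\ iff $k>J$, which matches the case split in the statement.

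The crucial step for division is to show that, in the case $k>J$, the entire operator $\tildedeltaJk$ (not merely its leading symbol) vanishes on $\ce[k-n/2]$. By \nn{XXXP}, at this weight $P_{A_1\cdots A_k}V=(-1)^kX_{A_1}\cdots X_{A_k}P_{2k}V$, so $\tildedeltaJk V=(-1)^k N^{A_1}\cdots N^{A_k}\delta_J\bigl(X_{A_1}\cdots X_{A_k}P_{2k}V\bigr)$. Since $\delta_J$ has order at most $J$ and, by Lemma~\ref{basicversion}, admits a symbolic formula satisfying Hypotheses~\ref{FixingHyp} that fixes tractor indices, expanding by Leibniz distributes at most $J$ derivatives among the $k+1$ factors $X_{A_1},\ldots,X_{A_k},P_{2k}V$. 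Because $J<k$, in every resulting term some $X_{A_i}$ receives no derivative at all; using $N^{A_i}X_{A_i}=0$ that term contracts to zero, giving the required vanishing.

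Given this vanishing, for $k>J$ I proceed exactly as in the proof of Theorem~\ref{cflatkey}: expand $\tildedeltaJk=\sum_{\beta\in A}\fkbeta(w,n)\RiemInv_{\beta,\g}$ in the basis of Proposition~\ref{RealBasis} (via Proposition~\ref{ACombination}), perform polynomial division $\fkbeta(w,n)=(n+2w-2k)\qkbetan(w)+\rkbetan(n)$ (the divisor is linear in $w$, so the remainder is $w$-independent), and use the vanishing at $w=k-n/2$ together with the linear independence of $\ClassesBasis$ in $\Classes_{J+k,J+k,0}$ (Proposition~\ref{RationalBasis}) to conclude $\rkbetan(n)\equiv 0$; polynomial continuation in $w$ then defines $\deltaJk:=\sum_\beta\qkbetan(w)\RiemInv_{\beta,\g}$ on all of $\Reals$, preserving naturality and conformal invariance. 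Combined with the leading‐symbol computation, this gives $E(\deltaJk)=E(\delta_{J+k})\setminus\{k-n/2\}$ and the stated transverse‐order behaviour. For $k\leq J$ no division is required, since $k-n/2\notin E(\delta_{J+k})$ and the factor $(n+2w-2k)$ does not appear in the leading coefficient; one simply sets $\deltaJk:=\tildedeltaJk$. The main obstacle will be the induction of the second paragraph: one must organise the leading‐symbol calculation so that it produces exactly the product $\prod_{i=1}^{J+k-1}(n+2w-(J+k)-i)$, while correctly accounting for both the $X_{A_1}\LowTrac{2}{k}$ correction and the non‐commutativity of the $D$-operators hidden inside $\delta_J$ and those appearing explicitly in $P_{A_1\cdots A_k}$ --- every deviation from the clean iterated‐$\Nt^A D_A$ model must be shown to drop to strictly lower transverse order.
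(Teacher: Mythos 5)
Your proposal is correct and follows essentially the same route as the paper: establish the identical vanishing of $\tildedeltaJk$ on $\ce[k-n/2]$ (via $\Nt^AX_A=0$), expand in the basis $\RealBasis$ and use Proposition~\ref{RationalBasis} with polynomial division to extract the factor $(n+2w-2k)$, and read off $E(\deltaJk)$ from the leading transverse-order coefficient built up by Lemmas~\ref{LTOTS} and \ref{DoubleO}. The paper organises the middle step a little differently --- it isolates the explicit $k$-fold product on an operator $O_1$ (with $E(O_1)=E(\delta_J)+k$) and divides only the remainder $O_3$ by $(n+2w-2k)$, whereas you expand and divide the whole operator; the two presentations give the same operator, since the missing factors are exactly those already carried inside $O_1$, so that your aggregated product $\prod_{i=1}^{J+k-1}(n+2w-(J+k)-i)$ agrees with Proposition~\ref{norder} for $\delta_{J+k}$. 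Your Leibniz count showing that at least one undifferentiated $X_{A_i}$ survives (because $J<k$ forces at least two of the $k+1$ factors to receive no derivative, and only one of them is $P_{2k}V$) is the correct justification for the paper's terse remark ``this is zero along $\sm$, since $\Nt^AX_A=0$,'' and you rightly rely on Hypotheses~\ref{FixingHyp} so that the tractor index $A_i$ stays put under $\delta_J$.
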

\begin{proof} Suppose first that
$k>J$ and suppose also that $V\in\ce[k-n/2]$.  Then by
Proposition~\ref{GJMSp},
\[
\Nt^{A_1}\cdots\Nt^{A_k}\delta_J P_{A_1\ldots A_k}V=
\Nt^{A_1}\cdots\Nt^{A_k}\delta_J (-1)^kX_{A_1}\cdots X_{A_k}P_{2k}V~.
\]
This is zero along $\sm$, since $\Nt^AX_A=0$.  We will use this later
in the proof.

Now consider general positive integers $J$ and $k$.  Let $w\in\Reals$
be given, and suppose that $V\in\ce[w]$.  By the definition of
$P_{A_1\ldots A_k}$,
\begin{equation}\label{ExpandOutB}
\begin{array}{ll}
\lefteqn{\Nt^{A_1}\cdots\Nt^{A_k}\delta_J P_{A_1\ldots A_k}V=}
  \vspace{0.5ex}
\\
&
\Nt^{A_1}\cdots\Nt^{A_k}\delta_J D_{A_1}\ldots D_{A_k}V
\vspace{0.5ex}
%\\
%&
+\Nt^{A_1}\cdots\Nt^{A_k}\delta_J(-X_{A_1}\LowTrac{2}{k}V)~.
\end{array}
\end{equation}
If $\LowTrac{2}{k}$ is nonzero, then every term of $\LowTrac{2}{k}V$
contains at least one occurrence of the tractor curvature $W$.  We may
use \nn{TracCurveOne}, \nn{TracCurveTwo}, and \nn{tractorD} to expand
all occurrences of $W$ and the tractor $D$-operator in the second term
on the right-hand side of \nn{ExpandOutB}.  We may then use the rules
in Figure~\ref{TrIProd} to eliminate all occurrences of $X$, $Y$, and
$Z$ in the resulting expression.  By \nn{TracCurveOne} and
\nn{TracCurveTwo}, each of the resulting terms will contain at least
one occurrence of the Weyl tensor or the Schouten tensor.
Since each term of
\nn{ExpandOutB} has mass $J+k$, it follows that the second term on the
right-hand side of \nn{ExpandOutB} has order less than $J+k$ for all
$w\in\Reals$.  This second term satisfies the conditions in
Hypotheses~\ref{NaturalHyp}.

By lemmas~\ref{LTOTS} and \ref{DoubleO},
\begin{equation}\label{L27June11a}
\begin{array}{ll}
    \Nt^{A_1}\cdots\Nt^{A_k}\delta_J D_{A_1}\ldots
  D_{A_k}V=
\left[
\prod_{i=1}^k(n+2w-2k+i-1-J)
\right]O_1V+O_2V
\end{array}
\end{equation}
for some natural operators $O_1$ and $O_2$ of mass $J+k$ and order at
most $J+k$.  The operators $O_1$ and $O_2$ may be given by symbolic
formulae which satisfy Hypotheses~\ref{FixingHyp}.  The operator $O_1$
has transverse order less than $J+k$ for all $w\in E(O_1)$, and
$E(O_1)=E(\delta_J)+k$.  The operator $O_2$ has transverse order less
than $J+k$ for all $w\in\Reals$.  By \nn{ExpandOutB} and
\nn{L27June11a},
\begin{equation}\label{ExpandOutC}
\begin{array}{ll}
\lefteqn{\Nt^{A_1}\cdots\Nt^{A_k}\delta_J P_{A_1\cdots A_k}V=}\vspace{1mm}
\\
&
\left[
\prod_{i=1}^k(n+2w-2k+i-1-J)
\right]O_1V+O_2V+\lots~.
\end{array}
\end{equation}

Suppose now that $k\leq J$, and let $S$ denote the set of all $w$
for which the explicit product in \nn{ExpandOutC} is zero.  Then
$E(\deltaJk)=S\cup(E(\delta_J)+k)$.  By \nn{E0} and an algebraic
exercise, it follows that $E(\deltaJk)=E(\delta_{J+k})$.  Since $k\leq
J$, it follows that $k-n/2\notin E(\delta_{J+k})$.  Thus \nn{exceptw}
holds in the case $k\leq J$.

Now suppose instead that $k>J$.  Let $O_2V+\lots$ be as in
\nn{ExpandOutC}.  Then $O_2V+\lots=O_3V$ for some natural operator
$O_3$ of mass $J+k$.  We apply Proposition~\ref{ACombination} with
$K=c=J+k$.  We conclude that $O_3=\sum_{\beta\in
  A}f_{\beta}(w,n)\RiemInv_{\beta,\g}$, where $A$ and
$\RiemInv_{\beta,\g}$ are as in Proposition~\ref{ACombination}.  Here
$f_{\beta}(w,n)$ is a polynomial in $w$ whose coefficients are real
rational functions of $n$.  By the polynomial division algorithm from
elementary algebra, we may write
\[
f_{\beta}(w,n)=(n+2w-2k)q_{\beta,n}(w)+r_{\beta,n}
\]
for each $\beta\in A$.  Here $q_{\beta,n}(w)$ is a polynomial in $w$
whose coefficients are real rational functions of $n$, and
$r_{\beta,n}$ is a real rational function of $n$.  We are dividing by
a polynomial whose leading term is $2w$.  Thus if $f_{\beta}(w,n)$ is
regular at some value of $n$, then $q_{\beta,n}(w)$ and $r_{\beta,n}$
are also regular for this value of $n$.

We now work with the same operator formula, but we trivialise the
density bundles as in Section \ref{OpBases}. So we operate now on
$V_0\in\ce[0]$.  Then
\[
O_3(\xig)^{k-n/2}V_0=\sum_{\beta\in
  A}f_{\beta}(k-n/2,n)\RiemInv_{\beta,\g}(\xig)^{k-n/2}V_0=0
\]
along $\sm$, by \nn{ExpandOutC} and our work at the beginning of this
proof.  Since $\nd((\xig)^{k-n/2})=0$, it follows that $\sum_{\beta\in
  A}f_{\beta}(k-n/2,n)\RiemInv_{\beta,\g}V_0=0$ along $\sm$.  For each
$\beta\in A$, $\RiemInv_{\beta,\g}$ determines an element
$[\RiemInv_{\beta,\g}]$ of $\Classes_{J+k,\,J+k,\,0}$.  Since
$V_0\in\ce[0]$ was arbitrary, it follows that $\sum_{\beta\in
  A}f_{\beta}(k-n/2,n)[\RiemInv_{\beta,\g}]$ is the zero element of
$\Classes_{J+k,\,J+k,\,0}$.  Thus by Proposition~\ref{RationalBasis},
we see that $f_{\beta}(k-n/2,n)=r_{\beta,n}=0$ for all $\beta\in A$
and all $n\in\Integers$.  We may therefore conclude that
\[
O_3=(n+2w-2k)\sum_{\beta\in A}q_{\beta,n}(w)\RiemInv_{\beta,\g}
\]
for general weights $w$.  Let $O_4=\sum_{\beta\in
  A}q_{\beta,n}(w)\RiemInv_{\beta,\g}$.  We may then let
\[
\textstyle
\deltaJk : =
\left[
\prod_{i=1,\,i\neq J+1}^k(n+2w-2k+i-1-J)
\right]O_1+O_4~.
\]
Polynomial continuation in $w$ shows that if $w=k-n/2$, then $O_4$ has
transverse order less than $J+k$.  Thus $O_4$ has transverse order
less than $J+k$ for all $w\in\Reals$.  This establishes \nn{exceptw}.
Polynomial continuation also shows that $\deltaJk$ is well-defined and
conformally invariant. \end{proof}

A simple key case is as follows.
\begin{corollary}\label{crit}
  Let $n_0\in\Integers_{\geq 4}$ and $n\in\Integers_{\geq 3}$ be
  given.  Also let $J=\lfloor(n_0-2)/2\rfloor$, and let
  $k=\lfloor(n_0+1)/2\rfloor$.  Suppose that $k$, $n$, and $\cc$
  satisfy Condition~\ref{nkcCondition}.  Then for all $w\in\Reals$,
  $\deltaJk:\ce[w]\rightarrow\ice[w-n_0+1]$ is well-defined and has
  order at most $n_0-1$.

  Now suppose in addition that $n=n_0$.  If the dimension $n$ is odd,
  then $E(\deltaJk)=\{0\}\cup\{i/2\,|\,2\leq i\leq
  n-3,\,i\in\mathbb{Z}\}$.  On the other hand, if $n$ is even, then
  $E(\deltaJk)=\{i/2\,|\,1\leq i\leq n-3,\,i\in\mathbb{Z}\}$.
\end{corollary}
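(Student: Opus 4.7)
The plan is to reduce everything to Theorem~\ref{key} and equations \nn{E0} and \nn{exceptw} after computing $J+k$ and $k-n/2$ explicitly in the two parity cases.

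First I would verify that $J+k = n_0-1$ regardless of the parity of $n_0$. If $n_0=2m$, then $J=m-1$ and $k=m$, so $J+k=2m-1=n_0-1$; if $n_0=2m+1$, then $J=m-1$ and $k=m+1$, so again $J+k=2m=n_0-1$. This already yields the first assertion: by Theorem~\ref{key}, under Condition~\ref{nkcCondition} the operator $\deltaJk:\ce[w]\to\ice[w-J-k]=\ice[w-n_0+1]$ is well-defined and has order at most $J+k=n_0-1$ for every $w\in\Reals$.

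Next I would specialise $n=n_0$ and apply \nn{exceptw} together with the explicit formula \nn{E0} for $E(\delta_{J+k})=E(\delta_{n_0-1})$. Substituting $K=n_0-1$ and $n=n_0$ in \nn{E0} gives
\[
E(\delta_{n_0-1})=\left\{\tfrac{n_0-3}{2},\,\tfrac{n_0-4}{2},\,\ldots,\,\tfrac{1}{2},\,0\right\}=\left\{\tfrac{i}{2}\,:\,0\le i\le n_0-3,\ i\in\Integers\right\},
\]
a set with $n_0-2$ elements spaced by $1/2$. It remains to identify the weight $k-n/2=k-n_0/2$ that is removed by \nn{exceptw}.

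Finally I would split on parity. If $n_0$ is even, then $k=n_0/2$, so $k-n_0/2=0$, and removing $0$ from the above list yields $\{i/2:1\le i\le n_0-3\}$, matching the even case of the statement. If $n_0$ is odd, then $k=(n_0+1)/2$, so $k-n_0/2=1/2$, and removing $1/2$ yields $\{0\}\cup\{i/2:2\le i\le n_0-3\}$, matching the odd case. The transverse-order claim ($J+k$ off $E(\deltaJk)$ and strictly less on $E(\deltaJk)$) then follows directly from the corresponding statement in Theorem~\ref{key}. There is no real obstacle here; the only thing to be careful about is the bookkeeping of $k-n/2$ relative to $E(\delta_{J+k})$ to confirm that this weight actually lies in $E(\delta_{J+k})$ (it does: $0$ and $1/2$ are both in the list above), so that \nn{exceptw} genuinely removes one element.
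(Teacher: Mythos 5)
Your proof is correct, and since the paper states Corollary~\ref{crit} without an explicit proof (introducing it only as ``a simple key case''), you have supplied exactly the bookkeeping the authors left implicit: the parity computations showing $J+k=n_0-1$, the evaluation of $E(\delta_{n_0-1})$ at $n=n_0$ via \nn{E0}, the determination of $k-n_0/2\in\{0,1/2\}$ according to parity, and the sanity check that this weight actually lies in $E(\delta_{J+k})$ (which matters because the Corollary's claim is that \nn{exceptw} genuinely shrinks the set, which happens precisely in the $k>J$ case that you are in). The reduction to Theorem~\ref{key} together with \nn{E0} and \nn{exceptw} is plainly the argument the authors intend.
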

We will use Corollary~\ref{crit} to identify a critical operator in
the proof of Theorem~\ref{SolnTheorem}, below.
%

%
%%%%%%%%%%%%%%%%%%%%%%%%%%%%%%%%%%%%%%%%%%%%%%%%%%%%%%%%%%%%%%%%%%%
% Section 5 ends here.
%%%%%%%%%%%%%%%%%%%%%%%%%%%%%%%%%%%%%%%%%%%%%%%%%%%%%%%%%%%%%%%%%%%
%

%
%%%%%%%%%%%%%%%%%%%%%%%%%%%%%%%%%%%%%%%%%%%%%%%%%%%%%%%%%%%%%%%%%%%
% Section 6 begins here.
%%%%%%%%%%%%%%%%%%%%%%%%%%%%%%%%%%%%%%%%%%%%%%%%%%%%%%%%%%%%%%%%%%%
%
%%%%%%%%%%%%%%%%%%%%%%%%%%%%%%%%%%%%%%%%%%%%%%%%%%%%%%%%%%%%%%%%%%%
% Section on Dirichlet-to-Neumann operators
%%%%%%%%%%%%%%%%%%%%%%%%%%%%%%%%%%%%%%%%%%%%%%%%%%%%%%%%%%%%%%%%%%%
%
\section{Conformal Dirichlet-to-Neumann Operators}\label{DToNOps}

In this section, we construct conformally invariant
Dirichlet-to-Neumann operators on a Riemannian conformal manifold with
boundary $(M,\Sigma,\cc)$.  Our construction uses the GJMS operators
$P_{2k}$ on $\Mcc$ and the hypersurface operator families
of this paper.  We begin with some groundwork in
Section~\ref{FSAsec}. We use standard theory of elliptic boundary
problems as in \cite{Grubb,H}; a summary of the key results needed
from those sources is given in \cite[Section 6]{BrGonon}.

%
%%%%%%%%%%%%%%%%%%%%%%%%%%%%%%%%%%%%%%%%%%%%%%%%%%%%%%%%%%%%%%%%%%%
%
% Subsection on formally self-adjoint boundary problems
%
%%%%%%%%%%%%%%%%%%%%%%%%%%%%%%%%%%%%%%%%%%%%%%%%%%%%%%%%%%%%%%%%%%%
%
\subsection{Formally self-adjoint boundary problems}\label{FSAsec}

For any $K\in\Integers_{>0}$, we will use $\FSAOp{K}$ to denote any of
the operators of order and transverse order $K$ from
Lemma~\ref{basicversion} and theorems~\ref{cflatkey} and \ref{key},
above,
as available according to context.  For any $w\in\Reals$, let
$\FSAOp{0}:\ce[w]\rightarrow\ice[w]$ be given by
$\FSAOp{0}u=u|_{\sm}$.  We also let $\delta_0$ denote $\FSAOp{0}$.
The following lemma describes some cases in which the operators
$\FSAOp{K}$ are available.
%
%%%%%%%%%%%%%%%%%%%%%%%%%%%%%%%%%%%%%%%%%%%%%%%%%%%%%%%%%%%%%%%%%%%
% Lemma on existence of \delta^T_K
%%%%%%%%%%%%%%%%%%%%%%%%%%%%%%%%%%%%%%%%%%%%%%%%%%%%%%%%%%%%%%%%%%%
%
\begin{lemma}\label{deltaTK}
Let a positive integer $k$ be given, and suppose $n$, $k$, and $\Mcc$
satisfy Condition~\ref{nkcCondition}.  Let $i\in\Integers$ be given,
and suppose that $0\leq i\leq 2k-1$.  Then there is an operator
$\delta^{T}_{i}:\ce[k-n/2]\rightarrow\ice[k-n/2-i]$ of order and
transverse order $i$.  In particular, $k-n/2\notin E(\delta^T_i)$.
\end{lemma}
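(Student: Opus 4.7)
The plan is to split on cases according to the size of $i$ relative to $k$, using the restriction $\delta_0$ for $i=0$, the basic operators $\delta_i$ from Lemma~\ref{basicversion} when $1 \leq i \leq k$, and the refined operators $\delta_{J,k}$ from Theorem~\ref{key} when $k+1 \leq i \leq 2k-1$ (with $J = i - k$ and the same $k$). In every case the operator maps $\ce[k-n/2]$ to $\ice[k-n/2-i]$ and has the required order and transverse order, provided we verify that $k-n/2$ does not lie in the corresponding exceptional set.

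The case $i=0$ is immediate: $\delta^T_0 := \delta_0$ is the restriction map and has both order and transverse order $0$ by convention, so $E(\delta^T_0) = \emptyset$. For $i=1$, take $\delta^T_1 := \delta_1$, the conformal Robin operator; here $E(\delta_1) = \emptyset$ by the formula immediately following \nn{E0}, so $k-n/2 \notin E(\delta_1)$. For $2 \leq i \leq k$, set $\delta^T_i := \delta_i$ and verify from \nn{E0} that $k-n/2 \notin E(\delta_i)$: the elements of $E(\delta_i)$ are the weights $w$ with $n + 2w = i + j$ for some integer $j$ with $1 \leq j \leq i-1$, so $k-n/2 \in E(\delta_i)$ would force $j = 2k-i \geq 2k-k = k$, but also $j \leq i-1 \leq k-1$, a contradiction. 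Hence $\delta_i$ has order and transverse order $i$ on $\ce[k-n/2]$, as needed.

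The remaining range $k+1 \leq i \leq 2k-1$ is the only one where $k - n/2$ actually belongs to $E(\delta_i)$ (the argument above shows the equivalence, since for these $i$ the value $j = 2k-i$ lies in $\{1,\ldots,i-1\}$). This is precisely where Theorem~\ref{key} is needed. Set $J := i - k$, which is a positive integer with $J \leq k-1$, and define
\[
\delta^T_i := \delta_{J,k}.
\]
Since $n$, $k$, and $\cc$ satisfy Condition~\ref{nkcCondition}, Theorem~\ref{key} furnishes $\delta_{J,k}: \ce[w] \to \ice[w - J - k] = \ice[w - i]$ as a well-defined family of natural conformally invariant operators of order at most $J+k = i$, with
\[
E(\delta_{J,k}) = E(\delta_{J+k}) \setminus \{k - n/2\} = E(\delta_i) \setminus \{k-n/2\}.
\]
In particular $k-n/2 \notin E(\delta_{J,k})$, so at this weight $\delta_{J,k}$ has transverse order exactly $J+k = i$ (and hence order exactly $i$).

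There is no substantive obstacle to overcome; the lemma is a bookkeeping consequence of the explicit exceptional sets in \nn{E0} and \nn{exceptw}. The only small point to be careful about is that Theorem~\ref{key} is used at $w = k - n/2$, which was \emph{the} weight deliberately excised by the construction of $\delta_{J,k}$ (division by $(n+2w-2k)$ in the case $k > J$ of that theorem), so the exclusion is automatic precisely in the range where it is required.
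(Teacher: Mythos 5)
Your proof is correct and takes essentially the same route as the paper's: split on $i=0$, $1\le i\le k$, and $k+1\le i\le 2k-1$, and use $\delta_0$, $\delta_i$, and $\delta_{i-k,k}$ respectively, with the non-exceptionality of $w=k-n/2$ coming from \nn{E0} in the middle range and from \nn{exceptw} in the top range. The only difference is that you spell out the arithmetic verifying $k-n/2\notin E(\delta_i)$ for $1\le i\le k$, which the paper leaves implicit.
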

\begin{proof}
If $i=0$, let $\FSAOp{i}=\delta_0$.  Now suppose instead that $1\leq
i\leq k$.  Then $k-n/2\notin E(\delta_i)$, by \nn{E0}, so we may let
$\FSAOp{i}=\delta_i$.  Finally, suppose that $k+1\leq i\leq 2k-1$.
Let $j=i-k$.  Then $k-n/2\notin E(\delta_{j,k})$, by
Theorem~\ref{key}, so we may let $\FSAOp{i}=\delta_{j,k}$.
\end{proof}

Let $M$ be a compact conformal $n$-manifold with smooth boundary
$\Sigma=\partial M$. Recall that a boundary problem $(A,B)$ is {\em
  formally self-adjoint} (FSA), or {\em symmetric}, if and only if
\begin{equation*}
\int_M[(Au)v-uAv] =0
\end{equation*}
whenever $Bu=Bv=0$ on $\Sigma$.  

Recall that each GJMS operator 
$
P_{2k}:\ce[k-n/2]\to\ce[-k-n/2]
$
is conformally invariant and FSA \cite{FGQandP,GrZ}.
For each such GJMS operator, we will consider the associated {\em
  conformal generalised Dirichlet problem} $(P_{2k},B)$, where $B$ is
the multi-boundary operator
\begin{equation}\label{MultiBOp}
B=(\FSAOp{0},\FSAOp{1},\cdots,\FSAOp{k-1})~.
\end{equation}
We will need the following observation.
\begin{lemma}\label{jet} Let $k\in\Integers_{>0}$ and a section $u$ of
  $\ce[k-n/2]$ be given.  Suppose that $u$ is in the kernel of $B$,
  i.e.
$$
0=\FSAOp{0}u =\FSAOp{1}u = \cdots =\FSAOp{k-1}u  \quad
\mbox{along}\quad \Sigma~.
$$
Then the {\upshape($k-1$)}-jet of $u$ is zero at every point of $\Sigma$.
\end{lemma}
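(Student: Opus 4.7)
The plan is to prove Lemma~\ref{jet} by induction on $j$, showing for each $j \in \{0, 1, \ldots, k-1\}$ that the $j$-jet of $u$ vanishes at every point of $\Sigma$. The base case $j = 0$ is immediate from the hypothesis $\FSAOp{0} u = u|_{\Sigma} = 0$.

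For the inductive step, fix $j$ with $1 \leq j \leq k-1$ and suppose the $(j-1)$-jet of $u$ vanishes along $\Sigma$. Near a point of $\Sigma$, pick a defining function $\dfn$; the standard divisibility lemma (applied in a local trivialisation of the density bundle $\ce[k-n/2]$) gives $u = \dfn^j v$ for some smooth local section $v$ of $\ce[k-n/2]$. It therefore suffices to show that $v|_{\Sigma} = 0$.

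To that end, I would exploit $\FSAOp{j} u = 0$. Lemma~\ref{deltaTK} ensures that $\FSAOp{j}$ is available and has transverse order exactly $j$ at weight $w = k - n/2$. More precisely, Proposition~\ref{norder} (for the subcase $1 \leq j \leq k$, where $\FSAOp{j} = \delta_j$) and the leading-transverse-term analysis embedded in the proof of Theorem~\ref{key} (for the subcase $k+1 \leq j \leq 2k-1$, where $\FSAOp{j} = \delta_{j-k,k}$) together show that
\[
\FSAOp{j} = c_j \nd_{\Nv}^{\,j} + R_j,
\]
where $R_j$ has transverse order strictly less than $j$, and $c_j$ is a nonzero real constant whose nonvanishing is equivalent to $k - n/2 \notin E(\FSAOp{j})$. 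By Definition~\ref{TOrderDefinition} applied to $R_j$, we have $R_j(\dfn^j v)|_{\Sigma} = 0$, while an elementary Leibniz computation yields $\nd_{\Nv}^{\,j}(\dfn^j v)|_{\Sigma} = a_j\, v|_{\Sigma}$ for some nonzero constant $a_j$ depending only on the normalisation of $\dfn$ (explicitly $a_j = j!\,|d\dfn|_{\bg}^{\,j}\big|_{\Sigma}$). Hence $0 = \FSAOp{j} u|_{\Sigma} = c_j a_j\, v|_{\Sigma}$, forcing $v|_{\Sigma} = 0$, so $u = \dfn^{j+1} w$ locally, and the $j$-jet of $u$ vanishes along $\Sigma$, completing the induction.

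The main technical obstacle is verifying that $c_j \neq 0$ at weight $w = k-n/2$ for every $j \in \{1, \ldots, k-1\}$; this is precisely the requirement $k-n/2 \notin E(\FSAOp{j})$, which is arranged by the explicit choice of operator made in Lemma~\ref{deltaTK} in each of its three ranges for $i$. Given that nonvanishing, the rest of the argument is a routine divisibility-and-Leibniz bookkeeping.
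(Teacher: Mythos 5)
Your argument is correct and is essentially the same as the paper's: both proceed by induction on divisibility of $u$ by powers of a local defining function, and at each step use the fact (guaranteed by Lemma~\ref{deltaTK}) that $\FSAOp{j}$ has transverse order exactly $j$ at weight $k-n/2$ to force the next quotient to vanish along $\Sigma$. The only cosmetic difference is that you isolate an explicit scalar leading coefficient $c_j$ and do the Leibniz computation directly, whereas the paper phrases the same step in terms of the nonvanishing of the contracted leading symbol $F^{i_1\cdots i_{c-1}}(\nd_{i_1}s)\cdots(\nd_{i_{c-1}}s)$.
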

\begin{proof}Let $s$ be any local defining function for $\sm$.  We
  will use induction to show that for all
  $\ell\in\{1,\,2,\,\ldots,\,k\}$, there is a smooth section
  $u_{\ell}$ of $\ce[k-n/2]$ such that $u=s^{\ell}u_{\ell}$.  Let
  $c\in\{2,\,3\,\ldots,\,k\}$ be given, and suppose that
  $u=s^{c-1}u_{c-1}$ for some $u_{c-1}\in\ce[k-n/2]$.  Then along
  $\sm$, $\FSAOp{c-1}u=F^{i_1\cdots
    i_{c-1}}(\nd_{i_1}s)\cdots(\nd_{i_{c-1}}s)u_{c-1}=0$ for some
  component functions $F^{i_1\cdots i_{c-1}}$.  Since $\FSAOp{c-1}$
  has transverse order $c-1$ in this context, it follows that
  $u_{c-1}=su_{c}$ for some section $u_c$ of $\ce[k-n/2]$.%
\end{proof}

This leads to the following result.
\begin{theorem}\label{FSAthm}
Let $k\in\Integers_{>0}$ be given, and let $B$ be as in \nn{MultiBOp}.
Suppose that $n$, $k$, and $\Mcc$ satisfy
Condition~\ref{nkcCondition}.  Then the conformal generalised
Dirichlet problem $(P_{2k},B)$ is FSA.
\end{theorem}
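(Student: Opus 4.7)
The plan is to invoke Green's identity for $P_{2k}$ and exploit the fact that the condition $Bu=0$ kills enough of the boundary jet of $u$ to annihilate every boundary correction term. First I would note that the operators $\FSAOp{0},\ldots,\FSAOp{k-1}$ have transverse orders $0,1,\ldots,k-1$ respectively (with $k-n/2 \notin E(\FSAOp{i})$ secured by Lemma~\ref{deltaTK}), so the vanishing hypotheses required by Lemma~\ref{jet} are precisely matched by $Bu=0$; hence when $Bu=Bv=0$ along $\Sigma$, both $u$ and $v$ have vanishing $(k-1)$-jet at every point of $\Sigma$.

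Next, $P_{2k}$ is formally self-adjoint on every closed conformal manifold (as a GJMS operator between the density bundles $\ce[k-n/2]$ and $\ce[-k-n/2]$, whose tensor product is the integrable bundle $\ce[-n/2]$). On the manifold-with-boundary $M$, performing the $2k$-fold integration by parts that implements this adjointness therefore produces no bulk residue: all interior terms cancel, leaving only a boundary correction. One obtains
$$\int_M [(P_{2k}u)v - u(P_{2k}v)] = \int_\Sigma \mathcal{G}(u,v),$$
where $\mathcal{G}(u,v)$ is a bilinear expression in the $(2k-1)$-jets of $u$ and $v$ along $\Sigma$. Expanding the integrations by parts explicitly, $\mathcal{G}(u,v)$ is a finite sum of terms of the form $c_{\alpha\beta}\,(D^\alpha u)(D^\beta v)$ with smooth hypersurface densities $c_{\alpha\beta}$ and multi-indices satisfying $|\alpha|+|\beta|\leq 2k-1$, since each of the $2k$ integrations by parts transfers at most one derivative off of the factor being differentiated and peels off at most one derivative to a boundary piece.

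The conclusion is then immediate: for every such term the bound $|\alpha|+|\beta|\leq 2k-1$ forces either $|\alpha|\leq k-1$ or $|\beta|\leq k-1$, and by Lemma~\ref{jet} the full $(k-1)$-jets of both $u$ and $v$ vanish identically on $\Sigma$, so at least one factor in every term is zero. Hence $\mathcal{G}(u,v) \equiv 0$ on $\Sigma$, the boundary integral vanishes, and the problem $(P_{2k},B)$ is formally self-adjoint. The only delicate point is the jet-order bound on $\mathcal{G}(u,v)$, which is the standard consequence of $P_{2k}$ being a differential operator of order $2k$ that is formally self-adjoint on closed manifolds; once that structural feature of Green's identity is recorded, Lemma~\ref{jet} does all the combinatorial work, and no further obstacle remains.
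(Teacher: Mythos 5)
Your proof is correct and follows essentially the same route as the paper: Green's identity produces a boundary bilinear form of total jet order at most $2k-1$, whence in each term one of the two factors is differentiated at most $k-1$ times, and Lemma~\ref{jet} then kills every term once $Bu=Bv=0$. The paper additionally remarks that the boundary integrand may be replaced by its skew part (a structural feature of the Green's form of a formally self-adjoint operator), but as your argument shows, the jet-order bound already holds for the full bilinear form, so that observation is not load-bearing; the two proofs are in substance the same.
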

\begin{proof} 
Let sections $u$ and $v$ of $\ce[k-n/2]$ be given, and fix a metric
$\g\in\cc$ for the purpose of calculating.  Since $P_{2k}$ is FSA, we
have
\begin{equation}\label{IntParts}
\int_M[(P_{2k}u)v-u P_{2k}v] =\int_{\Sigma} {\rm sk}_{P_{2k}}(u,v)~,
\end{equation}
where ${\rm sk}_{P_{2k}}(u,v)$ is a skew bilinear form on the Cauchy
data and its tangential derivatives at the boundary $\Sigma$.  To see
this, repeatedly integrate terms of the left-hand side of
\nn{IntParts} by parts in the usual way.  All integrals over $M$
eventually cancel out, and the boundary integral of a bilinear form
$b(u,v)$ remains.  But this boundary integral is skew-symmetric in $u$
and $v$, so in this integral, we may replace $b(u,v)$ with its skew
part.  In each term of ${\rm sk}_{P_{2k}}(u,v)$, either $u$ or $v$ is
differentiated at most $k-1$ times, so the result follows from
Lemma~\ref{jet}.
\end{proof}

To construct our conformal Dirichlet-to-Neumann operators, we will
work with well-posed boundary problems, in particular, boundary
problems which satisfy the so-called Lopatinski-Shapiro condition (see
e.g. \cite{ADN,BrGonon,H}).  We will assume that $M$ is compact with
nonempty boundary $\sm$.  We define the Lopatinski-Shapiro condition
below.  We will use the definition given in \cite{BrGonon}.  Our
definition begins with the idea of elliptic and properly elliptic
operators.  Let a differential operator $A$ of order $j$ acting on
$\ce[w]$ be given, and let $a_j(x,\xi)$ denote the leading symbol of
$A$.  We say that $A$ is \textit{elliptic} if there is a constant
$C>0$ such that $|a_j(x,\xi)|\geq C|\xi|^j$ on $M$.  Here
$|\xi|^2=\g^{ab}\xi_a\xi_b$.  In this context, we will always assume
that $j$ is even.

Now consider arbitrary $x\in\sm$ and $\eta_a\in\ce_a$, as well as the
polynomial $p(\tau)=a_j(x,\eta_a+\tau\Nv_a)$.  We say that $A$ is
\textit{properly elliptic} if for all $x\in\sm$ and all $\eta_a$
\textit{not} parallel to $\Nv_a$ at $x$, the zeros of $p(\tau)$ are
separated by the real axis.  This means that one may label the zeros
as $\tau^{\pm}_i$, for $i=1,\ldots,j/2$, and do this in such a way
that $\Im \tau^+_i>0$ and $\Im\tau^-_i<0$ for all $i$.

We may now define the Lopatinski-Shapiro condition.  Let
$k\in\Integers_{>0}$, a properly elliptic differential operator $A$ of
order $2k$, and a multi-boundary operator $O=(O_1,\ldots,O_k)$ be
given.  For each $i$, suppose that $O_i$ has order $m_i$ and leading
symbol $b_i(x,\xi)$.  Suppose also that $0\leq m_1<m_2<\cdots<m_k<2k$.
For every $i\in\{1,\ldots,k\}$, let
$p_i(\tau)=b_i(x,\eta_a+\tau\Nv_a)$.  Let $\tau_i^{+}$ be as above,
and let $M^+=\Pi_{i=1}^k(\tau-\tau_i^+)$.  Let $I$ denote the
principal ideal in $\ComplexNumbers[\tau]$ generated by $M^+$.  Each
$f(\tau)$ in $\ComplexNumbers[\tau]$ determines an element $[f(\tau)]$
of $\ComplexNumbers[\tau]/I$.  We say that $(A,O)$ satisfies the
\textit{Lopatinski-Shapiro} condition if, for all $x\in\sm$ and all
$\eta_a$ \textit{not} parallel to $\Nv_a$ at $x$, the  elements
$[p_i(\tau)]$ are linearly independent in $\ComplexNumbers[\tau]/I$.

If $n$, $k$, and $\cc$ satisfy Condition~\ref{nkcCondition}, then
$P_{2k}$ is elliptic and properly elliptic.  If $A=P_{2k}$, then there
is an $\alpha\in\ComplexNumbers$ such that $M^+=(\tau-\alpha)^k$.  We
will use this formula for $M^+$ in the proof of the following lemma.
\begin{lemma}\label{LSLemma1}
Let $k\in\Integers_{>0}$ be given, and let $O$ denote the
multi-boundary operator
$(\delta_0,\,\Nv^a\nd_a,\,(\Nv^a\nd_a)^2,\,\ldots,\,(\Nv^a\nd_a)^{k-1})$.
If $n$, $k$, and $\Mcc$ satisfy Condition~\ref{nkcCondition}, then the
boundary problem $(P_{2k},O)$ satisfies the Lopatinski-Shapiro
condition.
\end{lemma}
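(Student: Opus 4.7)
The plan is to verify the Lopatinski-Shapiro condition by direct computation of the relevant leading symbols, after which linear independence in $\ComplexNumbers[\tau]/I$ reduces to a trivial degree count.

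First, I would compute the leading symbol of $P_{2k}$. Since $P_{2k}$ has leading part proportional to $\Delta^k$, its leading symbol $a_{2k}(x,\xi)$ is a nonzero constant times $|\xi|^{2k}=(\g^{ab}\xi_a\xi_b)^k$, so $P_{2k}$ is elliptic. Fix $x\in\sm$ and a covector $\eta_a$ not parallel to $\Nv_a$ at $x$. Decompose $\eta_a=\eta^\perp_a+c\,\Nv_a$ with $c=\Nv^a\eta_a$ and $\eta^\perp_a\Nv^a=0$. Then
\[
|\eta_a+\tau \Nv_a|^2 \;=\; |\eta^\perp|^2+(\tau+c)^2,
\]
so $p(\tau)=a_{2k}(x,\eta+\tau \Nv)$ is proportional to $(|\eta^\perp|^2+(\tau+c)^2)^k$. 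Because $\eta$ is not parallel to $\Nv$, one has $|\eta^\perp|>0$, and the zeros of $p$ are $\tau^{\pm}=-c\pm i|\eta^\perp|$, each with multiplicity $k$. In particular the roots are separated by the real axis (so $P_{2k}$ is properly elliptic), and
\[
M^{+}(\tau)\;=\;(\tau+c-i|\eta^\perp|)^k,
\]
which has degree exactly $k$.

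Next, I would compute the leading symbols $b_j(x,\xi)$ of the boundary operators $O_j=(\Nv^a\nd_a)^{j-1}$, $j=1,\dots,k$. These are simply $(\Nv^a\xi_a)^{j-1}$, so
\[
p_j(\tau)\;=\;b_j(x,\eta+\tau \Nv)\;=\;(c+\tau)^{\,j-1},\qquad j=1,\dots,k.
\]
Thus $p_1,\dots,p_k$ are polynomials in $\tau$ of pairwise distinct degrees $0,1,\dots,k-1$.

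Finally, since each $p_j$ has degree strictly less than $k=\deg M^{+}$, the reduction map $\ComplexNumbers[\tau]\to\ComplexNumbers[\tau]/I$ is injective on the span of $p_1,\dots,p_k$, so linear independence of the classes $[p_j]$ is equivalent to linear independence of the polynomials $p_j$ themselves. Polynomials of pairwise distinct degrees are linearly independent, so $[p_1],\dots,[p_k]$ form a basis of $\ComplexNumbers[\tau]/I$, verifying the Lopatinski-Shapiro condition. There is no real obstacle here; the only point requiring a moment's thought is noting that $M^{+}$ has degree $k$ (because the two complex conjugate roots of $p$ each appear with multiplicity $k$), which matches the number of boundary conditions exactly and makes the degree argument work cleanly.
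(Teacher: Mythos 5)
Your proof is correct and takes essentially the same approach as the paper: compute the symbols $p_j(\tau)=(c+\tau)^{j-1}$, note that $M^+$ has degree $k$, and deduce linear independence in $\ComplexNumbers[\tau]/I$ from the fact that the $p_j$ have distinct degrees all less than $k$. The only cosmetic difference is that you present this directly (reduction mod $I$ is injective on degree $<k$ polynomials, which are linearly independent by degree) whereas the paper phrases it as a proof by contradiction, and you explicitly derive the roots $\tau^\pm=-c\pm i|\eta^\perp|$ to verify proper ellipticity rather than quoting it.
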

\begin{proof}
Let $j\in\{0,1,\ldots,k-1\}$ be given, and note that
$(\Nv^a\nd_a)^{j}$ has leading symbol $b_j(x,\xi)=C_j(\Nv^a\xi_a)^j$
for some nonzero constant $C_j$.  Let $x\in\sm$ be given, and suppose
that $\eta_a$ is not parallel to $\Nv_a$ at $x$.  Then
$b_j(x,\eta_a+\tau\Nv_a)=C_j(\Nv^a\eta_a+\tau)^j$.

Now let $I$ denote the principal ideal in $\ComplexNumbers[\tau]$
generated by $M^{+}$.  We claim that the elements
$[C_j(\Nv^a\eta_a+\tau)^j]$, $0\leq j\leq k-1$, are linearly
independent in $\ComplexNumbers[\tau]/I$.  To see this, suppose there
are constants $C_0'$,\,\ldots,\,$C_{k-1}'$, not all zero, such that
$\sum_{j=0}^{k-1}C_j'[C_j(\Nv^a\eta_a+\tau)^j]=I$.  Then there is a
polynomial $h(\tau)$ in $\ComplexNumbers[\tau]$ such that
\begin{equation}\label{DependentSum}
\sum_{j=0}^{k-1}C_j'C_j(\Nv^a\eta_a+\tau)^j=h(\tau)(\tau-\alpha)^k~.
\end{equation}
But then the left-hand side of \nn{DependentSum} is a polynomial of
lower degree than the polynomial on the right-hand side of
\nn{DependentSum}.  This is a contradiction.  The linear independence
claim therefore holds, and thus $(P_{2k},O)$ satisfies the
Lopatinski-Shapiro condition.
\end{proof}
\begin{lemma}\label{LSLemma2}
Let $k\in\Integers_{>0}$ be given, and suppose that $n$, $k$, and
$\Mcc$ satisfy Condition~\ref{nkcCondition}.  Let $B$ be as in
\nn{MultiBOp}.  Then the boundary problem $(P_{2k},B)$ satisfies the
Lopatinski-Shapiro condition.
\end{lemma}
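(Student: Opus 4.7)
The plan is to mirror the argument of Lemma~\ref{LSLemma1} but for the operators $\delta^T_i$, and to verify that the triangularity argument in $\ComplexNumbers[\tau]/I$ still goes through even though $\delta^T_i$ may contain tangential derivatives in addition to the pure normal derivative $\nabla^i_{\Nv}$.

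First I would invoke Lemma~\ref{deltaTK} to note that, for each $i\in\{0,1,\ldots,k-1\}$, the operator $\delta^T_i:\ce[k-n/2]\to\ice[k-n/2-i]$ has order $i$ and transverse order $i$; in particular $k-n/2\notin E(\delta^T_i)$. Let $b_i(x,\xi)$ denote the leading (order-$i$) symbol of $\delta^T_i$, a homogeneous polynomial of degree $i$ in $\xi$, and set $p_i(\tau):=b_i(x,\eta_a+\tau\Nv_a)$, a polynomial of degree at most $i$ in $\tau$. The coefficient of $\tau^i$ in $p_i(\tau)$ is $b_i(x,\Nv)$, which is precisely the coefficient of the iterated normal derivative $\nabla^i_{\Nv}$ in the symbolic formula for $\delta^T_i$. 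By Proposition~\ref{norder}, together with the constructions in Theorem~\ref{cflatkey} and Theorem~\ref{key}, this coefficient is nonzero at $w=k-n/2$ precisely because the weight avoids the exceptional set --- that is, because $\delta^T_i$ genuinely has transverse order $i$ at $w=k-n/2$. Hence $\deg p_i=i$ for each such $i$.

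Next I would analyse the factor $M^+(\tau)$. The leading symbol of $P_{2k}$ is $(\bg^{ab}\xi_a\xi_b)^k$, so
\[
a_{2k}(x,\eta+\tau\Nv)=\bigl(\tau^2+2(\eta\cdot\Nv)\tau+|\eta|^2_{\g}\bigr)^k.
\]
When $\eta_a$ is not parallel to $\Nv_a$ at $x$, strict Cauchy-Schwarz gives $|\eta|^2-(\eta\cdot\Nv)^2>0$, so there is a unique $\tau^+\in\ComplexNumbers$ with $\Im\tau^+>0$ satisfying the quadratic, and it is a root of multiplicity $k$. Therefore $M^+(\tau)=(\tau-\tau^+)^k$ and $\ComplexNumbers[\tau]/I$ has dimension $k$ with basis $\{1,(\tau-\tau^+),\ldots,(\tau-\tau^+)^{k-1}\}$.

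The linear independence of $[p_0],[p_1],\ldots,[p_{k-1}]$ in $\ComplexNumbers[\tau]/I$ is then an immediate triangularity argument: any collection of $k$ polynomials whose degrees are $0,1,\ldots,k-1$ with nonzero leading coefficients projects to a basis of $\ComplexNumbers[\tau]/((\tau-\tau^+)^k)$, since the change of basis from $\{(\tau-\tau^+)^j\}$ to $\{p_j\}$ is triangular with nonzero diagonal. Hence $(P_{2k},B)$ satisfies the Lopatinski-Shapiro condition. The only non-routine point is the first step --- identifying the top $\tau^i$ coefficient of $p_i$ with the $\nabla^i_{\Nv}$ coefficient in $\delta^T_i$ and confirming its non-vanishing --- but this is exactly the transverse-order information encoded in Lemma~\ref{deltaTK} (and, ultimately, in \nn{E0} and \nn{exceptw}). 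I do not expect any genuine obstacle beyond bookkeeping.
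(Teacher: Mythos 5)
Your proof is correct, and it takes a somewhat more self-contained route than the paper's. The paper's proof first observes, via Riemannian invariant theory, that each $\FSAOp{j}$ can be written along $\sm$ in a triangular normal form $\FSAOp{j}=c_j(\Nv^a\nd_a)^j+\sum_{m=0}^{j-1}D_m(\Nv^a\nd_a)^m$ with $c_j\neq 0$ and $D_m$ intrinsic (tangential) operators, and then appeals to Lemma~\ref{LSLemma1} together with Lemma~6.3 of \cite{BrGonon}, the latter being a general transfer result stating that a normal system in this triangular form satisfies the Lopatinski--Shapiro condition once the pure-normal model system does. You instead work directly at the level of leading symbols: Lemma~\ref{deltaTK} guarantees transverse order exactly $i$ at the weight $k-n/2$, which is precisely the non-vanishing of the coefficient of $\tau^i$ in $p_i(\tau)=b_i(x,\eta+\tau\Nv)$, so each $p_i$ has degree exactly $i$; since $I=\bigl((\tau-\tau^+)^k\bigr)$ and the quotient $\ComplexNumbers[\tau]/I$ is spanned by the residues of polynomials of degree $<k$, the classes $[p_0],\ldots,[p_{k-1}]$ are independent by the obvious degree/triangularity count. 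This avoids invoking Lemma~6.3 of \cite{BrGonon} (and in effect subsumes Lemma~\ref{LSLemma1}) at the modest cost of re-deriving the computation of $M^+$; the underlying observation — the triangular structure in the orders — is the same in both arguments.
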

\begin{proof}
Let $j\in\{0,\ldots,k-1\}$ be given.  By Riemannian invariant theory,
there exist intrinsic natural differential operators $D_m$ on $\sm$
(for $0\leq m\leq j-1$) of order at most $j-m$ such that
\[
\FSAOp{j}=c_j(\Nv^a\nd_a)^j+\sum_{m=0}^{j-1}D_m(\Nv^a\nd_a)^m
\]
along $\sm$.  Here $c_j$ is a nonzero constant.
The result then follows by Lemma~\ref{LSLemma1},
above, and Lemma~6.3 of \cite{BrGonon}.
\end{proof}
Our construction of Dirichlet-to-Neumann operators will also require
the following definition:

\begin{definition}\label{NormalDef}
Let $k\in\Integers_{>0}$ and a multi-boundary operator
$O=(O_1,\ldots,O_k)$ be given.  Suppose that $O_1$,\,\ldots,\,$O_k$
are differential operators of orders $m_1$,\,\ldots,\,$m_k$,
respectively, and suppose that $0\leq m_1<\cdots<m_k<2k$.  Finally,
suppose that for each $i\in\{1,\ldots,k\}$, the operator $O_i$ has
transverse order $m_i$.  Then we say that $O$ is a \textit{normal}
multi-boundary operator.
\end{definition}
Definition~\ref{NormalDef} is equivalent to the definition of a normal
system of boundary operators given on page~34 of \cite{BrGonon}.  The
multi-boundary operator $B$ of \nn{MultiBOp} is normal.

\subsection{The fraction Laplacians}\label{FracLap}

The next theorem gives our construction of conformally invariant
Dirichlet-to-Neumann operators.
\begin{theorem}\label{nonloc}
Let a compact Riemannian conformal manifold with boundary
$(M,\Sigma,\cc)$ be given. Let $k\in \mathbb{Z}_{>0}$ and $\ell\in
\{0,1,\cdots ,k-1\}$ be given, and suppose that $n$, $k$, and $\Mcc$
satisfy Condition~\ref{nkcCondition}.  Let $B$ be as in \nn{MultiBOp},
and suppose that the conformal generalised Dirichlet problem
$(P_{2k},B)$ has trivial kernel.  Then there is a well-defined
conformally invariant Dirichlet-to-Neumann operator
$$
P^{T,k}_{2m}:
\ice\left[m-\frac{\sdim}{2}\right]
\to
\ice\left[-m-\frac{\sdim}{2}\right]
$$
given by 
$$
\Gamma\left(\ice\left[m-\frac{\sdim}{2}\right]\right)
\ni f\mapsto \FSAOp{2k-1-\ell} u~.
$$
Here $m: = k-1/2-\ell$, and $u$ solves the conformal generalised
Dirichlet problem
\begin{equation}\label{bp}
P_{2k}u=0,\quad\FSAOp{\ell} u=f,\quad
\mbox{$\FSAOp{j}u=0$ for $j\neq \ell$ and $0\leq j\leq k-1$}~.
\end{equation}
The operator $P^{T,k}_{2m}$ has leading term $(-\ila)^m$.
\end{theorem}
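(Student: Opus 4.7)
The plan is as follows. First, I will establish well-posedness of the boundary problem \nn{bp}. By Theorem~\ref{FSAthm} the pair $(P_{2k}, B)$ is formally self-adjoint, by Lemma~\ref{LSLemma2} it satisfies the Lopatinski--Shapiro condition, and $P_{2k}$ is elliptic under Condition~\ref{nkcCondition}; so $(P_{2k}, B)$ is a properly elliptic formally self-adjoint problem with normal boundary system in the sense of Definition~\ref{NormalDef}. Standard Fredholm theory for such boundary problems (see \cite[Section~6]{BrGonon} or \cite{Grubb}) then yields finite index, and formal self-adjointness combined with the triviality of the kernel forces bijectivity. Hence for every boundary datum $f$ there is a unique $u \in \ce[k-n/2]$ solving \nn{bp}. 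Lemma~\ref{deltaTK} ensures that each $\FSAOp{i}$ we use is defined and has transverse order $i$ at the weight $w=k-n/2$; a direct weight count with $m = k - \tfrac{1}{2} - \ell$ confirms that $f = \FSAOp{\ell} u$ lies in $\ice[m-\sdim/2]$ and $\FSAOp{2k-1-\ell} u$ in $\ice[-m-\sdim/2]$. Since every operator entering \nn{bp} is conformally invariant, the solution $u$ depends on $f$ in a scale-invariant fashion, so the map $P^{T,k}_{2m}\colon f \mapsto \FSAOp{2k-1-\ell} u$ is conformally invariant.

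The heart of the proof is the identification of the principal symbol. I will work in Fermi coordinates $(t, x')$ near a chosen boundary point, with $t$ the distance to $\sm$, and perform a tangential Fourier transform with dual variable $\xi'$. By Proposition~\ref{norder} and Theorem~\ref{key}, at the weight $w = k - n/2$ the leading symbol of each $\FSAOp{j}$ is a nonzero scalar multiple of $(\Nv^a\nd_a)^j$, i.e.\ of the $j$-th pure normal derivative. Freezing coefficients at the chosen point, the equation $P_{2k} u = 0$ reduces to the constant-coefficient ODE $(\partial_t^2 - |\xi'|^2)^k \hat u = 0$ on $t>0$, whose decaying solutions have the form $\hat u(t,\xi') = p(t;\xi')\,e^{-t|\xi'|}$ with $p$ a polynomial in $t$ of degree at most $k-1$. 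To leading order the $k$ boundary conditions from \nn{bp} become $\partial_t^j \hat u|_{t=0} = 0$ for $j \neq \ell$, $0 \le j \le k-1$, and $\partial_t^\ell \hat u|_{t=0}$ equal to a nonzero multiple of $\hat f$; applying the Leibniz rule to $p(t) e^{-t|\xi'|}$ at $t=0$ this system is lower-triangular in the derivatives $p^{(i)}(0)$ with unit diagonal, and thus uniquely determines $p$ as an explicit polynomial in $|\xi'|$ times $\hat f$.

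Substituting this $p$ into $\partial_t^{2k-1-\ell} \hat u|_{t=0}$ and again applying the Leibniz rule, only the contributions from $p^{(i)}(0)$ with $\ell \le i \le k-1$ survive, and each surviving term is a constant multiple of $|\xi'|^{i-\ell} \hat f \cdot (-|\xi'|)^{2k-1-\ell-i}$. Every such term carries the same homogeneity $|\xi'|^{2k-1-2\ell} = |\xi'|^{2m}$, so the principal symbol of $P^{T,k}_{2m}$ is a scalar multiple of $|\xi'|^{2m}$, which is exactly the symbol of $(-\ila)^m$. The main obstacle I foresee is the bookkeeping needed to verify that this overall scalar multiplier is nonzero, since it is the product of the leading coefficients of the various $\FSAOp{j}$ (explicit polynomial factors in $n$ and $w$ given by Proposition~\ref{norder} and Theorem~\ref{key}) with the determinant of the triangular system above; these leading coefficients are nonzero precisely because $w = k - n/2$ avoids every $E(\FSAOp{i})$ we use, as guaranteed by Lemma~\ref{deltaTK}. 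A final normalisation then produces the stated leading term $(-\ila)^m$.
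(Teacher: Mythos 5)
Your treatment of well-posedness and conformal invariance follows the paper's argument exactly: proper ellipticity, normality, formal self-adjointness (Theorem~\ref{FSAthm}), the Lopatinski--Shapiro condition (Lemma~\ref{LSLemma2}), Fredholm theory, and the trivial-kernel hypothesis give unique solvability of \nn{bp}; conformal invariance is ``by construction.'' Where the two proofs genuinely diverge is the leading term. The paper invokes the hemisphere model: there is a unique (up to scale) intertwinor between $\ice[m-(\sdim)/2]$ and $\ice[-m-(\sdim)/2]$, it has leading term $(-\ila)^m$ by \cite{BrGonon}, and universality of the symbolic construction carries this over to the general curved case. You instead attempt a direct frozen-coefficient symbol computation on the model ODE. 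If completed, this would give a self-contained and more explicit derivation that avoids the citation to the model case, which would be worthwhile.

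The gap is precisely at the step you flag as ``bookkeeping,'' and it is substantive. You correctly show that every surviving contribution to $\partial_t^{2k-1-\ell}\hat u|_{t=0}$ is homogeneous of degree $2m$ in $|\xi'|$, so the model symbol is some scalar times $|\xi'|^{2m}$. But that scalar is a signed sum, roughly $\sum_{i=\ell}^{k-1}\binom{2k-1-\ell}{i}\,c_i\,(-1)^{2k-1-\ell-i}$ with the $c_i$ determined by your triangular recursion; it is \emph{not}, as you write, ``the product of the leading coefficients of the various $\FSAOp{j}$ with the determinant of the triangular system'' (that determinant is $1$, and the leading coefficients being nonzero by Lemma~\ref{deltaTK} is not in question). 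So nonvanishing of the individual leading coefficients does not by itself preclude cancellation in the sum, and the proposal does not rule it out. You would need to evaluate the sum directly, or give an a priori argument (for instance via the formal self-adjointness pairing applied to the model solution, or via the paper's hemisphere uniqueness argument). A secondary imprecision: even in the flat model the principal boundary symbols are not pure normal derivatives; $\FSAOp{j}=c_j(\Nv^a\nabla_a)^j+\sum_{m<j}D_m(\Nv^a\nabla_a)^m$ with the $D_m$ tangential of full complementary order, as is used in the proof of Lemma~\ref{LSLemma2} and is already visible for $\delta_2$. The frozen-coefficient boundary system is therefore a genuinely triangular matrix in $(\tau,\xi')$, not the diagonal system $\partial_t^j\hat u|_{t=0}$ you write; this reduction must be carried out before the nonvanishing of the resulting scalar can even be stated precisely.
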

\begin{proof}
The problem $(P_{2k},B)$ is properly elliptic, normal, and FSA.  It
satisfies the Lopatinski-Shapiro condition, and by assumption, it has
trivial kernel.  Thus by standard theory it follows that \nn{bp} has a
unique solution $u$ (see e.g.\ \cite[Section~20.1]{H}).  Thus the map
$f\mapsto \FSAOp{2k-1-\ell} u $ gives a well-defined non-local
operator $P_{2m}^{T,k}:\ice[m-(\sdim)/2]\to \ice[-m-(\sdim)/2]$. By
construction this is conformally invariant.

In the case of the dimension $n$ hemisphere, equipped with the
standard conformal structure of the round sphere, it is well-known
that there is a unique intertwinor between boundary conformal density
bundles and with the domain space $\Gamma (\bar{\ce} [m-(\sdim)/2]
)$. This takes values in $\Gamma (\bar{\ce} [-m-(\sdim)/2] )$ and has
leading term $(-\ila)^m$.  See \cite{BrGonon}. Thus by the
universality of the construction, the final claim follows.
\end{proof}
%
%%%%%%%%%%%%%%%%%%%%%%%%%%%%%%%%%%%%%%%%%%%%%%%%%%%%%%%%%%%%%%%%%%%%%%%%
% Section 6 ends here.
%%%%%%%%%%%%%%%%%%%%%%%%%%%%%%%%%%%%%%%%%%%%%%%%%%%%%%%%%%%%%%%%%%%%%%%%
%

%
%%%%%%%%%%%%%%%%%%%%%%%%%%%%%%%%%%%%%%%%%%%%%%%%%%%%%%%%%%%%%%%%%%%%%%%%
% Section 7 begins here.
%%%%%%%%%%%%%%%%%%%%%%%%%%%%%%%%%%%%%%%%%%%%%%%%%%%%%%%%%%%%%%%%%%%%%%%%
%
%%%%%%%%%%%%%%%%%%%%%%%%%%%%%%%%%%%%%%%%%%%%%%%%%%%%%%%%%%%%%%%%%%%%%%%%
% Q-curvature type quantities
%%%%%%%%%%%%%%%%%%%%%%%%%%%%%%%%%%%%%%%%%%%%%%%%%%%%%%%%%%%%%%%%%%%%%%%%
%
\section{$Q$-Type Curvatures}\label{Qsec}%

In this section, we define and investigate the $Q$-type curvatures
that we discussed in Section~\ref{IntroSect}, above.  Our definition
is very general.  It associates $Q$-type curvatures to the operator
families $\delta_K$, $\ConFlatOp{K}$, and $\deltaJk$ of
Section~\ref{confi} and to any other operator families satisfying
appropriate properties.
This general $Q$-type curvature will exist in all dimensions in which
the associated operators exist.  The construction of our $Q$-type
curvature resembles the construction of Branson's $Q$-curvature in
\cite{Brsharp}, and under conformal change of metric, our $Q$-type
curvature will transform in essentially the same way as Branson's
$Q$-curvature.  Note, however, that the integral of Branson's
$Q$-curvature over a compact manifold without boundary is a conformal
invariant; we do \textit{not} claim that our $Q$-type
curvatures have this property.  Our curvature definition applies to
families of natural hypersurface operators $\psiOp$ which satisfy the
following hypotheses:
\begin{hypotheses}\label{genQhyp}
For some $\lowerw\in\Reals$, $K\in\Integers_{>0}$, and
$\Dimensions\subseteq\Integers_{>0}$, the family of operators
$\psiOp:\ce[w]\rightarrow\ice[w-\lowerw]$ satisfies
Hypotheses~\ref{NaturalHyp}.  The operators $\psiOp$ are conformally
invariant and have mass $K$ and order at most $K$.  The set
$E(\psiOp)$ of Definition~\ref{ExSet} may depend on $n$ but is always
finite.  Finally, in all dimensions $n\in\Dimensions$, $\psiOp$
annihilates constant densities of weight zero.  Specifically, if $f$
is a constant section of $\ce[0]$, then $\psiOp f=0$ along $\sm$.
\end{hypotheses}
The definition of our $Q$-type curvatures requires some groundwork.
\begin{definition}\label{IndexSets}
Let $\{\RiemInv_{\beta,\g}\}_{\beta\in A}$ be the basis for
$\RealCoeff_{K,c,w}$ that we discussed in Proposition~\ref{RealBasis},
above.  Let $B$ denote the set of all $\beta\in A$ such that
$\RiemInv_{\beta,\g}$ is a composition of the form $O_{\beta,\g}\o
\nabla$ for some natural operator $O_{\beta,\g}$.  Let $C$ denote the
set of all $\beta\in A$ such that $\RiemInv_{\beta,\g}$ is a
zero-order operator.
\end{definition}
\noindent Note that $A=B\cup C$ and $B\cap C=\emptyset$.  Now recall
that if $\g$ is any Riemannian metric, then $\xig$ denotes the scale
density associated to $\g$.
\begin{definition}\label{psigwnotation}
Let $\lowerw\in\Reals$ and a family of conformally invariant natural
differential operators $\psi:\ce[w]\rightarrow\ice[w-\lowerw]$ be
given.  For all $w\in\Reals$ and all $\g\in\cc$, define
$\psigw{w}:C^{\infty}(M)\rightarrow C^{\infty}(\sm)$ by letting
$\psigw{w}f=\xigw{-w+\lowerw}\psiOp\xigw{w}f$ for all sections $f$ of
$\ce[0]$.  Now let $\RealBasis$ be as in Proposition~\ref{RealBasis}.
For every $w\in\Reals$ and $\beta\in A$, let
$\URiemInv_{\beta,\g}=\xigw{-w+\lowerw}\RiemInv_{\beta,\g}\xigw{w}$.
\end{definition}
\noindent In Definition~\ref{psigwnotation}, $\psigw{w}$ is
conformally covariant of bidegree $(-w,-w+\lowerw)$, by
Proposition~\ref{ReplaceCCov}.  The definition of our $Q$-type
curvatures will rely on the
following proposition.
\begin{proposition}\label{KillConsForm}
Let a family of natural hypersurface operators $\psiOp$ be given, and
suppose that $\psiOp$ satisfies Hypotheses~\ref{genQhyp}.  Let
notation be as in definitions~\ref{IndexSets} and \ref{psigwnotation}.
Then for all $w\in\Reals$,
\begin{equation}\label{Lab2Nov9a}
\psigw{w}=
\displaystyle
\sum_{\beta\in
  B}\fkbeta(w,n)\,\URiemInv_{\beta,\g}
+
\sum_{\beta\in C}w\,\qkbetan(w)\,\URiemInv_{\beta,\g}~,
\end{equation}
where both $\fkbeta(w,n)$ and $\qkbetan(w)$ are polynomial in $w$ and
rational in $n$.
\end{proposition}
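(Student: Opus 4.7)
The plan is to start from the basis expansion provided by Proposition~\ref{ACombination}. Applied to $\psiOp$, which satisfies Hypotheses~\ref{NaturalHyp} with mass $K$, this gives
\[
\psiOp \;=\; \sum_{\beta\in A} \hkbeta(w,n)\,\RiemInv_{\beta,\g}
\]
with each $\hkbeta$ polynomial in $w$ and rational in $n$. Conjugating both sides by the $\nd$-parallel scale density (and using that powers of $\xig$ pass through the Levi-Civita coupled derivative) translates this to
\[
\psigw{w} \;=\; \sum_{\beta\in A} \hkbeta(w,n)\,\URiemInv_{\beta,\g}.
\]
Splitting $A=B\cup C$ and setting $\fkbeta(w,n):=\hkbeta(w,n)$ for $\beta\in B$, the remaining task is to show that for each $\beta\in C$, the polynomial $\hkbeta(w,n)$ is divisible by $w$. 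This is the only substantive step.

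To establish divisibility by $w$, I will evaluate $\psigw{0}$ on the constant function $1\in C^\infty(M)$. Directly from the definition, $\psigw{0}(1) = \xigw{\lowerw}\psiOp(1)=0$ by the constant-killing clause in Hypotheses~\ref{genQhyp}. On the other hand, from the displayed expansion and the observation that for $\beta\in B$ the basis element $\RiemInv_{\beta,\g}=O_{\beta,\g}\circ\nabla$ annihilates constants in $\ce[0]$ (whence $\URiemInv_{\beta,\g}(1)=0$), the $B$-sum contributes nothing, leaving
\[
0 \;=\; \sum_{\beta\in C} \hkbeta(0,n)\,\URiemInv_{\beta,\g}(1)\quad\text{along }\sm.
\]
For $\beta\in C$ the operator $\RiemInv_{\beta,\g}$ is a zero-order operator, i.e.\ multiplication by a natural hypersurface density invariant; the same is then true of $\URiemInv_{\beta,\g}$. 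Since a multiplication operator is fully determined by its value on $1$, the identity above is equivalent to the vanishing of the operator $\sum_{\beta\in C} \hkbeta(0,n)\,\RiemInv_{\beta,\g}$ as an element of $\RealCoeff_{K,\lowerw,0}$.

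By Proposition~\ref{RealBasis} the family $\{\RiemInv_{\beta,\g}\}_{\beta\in A}$ is a basis for $\RealCoeff_{K,\lowerw,0}$, so its sub-collection indexed by $C$ is linearly independent. Hence $\hkbeta(0,n)=0$ for every $\beta\in C$ and every $n\in\Dimensions$. Writing $\hkbeta(w,n)=\sum_i a_{\beta,i}(n)\,w^i$, this gives $a_{\beta,0}(n)\equiv 0$ on the infinite set $\Dimensions$; since $a_{\beta,0}$ is a real rational function of $n$, it vanishes identically, so $\hkbeta(w,n)=w\,\qkbetan(w)$ for some $\qkbetan$ polynomial in $w$ and rational in $n$. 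Substituting yields exactly \nn{Lab2Nov9a}. The only potentially delicate point is the passage from a pointwise scalar identity along $\sm$ to an operator identity in $\RealCoeff_{K,\lowerw,0}$; this is clean here precisely because the splitting $A=B\cup C$ isolates the zero-order basis elements, for which ``determined on $1$'' coincides with ``determined as an operator''.
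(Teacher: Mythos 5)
Your overall architecture matches the paper's proof: expand $\psiOp$ in the basis from Proposition~\ref{ACombination}, isolate the zero-order part indexed by $C$, and use the constant-killing clause of Hypotheses~\ref{genQhyp} to force the constant term (in $w$) of each coefficient over $C$ to vanish. But the step where you infer $\hkbeta(0,n)=0$ has a genuine gap. From the constant-killing hypothesis you correctly obtain that $\sum_{\beta\in C}\hkbeta(0,n)\RiemInv_{\beta,\g}$ is the zero operator \emph{in each dimension $n\in\Dimensions$ separately}. However, the coefficients $\hkbeta(0,n)$ depend on $n$, so this sum is not an element of $\RealCoeff_{K,\lowerw,0}$ (whose elements by definition have coefficients that are real constants, independent of $n$); it is an element of $\RationalCoeff_{K,\lowerw,0}$. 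Linear independence of $\RealBasis$ in $\RealCoeff_{K,\lowerw,0}$ only says that a $\emph{fixed}$-coefficient combination that vanishes \emph{in all dimensions} has zero coefficients — it says nothing about the possibility that, for each particular $n_0$, the real constants $\hkbeta(0,n_0)$ produce an accidental relation special to dimension $n_0$. That is precisely the failure mode the paper's construction of $\Classes_{K,\lowerw,0}$ and Proposition~\ref{RationalBasis} (built on the dimensional embedding of Proposition~\ref{Embedding}) is designed to rule out.

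The fix is to pass from $\RealCoeff_{K,\lowerw,0}$ to the equivalence classes $\Classes_{K,\lowerw,0}$: the vanishing of $\sum_{\beta\in C}\hkbeta(0,n)\RiemInv_{\beta,\g}$ for all $n\in\Dimensions$ says $\sum_{\beta\in C}\hkbeta(0,n)[\RiemInv_{\beta,\g}]$ is the zero class, and Proposition~\ref{RationalBasis} — linear independence of $\ClassesBasis$ over the field $\FieldOfFuns$ of rational functions of $n$ — then yields $\hkbeta(0,n)=0$ identically. This is exactly how the paper argues (the paper also runs the Euclidean division $\fkbeta(w,n)=w\,\qkbetan(w)+\rkbetan$ up front, which has the minor advantage of making the regularity of $\qkbetan$, $\rkbetan$ in $n$ manifest, but that difference is cosmetic). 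Your other steps — the $B$-sum vanishing on $1$, the reduction to a zero-order operator determined by its value on $1$, and the final reading off of the divisibility by $w$ — are all sound.
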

\begin{proof}
By Proposition~\ref{ACombination},
$\psiOp=\sum_{\beta\in B\cup
  C}\fkbeta(w,n)\RiemInv_{\beta,\g}$,
where each function $\fkbeta(w,n)$ is polynomial in $w$ and rational
in $n$.  Let $\beta\in C$ be given.  By the polynomial division
algorithm from elementary algebra, we may write
$
f_{\beta}(w,n)=w q_{\beta,n}(w)+r_{\beta,n}
$.
Here $q_{\beta,n}(w)$ and $r_{\beta,n}$ are rational in $n$, and
$q_{\beta,n}(w)$ is polynomial in $w$.  Note that $q_{\beta,n}(w)$ and
$r_{\beta,n}$ are clearly regular at all $n\in\Dimensions$, where
$\Dimensions$ is as in Hypotheses~\ref{genQhyp}.  Since $\psiOp$
annihilates constant densities of weight zero, it follows that
$\sum_{\beta\in C}f_{\beta}(0,n)\RiemInv_{\beta,\g}=0$ for all
$n\in\Dimensions$.  Thus $\sum_{\beta\in
  C}f_{\beta}(0,n)[\RiemInv_{\beta,\g}]$ is the zero element of
$\Classes_{K,c,0}$.  Here $[\RiemInv_{\beta,\g}]$ denotes the element
of $\Classes_{K,c,0}$ determined by $\RiemInv_{\beta,\g}$.  Thus
$f_{\beta}(0,n)=r_{\beta,n}=0$ for all $\beta\in C$ and all
$n\in\Integers$, by Proposition~\ref{RationalBasis}.  It follows that
$f_{\beta}(w,n)=wq_{\beta,n}(w)$ for $\beta\in C$, all $w\in\Reals$,
and all $n\in\Dimensions$.  The result follows.
\end{proof}
The next definition gives our new general $Q$-type curvatures.
%
%%%%%%%%%%%%%%%%%%%%%%%%%%%%%%%%%%%%%%%%%%%%%%%%%%%%%%%%%%%%%%%%
% General definition of Q-curvature
%%%%%%%%%%%%%%%%%%%%%%%%%%%%%%%%%%%%%%%%%%%%%%%%%%%%%%%%%%%%%%%%
%
\begin{definition}\label{generalQ}
Let a family of natural hypersurface operators $\psiOp$ be given, and
suppose that $\psiOp$ satisfies Hypotheses~\ref{genQhyp}.  Let
notation be as above.  We define the following curvature quantity
along $\sm$:
\begin{equation}\label{genQdef}
Q_{\g}(\psiOp):=-\sum_{\beta\in C}\qkbetan(0)\,\URiemInv_{\beta,\g}~.
\end{equation}
\end{definition}
\begin{remark}
The curvature quantity $Q_{\g}(\psiOp)$ is a family of hypersurface
invariants parametrised by the dimension $n$.  We will only be
interested in the values of $Q_{\g}(\psiOp)$ at points $p\in\sm$, and
$Q_{\g}(\psiOp)$ is uniquely determined at such points.  Note,
however, that the right-hand side of \nn{genQdef} is defined on an
open neighbourhood of $M$; this will be important in the proof of
Proposition~\ref{H0}, below.
\end{remark}

The following proposition describes a transformation rule for
$Q_{\g}(\psiOp)$ under conformal change of metric.
%
%%%%%%%%%%%%%%%%%%%%%%%%%%%%%%%%%%%%%%%%%%%%%%%%%%%%%%%%%%%%%%%%
% Transformation formula for the Q-curvature
%%%%%%%%%%%%%%%%%%%%%%%%%%%%%%%%%%%%%%%%%%%%%%%%%%%%%%%%%%%%%%%%
%
\begin{proposition}\label{genQtran}
Let $\psiOp$ be as in Definition~\ref{generalQ}, and let notation be
as above.  Then
\[
e^{\lowerw\Upsilon}Q_{\hat{\g}}(\psiOp)
=
Q_{\g}(\psiOp)+\psigw{0}\Upsilon~.
\]
If we view $Q_{\g}(\psi)$ as a density of weight $-\lowerw$, then the
transformation rule becomes
\[
Q_{\hat{\g}}(\psi)=
Q_{\g}(\psi)+\psi\Upsilon~.
\]
\end{proposition}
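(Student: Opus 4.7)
The plan is to identify $Q_\g(\psiOp)$ with a $w$-derivative of $\psigw{w}$ at $w=0$, and then to differentiate the conformal covariance relation for $\psigw{w}$ at that same point. This mirrors the standard approach to Branson's $Q$-curvature transformation law.

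The first step is to prove the key identity
\begin{equation}
\frac{d}{dw}\bigg|_{w=0}\psigw{w}(1) = -Q_\g(\psiOp)
\end{equation}
as scalar functions on $\sm$. Starting from Proposition~\ref{KillConsForm} applied to the constant section $1 \in \ce$, I would observe that for each $\beta \in B$ one has $\URiemInv_{\beta,\g}(1) = (\xig)^{-w+\lowerw}\RiemInv_{\beta,\g}((\xig)^w) = 0$; this is because $\RiemInv_{\beta,\g} = O_{\beta,\g}\o\nd$ by Definition~\ref{IndexSets}, and the Levi-Civita connection on densities annihilates $(\xig)^w$. Thus only the $\beta \in C$ terms survive, giving $\psigw{w}(1) = w\sum_{\beta \in C} q_{\beta,n}(w)\URiemInv_{\beta,\g}(1)$, whence differentiating at $w=0$ recovers $-Q_\g(\psiOp)$ directly from Definition~\ref{generalQ}.

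Next, by conformal invariance of $\psiOp$ (Proposition~\ref{ReplaceCCov}) together with $(\xi^{\hat\g})^w = e^{-w\Upsilon}(\xig)^w$, a short manipulation of the defining formulae for $\psigw{w}$ and $\psighw{w}$ yields the operator identity
\begin{equation}
\psighw{w}(f) = e^{(w-\lowerw)\Upsilon}\,\psigw{w}(e^{-w\Upsilon}f)
\end{equation}
for all $f \in \ce$. I would then set $f = 1$ and differentiate at $w = 0$. The LHS gives $-Q_{\hat\g}(\psiOp)$ by the first step. On the RHS, the Leibniz rule produces $\Upsilon e^{-\lowerw\Upsilon}\psigw{0}(1) + e^{-\lowerw\Upsilon}\frac{d}{dw}\big|_{w=0}\psigw{w}(e^{-w\Upsilon})$; the first summand vanishes because $\psigw{0}(1) = 0$ by Hypotheses~\ref{genQhyp}, and a further expansion of the second summand, using bilinearity of $(w,u) \mapsto \psigw{w}(u)$, gives $e^{-\lowerw\Upsilon}[-Q_\g(\psiOp) - \psigw{0}(\Upsilon)]$. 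Equating the two sides and rearranging yields $e^{\lowerw\Upsilon}Q_{\hat\g}(\psiOp) = Q_\g(\psiOp) + \psigw{0}\Upsilon$.

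The density version is then a change of trivialisation. Interpreting both sides as densities of weight $-\lowerw$, the scalar representatives in scales $\g$ and $\hat\g$ differ by the conversion factor $(\xi^{\hat\g})^{-\lowerw}/(\xig)^{-\lowerw} = e^{\lowerw\Upsilon}$, which is exactly the factor appearing on the LHS of the scalar formula, while $(\xig)^{-\lowerw}\psigw{0}(\Upsilon) = \psiOp(\Upsilon)$ as weight-$(-\lowerw)$ densities on $\sm$. The only real technical point, and the main thing to verify carefully, is the vanishing $\URiemInv_{\beta,\g}(1) = 0$ for $\beta \in B$; everything else is a compact Leibniz computation at $w = 0$.
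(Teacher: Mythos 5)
Your proof is correct and is essentially the paper's own argument: both apply the conformal covariance of $\psigw{w}$ to the constant function $1$, invoke Proposition~\ref{KillConsForm}, use $\URiemInv_{\beta,\g}(1)=0$ for $\beta\in B$ (via $\RiemInv_{\beta,\g}=O_{\beta,\g}\circ\nd$ and $\nd\xigw{w}=0$), and extract the $w=0$ information by what amounts to differentiating in $w$ at $w=0$. The paper divides by $w$ and then sets $w=0$; since both sides vanish at $w=0$, that is exactly the derivative at $w=0$, so your derivative-based packaging is a tidier but mathematically identical presentation.
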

%
%%%%%%%%%%%%%%%%%%%
% Begin proof
%%%%%%%%%%%%%%%%%%%
%
\begin{proof}
We apply $e^{(-w+\lowerw)\Upsilon}\psighw{w}$ to the constant
function $1$.  By conformal covariance, we obtain
\begin{equation}\label{L2July12a}
\begin{array}{lll}
\lefteqn{\textstyle e^{(-w+\lowerw)\Upsilon}\sum_{\beta\in
    C}w\,\qkbetan(w)\,\URiemInv_{\beta,\hatg}=}&&\vspace{0.4ex}
\\
&&
-w\,e^{-w\Upsilon}\sum_{\beta\in
    B}\fkbeta(w,n)\,\URiemInv_{\beta,\g}\,\Upsilon
+
w^2\,e^{-w\Upsilon}\,\ell_{\g}(w,n,\Upsilon)\vspace{0.4ex}
\\&&
+w\,e^{-w\Upsilon}\sum_{\beta\in C}\qkbetan(w)\,\URiemInv_{\beta,\g}~.
\end{array}
\end{equation}
Here $\ell_{\g}(w,n,\Upsilon)$ is defined as follows.  Let $\beta\in
B$ be given.  Then
$$
\URiemInv_{\beta,\g}e^{-w\Upsilon}
=
e^{-w\Upsilon}(-w\,\URiemInv_{\beta,\g}\Upsilon+w^2k_{\beta,\g}(w,\Upsilon))~,
$$
where $k_{\beta,\g}(w,\Upsilon)$ is polynomial in $w$.  Let
$\ell_{\g}(w,n,\Upsilon)=\sum_{\beta\in
  B}f_{\beta}(w,n)k_{\beta,\g}(w,\Upsilon)$.  Now divide
\nn{L2July12a} by $-w\,e^{-w\Upsilon}$ to obtain
\begin{equation}\label{QtranB}
\begin{array}{lll}
\lefteqn{\textstyle -e^{\lowerw\Upsilon}\sum_{\beta\in
    C}\qkbetan(w)\,\URiemInv_{\beta,\hatg}=}&&\vspace{0.4ex} \\ &&
\sum_{\beta\in B}\fkbeta(w,n)\,\URiemInv_{\beta,\g}\,\Upsilon
-w\,\ell_{\g}(w,n,\Upsilon)
%
%\\&&
-\sum_{\beta\in
    C}\qkbetan(w)\,\URiemInv_{\beta,\g}~.
\end{array}
\end{equation}
This equation holds in the case $w=0$, since both sides are polynomial
in $w$.  In this case, we may use \nn{Lab2Nov9a} to identify the
presence of $\psigw{0}$ on the \RHS\ of \nn{QtranB}, and we
see that the proposition holds.
\end{proof}
%
%
%%%%%%%%%%%%%%%%%%%%%%%%%%%%%%%%%%%%%%%%%%%%%%%%%%%%%%%%%%%%%%%%
% End of transformation formula for the Q-curvature
%%%%%%%%%%%%%%%%%%%%%%%%%%%%%%%%%%%%%%%%%%%%%%%%%%%%%%%%%%%%%%%%
%

We now apply Definition~\ref{generalQ} to the operator families
$\delta_K$, $\ConFlatOp{K}$, and $\deltaJk$.
%
%%%%%%%%%%%%%%%%%%%%%%%%%%%%%%%%%%%%%%%%%%%%%%%%%%%%%%%%%%%%%%%%%%%%%%
% The next proposition says that operator has a Q-curvature.
%%%%%%%%%%%%%%%%%%%%%%%%%%%%%%%%%%%%%%%%%%%%%%%%%%%%%%%%%%%%%%%%%%%%%%
%
\begin{proposition}\label{KillConstants}

Let $K\in\Integers_{>0}$ be given, and let
$\psi:\ce[w]\rightarrow\ice[w-K]$ denote the $\delta_K$ operator
family of Lemma~\ref{basicversion}, as acting on densities, the
$\ConFlatOp{K}$ family of Theorem~\ref{cflatkey}, or the $\deltaJk$
family of Theorem~\ref{key}.  (In the latter case, $n$, $k$, and
$\Mcc$ must satisfy Condition~\ref{nkcCondition}, and $J+k$ must equal
$K$.)  Then $\psi$ satisfies Hypotheses~\ref{genQhyp}, and
Definition~\ref{generalQ} gives a $Q$-type curvature $Q_{\g}(\psi)$
associated to $\psi$ and a representative $\g$ of the conformal
structure on $M$.
\end{proposition}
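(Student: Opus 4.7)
\medskip

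\noindent\textit{Proof proposal.}
The plan is to check, family by family, that each of $\delta_K$, $\ConFlatOp{K}$ and $\deltaJk$ satisfies the four items of Hypotheses~\ref{genQhyp}: (i)~the structural properties of Hypotheses~\ref{NaturalHyp} (polynomial dependence, coefficients polynomial in $w$ and rational in $n$, no explicit occurrences of $n,w$ outside coefficients, etc.); (ii)~conformal invariance together with mass and order at most $K$; (iii)~finiteness of $E(\psi)$; and (iv)~annihilation of constant weight-zero densities. Once all four items are verified, Definition~\ref{generalQ} applies verbatim and produces $Q_{\g}(\psi)$, so there is nothing further to prove.

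Items (i) and (ii) are immediate from the respective construction results: Lemma~\ref{basicversion} for $\delta_K$, Theorem~\ref{cflatkey} for $\ConFlatOp{K}$, and Theorem~\ref{key} for $\deltaJk$ (the latter two explicitly assert that the families satisfy Hypotheses~\ref{NaturalHyp}). Item~(iii) follows from the explicit descriptions of the exceptional sets: formula~\nn{E0} for $\delta_K$, Corollary~\ref{cflatEwts} for $\ConFlatOp{K}$, and equation~\nn{exceptw} of Theorem~\ref{key} for $\deltaJk$. The only substantive point is therefore item (iv).

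For $\delta_K$ the annihilation of constants is direct. By \nn{cr}, the conformal Robin operator sends a constant weight-zero section $f$ to $\Nv^a\nabla_a f - 0\cdot \mc f=0$; and by \nn{tractorD}, the tractor $D$-operator applied to a constant $V\in\ce[0]$ also vanishes, since every term of \nn{tractorD} contains either the factor $w$, a covariant derivative of $V$, or $\Delta V$. From \nn{deltajplusone} it follows that for any $K\geq 1$ the innermost $D_{A_j}$ in $\delta_K$ already kills a constant weight-zero density, so $\delta_K$ does as well. The same reasoning shows that $P_{A_1\cdots A_k}$ of Proposition~\ref{GJMSp} annihilates constant weight-zero densities (both $D_{A_1}\cdots D_{A_k}V$ and $X_{A_1}\LowTrac{2}{k}V$ vanish, the latter because $\LowTrac{2}{k}$ has the form $\Psi^{PQ}D_PD_Q$ by Proposition~\ref{GJMSTractor}). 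Consequently, in the regime $k\leq J$, $\deltaJk=\Nt^{A_1}\cdots\Nt^{A_k}\delta_J P_{A_1\cdots A_k}$ annihilates constant weight-zero densities directly.

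The remaining cases, $\ConFlatOp{K}$ and $\deltaJk$ with $k>J$, are handled by the same two-step mechanism that was used inside the proofs of Theorems~\ref{cflatkey} and \ref{key}, combined with Proposition~\ref{RationalBasis}; this is the point where a little care is needed. Expand the operator in the basis $\RealBasis$ of Proposition~\ref{RealBasis} with splitting $A=B\cup C$ as in Definition~\ref{IndexSets}: only the $\beta\in C$ coefficients see constants. For $\ConFlatOp{K}$, the defining identity $p(w,n)\ConFlatOp{K}=\delta_K$ with $p(w,n)=\prod_{j=1}^{\lfloor(K-1)/2\rfloor}(n+2w-2K+2j)$ gives, after setting $w=0$ and applying to a constant, a linear relation among the scalar invariants $\RiemInv_{\beta,\g}$ for $\beta\in C$ whose coefficients are $p(0,n)q_{\beta,n}(0)$; since $\delta_K$ kills constants, each such coefficient vanishes identically in $n$, and since $p(0,n)$ is not the zero rational function, Proposition~\ref{RationalBasis} forces $q_{\beta,n}(0)=0$ for all $\beta\in C$ and all $n$. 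The same argument, with $p(w,n)$ replaced by $(n+2w-2k)$ and $\delta_K$ replaced by $\Nt^{A_1}\cdots\Nt^{A_k}\delta_J P_{A_1\cdots A_k}$, treats $\deltaJk$ for $k>J$. The main (mild) obstacle throughout is keeping track of the linear-independence step cleanly in the presence of denominators: one must invoke Proposition~\ref{RationalBasis} to pass from the identity on the basis $\ClassesBasis$ of $\Classes_{K,K,0}$ to vanishing of the individual rational coefficients at $w=0$, and then use the fact that a rational function of $n$ vanishing for cofinitely many integers is identically zero.
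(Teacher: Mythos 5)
Your proposal is correct, and the core mechanism---expand the operator in the basis $\RealBasis$ of Proposition~\ref{RealBasis}, pass to $\Classes_{K,K,0}$, and invoke linear independence over $\FieldOfFuns$ via Proposition~\ref{RationalBasis}, together with the observation that $\delta_K$ and $\Nt^{A_1}\cdots\Nt^{A_k}\delta_J P_{A_1\cdots A_k}$ each kill constants because the Thomas $D$-operator does---is the same as the paper's. Where you depart, mildly, is in the $k>J$ case (and its $\ConFlatOp{K}$ analogue). The paper divides $\fkbeta(w,n)$ by $w(n+2w-2k)$, obtains a degree-$\leq 1$ remainder $\tkbetan(w)$, and needs \emph{two} evaluations, at $w=0$ and at $w=k-n/2$ (the latter coming from the GJMS-weight vanishing of $\OJk$), to force $\tkbetan\equiv 0$ before reading off that the $C$-coefficients of $\deltaJk$ have a factor of $w$. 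You instead use only the single evaluation at $w=0$: since Theorem~\ref{key} already supplies the factorisation $\fkbeta(w,n)=(n+2w-2k)q_{\beta,n}(w)$, you get $(n-2k)q_{\beta,n}(0)=0$ in $\FieldOfFuns$ and simply divide by the nonzero rational function $n-2k$ (resp.\ $p(0,n)$ for $\ConFlatOp{K}$). That is valid and genuinely shorter; the paper's second evaluation is buying the stronger statement $\fkbeta(w,n)=w(n+2w-2k)\skbetan(w)$, which is more than Proposition~\ref{KillConstants} itself requires. One small imprecision: your closing remark about ``a rational function of $n$ vanishing for cofinitely many integers'' is not an extra step you need here---it is already internal to the definition of $\Classes_{K,c,0}$ and to Proposition~\ref{RationalBasis}; once linear independence gives $p(0,n)q_{\beta,n}(0)=0$ in $\FieldOfFuns$, the field structure finishes the job directly.
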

\begin{proof}
Suppose first that $\psi=\delta_K$.  The proposition then follows
trivially, since by \nn{tractorD}, the Thomas $D$-operator annihilates
constant functions.

The proofs for $\psi=\ConFlatOp{K}$ and $\psi=\deltaJk$ are similar.
We will give the proof for $\psi=\deltaJk$ only.  We begin by letting
$\psi=\deltaJk$.  We will consider the family of conformally invariant
operators $\OJk:\ce[w]\rightarrow\ice[w-J-k]$ given by
$\OJk:=\Nt^{A_1}\cdots\Nt^{A_k}\delta_JP_{A_1\cdots A_k}$.  By
Proposition~\ref{GJMSp}, we know that $P_{A_1\cdots A_k}$ is a family
of conformally invariant natural operators of mass $k$ given by a
universal symbolic formula.  It follows that $\OJk$ has mass $J+k$ and
satisfies Hypotheses~\ref{NaturalHyp}.  By
propositions~\ref{GJMSTractor} and \ref{GJMSp}, $P_{A_1\cdots A_k}$ is
given by either
\[
D_{A_1}\ldots D_{A_k}\mbox{\ \ \ \ or\ \ \ \ }D_{A_1}\ldots
D_{A_k}-X_{A_1}\LowTracPsi{2}{k}{PQ}D_PD_Q~.
\]
Here $\LowTracPsi{2}{k}{PQ}$ is as in Proposition~\ref{GJMSTractor}.
Thus $\OJk$ annihilates constant functions, and the proposition
follows in the case $k\leq J$.  We may thus assume that $k>J$.

Let $\Dimensions$ denote the set of all dimensions $n$ in which
$\deltaJk$ is well-defined.  We apply Proposition~\ref{ACombination}
with $\Dimensions$ and with $K=c=J+k$.  We may write
$\OJk=\sum_{\beta\in B\cup C}\fkbeta(w,n)\RiemInv_{\beta,\g}$, where
$\RiemInv_{\beta,\g}$, $B$, and $C$ are as in Definition~\ref{IndexSets}.
For each $\beta\in B\cup C$, $\fkbeta(w,n)$ is a polynomial in $w$
whose coefficients are real rational functions of $n$.  For all
$\beta\in C$, we may use polynomial division to symbolically compute
functions $\skbetan(w)$ and $\tkbetan(w)$, such that
\begin{equation}\label{LongDivThree}
\fkbeta(w,n)=w(n+2w-2k)\skbetan(w)+\tkbetan(w)~.
\end{equation}
In this polynomial division, we treat $\fkbeta(w,n)$ and $w(n+2w-2k)$
as polynomials in $w$.  Thus $\skbetan(w)$ and $\tkbetan(w)$ are
polynomials in $w$ whose coefficients are real rational functions of
$n$.  These rational functions are regular at all $n\in\Dimensions$,
since the divisor has leading term $2w^2$.  The degree of
$\tkbetan(w)$ is at most 1.

For all $\beta\in B\cup C$, let $[\RiemInv_{\beta,\g}]$ denote the
element of $\Classes_{J+k,\,J+k,\,0}$ determined by
$\RiemInv_{\beta,\g}$.  Since $\OJk$ annihilates constant densities of
weight zero, it follows that $\sum_{\beta\in
  C}\fkbeta(0,n)\RiemInv_{\beta,\g}$ is the zero operator for all $n$
in $\Dimensions$.  It therefore follows that $\sum_{\beta\in
  C}f_{\beta}(0,n)[\RiemInv_{\beta,\g}]$ is the zero element of
$\Classes_{J+k,\,J+k,\,0}$.  Thus $\fkbeta(0,n)=0$ for all $\beta\in
C$ and all $n\in\Integers$, by Proposition~\ref{RationalBasis}, so
$\tkbetan(0)=0$ for all $\beta\in C$ and all $n\in\Dimensions$, by
\nn{LongDivThree}.

Now consider any $V_0\in\ce[0]$.  Then
$(\xig)^{k-n/2}V_0\in\ce[k-n/2]$, so by Theorem~\ref{key},
$$
\sum_{\beta\in B\cup C}
f_{\beta}(k-n/2,n)\RiemInv_{\beta,\g}(\xig)^{k-n/2}V_0=
\OJk(\xig)^{k-n/2}V_0=0
$$
along $\sm$.  But $\nd((\xig)^{k-n/2})=0$, so $\sum_{\beta\in B\cup C}
f_{\beta}(k-n/2,n)\RiemInv_{\beta,\g}V_0$ vanishes along $\sm$.  It
then follows that $\sum_{\beta\in B\cup
  C}f_{\beta}(k-n/2,n)[\RiemInv_{\beta,\g}]$ is the zero element of
$\Classes_{J+k,\,J+k,\,0}$.  Thus $\fkbeta(k-n/2,n)=0$ for all
$\beta\in B\cup C$ and for all $n\in\Integers$, by
Proposition~\ref{RationalBasis}, so $\tkbetan(k-n/2)=0$ for all
$\beta\in C$ and all $n\in\Dimensions$, by \nn{LongDivThree}.

Let $\beta\in C$ and $n\in\Dimensions\backslash\{2k\}$ be given.  Then
$\tkbetan(w)$ is a polynomial in $w$ with two distinct zeros, so the
coefficients of $\tkbetan(w)$ must vanish in dimension $n$.  Since
$\Dimensions\backslash\{2k\}$ is an infinite set, these coefficients
must therefore vanish for all $n\in\Integers$.  Thus
$f_{\beta}(w,n)=w(n+2w-2k)s_{\beta,n}(w)$ for all $n\in\Dimensions$
and all $w\in\Reals$.

Similar reasoning shows that for all $\beta\in B$ and all
$n\in\Dimensions$, we have $\fkbeta(w,n)=(n+2w-2k)h_{\beta,n}(w)$.
Here $h_{\beta,n}(w)$ is polynomial in $w$ and rational in $n$.  Thus
by polynomial continuation in $w$,
\[
\deltaJk=\sum_{\beta\in B}h_{\beta,n}(w)\RiemInv_{\beta,\g}
+\sum_{\beta\in C}w\,\skbetan(w)\RiemInv_{\beta,\g}~.
\]
Thus $\deltaJk$ annihilates constant densities of weight zero.
\end{proof}
%
%%%%%%%%%%%%%%%%%%%%%%%%%%%%%%%%%%%%%%%%%%%%%%%%%%%%%%%%%%%%%%%%%%%%%
% End of proposition saying that operator has a Q-curvature type
% quantity
%%%%%%%%%%%%%%%%%%%%%%%%%%%%%%%%%%%%%%%%%%%%%%%%%%%%%%%%%%%%%%%%%%%%%
%
\begin{corollary}\label{GenSolveCorollary}
For all $K\in\Integers_{>0}$, the operator families $\delta_K$,
$\ConFlatOp{K}$, and $\deltaJk$ solve Problem~\ref{GenProblem}.  (In
the case of $\deltaJk$, we assume that $n$, $k$, and $\Mcc$ satisfy
Condition~\ref{nkcCondition} and that $J+k=K$.)
\end{corollary}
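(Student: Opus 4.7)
The proof is essentially an assembly of results already established in the paper. For each of the three operator families I would verify each of the three conditions of Problem~\ref{GenProblem} in turn, invoking Remark~\ref{IdentificationRem} to replace condition (2) with the equivalent statement that $P^{\g}_{w,K}:\ce[w]\to\ice[w-K]$ is conformally invariant.

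For $\delta_K$, condition (1) is immediate from Proposition~\ref{norder}, which shows that $\delta_K$ has order at most $K$, together with the explicit leading-coefficient factorisation
\[
\textstyle\prod_{i=1}^{K-1}(n+2w-K-i),
\]
whose $K-1$ roots constitute the finite exceptional set $E(\delta_K)$ given in \nn{E0}; at every other weight the transverse order equals $K$. Condition (2) (in its conformally invariant form) is part of Lemma~\ref{basicversion}. For condition (3) I would invoke Proposition~\ref{KillConstants} to conclude that $\delta_K$ satisfies Hypotheses~\ref{genQhyp}, so Definition~\ref{generalQ} produces a $Q$-type curvature $Q_{\g}(\delta_K)$; its required transformation rule is then exactly Proposition~\ref{genQtran} (applied with $\lowerw=K$ and $w=0$, so the rule reads $Q_{\hatg}(\delta_K)=Q_{\g}(\delta_K)+\delta_{K,\g,0}\Upsilon$ after identifying weighted densities with functions via $\xig$). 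The locality and natural-invariant character of $Q_{\g}(\delta_K)$ is immediate from the defining formula \nn{genQdef} and the fact that each basis element $\RiemInv_{\beta,\g}$ is a natural Riemannian hypersurface invariant.

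The argument for $\ConFlatOp{K}$ and $\deltaJk$ is entirely parallel. Condition (1) for $\ConFlatOp{K}$ is Corollary~\ref{cflatEwts}, which explicitly exhibits the finite exceptional set and certifies the order and transverse order claims; conformal invariance (condition (2)) is part of Theorem~\ref{cflatkey}. For $\deltaJk$, condition (1) is \nn{exceptw}, and conformal invariance (condition (2)) is part of Theorem~\ref{key}; note $J+k=K$ and the hypothesis Condition~\ref{nkcCondition} is precisely what is needed to make $P_{A_1\cdots A_k}$, and hence $\deltaJk$, available. In both cases Proposition~\ref{KillConstants} gives the annihilation of constant weight-zero densities, which together with the mass and order bounds (mass $K$, order at most $K$, finite $E$) verifies Hypotheses~\ref{genQhyp}, so Definition~\ref{generalQ} yields the required $Q$-type curvatures $Q_{\g}(\ConFlatOp{K})$ and $Q_{\g}(\deltaJk)$; Proposition~\ref{genQtran} then gives condition (3).

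No step here requires new work: the only nontrivial observation is that Proposition~\ref{KillConstants} has already done the labour of showing the three families annihilate constant densities of weight zero, which is precisely the condition needed to absorb a factor of $w$ in the expansion \nn{Lab2Nov9a} and thereby make the $Q$-type construction succeed. There is no genuine obstacle; the proof amounts to listing, for each of the three families, the results that establish naturality, conformal invariance, the order and transverse-order behaviour with its finite exceptional set, and finally the $Q$-type curvature and its conformal transformation rule.
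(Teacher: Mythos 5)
Your proof is correct and follows the same route the paper intends: the corollary is stated without an explicit proof precisely because it is the assembly of Proposition~\ref{norder} / Corollary~\ref{cflatEwts} / Theorem~\ref{key} (for order, transverse order, and the finite exceptional set), the conformal invariance built into Lemma~\ref{basicversion}, Theorem~\ref{cflatkey}, and Theorem~\ref{key}, and Proposition~\ref{KillConstants} together with Definition~\ref{generalQ} and Proposition~\ref{genQtran} for the $Q$-type curvature and its transformation law. One small slip in your parenthetical: the identity $Q_{\hatg}(\delta_K)=Q_{\g}(\delta_K)+\delta_{K,\g,0}\Upsilon$ is the \emph{density} form of Proposition~\ref{genQtran}; after trivialising to functions one picks up the factor $e^{-K\Upsilon}$ as in Problem~\ref{GenProblem}(3), which is the first display in that proposition — but this is a labelling mix-up, not a gap, and the invocation of Proposition~\ref{genQtran} does the work either way.
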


The following theorem gives higher-order analogues of $\CQOp$ in the
conformally flat setting and in the case of general Riemannian
metrics.
\begin{theorem}\label{SolnTheorem}
Let an even integer $n_0\geq 4$ be given, and let $K=n_0-1$.  Then
$\ConFlatOp{n_0-1}$ and $\delta_{(n_0-2)/2,\,n_0/2}$ solve
Problem~\ref{CQprob} in dimension $n=n_0$.
\end{theorem}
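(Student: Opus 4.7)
The plan is to show that both families meet the requirements of Problem~\ref{CQprob} by combining two inputs: the general fact that each family solves Problem~\ref{GenProblem}, and a direct check that the specific weight $w=0$ is non-exceptional in dimension $n=n_0$, so that the relevant operator has full transverse order $K = n_0-1$ and thus is critical in the sense of Section~\ref{IntroSect}.

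First, I would invoke Corollary~\ref{GenSolveCorollary} to conclude that the families $\ConFlatOp{n_0-1}$ and $\delta_{(n_0-2)/2,\,n_0/2}$ each solve Problem~\ref{GenProblem} (the second under Condition~\ref{nkcCondition}, which is supplied since $n=n_0$ is even and $k = n_0/2 \leq n_0/2$). This delivers properties (1)--(3) of Problem~\ref{GenProblem}, so what remains is to upgrade ``order at most $K$'' to the assertion that $P^{\g}_{0,K}$ is actually a critical operator in dimension $n_0$, i.e., has order and transverse order $n_0-1$.

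Second, I would compute the exceptional sets at $w=0$. For the conformally flat family, Corollary~\ref{cflatEwts} with $K=n_0-1$ gives
\[
E(\ConFlatOp{n_0-1}) = \Bigl\{\tfrac{n_0-3}{2},\,\tfrac{n_0-3}{2}-1,\,\ldots,\,\tfrac{n_0-3}{2}-\bigl\lfloor\tfrac{n_0-3}{2}\bigr\rfloor\Bigr\}.
\]
Since $n_0$ is even, $\lfloor (n_0-3)/2\rfloor = (n_0-4)/2$, so the smallest element of this set is $1/2$; in particular $0\notin E(\ConFlatOp{n_0-1})$. For the general family, Corollary~\ref{crit} with $J=(n_0-2)/2$, $k=n_0/2$, and $n=n_0$ even gives $E(\delta_{J,k}) = \{ i/2 : 1 \leq i \leq n_0-3\}$, again avoiding $0$. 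In both cases, the relevant exceptional-weight results of Theorem~\ref{cflatkey} and Theorem~\ref{key} then guarantee that at $w=0$ the operator has transverse order exactly $K=n_0-1$. Combined with the bound ``order at most $K$'' from Problem~\ref{GenProblem}, and the fact that transverse order is bounded above by order, both the order and transverse order are equal to $n_0-1$, so the operator is critical.

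I expect no genuine obstacle: the theorem is essentially a specialisation of earlier structural results to the parameter values $(n,K,w) = (n_0,\,n_0-1,\,0)$. The only thing to be careful about is keeping track of which family requires Condition~\ref{nkcCondition} (only $\delta_{J,k}$, which is fine here because $k = n_0/2$), and verifying by a short parity/arithmetic calculation that the integer $0$ lands outside the half-integer exceptional set in each case. With those checks in place, the conclusion that $P^{\g}_{0,K}$ is critical follows immediately, which is exactly the additional condition imposed by Problem~\ref{CQprob} on top of Problem~\ref{GenProblem}.
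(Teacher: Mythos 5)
Your proof is correct and follows the paper's own argument: both rely on Corollaries~\ref{cflatEwts} and~\ref{crit} to check that $0$ is absent from the exceptional weight set in dimension $n_0$, so that the operators are critical, with the rest of Problem~\ref{CQprob} already guaranteed by Corollary~\ref{GenSolveCorollary}. Your explicit citation of Corollary~\ref{GenSolveCorollary} and the parity/arithmetic check are slightly more detailed than the paper's terse two-line proof, but the route is the same.
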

\begin{proof}
If $n=n_0$, then $0\notin E(\ConFlatOp{n_0-1})$ and $0\notin
E(\delta_{(n_0-2)/2,\,n_0/2})$, by corollaries~\ref{cflatEwts} and
\ref{crit}.  So if $n=n_0$, then $\ConFlatOp{n_0-1}$ and
$\delta_{(n_0-2)/2,\,n_0/2}$ are critical operators.
\end{proof}
%

%
%%%%%%%%%%%%%%%%%%%%%%%%%%%%%%%%%%%%%%%%%%%%%%%%%%%%%%%%%%%%%%%%
% Tractor formulae for Q
%%%%%%%%%%%%%%%%%%%%%%%%%%%%%%%%%%%%%%%%%%%%%%%%%%%%%%%%%%%%%%%%
%
The next two theorems give symbolic tractor formulae associated to
$Q_{\g}(\deltaJk)$, $Q_{\g}(\delta_K)$, and $Q_{\g}(\ConFlatOp{K})$.
%
% First theorem on tractor formulae for Q
%
\begin{theorem}\label{TFormula}
Let $J,\,k\in\Integers_{>0}$ be given, and view $Q_{\g}(\deltaJk)$ as
a density of weight $-J-k$.  Let $I_{E}=(n-2)Y_{E}-\J X_E$.  Then
there is a family of conformally invariant differential operators
$\POneAB:\tbn_E[w]\rightarrow\tbn_{A_1\dots A_k}[w-k+1]$ with the
following properties:
\begin{itemize}
\item It is well-defined for all $n$ and $\Mcc$ satisfying
  Condition~\ref{nkcCondition}.
\item It is given by a polynomial in $D$, $W$,
  $X$, $\tm$, and $\tmco$ of mass $k-1$ whose coefficients are
  rational in $n$.
\item If $k>J$, then
\begin{equation}\label{DefineQ}
(n-2k)Q_{\g}(\deltaJk)=-\Nt^{A_1}\cdots\Nt^{A_k}\delta_J\POneAB I_E~.
\end{equation}
\item If $k\leq J$, then
\[
Q_{\g}(\deltaJk)=-\Nt^{A_1}\cdots\Nt^{A_k}\delta_J\POneAB I_E~.
\]
\end{itemize}
\end{theorem}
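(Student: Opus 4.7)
The plan is to extract $Q_\g(\deltaJk)$ by applying $\deltaJk$ to the one-parameter family of scale densities $\xigw{w}$ and isolating the linear-in-$w$ piece. By Proposition~\ref{KillConsForm}, the family $\psigw{w}$ evaluated on the constant $1$ satisfies $\psigw{w}(1)=w\sum_{\beta\in C}q_{\beta,n}(w)\,\URiemInv_{\beta,\g}(1)$, so $\frac{d}{dw}\big|_{w=0}\psigw{w}(1)=-Q_\g(\deltaJk)$ as a function on $\sm$. Unwinding the definition of $\psigw{w}$ and invoking $\deltaJk(1)=0$ from Proposition~\ref{KillConstants}, this rewrites as
\begin{equation*}
\frac{d}{dw}\Big|_{w=0}\,\deltaJk(\xigw{w})=-Q_\g(\deltaJk)
\end{equation*}
when $Q_\g(\deltaJk)$ is regarded as a density of weight $-J-k$.

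I then differentiate the defining identity of Theorem~\ref{key} at $w=0$. For $k>J$, applying the chain rule to $(n+2w-2k)\deltaJk(\xigw{w})$ and using $\deltaJk(1)=0$ yields a left-hand side equal to $-(n-2k)Q_\g(\deltaJk)$; for $k\leq J$ the factor $(n+2w-2k)$ is absent and the left-hand side is simply $-Q_\g(\deltaJk)$. The remaining task is to compute $\Nt^{A_1}\cdots\Nt^{A_k}\delta_J\frac{d}{dw}\big|_{w=0}[P_{A_1\cdots A_k}(\xigw{w})]$ and to recognise the factor $\POneAB I_E$.

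By Proposition~\ref{GJMSp}, $P_{A_1\cdots A_k}(\xigw{w})=D_{A_1}\cdots D_{A_k}(\xigw{w})-X_{A_1}\LowTrac{2}{k}(\xigw{w})$. The $X_{A_1}$-term is killed after contraction with $\Nt^{A_1}$ because $\Nt^{A}X_A=0$. For the main term, \nn{tractorD} gives $D_{A_k}(\xigw{w})=w\,\xigw{w}\,I_{A_k}(w)$ with $I_{A_k}(w):=(n+2w-2)Y_{A_k}-\J X_{A_k}$, hence
\[
D_{A_1}\cdots D_{A_k}(\xigw{w})=w\,D_{A_1}\cdots D_{A_{k-1}}\big[\xigw{w}\,I_{A_k}(w)\big],
\]
and the product rule at $w=0$ returns $D_{A_1}\cdots D_{A_{k-1}}(I_{A_k})$, each $D$ acting on the appropriately weighted tractor. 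I will then take $\POneAB$ to be the conformally invariant natural differential operator sending $V_E$ to $D_{A_1}\cdots D_{A_{k-1}}V_{A_k}$ (identifying the free index $E$ with $A_k$); this is a polynomial in $D$ and $\tmco$ of mass $k-1$, well-defined under Condition~\ref{nkcCondition} since only the tractor $D$-operator is used.

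Combining these ingredients proves both asserted identities: $(n-2k)Q_\g(\deltaJk)=-\Nt^{A_1}\cdots\Nt^{A_k}\delta_J\POneAB I_E$ when $k>J$, and $Q_\g(\deltaJk)=-\Nt^{A_1}\cdots\Nt^{A_k}\delta_J\POneAB I_E$ when $k\leq J$. The main obstacle I anticipate is the careful bookkeeping between the function picture (in which $Q_\g$ arises as a $w$-derivative of a scalar expression built from $\xigw{w}$) and the density picture in which the theorem is stated; secondarily, one must verify that every surviving term of $\frac{d}{dw}\big|_{w=0}P_{A_1\cdots A_k}(\xigw{w})$ either contributes to $\POneAB I_E$ or is annihilated by the $\Nt^{A_1}\cdots\Nt^{A_k}$ contraction, which relies on $\Nt^A X_A=0$ together with the specific placement of the $A_1$ index on $X$ in the lower-order piece $X_{A_1}\LowTrac{2}{k}$.
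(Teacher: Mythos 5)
Your overall strategy---pull the $Q$-type curvature out of $\deltaJk$ by applying it to $\xigw{w}$ and isolating the linear-in-$w$ coefficient---is the same as the paper's, which divides the resulting identity by $w$ (via polynomial continuation) and then sets $w=0$; these are equivalent since all expressions are polynomial in $w$ and vanish at $w=0$. The first half of your argument, reducing \nn{DefineQ} to a statement about $\Nt^{A_1}\cdots\Nt^{A_k}\delta_J\frac{d}{dw}\big|_{w=0}\big[P_{A_1\cdots A_k}(\xigw{w})\big]$, is sound.

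The gap is your claim that the $X_{A_1}\LowTrac{2}{k}$ piece ``is killed after contraction with $\Nt^{A_1}$ because $\Nt^A X_A=0$.'' In the expression
$\Nt^{A_1}\cdots\Nt^{A_k}\,\delta_J\bigl(-X_{A_1}\LowTrac{2}{k}\xigw{w}\bigr)$,
the differential operator $\delta_J$ sits \emph{between} $\Nt^{A_1}$ and $X_{A_1}$, so the contraction $\Nt^{A_1}X_{A_1}$ never occurs directly. The operator $\delta_J$ is strongly invariant and hence contains coupled tractor derivatives, and by \nn{connids} a single derivative hitting $X_{A_1}$ produces a $Z_{A_1}{}^a$, which does \emph{not} pair to zero with $\Nt^{A_1}$. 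Since $\delta_J$ has positive order $J\geq 1$, at least one derivative can land on $X_{A_1}$, and this contribution survives. (This is precisely why the paper, in the proof of Theorem~\ref{key}, is careful only to assert that this second term is of \emph{lower order}, not that it vanishes; and it is why the vanishing argument in that proof---valid for $k$ factors of $X$ with $k>J$---does not apply to a single $X_{A_1}$.) Consequently, your choice of $\POneAB = D_{A_1}\cdots D_{A_{k-1}}$ does not produce the correct $Q$-curvature except when $\LowTrac{2}{k}=0$, i.e.\ for $k\in\{1,2\}$ or on conformally flat $\Mcc$. The paper instead takes $\POneAB$ to be the operator obtained from the \emph{full} $P_{A_1\cdots A_k}$ by peeling off the rightmost tractor $D$, using the factorisation $\LowTrac{2}{k}=\LowTracPsi{2}{k}{PQ}D_P D_Q$ of Proposition~\ref{GJMSTractor}; the $W$-dependent piece $-X_{A_1}\LowTracPsi{2}{k}{PE}D_P$ is retained in $\POneAB$ and absorbs exactly the terms you have dropped. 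That is why the theorem allows (indeed, requires) $\POneAB$ to be a polynomial in $W$ and $X$ as well as $D$ and the tractor metric.
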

\begin{proof}
Let $P_{A_1\ldots A_k}$ be as in Proposition~\ref{GJMSp}, above, and
suppose first that $k>J$.  Then by Proposition~\ref{GJMSTractor}, we
may write $P_{A_1\ldots A_k}=\POneAB D_E$, where $\POneAB$ is a
polynomial in $D$, $W$, $X$, $\tm$, and $\tmco$
whose coefficients are rational in $n$.  Let $\Dimensions$ denote the
set of all dimensions $n$ in which $\deltaJk$ is defined.  By
Proposition~\ref{KillConsForm},

\[
\delta_{J,k,\g,w}=\sum_{\beta\in
  B}\fkbeta(w,n)\,\URiemInv_{\beta,\g}+ \sum_{\beta\in
  C}w\,\qkbetan(w)\,\URiemInv_{\beta,\g}~.
\]
Here notation is as in definitions~\ref{IndexSets} and
\ref{psigwnotation} and Proposition~\ref{KillConsForm}.  By
\nn{tractorD},
\begin{equation}\label{L7June14a}
\begin{array}{ll}
\lefteqn{(n+2w-2k)\delta_{J,k,\g,w}
1=
\sum_{\beta\in
  C}w\,(n+2w-2k)\,\qkbetan(w)\,\URiemInv_{\beta,\g}}\vspace{0.9ex}
\\
&=
(\xig)^{-w+J+k}\Nt^{A_1}\cdots\Nt^{A_k}\delta_J
\POneAB w\tilde{I}_E(\xig)^{w}~,\hspace{2.9ex}
\end{array}
\end{equation}
where $\tilde{I}_E=(n+2w-2)Y_E-\J X_E$.  We may use \nn{connids},
\nn{TracCurveOne}, \nn{TracCurveTwo}, \nn{tractorD},
\nn{NormalTractor}, and the rules of Figure~\ref{TrIProd} to expand
the rightmost expression in \nn{L7June14a}.  Then by polynomial
continuation in $w$, we find that
\begin{equation}\label{L7June14b}
\begin{array}{ll}
\lefteqn{-(n+2w-2k)(\xig)^{-J-k}\sum_{\beta\in
  C}\qkbetan(w)\,\URiemInv_{\beta,\g}
=}
\\
&
-(\xig)^{-w}\Nt^{A_1}\cdots\Nt^{A_k}\delta_J\POneAB\tilde{I}_E(\xig)^w
\hspace{5ex}
\end{array}
\end{equation}
for all $w\in\Reals$.  If we let $w=0$ in \nn{L7June14b}, then
\nn{DefineQ} follows.

The proof for the case $k\leq J$ is straightforward.
\end{proof}
%
% Second theorem on tractor formulae for Q
%
\begin{theorem}\label{TFormulaTwo}
Let $I_E$ be as in Theorem~\ref{TFormula}, and for all
$K\in\Integers_{>0}$, view $Q_{\g}(\delta_K)$ and
$Q_{\g}(\ConFlatOp{K})$ as densities of weight $-K$.  Then for all
$K\in\Integers_{\geq 2}$,
\[
Q_{\g}(\delta_K)=-\Nt^{A_1}\cdots N^{A_{K-1}}\delta_1 D_{A_1}\cdots
D_{A_{K-2}}I_{A_{K-1}}
\]
and
\[
\big[
  \prod_{j=1}^{\left\lfloor\frac{K-1}{2}\right\rfloor}(n-2K+2j)
  \big]Q_{\g}(\ConFlatOp{K})=
-\Nt^{A_1}\cdots\Nt^{A_{k-1}}\delta_1D_{A_1}\cdots
D_{A_{K-2}}I_{A_{K-1}}~.
\]
In the above equations, if $K=2$, then $D_{A_1}\ldots D_{A_{K-2}}$
is the identity operator.  On the other hand, $Q_{\g}(\delta_1)=\mc$
and $Q_{\g}(\ConFlatOp{1})=\mc$.
\end{theorem}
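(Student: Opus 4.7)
The plan is to adapt the strategy used in the proof of Theorem~\ref{TFormula}, now taking advantage of the simpler structure of $\delta_K$ and $\ConFlatOp{K}$ (no $\LowTrac{1}{k-1}$ correction is present) and then bridging between the two families via the defining relation of Theorem~\ref{cflatkey}. The key mechanism throughout is that Proposition~\ref{KillConsForm} identifies the $Q$-type curvature, up to sign, with the coefficient of $w$ in the $w$-polynomial $\psi_{\g,w}(1)$, so it suffices to evaluate this polynomial explicitly on a scale density.

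First I would dispose of the $K=1$ case. Directly from \nn{cr} and the parallelism of the scale density under the Levi-Civita connection on densities, $\delta_1(\xig)^w = -wH(\xig)^w$, whence $\delta_{1,\g,w}(1) = (\xig)^{1-w}\delta_1(\xig)^w = -wH\,\xig$. Comparing with the normal form of Proposition~\ref{KillConsForm} and invoking Definition~\ref{generalQ}, one reads off $Q_{\g}(\delta_1)=\mc$ as a weight $-1$ density. Since the empty product in the defining equation of $\ConFlatOp{1}$ equals $1$, we have $\ConFlatOp{1}=\delta_1$, and therefore $Q_{\g}(\ConFlatOp{1})=\mc$ as well.

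For $\delta_K$ with $K\geq 2$, I would write
\[
\delta_K = \Nt^{A_1}\cdots\Nt^{A_{K-1}}\,\delta_1\,[D_{A_1}\cdots D_{A_{K-2}}]\,D_{A_{K-1}}
\]
and apply it to the scale density $(\xig)^w$. Since $(\xig)^w$ is parallel, the formula \nn{tractorD} collapses to $D_{A_{K-1}}(\xig)^w = w\,\tilde{I}_{A_{K-1}}(\xig)^w$, where $\tilde{I}_{A_{K-1}}:=(n+2w-2)Y_{A_{K-1}}-\J X_{A_{K-1}}$. Pulling the scalar $w$ past the remaining operators (all of which are linear over $\Reals$) yields
\[
\delta_{K,\g,w}(1) = w\,(\xig)^{-w+K}\,\Nt^{A_1}\cdots\Nt^{A_{K-1}}\,\delta_1\,D_{A_1}\cdots D_{A_{K-2}}\bigl[\tilde{I}_{A_{K-1}}(\xig)^w\bigr].
\]
Dividing through by $w$, polynomial continuation in $w$ permits evaluation at $w=0$: there $\tilde{I}_{A_{K-1}}$ becomes $I_{A_{K-1}}$, the scale density factors collapse when $Q_{\g}(\delta_K)$ is viewed as a weight $-K$ density, and Proposition~\ref{KillConsForm} together with Definition~\ref{generalQ} identifies the resulting expression with $-Q_{\g}(\delta_K)$, yielding the first formula.

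For $\ConFlatOp{K}$ with $K\geq 2$ in the conformally flat setting, I would invoke the defining identity from Theorem~\ref{cflatkey}, $\bigl[\prod_{j=1}^{\lfloor(K-1)/2\rfloor}(n+2w-2K+2j)\bigr]\ConFlatOp{K,\g,w}=\delta_{K,\g,w}$, applied to $1$; comparing coefficients of $w$ at $w=0$ on the two sides (again via Proposition~\ref{KillConsForm}) produces
\[
\Bigl[\prod_{j=1}^{\lfloor (K-1)/2\rfloor}(n-2K+2j)\Bigr] Q_{\g}(\ConFlatOp{K}) = Q_{\g}(\delta_K),
\]
and substituting the formula for $Q_{\g}(\delta_K)$ just derived completes the proof. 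The main obstacle is a bookkeeping one: verifying that the operators $D_{A_1}\cdots D_{A_{K-2}}$, when subsequently applied to the weighted tractor-valued expression $\tilde{I}_{A_{K-1}}(\xig)^w$, behave linearly in the extracted scalar factor $w$, so that this factor can be cleanly pulled out before dividing. This is immediate from the linearity of \nn{tractorD} and the Leibniz rule for the tractor-coupled Levi-Civita connection, exactly as in the proof of Theorem~\ref{TFormula}.
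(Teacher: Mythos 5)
Your proposal is correct and follows precisely the route the paper intends when it writes that the proof of Theorem~\ref{TFormulaTwo} "is similar to the proof of Theorem~\ref{TFormula}": apply the operator to $(\xig)^w$, use the parallelism of the scale density so that the innermost $D$ produces the explicit scalar factor $w$ times $\tilde{I}_E(\xig)^w$, invoke Proposition~\ref{KillConsForm} to kill the $B$-terms and identify what remains with $-wQ_{\g}(\cdot)$ at leading order in $w$, divide by $w$, and evaluate at $w=0$; the relation between $Q_{\g}(\ConFlatOp{K})$ and $Q_{\g}(\delta_K)$ then falls out of the defining identity of Theorem~\ref{cflatkey} applied to $1$. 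The $K=1$ base case, the sign bookkeeping, and the passage to a weight-$(-K)$ density all check out as you describe.
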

\begin{proof}
The proof is similar to the proof of Theorem~\ref{TFormula}.
\end{proof}

One may use theorems~\ref{TFormula} and \ref{TFormulaTwo} to construct
symbolic formulae for $Q_{\g}(\deltaJk)$, $Q_{\g}(\delta_K)$, and
$Q_{\g}(\ConFlatOp{K})$.  In some cases, this requires a special
procedure.  For example, consider $Q_{\g}(\deltaJk)$ when $k>J$.  By
applying \nn{connids}, \nn{TracCurveOne}, \nn{TracCurveTwo},
\nn{tractorD}, \nn{NormalTractor}, and the rules in
Figure~\ref{TrIProd}, above, one may convert the right-hand side of
\nn{DefineQ} into a polynomial in $\Nv_a$, $R_{ab}{}^{c}{}_{d}$,
$\bg_{ab}$, $\bg^{ab}$, the modified mean curvature $\mmc$, and the
Levi-Civita connection $\nd$.  By commuting covariant derivatives and
applying standard tensor identities, one may (in principle) convert
this polynomial into an expression of the form $(n-2k)\psi$, where
$\psi$ is again a polynomial of the above type.  Thus
$(n-2k)Q_{\g}(\deltaJk)=(n-2k)\psi$ in all dimensions
$n\in\Dimensions$, where $\Dimensions$ is the set of all dimensions in
which $\deltaJk$ is defined.  Now let $K=J+k$, and note that
$Q_{\g}(\deltaJk)$ and $\psi$ define differential operators of order
zero.  These operators are elements of $\RationalCoeff_{K,K,0}$, and
they are equal in all dimensions $n\in\Dimensions\backslash\{2k\}$.
Thus $Q_{\g}(\deltaJk)=\psi$ in dimension $n=2k$, by
Proposition~\ref{ZeroAlln}.

We have used \nn{DefineQ} and the above procedure to find an explicit
symbolic formula for $Q_{\g}(\deltaJk)$ in a few specific low-order
cases.

Next we shall see that the behaviour of the curvatures of $\delta_K$
and $\deltaJk$, as defined by \nn{genQdef}, or equivalently by
theorems~\ref{TFormula} and \ref{TFormulaTwo}, varies strongly
according to dimension parity.  This suggests that we introduce some
terminology.
\begin{definition} \label{Tdef}
Let $\lowerw\in\Reals$, $K\in\Integers_{\geq 0}$,
and a Riemannian
hypersurface invariant $T^{\g}\in\ice[-\lowerw]$ be given.  Suppose
that $T^{\g}$ has a conformal transformation of the form
$
T^{\hatg}= T^{\g}+ P\Upsilon
$,
where $P:\ce[0]\rightarrow\ice[-\lowerw]$ is a conformally invariant
natural hypersurface differential operator of order and transverse
order $K$.  Then we shall say that $T^{\g}$ is a {\em hypersurface
  T-curvature} (or {\em transverse curvature}) of order $K$. We shall
call $(P,T)$ a {\em T-curvature pair}.
\end{definition}
This terminology is inspired by the Chang-Qing pair $(\CQOp,T^{CQ})$
which we discussed in Section~\ref{IntroSect}, above.  This pair is a
$T$-curvature pair in the sense of Definition~\ref{Tdef}. This is very
different from the Branson $Q$-curvature of the hypersurface, so we
shall say ``$Q$-curvature'' for quantities closer to that
behaviour. More precisely, we make the following definition.

\begin{definition}\label{QQdef}
Let $\lowerw\in\Reals$, $K\in\Integers_{>0}$,
and a Riemannian
hypersurface invariant $Q^{\g}\in\ice[-\lowerw]$ be given.  Suppose
that $Q^{\g}$ has a conformal transformation of the form
$
Q^{\hatg}= Q^{\g}+P\Upsilon
$,
where $P:\ce[0]\rightarrow\ice[-\lowerw]$ is a conformally
invariant natural hypersurface differential operator of order $K$ and
transverse order {\em less than} $K$.  Then we shall say that $Q^{\g}$
is a {\em hypersurface $Q$-curvature} of order $K$. We shall call
$(P,Q)$ a {\em $Q$-curvature pair}.
\end{definition}

It is clear that the $T$-curvatures of Definition~\ref{Tdef} are
recovered from the operators $\delta_K$, $\ConFlatOp{K}$, and
$\deltaJk$ in all cases in which these operators are defined and the
weight 0 is {\em not} in the respective exceptional list
$E(\delta_K)$, $E(\ConFlatOp{K})$, or $E(\deltaJk)$.
Theorem~\ref{T-thm-1}, above, thus follows from \nn{E0} and
Theorem~\ref{key}.

Let notation be as in Theorem~\ref{T-thm-1}, and let an even integer
$n_0\geq 4$ be given.  Then in dimension $n=n_0$, we should think of
the {\em critical pair} $(\delta_{(n_0-2)/2,\,n_0/2},\,T_{n_0-1})$ as
an analogue of the Chang-Qing $(\CQOp,T^{CQ})$-pair.  Note that
$T_1^{\g}$ is simply $H^{\g}$, the mean curvature of the hypersurface,
so the $T$-curvatures may also be thought of as (conformal
geometry-inspired) higher-order generalisations of the mean
curvature. This also suggests an interesting application of the
$T$-curvatures, which we describe as follows.
\begin{proposition}\label{H0}
Suppose that $n$ is even, $\sm$ is closed as a subset of $M$, and
$\sm$ and $M$ are orientable.  For all $i\in\Integers_{>0}$, let
$T_i^{\g}$ be as in Theorem~\ref{T-thm-1}.  Finally, let $\g\in\cc$ be
given.  Then for all $m\in\Integers_{>0}$, there is a metric
$\hatg\in\cc$ such that $\hatg$ induces $\ig$ and
\begin{equation}\label{vanishingT}
T_1^{\hatg}=T_2^{\hatg}=\cdots=T_m^{\hatg}=0\mbox{\ \ along $\sm$}~.
\end{equation}
Along $\sm$, the metric $\hatg$ is uniquely determined by $\ig$ to
order $m$.

The above results also hold if $\sm$ is not closed as a subset of $M$.
In this case, however, we only claim that the metric $\hatg$ is
defined on an open neighbourhood $O$ of $\sm$ in $M$.  Thus
$\hatg=e^{2\Upsilon}\g$ on $O$ for some $\Upsilon\in C^{\infty}(O)$.
The neighbourhood $O$ may depend on $m$.
\end{proposition}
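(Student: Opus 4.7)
The plan is to construct $\Upsilon \in C^\infty(M)$ with $\Upsilon|_\sm = 0$ such that $\hatg = e^{2\Upsilon}\g$ satisfies \nn{vanishingT}, by inductively prescribing the $\g$-normal derivatives of $\Upsilon$ along $\sm$. Since $\Upsilon|_\sm = 0$, the metric $\hatg$ automatically induces $\ig$. By Proposition~\ref{genQtran}, the conditions \nn{vanishingT} amount to
\[
\psi_i \Upsilon + T_i^{\g} = 0 \quad \mbox{along $\sm$}, \qquad i = 1, 2, \ldots, m,
\]
where $\psi_i$ is the operator paired with $T_i^{\g}$ in Theorem~\ref{T-thm-1}.

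Next I would check that for even $n$, the weight $w=0$ lies in none of the exceptional sets $E(\psi_i)$ for $1 \leq i \leq m$, so that $\psi_i$ has transverse order exactly $i$. For $\psi_i = \delta_i$ with $i \leq n/2$ or $i \geq n$, direct inspection of \nn{E0} shows $E(\delta_i)$ consists entirely of half-integers or of values outside $\{0\}$; for $\psi_i = \delta_{i-n/2,\,n/2}$ with $n/2 < i < n$, equation \nn{exceptw} explicitly removes the weight $k - n/2 = 0$ from the exceptional list. Combining the transverse-order conclusion with Hypotheses~\ref{NaturalHyp}, on $\sm$ the operator admits a decomposition
\[
\psi_i \Upsilon\big|_\sm = a_i^{\g}\, (\Nv^a\nd_a)^i \Upsilon \big|_\sm + G_i^{\g}\!\big(\Upsilon,\, \Nv^a\nd_a\Upsilon,\, \ldots,\, (\Nv^a\nd_a)^{i-1}\Upsilon\big)\big|_\sm,
\]
with $a_i^{\g}$ a nowhere vanishing smooth function on $\sm$ (the nonzero leading coefficient supplied by the formulae of Proposition~\ref{norder} and Theorem~\ref{key}) and with $G_i^{\g}$ a natural expression whose dependence on $\Upsilon$ is only through the $(i{-}1)$-transverse jet along $\sm$.

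With this in hand, the inductive construction is straightforward. Set $\Upsilon|_\sm = 0$; assuming $(\Nv^a\nd_a)^j\Upsilon|_\sm$ has been fixed for $0 \leq j \leq i-1$ (which, together with $\Upsilon|_\sm = 0$, determines the full $(i{-}1)$-jet of $\Upsilon$ along $\sm$ via commutation of tangential derivations with restriction), solve the equation $\psi_i\Upsilon + T_i^{\g} = 0$ uniquely for $(\Nv^a\nd_a)^i\Upsilon|_\sm$, dividing through by $a_i^{\g}$. This provides a uniquely determined normal $m$-jet of $\Upsilon$ along $\sm$.

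Finally, choose a defining function $\dfn$ for $\sm$ on an open neighbourhood $O$ of $\sm$ and, writing $u_j$ for any smooth extension of the prescribed $(\Nv^a\nd_a)^j\Upsilon|_\sm$ to $O$, set
\[
\Upsilon = \sum_{j=1}^{m} \frac{\dfn^j}{j!}\, u_j \quad \text{on } O.
\]
(If $\sm$ is closed in $M$, one may take $O = M$ after multiplication by a cutoff supported in a tubular neighbourhood, to realise $\hatg$ globally; otherwise work on $O$.) The resulting $\Upsilon$ has the desired normal $m$-jet along $\sm$, and uniqueness of $\hatg$ to order $m$ is immediate from the unique solvability at each inductive step. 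The main obstacle is establishing the leading-term decomposition of $\psi_i$ with a nowhere vanishing coefficient $a_i^{\g}$; this is the place where the evenness of $n$ is essential, as it guarantees that $0 \notin E(\psi_i)$ for every order $i$ simultaneously.
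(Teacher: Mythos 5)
Your overall strategy --- fix $\Upsilon|_\sm = 0$, use Proposition~\ref{genQtran} to recast the conditions as the lower-triangular system $\psi_i\Upsilon = -T_i^\g$ along $\sm$, check that $0\notin E(\psi_i)$ for every $i$ when $n$ is even, and solve inductively for the normal jet of $\Upsilon$ --- is sound and is genuinely different from the paper's route. Where you determine the whole normal $m$-jet of $\Upsilon$ first and then try to realize it by one function, the paper corrects $\Upsilon$ iteratively: given a conformal factor whose metric $\hatg$ kills $T_1^{\hatg},\ldots,T_s^{\hatg}$, it adds the term $-\dfn^{s+1}\phi\,T^{\hatg}_{s+1}/(\psi_{s+1}\dfn^{s+1})$, which vanishes to order $s+1$ along $\sm$ and therefore leaves the earlier conditions intact while cancelling $T^{\hatg}_{s+1}$. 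Both arguments turn on exactly the point you identify --- that transverse order $i$ supplies a nowhere-vanishing leading coefficient of $\psi_i$ --- but the paper's correction device sidesteps the explicit jet decomposition of the boundary operator; your version, in exchange, yields the order-$m$ uniqueness of $\hatg$ as an immediate by-product of the unique solvability at each step.

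There is, however, a genuine gap in your last paragraph. Having determined the data $u_1,\ldots,u_m$, you set $\Upsilon = \sum_{j=1}^m \dfn^j u_j/j!$ and assert this realizes $(\Nv^a\nd_a)^j\Upsilon|_\sm = u_j$. It does not: already at first order,
\[
(\Nv^a\nd_a)\Upsilon\big|_\sm = (\Nv^a\nd_a\dfn)\,u_1\big|_\sm = |d\dfn|_\g\,u_1\big|_\sm,
\]
which is $u_1$ only if $\dfn$ is normalized to have unit gradient along $\sm$, and for $j\geq 2$ there are further contributions from higher derivatives of $\dfn$, from the transverse derivatives of the extensions $u_k$, and from the connection coefficients, so that the actual normal jet is a nontrivial invertible lower-triangular transform of $(u_1,\ldots,u_m)$ rather than $(u_1,\ldots,u_m)$ itself. (The phrasing ``$u_j$ is an extension of the prescribed $(\Nv^a\nd_a)^j\Upsilon|_\sm$'' is also circular, since $\Upsilon$ is precisely what is being built.) The conclusion is recoverable --- because that transform is invertible, the system the $u_j$ were chosen to solve is equivalent to the one required of the genuine normal jet of $\Upsilon$ --- but as written the step fails. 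Either appeal abstractly to the existence of a smooth function with prescribed normal $m$-jet along $\sm$ (Borel's lemma in a collar), or work in Fermi coordinates where $(\Nv^a\nd_a)^j = \partial_t^j$ on functions and your formula does hold with $\dfn$ the signed distance and each $u_j$ extended by constancy along normal geodesics. A smaller omission: you take for granted a single defining function $\dfn$ on a full neighbourhood of $\sm$; the paper devotes a short argument, using the orientability of $M$ and $\sm$ with a partition of unity, to producing one, and some such step is needed.
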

\begin{proof}
Suppose first that $\sm$ is closed.  For any $p\in\sm$, we may define
local coordinates $(x^1,\ldots,x^n)$ on a neighbourhood of $p$ in $M$
such that $x^n$ is a local defining function for $\sm$.  Suppose that
$(y^1,\ldots,y^n)$ is any other local coordinate system, and suppose
that $y^n$ is also a local defining function for $\sm$.  We may assume
that $\partial y^n/\partial x^n>0$ at all points of $\sm$ at which
both coordinate systems are defined; this follows from the fact that
$M$ and $\sm$ are both orientable.  By using such coordinate systems
along with an appropriate partition of unity, one can show that for
some open neighbourhood $U$ of $\sm$, there is a defining function
$\dfn$ for $\sm$ on $U$.

For any $\Upsilon$, $\Upsilon'$, and $\Upsilon''$ in $\ce$, we will
let $\hatg=e^{2\Upsilon}\g$, $\hatg'=e^{2\Upsilon'}\g$, and
$\hatg''=e^{2\Upsilon''}\g$.  For all $i\in\Integers_{>0}$, let
$\psi_i$ denote the operator corresponding to $T^{\g}_{i}$ in
Theorem~\ref{T-thm-1}.

To establish the existence of the desired $\hatg$, we proceed by
induction on $m$.  Let $\indind\in\Integers_{\geq 0}$ be given.  If
$\indind=0$, let $\Upsilon\equiv 0$ on $M$.  Otherwise, suppose there
is an $\Upsilon\in\ce$ such that $\Upsilon$ vanishes along $\sm$, and
\nn{vanishingT} holds in the $m=\indind$ case.  Since $0\notin
E(\psi_{\indind+1})$, there is an open set $V\subseteq M$ such that
(1)~$\sm\subseteq V\subseteq U$, and
(2)~$\psi_{\indind+1}t^{\indind+1}$ is nonzero on $V$.  Choose any
$\phi\in\ce$ with support in $V$ such that $\phi\equiv 1$ on $\sm$,
and let $\Upsilon'=\Upsilon-t^{\indind+1}\phi
T^{\hatg}_{\indind+1}/(\psi_{\indind+1} t^{\indind+1})$.  Then
$T^{\hatg'}_1=T^{\hatg'}_2=\cdots=T^{\hatg'}_{\indind+1}=0$ along
$\sm$, by Proposition~\ref{genQtran}.

To establish uniqueness, it suffices to show that for every
$m\in\Integers_{>0}$ and every $\Upsilon',\Upsilon''\in\ce$, if
$\hatg=\hatg'$ and $\hatg=\hatg''$ induce $\ig$ and satisfy
\nn{vanishingT}, then $\Upsilon''-\Upsilon'$ vanishes to order $m$
along $\sm$.  We again proceed by induction on $m$.  Let
$\indind\in\Integers_{\geq 0}$ be given.  If $\indind>0$, suppose that
the desired statement holds in the case $m=\indind$.  Let $\Upsilon'$,
$\Upsilon''\in\ce$ be given, and suppose that $\hatg=\hatg'$ and
$\hatg=\hatg''$ induce $\ig$ and satisfy \nn{vanishingT} in the
$m=\indind+1$ case.  Then $\Upsilon''-\Upsilon'$ vanishes to order
$\indind$ along $\sm$.  But
$\hatg''=e^{2(\Upsilon''-\Upsilon')}\hatg'$.  Thus
$\psi_{\indind+1}(\Upsilon''-\Upsilon')=0$ along $\sm$, by
Proposition~\ref{genQtran}.  Recall that $0\notin
E(\psi_{\indind+1})$.  So if we work in local coordinates of the type
that we discussed above, a short argument shows that
$\Upsilon''-\Upsilon'$ vanishes to order $\indind+1$ along $\sm$.

If $\sm$ is \textit{not} closed, the proof is similar to the above
proof.
\end{proof}
\begin{remark}
In even dimensions, Proposition~\ref{H0} generalises a previous result
which has occasionally been used in the literature of general
relativity.  For even $n\geq 4$, Proposition~\ref{H0} gives a way of
normalising the conformal scale so that in the new scale, $\sm$ is
minimal (i.e.\ $H=0$) and also satisfies related higher-order
conditions.  By Theorem~\ref{Q-thm}, below, similar results hold in
the case in which $M$ is odd-dimensional, but the range of available
$T$-curvatures is smaller.  A statement of a zero-order version of
these results appears in Proposition~4.1 of \cite{Go-al}.

\end{remark}

In the proof of Theorem~\ref{Q-thm}, below, we will use the following
lemma to determine the order of $\delta_K:\ce[0]\rightarrow\ice[-K]$
in odd dimensions.
\begin{lemma}\label{FullOrder}
Let $K\in\Integers_{>0}$ and $w\in\Reals$ be given, and suppose that
$V$ is any section of $\tbPhi[w]$.  Then
\[
\delta_{K}V=
\big[
\prod_{i=1}^{K-1}(n+2w-2i)
\big]
\nd_{\Nv}^KV
+
\sum_{\beta\in A}C_{\beta}(n,w)P_{\beta}V
+
\lots~.
\]
Here $A$ is a finite index set, and for each $\beta\in A$,
$C_{\beta}(n,w)$ is a real polynomial in $n$ and $w$.  For each
$\beta\in A$, there is an $\ell\in\Integers_{>0}$ such that
$P_{\beta}=n^{i_1}\cdots n^{i_{K-2\ell}}O^{\beta}_{i_1\ldots
  i_{K-2\ell}}$, where $O^{\beta}$ is the composition of the operators
$\nd_{i_1}$, \ldots, $\nd_{i_{K-2\ell}}$, and $\ell$ copies of
$\Delta$, in some order.  Finally, $\lots$ can be given by a
symbolic formula which satisfies Hypotheses~\ref{FixingHyp}.
\end{lemma}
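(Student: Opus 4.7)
The plan is to proceed by induction on $K$, using the recursive identity $\delta_{K+1}V = N^A\delta_K(D_A V)$, which is immediate from \nn{deltajplusone}. The base case $K = 1$ is clear: $\delta_1 V = N^a\nd_a V - wHV = \nd_\Nv V + \lots$, with leading coefficient equal to the empty product $1$ and empty middle sum.

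For the inductive step, first apply the inductive hypothesis to $\delta_K$ acting on the weight-$(w-1)$ tractor $D_A V\in\tbn_A\otimes\tbPhi[w-1]$, giving
\begin{equation*}
\delta_K(D_A V) = \Bigl[\prod_{i=1}^{K-1}(n+2w-2-2i)\Bigr]\nd_\Nv^K D_A V + \sum_\beta C_\beta(n,w-1)\,P_\beta(D_A V) + \lots~.
\end{equation*}
For the leading piece, Lemma~\ref{NNabnkD} with $s=w$ and $k=K$ yields
\begin{equation*}
N^A\nd_\Nv^K D_A V = (n+2w-2-K)\nd_\Nv^{K+1}V + K\bigl(\nd_\Nv^{K+1} - \nd_\Nv^{K-1}\Delta\bigr)V + \lots~.
\end{equation*}
Combining, the coefficient of $\nd_\Nv^{K+1}V$ in $\delta_{K+1}V$ becomes $(n+2w-2)\prod_{i=1}^{K-1}(n+2w-2-2i) = \prod_{j=1}^{K}(n+2w-2j)$ after reindexing $j = i+1$, matching the claim, and the residual $-K\prod_{i=1}^{K-1}(n+2w-2-2i)\,\nd_\Nv^{K-1}\Delta V$ is already of the required $P_{\beta'}$-form with $\ell' = 1$.

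For the middle sum, expand each $N^A P_\beta(D_A V)$ using \nn{tractorD} to write out the three slots of $D_A V$, and use \nn{connids} and Figure~\ref{TrIProd} to commute derivatives in $O^\beta$ through the tractor splitters when necessary. The $Y_A$-slot carries no derivative and contributes only at order $\le K$, hence to $\lots$. For the $Z_A{}^b\nd_b$-slot, at leading order no derivative in $O^\beta$ strikes $Z_A{}^b$; the contraction $N^A Z_A{}^b = N^b$ simply appends one more normal-contracted derivative to $P_\beta$, producing an operator with $(K-2\ell)+1$ normals and $\ell$ Laplacians, a $P_{\beta'}$-form with $\ell' = \ell$. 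For the $X_A\Delta$-slot, the direct contraction $N^A X_A = 0$ kills the term; it survives only when a derivative $\nd_{i_j}$ in $O^\beta$ strikes $X_A$ via $\nd_a X^A = Z^A{}_a$, in which case $N^A Z_{A\,i_j} = N_{i_j}$ pairs with the preexisting $n^{i_j}$-factor through $n^{i_j}N_{i_j} = 1$ and the $\Delta$ from $D_A$ becomes an additional Laplacian, producing a $P_{\beta'}$-form with $(K-2\ell)-1$ normals and $\ell+1$ Laplacians, so $\ell' = \ell+1$. All curvature-generating shifts from \nn{connids} (those carrying $\Rho$ or $\J$) and commutators of covariant derivatives drop operator order and are absorbed into $\lots$; the inductive $\lots$ remain of order less than $K+1$ under $N^A(\cdot)D_A$ contraction by Lemma~\ref{LTOTS}. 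Polynomiality of the new coefficients $C_{\beta'}(n,w)$ in $w$ follows since the substitution $w\mapsto w-1$ and the multiplicative factors $(n+2w-2)$, $(n+2w-2-K)$, $K$ are all polynomial.

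The main obstacle will be the combinatorial bookkeeping in the third paragraph: enumerating every way a derivative in $O^\beta$ can strike the tractor splitters in $D_A V$ and verifying that the resulting order-$(K+1)$ contribution falls into a $P_{\beta'}$-form with integer $\ell'$, while every other contribution lowers operator order into $\lots$. The underlying reason parity is preserved is that each elementary contraction either (i) pairs a new normal index with a new derivative (adding $1$ to $K-2\ell$), or (ii) trades two contracted derivatives for a Laplacian (subtracting $2$ from $K-2\ell$ while adding $1$ to $\ell$), or (iii) introduces a curvature factor and strictly lowers operator order.
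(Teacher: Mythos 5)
Your proof is correct and follows essentially the same route the paper indicates: the paper's own proof is just the one-line citation of \nn{connids}, \nn{tractorD}, \nn{NormalTractor}, Figure~\ref{TrIProd}, lemmas~\ref{NNabnkD} and \ref{LTOTS}, and ``an inductive argument,'' and your proposal is a careful unwinding of exactly that induction, including the slot-by-slot analysis of $D_A$ and the parity bookkeeping that the paper leaves implicit.
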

\begin{proof}
The lemma follows from \nn{connids}, \nn{tractorD},
\nn{NormalTractor}, Figure~\ref{TrIProd}, lemmas~\ref{NNabnkD} and
\ref{LTOTS}, and an inductive argument.
\end{proof}
\begin{theorem}\label{Q-thm}
Let $K\in\Integers_{>0}$ be given, and suppose that $\dim(M)=n$ is
odd.  Let $\psi:\ce[w]\rightarrow\ice[w-K]$ denote the operator family
$\delta_K$ of Lemma~\ref{basicversion} or the operator family
$\deltaJk$ of Theorem~\ref{key}.  (In the latter case $J+k$ must equal
$K$.)  If
\[
\frac{n+1}{2}\leq K\leq n-1~,
\]
then there is a canonical $Q$-curvature pair $(\psi,Q_K^{\g})$ of
order $K$.  Here $Q_K^{\g}:=Q_{\g}(\psi)$.  In particular, in the
critical-order case in which $K=n-1$, we obtain a $Q$-curvature pair
$(\psi,Q_{n-1}^\g)$.  If
\[
K\leq\frac{n-1}{2} \quad \mbox{or} \quad n \leq K~,
\]
then there is a $T$-curvature pair $(\psi,\TgK)$ of order $K$.  Here
$\TgK:=Q_{\g}(\psi)$.
\end{theorem}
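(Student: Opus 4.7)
The plan is to reduce Theorem~\ref{Q-thm} to three checks: (a) the defining conformal transformation formula, (b) the transverse order of $\psi$ on weight-$0$ densities, and (c) the total order of $\psi$ on weight-$0$ densities. For (a), both $\delta_K$ and $\delta_{J,k}$ (with $J+k=K$) satisfy Hypotheses~\ref{genQhyp} by Proposition~\ref{KillConstants}, so Definition~\ref{generalQ} produces $Q^{\g}_K := Q_{\g}(\psi)$ and Proposition~\ref{genQtran} gives $Q^{\hatg}_K = Q^{\g}_K + \psi\,\Upsilon$, the transformation rule demanded by both Definition~\ref{Tdef} and Definition~\ref{QQdef}.

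For (b) I read off the transverse order from the exceptional-weight sets. Formula \nn{E0} gives $0 \in E(\delta_K)$ iff $n = 2K - i$ for some $i \in \{1,\ldots,K-1\}$, equivalently $(n+1)/2 \leq K \leq n-1$. Formula \nn{exceptw} gives $E(\delta_{J,k}) = E(\delta_{J+k}) \setminus \{k - n/2\}$; since $n$ is odd, the removed value $k - n/2$ is a half-integer and cannot equal $0$, so the criterion $0 \in E(\delta_{J,k})$ coincides with that for $\delta_{J+k} = \delta_K$. Hence on weight-$0$ densities $\psi$ has transverse order $K$ if $K \leq (n-1)/2$ or $K \geq n$, and strictly less than $K$ in the complementary range.

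The genuine work is (c), showing that the total order does not accidentally drop along with the transverse order in the $Q$-range. For this I use Lemma~\ref{FullOrder} at $w = 0$, which writes
\[
\delta_K V = \Bigl[\prod_{i=1}^{K-1}(n - 2i)\Bigr]\, \nd_{\Nv}^K V + \sum_\beta C_\beta(n,0)\, P_\beta V + \lots,
\]
with each $P_\beta$ carrying $\ell_\beta \geq 1$ Laplacians, and I note that $\prod(n - 2i) \neq 0$ in odd dimension. Using $\sigma(\Delta) = t^2 + |\eta|^2$ the principal symbol at $\xi = t\,\Nv + \eta$ expands as
\[
\sigma_K(\psi)(\xi) = \sum_{j=0}^{\lfloor K/2\rfloor} A_j\, t^{K-2j}\, |\eta|^{2j}, \qquad A_0 = \prod_{i=1}^{K-1}(n - K - i), \qquad A_j = \sum_{\ell \geq j} c_\ell \binom{\ell}{j}\ \ (j \geq 1),
\]
where Proposition~\ref{norder} identifies $A_0$ and the $c_\ell$ are the totals $\sum_{\beta : \ell_\beta = \ell} C_\beta(n,0)$. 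In the $Q$-range $A_0 = 0$, so consistency of the two formulae for the $t^K$-coefficient forces $\sum_\ell c_\ell = -\prod(n - 2i) \neq 0$. Were all $A_j$ with $j \geq 1$ to vanish, the square lower-triangular matrix $\bigl(\binom{\ell}{j}\bigr)_{1 \leq j,\ell \leq \lfloor K/2\rfloor}$ with unit diagonal would be invertible and force every $c_\ell = 0$, hence $\sum_\ell c_\ell = 0$, a contradiction. Thus $\sigma_K(\psi) \not\equiv 0$, so $\psi$ has order exactly $K$. The same symbol analysis transfers to $\delta_{J,k}$ through the defining identity of Theorem~\ref{key}, whose prefactor $n - 2k$ is nonzero for $n$ odd.

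Assembling (a)--(c): when $K \leq (n-1)/2$ or $K \geq n$, $\psi$ has order and transverse order $K$, so $(\psi, T_K^{\g})$ is a $T$-curvature pair in the sense of Definition~\ref{Tdef}; when $(n+1)/2 \leq K \leq n-1$ (which includes the critical case $K = n-1$), $\psi$ has order $K$ but transverse order strictly less than $K$, yielding the $Q$-curvature pair $(\psi, Q_K^{\g})$ in the sense of Definition~\ref{QQdef}. The main obstacle is (c); the transverse-order bookkeeping and the transformation formula are routine consequences of the infrastructure already built.
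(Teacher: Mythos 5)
Your proposal is correct and follows essentially the same route as the paper: read off the transverse order from \nn{E0} and \nn{exceptw}, use Lemma~\ref{FullOrder} at $w=0$ together with the fact that $\prod_{i=1}^{K-1}(n-2i)\neq 0$ in odd dimensions to get the full order, and transfer to $\delta_{J,k}$ via the defining identity of Theorem~\ref{key} with nonvanishing prefactor $n-2k$. The only difference is cosmetic: where the paper implicitly invokes linear independence of the order-$K$ symbols $t^{K-2\ell}(t^2+|\eta|^2)^\ell$ to conclude the leading part is nonzero, you spell this out via the upper-triangular binomial matrix and the consistency relation with Proposition~\ref{norder}, which is a valid (if slightly more elaborate) way of saying the same thing.
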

\begin{proof}
Suppose first that $K\leq(n-1)/2$ or $n\leq K$.  Then $0\notin
E(\delta_K)$, by \nn{E0}.  Thus if $\psi=\deltaJk$, then $0\notin
E(\psi)$, by Theorem~\ref{key}.  Thus $\psi:\ce[0]\rightarrow\ice[-K]$
has order and transverse order $K$, and $Q_{\g}(\psi)$ is a
$T$-curvature.

Now suppose instead that $(n+1)/2\leq K\leq n-1$.  Then $0\in
E(\delta_{K})$, by \nn{E0}.  For the case in which $\psi=\deltaJk$,
note that $k-n/2\neq 0$, since $n$ is odd; thus $0\in E(\deltaJk)$, by
Theorem~\ref{key}.  So to complete the proof, it suffices to show that
$\psi:\ce[0]\rightarrow\ice[-K]$ has order $K$.

Let $V\in\ce[0]$ be given, and apply Lemma~\ref{FullOrder}.  By
commuting covariant derivatives, if necessary, we may write
\[
\delta_KV=\big[\prod_{i=1}^{K-1}(n-2i)\big]\nd_{\Nv}^KV
+\sum_{i=1}^{\lfloor
  K/2\rfloor}C_i'(n)\nd_{\Nv}^{K-2i}\Delta^iV+\lots~.
\]
Here $C_i'(n)$ is a real polynomial in $n$.  It follows that
$\delta_K$, as acting on $V$, has order $K$.  Now suppose that
$\psi=\deltaJk$.  Since $w=0$, it follows from Theorem~\ref{key} that
$\psi$ is a nonzero multiple of
$\Nt^{A_1}\cdots\Nt^{A_k}\delta_JP_{A_1\ldots A_k}$.  But by
\nn{ExpandOutB} and the discussion following \nn{ExpandOutB},
$\Nt^{A_1}\cdots\Nt^{A_k}\delta_JP_{A_1\ldots A_k}=\delta_K+\lots$.
Thus $\psi:\ce[0]\rightarrow\ice[-K]$ has order $K$.
\end{proof}

%
%%%%%%%%%%%%%%%%%%%%%%%%%%%%%%%%%%%%%%%%%%%%%%%%%%%%%%%%%%%%%%%%%%%%%%%%%%%%
% Section 7 ends here.
%%%%%%%%%%%%%%%%%%%%%%%%%%%%%%%%%%%%%%%%%%%%%%%%%%%%%%%%%%%%%%%%%%%%%%%%%%%%
%

%
%%%%%%%%%%%%%%%%%%%%%%%%%%%%%%%%%%%%%%%%%%%%%%%%%%%%%%%%%%%%%%%%%%%%%%%%%%%%
% Section 8 begins here.
%%%%%%%%%%%%%%%%%%%%%%%%%%%%%%%%%%%%%%%%%%%%%%%%%%%%%%%%%%%%%%%%%%%%%%%%%%%%
%
%%%%%%%%%%%%%%%%%%%%%%%%%%%%%%%%%%%%%%%%%%%%%%%%%%%%%%%%%%%%%%%%%%%%%%%%%%%%
% Section on examples
%%%%%%%%%%%%%%%%%%%%%%%%%%%%%%%%%%%%%%%%%%%%%%%%%%%%%%%%%%%%%%%%%%%%%%%%%%%%
%
\section{Examples} \label{exs}

By using the constructions of Lemma~\ref{basicversion} and
theorems~\ref{cflatkey} and \ref{key}, one may
construct explicit formula for $\delta_K$, $\ConFlatOp{K}$, and
$\deltaJk$ of the type described in Hypotheses~\ref{NaturalHyp}.  One
may also use theorems~\ref{TFormula} and \ref{TFormulaTwo} to
construct similar explicit formulae for $Q_{\g}(\delta_K)$,
$Q_{\g}(\ConFlatOp{K})$, and $Q_{\g}(\deltaJk)$.  The construction of
such formulae for $\deltaJk$ and $Q_{\g}(\deltaJk)$ uses the algorithm
of \cite{GP-CMP} for constructing tractor formulae for the GJMS
operators $P_{2k}$.  We have constructed explicit formulae of
the above types in a few low-order cases.  In this section, we discuss
these constructions and give some of the resulting explicit formulae.
In some cases, we have incorporated the trace-free second fundamental
form $\tfsff{ab}$ into the formulae.  In constructing the explicit
formulae, we used the results described in Section~\ref{back}, above.
In many cases, we also used \textit{Mathematica}, together with
J. Lee's \textit{Ricci} package \cite{Lee,Wolfram}.

We will need specific tractor formulae for two of the operator
families of Proposition~\ref{GJMSp}.
By \cite{GP-CMP}, we
have $P_{AB}=D_{A}D_{B}$ and
\begin{equation}\label{PABC}
P_{ABC}=D_AD_BD_C-\frac{2}{n-4}X_AW_B{}^E{}_C{}^FD_ED_F~.
\end{equation}

\subsection{Second-Order Operator}\label{ExamplesOrder2}
The operator family
$\delta_2:\tbPhi[w]\rightarrow\left.\tbPhi[w-2]\right|_{\sm}$ of
Lemma~\ref{basicversion} is given by \nn{d2}.  By
Definition~\ref{generalQ},
$$
Q_{\g}(\delta_2)=\J+(n-2)\mc^2-(n-2)\Nv^a\Nv^b\Rho_{ab}~.
$$
From \nn{E0}, it follows that $E(\delta_{2})=\{(3-n)/2\}$, so for all
$n\geq 4$, $Q_{\g}(\delta_2)$ is a hypersurface
$T$-curvature in the sense of Definition~\ref{Tdef}.  In the $n=3$
case, $Q_{\g}(\delta_2)$ is a $Q$-curvature in the sense of
Definition~\ref{QQdef}.

In \cite{Grant}, D.H. Grant discusses a related family of conformally
invariant second-order hypersurface differential operators
$\iD^A\Pi_A{}^{B}D_B:\ce[w]\rightarrow\ice[w-2]$.  For any
$w\in\Reals$ and any $f\in\ce[w]$,
$$
\begin{array}{ll}
\lefteqn{\iD^A\Pi_{A}{}^BD_Bf=
(n+w-3)\left(\rule{0mm}{2.5ex}\right.(n+2w-2)\ibop f}
\\
&\hspace*{5mm}
+(n+2w-3)(\ (n+2w-2)(\mc\Nv^a\nd_af-\frac{w}{2}\mc^2f)\ -\ \bop f\ )
\left.\rule{0mm}{2.5ex}\right)~,
\end{array}
$$
by \cite{Grant}.  A computation shows that
\[
(n+w-3)\delta_2 f
+\iD^A\Pi_A{}^{B}D_Bf
=
-\frac{w(n+w-3)(n+2w-2)}{2(n-2)}\tfsff{ab}\tfsffup{ab}f~.
\]
If we divide by $n+w-3$, polynomial continuation shows that
\[
\delta_2 f=-\ibop f+\frac{n-3}{4(n-2)}\tfsff{ab}\tfsffup{ab}f~,
\hspace{2ex}\mbox{for\ }f\in\ce\left[\frac{3-n}{2}\right]~.
\]
In this case, $\ibop$ is the intrinsic Yamabe operator.%
%
%
%
%%%%%%%%%%%%%%%%%%%%%%%%%%%%%%%%%%%%%%%%%%%%%%%%%%%%%%%%%%%%%%%%%
% Third-order operator
%%%%%%%%%%%%%%%%%%%%%%%%%%%%%%%%%%%%%%%%%%%%%%%%%%%%%%%%%%%%%%%%%
%
%
\subsection{Third-Order Operator}\label{ExamplesOrder3}
Consider $\deltaOneTwo:\ce[w]\rightarrow\ice[w-3]$.  This family of
operators is well-defined in all ambient dimensions $n\geq 3$, and
from Theorem~\ref{key}, it follows that $E(\deltaOneTwo)=\{(5-n)/2\}$.
By Theorem~\ref{key}, $(n+2w-4)\deltaOneTwo=\Nt^A\Nt^B\delta D_A D_B$.
For $n\geq 4$, symbolic computations show that $\deltaOneTwo$ is given
by the formula in Figure~\ref{ThirdOrderOperator}.%
\newc{\OneTwoSpace}{\rule{0mm}{4mm}}
%
%%%%%%%%%%%%%%%%%%%%%%%%%%%%%%%%%%%%%%%%%%%%%%%%%%%%%%%%%%%%%%%%
% Third-order operator (figure)
%%%%%%%%%%%%%%%%%%%%%%%%%%%%%%%%%%%%%%%%%%%%%%%%%%%%%%%%%%%%%%%%
%
\begin{figure}
\[
\begin{array}{ll}
\lefteqn{(n+2w-5)\delta\Box f}
\\
\lefteqn{\textstyle-(n+2w-2)\left(\OneTwoSpace\right.
\ibop\delta f
-\frac{2}{3(n-3)}\LDD f
\vspace{0mm}
%\\
%
+\frac{3n-5}{4(n-2)}\tfsff{ab}\tfsffup{ab}\cRobin f}
\\&
-\frac{n-5}{2(n-3)}\tfsff{ab}\sffo^{a}{}_{c}\tfsffup{bc}f
-\frac{n-5}{2(n-3)}\Nv^a\Nv^b\tfsff{cd}\C_a{}^c{}_b{}^d f
\left.\OneTwoSpace\right)
\vspace{1mm}
\\
\lefteqn{\textstyle+(n+2w-2)(n+2w-5)\left(\OneTwoSpace\right.
-\mc\Nv^a\Nv^b\nd_a\nd_b f
-\frac{2(n+2w-1)}{3}\tfsffup{ab}\nd_a\nd_b f
}
\\
&
+\frac{7-4n-12w+4nw+8w^2}{3}(\ilc_a\mc)\nd^a f
+\frac{3(w-1)}{2}\mc^2\Nv^a\nd_a f
\\
&
+\frac{-7+4n+12w-4nw-8w^2}{3}\Nv^a\Nv^b\Nv^c\Rho_{ab}\nd_c f
+\frac{2(5-2n-6w+2nw+4w^2)}{3}\Nv^a\Rho_a{}^b\nd_bf
\\
&-\frac{1}{4(n-2)}\tfsff{ab}\tfsffup{ab}\Nv^c\nd_cf
-\frac{w(5-2n-6w+2nw+4w^2)}{3(n-3)}(\ila\mc)f
-\frac{w(w-1)}{2}\mc^3 f
\\
&
+\frac{w(9+2n-2n^2-6nw+2n^2w+4nw^2)}{3(n-3)}\mc\Nv^a\Nv^b\Rho_{ab}f
\\
&
+\frac{w(5-2n-6w+2nw+4w^2)}{3(n-3)}\Nv^a\Nv^b\Nv^c(\nd_a\Rho_{bc})f
-\frac{w(5-2n-6w+2nw+4w^2)}{3(n-3)}\mc\J f
\\
&
-\frac{w(5-2n-6w+2nw+4w^2)}{3(n-3)}\Nv^a(\nd_a\J)f
-\frac{2w(n+2w-2)(n+2w-4)}{3(n-3)}\tfsffup{ab}\Rho_{ab}f
\\
&
-\frac{w(-31+33n-8n^2+48w-40nw+8n^2w-32w^2+16nw^2)}{12(n-2)(n-3)}
\mc\tfsff{ab}\tfsffup{ab}f
\\
&
+\frac{(2w+1)(9-3n-8w+2nw+4w^2)}{6(n-3)^2}
\tfsff{ab}\sffo^{a}{}_{c}\sffo^{bc}f
\\
&
+\frac{(2w+1)(9-3n-8w+2nw+4w^2)}{6(n-3)^2}
\Nv^a\Nv^b\tfsff{cd}\C_a{}^c{}_b{}^df
\left.\OneTwoSpace\right)
\end{array}
\]
\caption{$\deltaOneTwo f$ for dimensions $n\geq 4$ and $f\in\ce[w]$}
\label{ThirdOrderOperator}
\end{figure}
%
%%%%%%%%%%%%%%%%%%%%%%%%%%%%%%%%%%%%%%%%%%%%%%%%%%%%%%%%%%%%%%%%
% End third-order operator
%%%%%%%%%%%%%%%%%%%%%%%%%%%%%%%%%%%%%%%%%%%%%%%%%%%%%%%%%%%%%%%%
%
Here $\LDD$ is the conformally invariant operator $\tsff^{AB}D_AD_B$;
additional symbolic computations show that for all $w\in\Reals$ and all
$f\in\ce[w]$,
\[
\begin{array}{ll}
\lefteqn{\LDD f=(n+2w-2)(n+2w-4)\left(\rule{0mm}{4mm}\right.
(n-3)\sffo^{ab}\nd_a\nd_b f}
\vspace{1mm}
\\
&
-2(n-3)(w-1)(\ilc_a\mc)\nd^a f
+w(w-1)(\ila\mc)f
\vspace{1mm}
\\
&
-n(w-1)w\mc\Nv^a\Nv^b\Rho_{ab}f
-w(w-1)\Nv^a\Nv^b\Nv^c(\nd_a\Rho_{bc})f
\vspace{1mm}
\\
&
+w(w-1)\mc\J f
+2(n-3)(w-1)\Nv^a\Nv^b\Nv^c\Rho_{ab}\nd_c f
\vspace{1mm}
\\
&
+w(w-1)\Nv^a(\nd_a\J)f
-2(n-3)(w-1)\Nv_a\Rho^{ab}\nd_bf
\vspace{1mm}
\\
&
+w(n+2w-5)\Rho_{ab}\sffo^{ab}f
+w(w-1)\mc\sffo_{ab}\sffo^{ab}f
\vspace{1mm}
\\
&
-\frac{w(w-1)}{n-3}\sffo_{ab}\sffo^a{}_c\sffo^{bc}f
-\frac{w(w-1)}{n-3}\Nv^a\Nv^b\sffo_{cd}\C_a{}^c{}_b{}^df
\left.\rule{0mm}{4mm}\right)~.
\end{array}
\]
Figure~\ref{ThirdOrderOperator} shows that for $f\in\ce[1-n/2]$, we have
$\deltaOneTwo f=-3\cRobin\Box f$.  Thus
$\deltaOneTwo:\ce[1-n/2]\rightarrow\ice[-2-n/2]$ is
the composition of two conformally invariant operators.
Similarly,
Figure~\ref{ThirdOrderOperator} shows that for $f\in\ce[5/2-n/2]$,
\[
\begin{array}{ll}
\lefteqn{\deltaOneTwo f=\textstyle-3\ibop\delta f
+\frac{2}{n-3}\LDD f
-\frac{3(3n-5)}{4(n-2)}\tfsff{ab}\tfsffup{ab}\cRobin f}
\\
&+\frac{3(n-5)}{2(n-3)}\tfsff{ab}\sffo^{a}{}_{c}\tfsffup{bc}f
 +\frac{3(n-5)}{2(n-3)}\Nv^a\Nv^b\tfsff{cd}\C_a{}^c{}_b{}^d f~.
\end{array}
\]
Here $\ibop$ acts on a density of weight $1-(\sdim)/2$.  Thus $\ibop$
is conformally invariant in this case, and we have again expressed
$\deltaOneTwo$ in terms of conformally invariant operators.
We gave a symbolic formula for $Q_{\g}(\delta_{1,2})$ in \nn{OneTwoQ}.

In Section~\ref{IntroSect}, above, we
discussed the operators $\delta^{G}_{3}$ of \cite{Grant}.  These
operators exist in all dimensions $n\geq 4$ and map $\ce[(4-n)/2]$ to
$\ice[(-2-n)/2]$.  The leading term of $\delta^G_{3}$ equals the
leading term of $\delta_{1,2}:\ce[(4-n)/2]\rightarrow\ice[(-2-n)/2]$,
up to a nonzero scale.

Now suppose that $M$ is a compact 4-manifold with boundary $\sm$, and
suppose that the unit normal vector $\Nv^a$ points inward.  Symbolic
computations using \cite{Lee} and \cite{Wolfram} show that for all
$f\in\ce[0]$, $\delta_{1,2}$ and the third-order boundary operator
$\CQOp$ of \cite{CQ} satisfy the following:
\[
\CQOp
f=-\frac{1}{2}\,\delta_{1,2}f
-\tsff^{AB}\iD_B\iD_Af+\tfsff{ab}\tfsffup{ab}\delta f~.
\]

\subsection{Fourth-Order Operator}\label{ExamplesOrder4}

Consider $\deltaOneThree:\ce[w]\rightarrow\ice[w-4]$.  By
Theorem~\ref{key}, together with \nn{PABC},
\[
(n+2w-6)\deltaOneThree f=
\Nt^{A}\Nt^{B}\Nt^{C}\cRobin
(
D_AD_BD_Cf-\frac{2}{n-4}X_A W_B{}^{E}{}_{C}{}^FD_ED_Ff
)
\]
for all $f\in\ce[w]$.  For $n\geq 5$, symbolic computations show that
$\deltaOneThree$ has the form given in Figure~\ref{GeneralWeight}.
%
%%%%%%%%%%%%%%%%%%%%%%%%%%%%%%%%%%%%%%%%%%%%%%%%%%%%%%%%%%%%%%%%%%%%%
% Fourth-order operator for general weight w
%%%%%%%%%%%%%%%%%%%%%%%%%%%%%%%%%%%%%%%%%%%%%%%%%%%%%%%%%%%%%%%%%%%%%
%
\newc{\JuneTenSpace}{\vspace{1mm}}%
\begin{figure}
\[
\begin{array}{ll}
%
% Part One is here.
%
\lefteqn{\textstyle\frac{(n+2w-2)(n+2w-5)(n+2w-7)}{2}(-Y^A\Box D_A f)}
&
\hspace{110mm}
\JuneTenSpace
\\
%
% Part Two is here.
%
\lefteqn{\textstyle
+\frac{(n+2w-4)(n+2w-5)(n+2w-7)}{10}\,\delta_2\Box f}
\\
%
% Part Three is here.
%
\lefteqn{\textstyle
-\frac{(n+2w-2)(n+2w-4)(n+2w-7)}{2}
\left(\rule{0mm}{3.5mm}\right.%
-\YS^A\ibop\iD_A f+\frac{4}{3(n-3)}\tsff^{AB}\cRobin D_A D_Bf
}
\\&
-\frac{2(n-4)}{3(n-3)}\tfsff{ab}\tfsffup{ab}\delta_2 f
-\frac{n-7}{2(n-3)^2(n-2)}\tsff^{AB}\ibop\tsff_{AB}f
\\&
-\frac{2(n-5)}{3(n-3)(n-4)}\Nt^C\Nt^D\tracW^A{}_{C}{}^B{}_D D_A D_B f
+\frac{2(n-1)}{3(n-3)^3}\tsff^{A}{}_{C}\tsff^{CB}D_{A}D_{B} f
\\&
+\frac{n-5}{n-3}\tfsff{ac}\tfsffgen^a{}_d\tfsffup{cd}\cRobin f
+\frac{n-5}{(n-4)(n-3)^2}\Nt^A\Nt^B\tsff_{CD}\tracW_{A}{}^C{}_B{}^D\cRobin
f
\\&
+
\frac{4}{3(n-4)(n-3)}\Nt^B\tsff_{CD}\tracW^{AC}{}_{B}{}^D D_A f
-\frac{1}{2(n-3)^2(n-2)}K(L^{AB},L^{CD})f
\\&
+\frac{n-5}{(n-3)^2}\tfsff{ab}\tfsff{cd}\tfsffup{ac}\tfsffup{bd}f
+\frac{n-5}{(n-3)^2}
\Nv^a\Nv^b\Nv^c\Nv^d\C_{aebi}\C_c{}^e{}_d{}^i f
\\&
+\frac{(n-5)(n^3-15n^2+51n-53)}{16(n-3)^2(n-2)^2}
\tfsff{ab}\tfsffup{ab}\tfsff{cd}\tfsffup{cd}f
+\frac{2(n-5)}{(n-3)^2}\Nv^{a}\Nv^{b}\tfsff{cd}\tfsffgen^{c}{}_{e}
\C_{a}{}^{d}{}_{b}{}^{e}f%
\left.\rule{0mm}{3.5mm}\right)
\\
%
% Part Four begins here.
%
\lefteqn{\textstyle
-\frac{(n+2w-2)(n+2w-4)(n+2w-5)}{10}\left(\rule{0mm}{3.5mm}\right.%
\overline{\Box}\,\delta_2f
-\frac{4}{3(n-3)}\tsff^{AB}\cRobin D_AD_Bf}
\\&
+\frac{7n-13}{4(n-2)}\tfsff{ab}\tfsffup{ab}\cRobin_2f
+\frac{2}{3(n-4)}\Nt^C\Nt^D\tracW^{A}{}_{C}{}^{B}{}_{D}D_AD_Bf
\\&
-\frac{2}{3(n-3)^2}\tsff^{A}{}_{C}\tsff^{CB}D_A D_B f
-\frac{4}{(n-4)(n-3)}\Nt^A\tsff_{BC}\tracW^{EB}{}_{A}{}^{C}D_Ef
\\&
-\frac{5(n-5)}{n-3}\tfsff{bc}\tfsffgen^{b}{}_{d}\tfsffup{cd}\cRobin f
-\frac{5(n-5)}{(n-4)(n-3)^2}\Nt^A\Nt^{B}\tsff_{CD}\tracW_{A}{}^{C}{}_{B}{}^{D}\cRobin
f
\left.\rule{0mm}{3.5mm}\right)
%
% Part Five begins here.
%
\\
\lefteqn{\textstyle+
(n+2w-2)(n+2w-4)(n+2w-5)(n+2w-7)\left(\rule{0mm}{3.5mm}\right.}
\\&
-\frac{1}{10}\Nv^a\Nv^b\Nv^c\Nv^d\nd_a\nd_b\nd_c\nd_d f
+
\lots
\left.\rule{0mm}{3.5mm}\right)
\end{array}
\]
\caption{$\deltaOneThree f$ for dimensions $n\geq 5$ and $f\in\ce[w]$}
\label{GeneralWeight}
\end{figure}
%
%%%%%%%%%%%%%%%%%%%%%%%%%%%%%%%%%%%%%%%%%%%%%%%%%%%%%%%%%%%%%%%%%%%%%
% End fourth-order operator for general weight w
%%%%%%%%%%%%%%%%%%%%%%%%%%%%%%%%%%%%%%%%%%%%%%%%%%%%%%%%%%%%%%%%%%%%%
%
In this figure, $K(L^{AB},L^{CD})$ is a scalar invariant which we
define as follows.  First, note that by \nn{tractorD} and
Figure~\ref{TrIProd}, $(\iD_AL_{BC})\iD^{A}L^{BC}$ contains a factor
$(n-5)$.
Divide the symbolic formula for $\left( \iD_AL_{BC} \right)
\iD^AL^{BC}$ by $(n-5)$, and let $K(L^{AB},L^{CD})$ be defined by the
resulting symbolic formula.  The quantity defined in this way is
conformally invariant in all dimensions $n\geq 5$.  In dimension~5,
this follows from dimensional continuation.

Inspection of Figure~\ref{GeneralWeight} reveals the particular form
that $\deltaOneThree f$ takes when the weight $w$, of $f$,
is equal to $2-n/2$, $1-n/2$, $5/2-n/2$, or $7/2-n/2$.  In each of
these cases, the formula in the figure expresses $\deltaOneThree$ in
terms of conformally invariant operators.  To see this, note first
that by \cite{GP-CMP}, the (conformally invariant) Paneitz operator
$P_4:\ce[2-n/2]\rightarrow\ce[-2-n/2]$ satisfies $P_4=-Y^A\Box D_A$.
Thus if $w=2-n/2$ in Figure~\ref{GeneralWeight}, then
$\delta_{1,3}=3P_4$.  Similarly, if $w=5/2-n/2$, then
$-\YS^A\ibop\iD_A$, in Figure~\ref{GeneralWeight},  is the
intrinsic Paneitz operator.  On the other hand,
$\Box:\tbPhi[1-n/2]\rightarrow\tbPhi[-1-n/2]$ is conformally 
invariant.  The operators $\tsff^{AB}\delta D_AD_B$,
$\Nt^C\Nt^D\tracW^A{}_C{}^B{}_DD_AD_B$, and
$\tsff^{A}{}_{C}\tsff^{CB}D_AD_B$ in the figure have order less than
four.

The $Q$-type curvature associated to $\deltaOneThree$ is as follows:
$$
\begin{array}{ll}
\lefteqn{Q_{\g}(\delta_{1,3})=
6(n-4)(n-2)\mc^4
-36(n-4)(n-2)\mc^2\Nv^a\Nv^b\Rho_{ab}}
\\
&-12(n-4)(n-2)\mc\Nv^a\Nv^b\Nv^c\nd_a\Rho_{bc}
\\
&
-(n-4)(n-2)\Nv^a\Nv^b\Nv^c\Nv^d\nd_a\nd_b\Rho_{cd}
\\
&+(n-2)\Nv^a\Nv^b\De\Rho_{ab}
+36(n-4)\mc^2\J
-3\De\J
+6\J^2
\\
&
+36(n-4)\mc\Nv^a\nd_a\J
+2(n-6)(n-2)\Nv^a\Nv^b\Rho_{ac}\Rho_b{}^c
\\
&
+6(n-4)(n-2)\Nv^a\Nv^b\Nv^c\Nv^d\Rho_{ab}\Rho_{cd}
-12(n-4)\Nv^a\Nv^b\Rho_{ab}\J
\\
&
+(5n-22)\Nv^a\Nv^b\nd_a\nd_b\J
-2(n-2)\Rho_{ab}\Rho^{ab}
+2(n-2)\Nv^a\Nv^b\Rho_{cd}\C_a{}^c{}_b{}^d~.
\end{array}
$$

\subsection{Fifth-Order Operator}

Finally, consider $\deltaTwoThree:\ce[w]\rightarrow\ice[w-5]$.
By \nn{PABC}, we see that
\begin{equation}\label{TwoThreeTractor}
\begin{array}{ll}
\lefteqn{(n+2w-6)\deltaTwoThree f=}\vspace{1mm}
\\
&
\Nt^{A}\Nt^{B}\Nt^{C}\Nt^{E}\cRobin D_E
(
D_AD_BD_Cf-\frac{2}{n-4}X_A W_B{}^{F}{}_{C}{}^GD_FD_Gf
)~.
\end{array}
\end{equation}
One may use \nn{TwoThreeTractor} to express $\deltaTwoThree f$, in
terms of $\nd$, $\sffo_{ab}$, $\mc$, the normal vector $\Nv$, the
curvature of a representative $\g$ of the conformal structure, and
$f$.  By setting $w$ equal to $2-n/2$ in the resulting expanded
formula, one can show that $\deltaTwoThree=-60\delta P_4$, where $P_4$
is the (conformally invariant) Paneitz operator.  Similarly, for
$f\in\ce[1-n/2]$, we have $\deltaTwoThree f = -5\delta_3\Box f$; note
that in this context, $\Box$ is conformally invariant.  An expanded
formula for $Q_{\g}(\delta_{2,3})$ appears in Figure~\ref{Q23}.
%
%%%%%%%%%%%%%%%%%%%%%%%%%%%%%%%%%%%%%%%%%%%%%%%%%%%%%%%%%%%
% Begin figure for Q for \delta_{2,3}
%%%%%%%%%%%%%%%%%%%%%%%%%%%%%%%%%%%%%%%%%%%%%%%%%%%%%%%%%%%
%
\begin{figure}
\[
\begin{array}{l}
24(-8+n)(-4+n)(-2+n)\mc^5
\\
-240(-8+n)(-4+n)(-2+n)\mc^3\Nv{}^{a}\Nv{}^{b}\Rho{}_{a}{}_{b}
\\
-120(-8+n)(-4+n)(-2+n)\mc^2\Nv{}^{a}\Nv{}^{b}\Nv{}^{c}
\nd_a\Rho{}_{b}{}_{c}
\\
-20(-8+n)(-4+n)(-2+n)\mc\Nv{}^{a}
\Nv{}^{b}\Nv{}^{c}\Nv{}^{d}\nd_a\nd_b\Rho{}_{cd}
\\
+20(-7+n)(-2+n)\mc\Nv{}^{a}\Nv{}^{b}\Delta\Rho{}_{a}{}_{b}
\\
-(-8+n)(-4+n)(-2+n)\Nv{}^{a}\Nv{}^{b}\Nv{}^{c}\Nv{}^{d}
\Nv{}^{e}\nd_a\nd_b\nd_c\Rho{}_{d}{}_{e}
\\
+(-2+n)(-22+3n)\Nv{}^{a}\Nv{}^{b}\Nv{}^{c}
\De\nd_a\Rho{}_{b}{}_{c}
\\
+(-14+n)(-2+n)(-14+3n)\Nv{}^{a}\Nv{}^{b}\Nv{}^{c}
\Rho{}_{ad}\nd^d\Rho{}_{bc}
\\
+20(-2+n)(96-29n+2n^2)\mc\Nv{}^{a}\Nv{}^{b}\Rho{}_{a}{}_{c}
\Rho{}_{b}{}^{c}
\\
+2(-8+n)(-2+n)(-32+3n)\Nv{}^{a}\Nv{}^{b}
\Nv{}^{c}\Rho{}_{a}{}_{d}\nd_c\Rho{}_{b}{}^{d}
\\
+120(-8+n)(-4+n)(-2+n)\mc\Nv{}^{a}\Nv{}^{b}\Nv{}^{c}
\Nv{}^{d}\Rho{}_{a}{}_{b}\Rho{}_{cd}
\\
+24(-8+n)(-4+n)(-2+n)\Nv{}^{a}\Nv{}^{b}\Nv{}^{c}\Nv{}^{d}
\Nv{}^{e}\Rho{}_{a}{}_{b}\nd_c\Rho{}_{de}
\\
-20(-2+n)(-17+2n)\mc\Rho{}_{a}{}_{b}\Rho{}^{a}{}^{b}
+8(-9+n)(-2+n)\Nv{}^{a}\Rho{}^{bc}\nd_b\Rho_{ac}
\\
-20(-9+n)(-2+n)\Nv{}^{a}\Rho{}_{b}{}_{c}\nd_{a}\Rho{}^{b}{}^{c}
+240(-8+n)(-4+n)\mc^3\J
\\
-240(-8+n)(-4+n)\mc\Nv{}^{a}\Nv{}^{b}\Rho{}_{a}{}_{b}\J
\\
+(-684+268n-23n^2)\Nv{}^{a}\Nv{}^{b}\Nv{}^{c}
\J\nd_a\Rho{}_{b}{}_{c}
+120(-8+n)\mc\J^2
\\
+360(-8+n)(-4+n)\mc^2\Nv{}^{a}\nd_a\J
+75(-8+n)\Nv{}^{a}\J\nd_a\J
\\
-2(1236-452n+37n^2)\Nv{}^{a}\Nv{}^{b}\Nv{}^{c}\Rho{}_{a}{}_{b}
\nd_c\J
\\
+(-312+34n+n^2)\Nv{}^{a}\Rho{}_{a}{}_{b}\nd^b\J
+20(178-63n+5n^2)\mc\Nv{}^{a}\Nv{}^{b}\nd_a\nd_b\J
\\
-60(-8+n)\mc\Delta\J
+(276-92n+7n^2)\Nv{}^{a}\Nv{}^{b}\Nv{}^{c}
\nd_a\nd_b\nd_c\J
\\
-15(-8+n)\Nv{}^{a}\De\nd_a\J
+20(-10+n)(-2+n)\mc\Nv{}^{a}
\Nv{}^{b}\Rho{}_{cd}\C{}_{a}{}^{c}{}_{b}{}^{d}
\\
+4(-9+n)(-2+n)\Nv{}^{a}\Nv{}^{b}\Nv{}^{c}\Rho{}_{de}
\nd_a\C{}_{b}{}^{d}{}_{c}{}^{e}
\\
+8(-9+n)(-2+n)\Nv{}^{a}\Nv{}^{b}\Nv{}^{c}
\C{}_{b}{}^{d}{}_{c}{}^{e}\nd_e\Rho{}_{ad}
\\
+4(-9+n)(-2+n)\Nv{}^{a}\Nv{}^{b}\Nv{}^{c}
\C{}_{b}{}^{d}{}_{c}{}^{e}\nd_a\Rho{}_{de}
\end{array}
\]
\caption{$Q_{\g}(\delta_{2,3})$}
\label{Q23}
\end{figure}
%
%%%%%%%%%%%%%%%%%%%%%%%%%%%%%%%%%%%%%%%%%%%%%%%%%%%%%%%%%%%
% End figure for Q^n for \tilde{\delta}_{2,3}
%%%%%%%%%%%%%%%%%%%%%%%%%%%%%%%%%%%%%%%%%%%%%%%%%%%%%%%%%%%
%
%%%%%%%%%%%%%%%%%%%%%%%%%%%%%%%%%%%%%%%%%%%%%%%%%%%%%%%%%%%%%%%%%%%%%%
% Section 8 ends here.
%%%%%%%%%%%%%%%%%%%%%%%%%%%%%%%%%%%%%%%%%%%%%%%%%%%%%%%%%%%%%%%%%%%%%%
%

%
%%%%%%%%%%%%%%%%%%%%%%%%%%%%%%%%%%%%%%%%%%%%%%%%%%%%%%%%%%%%%%%%%%%%%%
% Section 9 begins here.
%%%%%%%%%%%%%%%%%%%%%%%%%%%%%%%%%%%%%%%%%%%%%%%%%%%%%%%%%%%%%%%%%%%%%%
% 
%%%%%%%%%%%%%%%%%%%%%%%%%%%%%%%%%%%%%%%%%%%%%%%%%%%%%%%%%%%%%%%%%%%
% Section on Juhl's conjectures
%%%%%%%%%%%%%%%%%%%%%%%%%%%%%%%%%%%%%%%%%%%%%%%%%%%%%%%%%%%%%%%%%%%
%
\section{Final remarks}\label{FR}
Here we mention some related points that we find very interesting.  We
only touch on these points however, as a full treatment lies beyond
the scope of the current work.

\subsection{Poincar\'e-Einstein manifolds} \label{PEm}

We recall the well-known notions of conformally compact and
Poincar\'e-Einstein manifolds. For simplicity, we assume here that all
relevant structures are orientable.
\begin{definition}\label{PE-Def}
Let a compact manifold $M$ of dimension $n$ with boundary $\sm$ be
given.  Also let a Riemannian metric $\gplus$ on the interior,
$\Mplus$, of $M$ be given, and suppose that for some nonnegative
defining function $x$ for $\sm$, the metric $\g=x^2\gplus$ extends to
a Riemannian metric on $M$.  Then we say that $(\Mplus,\gplus)$ is
\textit{conformally compact}.  The restriction of $\g$ to $T\sm$
determines a conformal structure $\ic$ on $\sm$, and we say that
$(\sm,\ic)$ is the \textit{conformal infinity} of $\Mplus$.  Finally,
suppose that the metric $\gplus$ on $\Mplus$ is Einstein and that
$\Rc^{\gplus}=-(n-1)\gplus$.  Then we say that $M$ is a
\textit{Poincar\'{e}-Einstein} manifold.
\end{definition}
In fact, we will assume, as above, that $M$, the defining function
$x$, and $\g$ are extended to a collar neighbourhood of $\sm$.  The
assumption that $x$ is nonnegative will only apply to points of the
original manifold.

Conformally compact manifolds have, in particular, the structure of a
conformal manifold with boundary. Thus all the results from the
earlier sections apply to this setting. It is worth noting, however,
that in the special case of a Poincar\'e-Einstein manifold, rather
strong simplifications arise. (This is also true for manifolds which
are asymptotically Einstein to sufficient order. We leave the details
of this to the reader, but some related results are discussed in
\cite{CG}.)
This stems from the following result.
\begin{lemma}\label{ILemma}
Let $\sm$ be the conformal infinity of a Poincar\'{e}-Einstein
manifold $M$, and let $x$ be a nonnegative defining function for $\sm$
as in Definition~\ref{PE-Def}.  Let $\sigma=x\xig$, and let
$I^A=\frac{1}{n}D^A\sigma$.  Then $I^A|_{\sm}=\Nt^{A}|_{\sm}$,
and $I^A$ has the following properties on $M$: (1)~$I^A$ is parallel,
(2)~$I_AI^A=1$, and (3)~the contraction of $I^A$ with any index of
$W_{BCDE}$ is zero.
\end{lemma}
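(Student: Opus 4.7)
The plan is to verify the four assertions by working in two complementary conformal scales. The section $\sigma := x\xig \in \ce[1]$ is smooth on $M$, so $I^A = (1/n)D^A\sigma$ is smooth on $M$; and the direct identity $\sigma^{-2}\bg = \gplus$ shows that $\sigma$ is the scale density of the Einstein metric $\gplus$ on the interior $\Mplus$. The global properties (1), (2), and (3) will be checked on $\Mplus$ in the $\gplus$-trivialisation and then extended to all of $M$ by smoothness of $I^A$, while the boundary identification $I^A|_\sm = \Nt^A|_\sm$ will be handled directly in the scale $\g$, which is smooth across $\sm$.

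For (1) and (2), in the $\gplus$-trivialisation $\sigma$ trivialises to the constant $1$, so $\nd\sigma = 0$ and $\De\sigma = 0$. The normalisation $\Rc^{\gplus} = -(n-1)\gplus$ combined with $\Rc = (n-2)\Rho + \J\g$ gives $\J^{\gplus} = -n/2$ and $\Rho^{\gplus}_{ab} = -\tfrac{1}{2}\gplus_{ab}$. Formula \nn{tractorD} with $w = 1$ then collapses to $I^A \stackrel{\gplus}{=} Y^A + \tfrac{1}{2}X^A$, and Figure~\ref{TrIProd} gives $I^A I_A = 2\cdot 1 \cdot \tfrac{1}{2} = 1$. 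For parallelism, the identities \nn{connids} show that the $Z^{Bb}$-slot of $\nd_a I^B$ is proportional to the trace-free part of $\nd_a\nd_b\sigma + \sigma\Rho_{ab}$, which vanishes precisely by the Einstein equation; the $Y^B$- and $X^B$-slots vanish using the contracted Bianchi identity $\nd^a\Rho_{ab} = \nd_b\J$. Hence $\nd I^A = 0$ on $\Mplus$, and thus on $M$ by continuity, and $I^A I_A \equiv 1$ follows.

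For the boundary identification, in the scale $\g$ the splitting of $I^A$ trivialises to $(x,\, dx_b,\, -(1/n)(\De_{\g} x + \J_{\g}x))$. At $\sm$ the first slot vanishes, matching $\Nt^A$. Expanding the conformally invariant identity $I^A I_A = 1$ in the $\g$-trivialisation gives
\[
|dx|^2_{\g} \;=\; 1 + \frac{2x}{n}(\De_{\g} + \J_{\g})x,
\]
so $|dx|^2_{\g} = 1$ at $\sm$ and the middle slot $dx_b$ equals the trivialised conformal conormal $\Nv_b|_\sm$. Differentiating this identity once and restricting yields $\Nv^c\nd_c|dx|^2_{\g}|_\sm = (2/n)\De_{\g}x|_\sm$; combining this with the direct expansion of $(n-1)\mc = \nd^a\Nv_a$ for $\Nv_a = dx_a/|dx|_{\g}$ at $\sm$ gives $\De_{\g}x|_\sm = n\mc$. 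The third slot of $I^A|_\sm$ is therefore $-\mc$, matching $\Nt^A$ via \nn{NormalTractor}.

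Finally, for (3), parallelism of $I^A$ gives $\tcurve_{ij}{}^A{}_B I^B = [\nd_i,\nd_j]I^A = 0$. Skew-symmetry of $\tcurve_{abCE}$ in the pair $CE$ yields $I^E \tcurve_{abCE} = 0$, and a second application of parallelism gives $I^E \nd^p \tcurve_{pbCE} = \nd^p(I^E \tcurve_{pbCE}) = 0$. Formula \nn{TracCurveTwo} assembles $W_{ABCE}$ entirely out of these two tensors contracted against the splitting operators, so $I^E W_{ABCE} = 0$; the Weyl-type symmetries of $W$ (skew in $AB$, skew in $CE$, and the pair-swap $W_{ABCE} = W_{CEAB}$) then propagate the vanishing to contractions on each of the remaining three indices. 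The main technical obstacle in the whole argument is the boundary third-slot computation, which requires the expansion of $I^A I_A = 1$ to first order transverse to $\sm$; the remaining steps reduce to routine tractor calculus once the Einstein condition is exploited.
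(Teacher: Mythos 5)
Your proof is correct. For property~(3), it matches the paper's argument exactly: parallelism of $I^A$ forces $\tcurve_{ij}{}^A{}_B I^B = 0$, skew-symmetry of the tractor curvature and a second application of parallelism kill both pieces of \nn{TracCurveTwo}, and the Weyl-type symmetries of $W$ spread the vanishing to all four slots. For the boundary identification and properties (1) and (2), however, the paper simply cites \cite{Go-Conf-D-N} (see also \cite{CG}) rather than proving them; you have instead given a self-contained derivation, and it is sound. Observing that $\sigma = x\xig$ is the scale density of $\gplus$, computing $I^A \stackrel{\gplus}{=} Y^A + \tfrac12 X^A$ on $\Mplus$, reading off $I_AI^A=1$ from Figure~\ref{TrIProd}, and extending to $M$ by continuity is exactly the argument in the cited sources; likewise your boundary computation of the third slot, via the $\g$-trivialised identity $|dx|^2_\g = 1 + \tfrac{2x}{n}(\Delta_\g + \J_\g)x$ together with one transverse derivative and the trace $(n-1)H = \nabla^a\Nv_a$, is the standard way this is done. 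One small expository quibble: in the $\gplus$-trivialisation with $\sigma \equiv 1$ the only slot of $\nabla_a I^B$ that is not automatically zero is the $Z$-slot, which is the trace-free Schouten tensor and vanishes by the Einstein condition; the $Y$-slot vanishes identically (not via Bianchi), and the Bianchi identity is only what forces the $X$-slot to vanish once the $Z$-slot does in a generic trivialisation. This does not affect the correctness of your proof.
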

\begin{proof}
The fact that $I^A|_{\sm}=N^A|_{\sm}$ and properties~(1) and (2) are
proved in \cite{Go-Conf-D-N}.  (See also \cite{CG} for a recent
treatment.)  Note that \cite{Go-Conf-D-N} and \cite{CG} use $d$ and
$n$ to denote the dimensions of $M$ and $\sm$, respectively.
By \cite{GP-CMP}, $W_{BCDE}$ has Weyl tensor symmetries.  Thus
$W_{BCDE}=-W_{BCED}$ and $W_{BCDE}=W_{DEBC}$.  Since $I^A$ is parallel,
property~(3) follows from \nn{TracCurv} and \nn{TracCurveTwo}.
\end{proof}

Moreover since $I^A|_\Sigma=N^A$ is parallel along $\Sigma$, it follows
that $\Sigma$ is totally umbilic \cite{Go-al}. The Einstein condition,
i.e.\ the fact that $I^A$ is parallel, also forces other hypersurface
invariants to vanish, cf. \cite{CG,GW-willmore}. We should thus expect
the formulae
for the boundary operators and associated curvatures to
simplify. Indeed, we have the following:
\begin{proposition}\label{PELemma}
Let $\sm$ be the conformal infinity of a Poincar\'{e}-Einstein
manifold $M$.  Let $\g$ and $n$ be as in Definition~\ref{PE-Def}, and
let $\cc$ denote the conformal class associated to $\g$.  Let
$j,\ k\in\Integers_{>0}$ be given, and suppose that $n$ and $\Mcc$
satisfy Condition~\ref{nkcCondition}. Finally, let $P$ be as in
Proposition~\ref{GJMSp}.  Then
\begin{equation}\label{PtoD}
\Nt^{A_1}\cdots\Nt^{A_k}\delta_j P_{A_1\cdots A_k}=
\Nt^{A_1}\cdots\Nt^{A_k}\delta_j D_{A_1}\ldots D_{A_k}
\end{equation}
along $\sm$.
\end{proposition}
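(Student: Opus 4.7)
The plan is to reduce the identity, via Proposition~\ref{GJMSp}, to the single vanishing statement
\[
\Nt^{A_1}\cdots\Nt^{A_k}\delta_j X_{A_1}\LowTrac{2}{k}V=0\qquad\text{along }\sm,
\]
and to establish this using the parallel tractor $I^A$ of Lemma~\ref{ILemma}. Because $\Nt^A$ agrees with $I^A$ along $\sm$, and because $I^A$ extends to a parallel tractor on the ambient manifold, I may perform the calculation with $I^{A_i}$ in place of $\Nt^{A_i}$ and restrict at the end.

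Since $\nabla I=0$, the factor $I^A$ commutes with $\nabla$, and hence with $Y$, $Z$, $X$, the Thomas operator $D^B$, and every differential operator built from these; in particular it commutes with $\delta$ and with $\delta_j$. (Recall that $\delta_j$ itself contains normal-tractor factors $\Nt^B$, which along $\sm$ equal $I^B$ and extend parallelly.) I then slide every $I^{A_i}$ through $\delta_j$ until it meets the tractor $X_{A_1}\LowTrac{2}{k}V$ on which it contracts, reducing the expression, on $\sm$, to
\[
\delta_j\bigl((I^{A_1}X_{A_1})\cdot(I^{A_2}\cdots I^{A_k}\LowTrac{2}{k}V)\bigr)
=\delta_j\bigl(\sigma\cdot I^{A_2}\cdots I^{A_k}\LowTrac{2}{k}V\bigr),
\]
where $\sigma:=I^AX_A$ is the Poincar\'e--Einstein scale density, readily computed from $I^A=(1/n)D^A\sigma$ acting on $\sigma$ of weight $1$.

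It now suffices to show that $I^{A_2}\cdots I^{A_k}\LowTrac{2}{k}V$ vanishes identically on the ambient neighbourhood of $\sm$. As noted in the proof of Theorem~\ref{key}, every term of the symbolic tractor expansion of $\LowTrac{2}{k}V$ carries at least one factor of the tractor curvature $W$; moreover, inherited from Proposition~\ref{GJMSTractor} via the identity
\[
X_{A_1}\LowTrac{2}{k}V=D_{A_1}\cdots D_{A_k}V+X_{A_k}\Box D_{A_1}\cdots D_{A_{k-1}}V+X_{A_k}\LowTrac{1}{k-1}V
\]
(obtained by comparing \nn{GJMStractorformula} and \nn{DefineGJMSp} at weight $k-n/2$ and extending symbolically to all $w$), at least one of the free indices $A_2,\ldots,A_k$ appears on such a $W$ in each term. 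By Lemma~\ref{ILemma}(3) the contraction of $I$ into any index of $W$ vanishes; sliding the relevant $I^{A_i}$ (once more via the parallel property) onto its target $W$-index annihilates the term.

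The main obstacle is precisely the structural claim in the previous paragraph: that in each term of $\LowTrac{2}{k}V$ at least one free index $A_2,\ldots,A_k$ actually sits on a $W$. Verifying this rigorously requires careful bookkeeping within the iterative tractor construction of $P_{2k}$ from \cite{GP-CMP}. Without it, alternative placements of a free index on an $X$ would give contractions $I^AX_A=\sigma$ that are nonzero off $\sm$ and would have to be cancelled by a separate mechanism; with it, the argument becomes essentially formal, driven entirely by $\nabla I=0$ together with the relation $I\cdot W=0$.
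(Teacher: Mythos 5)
Your argument reproduces the paper's own proof: along $\sm$ replace $\Nt^{A_i}$ by the parallel tractor $I^{A_i}$ of Lemma~\ref{ILemma}, slide the $I$'s through $\delta_j$, and kill the extra term in \nn{DefineGJMSp} using the fact that $I$ contracted into any index of $W$ vanishes. The structure is right and the reduction to the vanishing of $I^{A_2}\cdots I^{A_k}\LowTrac{2}{k}V$ is exactly what is needed, since the contraction $I^AX_A=\sigma$ on the remaining $A_1$ only vanishes to first order and $\delta_j$ differentiates transversally.

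The one thing you flag as a genuine gap is not one. The last sentence of Proposition~\ref{GJMSTractor} already states precisely the structural claim you call the main obstacle: ``In every term of the tractor formula for $\LowTrac{1}{k-1}$, at least one of the indices $A_1,\ldots,A_{k-1}$ appears on a $W$.'' After the harmless reindexing from $A_1,\ldots,A_{k-1}$ to $A_2,\ldots,A_k$, this is the assertion you need for $\LowTrac{2}{k}$. So the bookkeeping in \cite{GP-CMP} that you worry about is already packaged in the statement of Proposition~\ref{GJMSTractor}; you can cite it directly and omit both the intermediate identity comparing \nn{GJMStractorformula} with \nn{DefineGJMSp} and the concluding hedge. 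With that citation in place, your proof is complete and coincides with the paper's.
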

\begin{proof}
From Lemma~\ref{ILemma}, it follows that
$$
\Nt^{A_1}\cdots\Nt^{A_k}\delta_j P_{A_1\cdots A_k}=
\delta_j I^{A_1}\cdots I^{A_{k}}P_{A_1\cdots A_{k}}
$$
along $\sm$.  But by Proposition~\ref{GJMSTractor}, in every term of
the tractor formula for $\LowTrac{2}{k}$, at least one of the indices
$A_2$, \ldots, $A_{k}$ appears on a $W$.  Thus \nn{PtoD} follows from
\nn{DefineGJMSp} and Lemma \ref{ILemma}.
\end{proof}

The simplification expressed in \nn{PtoD} enables, in this setting,
further refinements beyond Theorem \ref{key}. This will be taken up
elsewhere. Meanwhile, inspired by Proposition~\ref{PELemma}, we now
consider the very simple operator
\begin{equation}\label{BigIDOp}
\Nt^{A_1}\cdots\Nt^{A_k} D_{A_1}\ldots D_{A_k} =I^{A_1}\cdots I^{A_k}
D_{A_1}\ldots D_{A_k}
\end{equation}
along the boundary $\Sigma$ of a Poincar\'e-Einstein manifold. The
right-hand side of \nn{BigIDOp} is defined not only along $\sm$, but
on all of $M$ as well.  Since $I$ is parallel, the right-hand side of
\nn{BigIDOp} is simply $(I\cdot D)^k$.  Here $I\cdot D = I^AD_A$.
This idea was studied in \cite{Go-Powers} (to produce GJMS operators
of the interior manifold) and in
e.g.\ \cite{GoW-boundary,GW-willmore}. These operators have a number
of applications.  For example, for any $m\in\Integers_{>0}$,
\begin{equation}\label{tan-GJMS}
(I\cdot D)^{2m} :\ce\left[m-\frac{n-1}{2}\right] \to
\ce\left[-m-\frac{n-1}{2}\right]~,
\end{equation}
acts tangentially along $\sm$ \cite[Theorem 4.1]{GoW-boundary}, and for
suitable $m$ recovers nonzero multiples of the order-$2m$ GJMS operators
of the boundary \cite[Theorem 4.5]{GoW-boundary}.  Here \textit{tangential} is as
defined in \cite{GoW-boundary}. (In comparing with \cite{GoW-boundary}, note that with $I$ parallel it follows that $I^2:=I^AI_A$ is constant, and in the Poincar\'e-Einstein case this constant is non-zero.) From this one can, in an obvious way,
find ``factorisations'' of the $I\cdot D$ powers at appropriate
weights. For
example, along $\sm$ and for $k\in\Integers_{>0}$,
$$
(I\cdot D)^{2m+k}=(I\cdot D)^{2m}\circ (I\cdot D)^{k}:
\ce\left[m+k-\frac{n-1}{2}\right] \to \ce\left[-m-\frac{n-1}{2}\right]
$$
evidently factors as a composition of a GJMS operator of $\sm$ applied
to the restriction to $\Sigma$ of $(I\cdot D)^{k}$ acting on
$\ce[m+k-(n-1)/2]$.  Factorisations along these lines have been sought
in the work \cite{JB} of Juhl.  Again, further examples are beyond the
scope of the current work, but we believe that these observations are
of interest.  In particular we believe that, in the setting of
Poincar\'e-Einstein (or suitably asymptotically Poincar\'e-Einstein)
manifolds, one may use these observations, together with the main
techniques of the present paper, to construct simpler families of
boundary operators that solve the problems \ref{GenProblem} (and
\ref{CQprob}), and indeed yield rather stronger results. Again this
will be taken up elsewhere.

\subsection{Juhl's conjectures}\label{JC}
Several of the results in the sections above solve problems closely
related to (other) directions and conjectures in Juhl's monograph
\cite{JB}. For example, the $T$-curvature pairs of
Theorem~\ref{T-thm-1}, above, satisfy the ``fundamental identity''
that expression (1.10.4) in \cite{JB} would satisfy if the residue
described there were to vanish. So the {\em critical $T$-curvature
  pair} of Theorem~\ref{T-thm-1} provides a version of the objects
sought at that point in \cite{JB}. (See also Chapter~6 of that work.)

The tractor families ``$D^{\ct}_K(M,\Sigma;g;\lambda)$'' of Definition
6.21.1 and Definition 6.21.2 in \cite{JB} are the operators denoted
``$\delta_K$'' in Theorem 5.1 of \cite{BrGonon}. In comparing with our
discussion above, the reader should note that in \cite{JB}, $n$
denotes $\dim(\Sigma)$, whereas here $n$ is $\dim(M)=\dim(\Sigma)+1$.
Conjecture 1.10.1 of \cite{JB} (which is stated in more detail as
Conjecture 6.21.3 of the same source) proposes that for case of
$\dim(\Sigma)$ even, these operators admit a certain decomposition
into an intrinsic Laplacian power part plus another part, termed an
``extrinsic'' part, and that this occurs in a manner compatible with
corresponding $Q$-curvatures. As stated, that conjecture relies on
Conjecture 6.21.1 of \cite{JB}, which asserts that for $\dim(\Sigma)$
even, $\deltaBGK$ is the zero operator on densities of
weight $w=0$.

We do not see evidence that Conjecture 6.21.1 should hold in the
curved setting if $\dim(\Sigma)$ is (even and) greater than
4. However, {\em any} of the operators $\deltaJk$, with $J+k=
\dim(\Sigma)$, or either of the operators $\delta_K$ or
$\ConFlatOp{K}$, with $K=\dim(\sm)$, provides a replacement for the
operator $D_{\dim(\Sigma)}^T (\cdot,\Sigma;g;0)$
sought in that conjecture, as $\delta_K$, $\ConFlatOp{K}$, and
$\deltaJk$ are all regular at weight 0.  One can use $\delta_K$,
$\ConFlatOp{K}$, and $\deltaJk$ to show that a variant of Juhl's
Conjecture~6.21.3 (or 1.10.1) \textit{does} in fact hold.  To see
this, begin by supposing that $n\geq 5$ and that $n$ is odd.  Let
$\iGJMSden_{n-1}$ denote the intrinsic order-$(n-1)$ GJMS operator on
$\sm$ mapping $\ceS[0]$ to $\ceS[1-n]$.  Let $\iBranQ_{n-1}(\ig)$
denote the Branson $Q$-curvature of $\sm$ associated to the metric
$\ig$ on $\sm$ induced by a representative $\g$ of the conformal
structure on $M$.  View $\iBranQ_{n-1}(\ig)$ as a density of weight
$1-n$.  For a conformal metric $\hatg=e^{2\Upsilon}\g$ on $M$,
\begin{equation}\label{BranTran}
\iBranQ_{n-1}(\,\hatig\,)
=\iBranQ_{n-1}(\ig)+(-1)^{(n-1)/2}\iGJMSden_{n-1}\Upsilon~,
\end{equation}
by (1.13) of \cite{Brsharp}.  The factor $(-1)^{(n-1)/2}$ in
\nn{BranTran} occurs because Branson's sign convention for $\Delta$
differs from ours.

With these preparations complete, we obtain at once the promised
variant of Juhl's Conjecture~6.21.3.
\begin{theorem}\label{Juhl-6.21.3}
Suppose
that $\dim(M)=n\geq 5$, and suppose that $n$ is odd.  Let
$\alpha\in\Reals$ be given, and let $K=n-1$.  Let
$\OurOp:\ce[w]\rightarrow\ice[w-K]$ denote the operator $\delta_K$ of
Lemma~\ref{basicversion}, as acting on densities, the operator
$\ConFlatOp{K}$ of Theorem~\ref{cflatkey}, or the operator $\deltaJk$
of Theorem~\ref{key}.  (In the latter case, $J+k$ must equal $K$.)
Then there exist natural conformally invariant differential operators
$\JuhlPi_{n-1}:\ceS[0]\rightarrow\ceS[1-n]$ and
$\JuhlPe_{n-1}:\ce[0]\rightarrow\ceS[1-n]$ and natural sections
$\JuhlQi_{n-1}(\ig)$ and $\JuhlQe_{n-1}(\g)$ of $\ceS[1-n]$ and
$\left.\ce[1-n]\rule{0mm}{2.12ex}\right|_{\sm}$, respectively, with
the following properties.  First, for the operator
$\OurOp:\ce[0]\rightarrow\ice[1-n]$, we have
$\alpha\OurOp=\JuhlPi_{n-1}+\JuhlPe_{n-1}$.  Also,
\[
(-1)^{(n-1)/2}Q_{\g}(\alpha\OurOp)=
\JuhlQi_{n-1}(\ig)+\JuhlQe_{n-1}(\g)~.
\]
Here $\JuhlPi_{n-1}$ has leading term $\ila^{(n-1)/2}$, and
$Q_{\g}(\alpha\OurOp)$ is as in Definition~\ref{generalQ}.  We view
$Q_{\g}(\alpha\OurOp)$ as a section of
$\left.\ce[1-n]\rule{0mm}{2.12ex}\right|_{\sm}$.  Finally, suppose
that $\hatg=e^{2\Upsilon}\g$.  Then
\begin{equation}\label{JuhlQtrani}
\JuhlQi_{n-1}(\,\hatig\,)=
\JuhlQi_{n-1}(\ig)+(-1)^{(n-1)/2}\JuhlPi_{n-1}\Upsilon~,
\end{equation}
and
\begin{equation}\label{JuhlQtrane}
\JuhlQe_{n-1}(\hatg)=\JuhlQe_{n-1}(\g)+(-1)^{(n-1)/2}\JuhlPe_{n-1}\Upsilon~.
\end{equation}
\end{theorem}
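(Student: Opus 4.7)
The plan is to take $\JuhlPi_{n-1}$ to be the critical-order intrinsic GJMS operator $\iGJMSden_{n-1}$ of the $(n-1)$-dimensional conformal boundary $(\sm,\ic)$, which exists precisely because $n$ is odd and hence $\dim\sm=n-1$ is even. Precomposition with the conformally invariant restriction $\ce[0]\to\ceS[0]$ yields a natural conformally invariant operator $\ce[0]\to\ceS[1-n]$ with leading term $\ila^{(n-1)/2}$, and we may then simply set $\JuhlPe_{n-1}:=\alpha\OurOp-\JuhlPi_{n-1}$. On the curvature side, the plan is to take $\JuhlQi_{n-1}(\ig)$ to be Branson's intrinsic $Q$-curvature $\iBranQ_{n-1}(\ig)$ of order $n-1$, viewed as a section of $\ceS[1-n]$, and to define
$$
\JuhlQe_{n-1}(\g):=(-1)^{(n-1)/2}Q_{\g}(\alpha\OurOp)-\JuhlQi_{n-1}(\ig).
$$

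With these definitions both decomposition identities in the conclusion hold tautologically. Conformal invariance and naturality of $\JuhlPi_{n-1}$ follow from those of $\iGJMSden_{n-1}$ together with those of the restriction map. Conformal invariance of $\alpha\OurOp$ comes from Lemma~\ref{basicversion}, Theorem~\ref{cflatkey}, or Theorem~\ref{key}, depending on which operator family $\OurOp$ is drawn from, so $\JuhlPe_{n-1}$ inherits naturality and conformal invariance by subtraction. Branson's $Q$-curvature is a natural Riemannian invariant, so both $\JuhlQi_{n-1}$ and $\JuhlQe_{n-1}$ are natural.

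For the transformation rules, the identity \nn{JuhlQtrani} is just Branson's transformation rule \nn{BranTran} rewritten using $\JuhlPi_{n-1}\Upsilon=\iGJMSden_{n-1}\Upsilon$. For \nn{JuhlQtrane} the plan is to apply Proposition~\ref{genQtran} to $\psi=\alpha\OurOp$ (which satisfies Hypotheses~\ref{genQhyp} by Proposition~\ref{KillConstants}), yielding
$$
Q_{\hat\g}(\alpha\OurOp)=Q_{\g}(\alpha\OurOp)+\alpha\OurOp\,\Upsilon,
$$
and then to subtract the already-established \nn{JuhlQtrani}, so that
$$
\JuhlQe_{n-1}(\hat\g)=\JuhlQe_{n-1}(\g)+(-1)^{(n-1)/2}(\alpha\OurOp-\JuhlPi_{n-1})\Upsilon=\JuhlQe_{n-1}(\g)+(-1)^{(n-1)/2}\JuhlPe_{n-1}\Upsilon.
$$

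The hardest point is really just keeping track of the sign $(-1)^{(n-1)/2}$, which enters because Branson's sign convention for $\Delta$ differs from the one used in this paper (as remarked after \nn{BranTran}); once that sign is absorbed into the definitions of $\JuhlQi_{n-1}$ and $\JuhlQe_{n-1}$, the argument is purely formal and no further analytic input is required beyond the existence of $\iGJMSden_{n-1}$ and $\iBranQ_{n-1}$ on $(\sm,\ic)$ and Proposition~\ref{genQtran} on the ambient side.
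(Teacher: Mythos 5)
Your proposal is correct and takes essentially the same approach as the paper: you define $\JuhlPi_{n-1}=\iGJMSden_{n-1}$, $\JuhlPe_{n-1}=\alpha\OurOp-\iGJMSden_{n-1}$, $\JuhlQi_{n-1}(\ig)=\iBranQ_{n-1}(\ig)$, and $\JuhlQe_{n-1}(\g)=(-1)^{(n-1)/2}Q_{\g}(\alpha\OurOp)-\JuhlQi_{n-1}(\ig)$, and then derive the two transformation rules from \nn{BranTran} and Proposition~\ref{genQtran}. This matches the paper's proof, with your version supplying a bit more explicit bookkeeping of the sign factor $(-1)^{(n-1)/2}$ and the sources of naturality and conformal invariance.
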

\begin{remark}\label{Juhl-6.21.3-Rem}
We do not claim that $\OurOp$, $\JuhlPi_{n-1}$, $\JuhlPe_{n-1}$,
$\JuhlQi_{n-1}(\ig)$, or $\JuhlQe_{n-1}(\g)$ is uniquely determined,
but see Remark~\ref{GJMSTracRem}.
\end{remark}
\noindent\textit{Proof of Theorem~\ref{Juhl-6.21.3}.}
Let $\iGJMSden_{n-1}$ and $\iBranQ_{n-1}(\ig)$ be as above.  Let
$\JuhlPe_{n-1}=\alpha\OurOp-\iGJMSden_{n-1}$, and let
$\JuhlPi_{n-1}=\iGJMSden_{n-1}$.  Let
$\JuhlQi_{n-1}(\ig)=\iBranQ_{n-1}(\ig)$, and let
\[
\JuhlQe_{n-1}(\g)=(-1)^{(n-1)/2}Q_{\g}(\alpha\OurOp)-\JuhlQi_{n-1}(\ig)~.
\]
The transformation rules given in \nn{JuhlQtrani} and \nn{JuhlQtrane}
then follow from \nn{BranTran} and Proposition~\ref{genQtran}.
\hspace{\fill}\QED

As stated Juhl's Conjecture~6.21.3 allows for the freedom to choose
$\alpha$ in Theorem~\ref{Juhl-6.21.3}. However it is natural to fix
this to the special value $\alpha=\alpha_0\neq 0$ so that when
specialised to the case that $(M,g_+)$ is the Poincar\'e ball metric,
with $\Sigma$ the boundary at infinity (i.e.\ the conformally flat
model for Poincar\'e-Einstein geometry), then $\JuhlPe_{n-1}$ is zero.
For the case of $\psi=\delta_K$ the existence of $\alpha_0$ follows
directly from the discussion surrounding \eqref{tan-GJMS}. More
generally it follows from the uniqueness of the intertwinors in this
case.

Finally we mention that the role of
extrinsic geometry in generalised notions of $Q$-curvature (and
related ``$T$-curvatures'' as associated transgression curvatures) is
explored in \cite{GW-volume,GW-volume-bd}.

%%%%%%%%%%%%%%%%%%%%%%%%%%%%%%%%%%%%%%%%%%%%%%%%%%%%%%%%%%%%%%%%%%%%%%
% Section 9 ends here.
%%%%%%%%%%%%%%%%%%%%%%%%%%%%%%%%%%%%%%%%%%%%%%%%%%%%%%%%%%%%%%%%%%%%%%
%
%%%%%%%%%%%%%%%%%%%%%%%%%%%%%%%%%%%%%%%%%%%%%%%%%%%%%%%%%%%%%%%%%%%%%%
% Appendix begins here.
%%%%%%%%%%%%%%%%%%%%%%%%%%%%%%%%%%%%%%%%%%%%%%%%%%%%%%%%%%%%%%%%%%%%%%
%

\appendix

\section{Proof of Proposition~\ref{Embedding}}\label{EmbedApp}
All of the expressions in this proof will carry a weight of zero.  Let
$d=m'-m$, and let $M'=M\times\Reals^{d}$.  Also let
$\sm'=\sm\times\Reals^{d}$.  Let $\g_{E}$ denote the Euclidean metric
on $\Reals^{d}$, and let $\amr$ denote the product metric
$\g\oplus\g_{E}$ on $M'$.  One key to our proof is the fact that the
operator $\psi$ is given by the same universal symbolic formula
on $M$ and on $M'$.  Our strategy will be to relate expressions on
$(M',\amr)$ to their counterparts on $(M,\g)$.

Let $(y^1,\ldots,y^d)$ denote the standard coordinates on
$\Reals^{d}$.  Let $U$ be any open subset of $M$ on which a local
coordinate system $(x^1,\ldots,x^m)$ is defined, and suppose there is
a local defining function $t$ for $\sm$ on $U$.  Note that
$(z^1,\ldots,z^{m'})=(x^1,\ldots,x^m,y^1,\ldots,y^d)$ is a local
coordinate system for $M'$ on $U\times\Reals^{d}$.  Let
$\Gamma_{ab}{}^c$ and $\Gamma'_{ab}{}^{c}$ denote the Christoffel
symbols of $\g$ and $\amr$ in the $(x^1,\dots,x^m)$ and
$(z^1,\ldots,z^{m'})$ coordinate systems, respectively.

Figure~\ref{CompMatrices}
%
%%%%%%%%%%%%%%%%%%%%%%%%%%%%%%%%%%%%%%%%%%%%%%%%%%%%%%%%%%%%%%%%%%%%
% Figure giving the components of the product metric and its inverse
%%%%%%%%%%%%%%%%%%%%%%%%%%%%%%%%%%%%%%%%%%%%%%%%%%%%%%%%%%%%%%%%%%%%
%
\begin{figure}
  \[
[\amr_{ab}]=
\left[
\begin{array}{c|c}
  \g_{ab}|_p&O_{m\times d}\\
  \hline
  O_{d\times m}&I_{d\times d}
\end{array}
\right]
,\hspace{4ex}
[\amr^{ab}]=
\left[
\begin{array}{c|c}
  \g^{ab}|_p&O_{m\times d}\\
  \hline
  O_{d\times m}&I_{d\times d}
\end{array}
\right]
\]
\caption{The component matrices of $\amr$ and its inverse at a point
  $(p,q)\in M\times\Reals^d$}
\label{CompMatrices}
\end{figure}
%
%%%%%%%%%%%%%%%%%%%%%%%%%%%%%%%%%%%%%%%%%%%%%%%%%%%%%%%%%%%%%%%%%%%%
% End of figure giving the components of the product metric
%%%%%%%%%%%%%%%%%%%%%%%%%%%%%%%%%%%%%%%%%%%%%%%%%%%%%%%%%%%%%%%%%%%%
%
gives the components of $\amr$ and its inverse in the
$(z^1,\ldots,z^{m'})$ coordinate system at a point $(p,q)\in M'$.  In
this figure, $\g_{ab}|_p$ and $\g^{ab}|_{p}$ denote the $m\times m$
component matrices of $\g$ and its inverse, respectively, at $p$,
$I_{d\times d}$ is the $d\times d$ identity matrix, and $O_{m\times
  d}$ and $O_{d\times m}$ are the zero matrices of the indicated
dimensions.

Note that $\Gamma_{ab}'{}^{c}= \frac{1}{2}\amr^{cd}
(\partial_a\amr_{bd}+\partial_b\amr_{ad}-\partial_d\amr_{ab})$.  Thus
if $\{a,b,c\}$ is \textit{not} a subset of $\{1,\ldots,m\}$, one can
use the formulae in Figure~\ref{CompMatrices} to show that
$\Gamma'_{ab}{}^{c}=0$ on $U\times\Reals^{d}$.  On the other hand, if
$\{a,b,c\}$ \textit{is} a subset of $\{1,\ldots,m\}$, then for all
$(p,q)\in U\times\Reals^{d}$, we have
$\Gamma'_{ab}{}^{c}|_{(p,q)}=\Gamma_{ab}{}^{c}|_{p}$.  For all
$\{a,b,c\}\subseteq\{1,\ldots,m'\}$, $\Gamma'_{ab}{}^{c}$ is
independent of $z^{m+1}$, \ldots, and $z^{m'}$.

Let $(R')_{ab}{}^{c}{}_{d}$ denote the components of the Riemannian
curvature tensor of $\amr$.  Then
\[
(R')_{ab}{}^{c}{}_{d}
=
 \frac{\partial}{\partial z^a}\Gamma'_{bd}{}^{c}
-\frac{\partial}{\partial z^b}\Gamma'_{ad}{}^{c}
+\Gamma'_{bd}{}^e\Gamma'_{ae}{}^{c}
-\Gamma'_{ad}{}^e\Gamma'_{be}{}^{c}~.
\]
Now let $(p,q)\in U\times\Reals^{d}$ be given.  If
$\{a,b,c,d\}\subseteq\{1,\ldots,m\}$, then
$(R')_{ab}{}^{c}{}_{d}|_{(p,q)}=R_{ab}{}^{c}{}_{d}|_{p}$.
Otherwise, $(R')_{ab}{}^{c}{}_{d}|_{(p,q)}=0$.

We will now construct a unit conormal field for $\sm'$.  Let
$\pi:U\times\Reals^{d}\rightarrow U$ be the natural projection, and
let $t'$ denote the lift (pullback) of $t$ along $\pi$.  Consider any
given $(p,q)\in U\times\Reals^d$.  Note that
$(dt')|_{(p,q)}=\left.\frac{\partial t}{\partial
  x^i}\right|_{p}dx^i|_{(p,q)}$ and that $t'(p,q)=0$ if and only if
$p\in\sm$.  Thus $t'$ is a local defining function for $\sm'$ on
$U\times\Reals^{d}$.  The unit conormal field for $\sm'$ on
$\sm'\cap(U\times\Reals^d)$ is given by
$\Nv'_{a}=\frac{dt'}{|dt'|_{r}}$.  If we consider
Figure~\ref{CompMatrices}, we find that $|dt'|_{r}$, evaluated at
$(p,q)$, is equal to $|dt|_{\g}$, evaluated at $p$.  Now let
$a\in\{1,\ldots,m'\}$ be given, and let $\Nv'_{a}$ denote the
component of the unit conormal field for $\sm'$ corresponding to $a$.
If $m+1\leq a\leq m'$, then $\Nv'_a|_{(p,q)}=0$.  If $1\leq a\leq m$,
then $\Nv'_a|_{(p,q)}=\Nv_{a}|_{p}$; here $\Nv_a|_p$ denotes the
component of $\Nv_a$ at $p$.

Let $\mmc'$ denote the modified mean curvature associated to $M'$ and
$\sm'$, as defined in Section~\ref{RH}, above.  Also let $\nd'$ denote
the Levi-Civita connection of $\amr$ on $M'$.
%
%%%%%%%%%%%%%%%%%%%%%%%%%%%%%%%%%%%%%%%%%%%%%%%%%%%%%%%%%%%
\newc{\makenuprime}{\nu\makebox[0mm]{\hspace{0.8ex}${}'$}}%
%%%%%%%%%%%%%%%%%%%%%%%%%%%%%%%%%%%%%%%%%%%%%%%%%%%%%%%%%%%
%
Then $\mmc'=\amr^{ab}\nd'_a\Nv'_b$ on $U\times\Reals^{d}$, and for any
$(p,q)\in U\times\Reals^{d}$, we have
$\mmc'|_{(p,q)}= \amr^{ab}((\partial/\partial z^a)\Nv'_b -
\Gamma_{ab}'{}^c\Nv'_c)|_{(p,q)}$.
We may assume here that $a$, $b$, and $c$ are summed from $1$ to $m$
only.  Thus
\[
  \left.\rule{0mm}{2.5ex}\mmc'\right|_{(p,q)}
  =\g^{ab}\left.\left(\frac{\partial}{\partial
    x^a}\Nv_b
  -
  \rule{0mm}{2.5ex}
  \Gamma_{ab}{}^c\Nv_c\right)\right|_p=\mmc|_p~.
\]

Next, let $V'$ denote the lift of $V$ along $\pi$.  Also let
$\beta\in\Integers_{>0}$ and $(p,q)\in U\times\Reals^{d}$ be given,
and consider the following component expressions:
\begin{equation}\label{NabNabExp}
\left.(\nd'_{a_1}\ldots\nd'_{a_{\beta}}V')\right|_{(p,q)}
\mbox{\ \ and\ \ }
  \left.(\nd'_{a_1}\ldots
  \nd'_{a_{\beta}}(R')_{ij}{}^{k}{}_{\ell})\right|_{(p,q)}~.
\end{equation}
We may symbolically expand both of these and express them in terms of
the Christoffel symbols $\Gamma'_{ij}{}^{k}$ and partial derivatives
of $V'$, $\Gamma'_{ij}{}^{k}$ and $(R')_{ij}{}^{k}{}_{\ell}$.  (See
Section~2.5 of \cite{SyngeSchild}.)  Let $\mathcal{P}_1$ and
$\mathcal{P}_{2}$ denote the expressions that result from the
expansion of the respective expressions in \nn{NabNabExp} in this way.
Note that $V'$, $\Gamma'_{ij}{}^{k}$, and $(R')_{ij}{}^{k}{}_{\ell}$
are independent of $q$.  Also recall that
$\Gamma'_{ab}{}^{c}|_{(p,q)}$ vanishes whenever $\{a,b,c\}$ is not a
subset of $\{1,\ldots,m\}$.  Similarly, $(R')_{ab}{}^c{}_d|_{(p,q)}$
vanishes if $\{a,b,c,d\}$ is not a subset of $\{1,\ldots,m\}$.  We may
thus assume that all indices in $\mathcal{P}_1$ and $\mathcal{P}_2$
take values in $\{1,\ldots,m\}$ only.

Now let $p\in\sm$ and $q\in\Reals^{d}$ be given.  We may assume that
$p\in U$.  We claim that $(\psi V)(p)=(\psi V')(p,q)$.  To see this,
begin by expanding the symbolic formula for $(\psi V')(p,q)$ in the
way in which we expanded the two expressions in \nn{NabNabExp}, above.
In the resulting expansion, we may assume that all indices are summed
from $1$ to $m$ only.  Next, replace all occurrences of
$(R')_{ab}{}^{c}{}_{d}|_{(p,q)}$, $\Gamma_{ab}'{}^{c}|_{(p,q)}$,
$\mmc'|_{(p,q)}$, $\Nv_a'|_{(p,q)}$, $\amr_{ab}|_{(p,q)}$,
$\amr^{ab}|_{(p,q)}$, $V'(p,q)$, and $\frac{\partial}{\partial
  z^i}|_{(p,q)}$ with $R_{ab}{}^{c}{}_{d}|_{p}$,
$\Gamma_{ab}{}^{c}|_{p}$, $\mmc|_{p}$, $\Nv_a|_{p}$, $\g_{ab}|_p$,
$\g^{ab}|_p$, $V(p)$, and $\frac{\partial}{\partial x^i}|_{p}$,
respectively.  The resulting expression is $(\psi V)(p)$.  If we let
$p'=(p,q)$, then $p'$ satisfies \nn{EmbeddingRelation}.
\hspace{\fill}\QED
%

% 
%%%%%%%%%%%%%%%%%%%%%%%%%%%%%%%%%%%%%%%%%%%%%%%%%%%%%%%%%%%
% Reference section
%%%%%%%%%%%%%%%%%%%%%%%%%%%%%%%%%%%%%%%%%%%%%%%%%%%%%%%%%%%
%

%
%%%%%%%%%%%%%%%%%%%%%%%%%%%%%%%%%%%%%%%%%%%%%%%%%%%%%%%%%%%%%%%%%%%
% End document
%%%%%%%%%%%%%%%%%%%%%%%%%%%%%%%%%%%%%%%%%%%%%%%%%%%%%%%%%%%%%%%%%%%
\end{document}